\newtheorem {theorem}{Theorem}[section]
\newtheorem {lemma}[theorem]{Lemma}
\newtheorem {proposition}[theorem]{Proposition}
\newtheorem {corollary}[theorem]{Corollary}
\newtheorem {conjecture}[theorem]{Conjecture}
\theoremstyle{definition}
\newtheorem {definition}[theorem]{Definition}
\theoremstyle{remark}
\newtheorem {remark}[theorem]{Remark}
\newcommand\Z{\mathbb{Z}}
\newcommand\R{\mathbb{R}}
\newcommand\Q{\mathbb{Q}}
\newcommand\X{\mathbb{X}}
\def\H{\mathcal{H}}
\newcommand \bJ {\bar{J}}
\newcommand \bh {\mkern3mu \overline{\mkern-3mu H \mkern-1mu} \mkern1mu}
\newcommand \bH {\overline{\H}}
\newcommand\T{\mathbb{T}}
\newcommand\Ta{\mathbb{T}_\alpha}
\newcommand\Tb{\mathbb{T}_\beta}
\newcommand\Tg{\mathbb{T}_\gamma}
\def\Sym{\mathrm{Sym}}
\def\del {{\partial}}
\def\spinc {{\operatorname{spin^c}}}
\def\Spinc {{\operatorname{Spin^c}}}
\def\J {\mathcal{J}}
\def\fin\qedhere
\def\pr {{\text{pr}}}
\DeclareMathOperator{\id}{id}
\def\from {{\leftarrow}}
\def\M {\mathcal {M}}
\def\Mh {\widehat{\M}}
\def\s{\mathfrak s}
\def\t{\mathbf t}
\def\x{\mathbf x}
\def\y{\mathbf y}
\def\Tower{\mathcal{T}^+}
\def\Ring {\mathcal R}
\def\dhorz{\del_{\operatorname{horz}}}
\def\dvert{\del_{\operatorname{vert}}}
\newcommand\alphas{\boldsymbol\alpha}
\newcommand\betas{\boldsymbol\beta}
\newcommand\gammas{\boldsymbol\gamma}
\newcommand\deltas{\boldsymbol\delta}
\def\AI {\mathit{AI}}
\def\CI{\mathit{CI}}
\def\gr{\mathrm{gr}}
\def\coker{{\operatorname{Coker}}}
\def\Id  {{\operatorname{Id}}}
\def\fin\qedhere
\def\from {{\leftarrow}}
\def\CFinfty {\CF^\infty}
\def\HFinfty {\HF^\infty}
\def\T{\mathcal{T}}
\def\ccdot {\! \cdot \!}
\def\pin{\operatorname{Pin}(2)}
\def\Cone{\mathit{Cone}}
\def\sarkar{\varsigma}
\def\Epin {E \kern-1pt \pin}
\def\Bpin{B \! \pin}
\newcommand{\bunderline}[1]{\underline{#1\mkern-2mu}\mkern2mu }
\newcommand{\sunderline}[1]{\underline{#1\mkern-3mu}\mkern3mu }
\def\du {\bar{d}}
\def\dl {\bunderline{d}}
\def\Vu {\overline{V}}
\def\Vl {\sunderline{V}}
\def\CF {\mathit{CF}}
\def\HF {\mathit{HF}}
\newcommand\HFhat{\widehat{\HF}}
\newcommand\CFhat{\widehat{\CF}}
\newcommand\HFp {\HF^+}
\newcommand \CFp {\CF^+}
\newcommand \CFm {\CF^-}
\newcommand \HFm {\HF^-}
\newcommand \CFinf {\CF^{\infty}}
\newcommand \CFi {\CF^{\infty}}
\newcommand \HFinf {\HF^{\infty}}
\newcommand \CFo {\CF^{\circ}}
\newcommand \HFo {\HF^{\circ}}
\def\HFred{\HF_{\operatorname{red}}}
\def\CFI {\mathit{CFI}}
\def\HFI {\mathit{HFI}}
\newcommand\HFIhat{\widehat{\HFI}}
\newcommand\HFIp {\HFI^+}
\newcommand \CFIp {\CFI^+}
\newcommand \CFIm {\CFI^-}
\newcommand \HFIm {\HFI^-}
\newcommand \CFIinf {\CFI^{\infty}}
\newcommand \HFIinf {\HFI^{\infty}}
\newcommand \CFIo {\CFI^{\circ}}
\newcommand \HFIo {\HFI^{\circ}}
\def\HFIred{\HFI_{\operatorname{red}}}
\def\CFKm{\mathit{CFK}^-}
\def\CFKi{\mathit{CFK}^{\infty}}
\def\HFKhat{\widehat{\mathit{HFK}}}
\def\CFKinfty{\CFKi}
\def\inv{\iota}
\def\delinv{\partial^{\inv}}
\def\co{\colon\thinspace}
\def\mm {\mathfrak{m}}
\def\Arf {\operatorname{Arf}}
\def\aa {\mathfrak{a}}
\def\bb{\mathfrak{b}}
\def\swf{\mathit{SWF}}
\def\borel{\operatorname{borel}}
\def\rp{\mathbb{RP}}
\def\Filt{\mathcal{F}}
\def\CFKI{\mathit{CFKI}}
\def\Ch{\mathscr{C}}
\newcommand \bFilt {\mkern3mu \overline{\mkern-3mu \Filt \mkern-1mu} \mkern1mu}
\def\hF {\widehat{F}}
\def\xu {\bar{x}}
\def\xl {\sunderline{x}}
\begin{document}

\title{Involutive Heegaard Floer homology}

\author[Kristen Hendricks]{Kristen Hendricks}
\author[Ciprian Manolescu]{Ciprian Manolescu}
\thanks {CM was partially supported by NSF grant DMS-1402914.}

\address {Department of Mathematics, UCLA, 520 Portola Plaza\\ 
Los Angeles, CA 90095}
\email {hendricks@math.ucla.edu}
\email {cm@math.ucla.edu}

\begin {abstract}
Using the conjugation symmetry on Heegaard Floer complexes, we define a three-manifold invariant called involutive Heegaard Floer homology, which is meant to correspond to $\Z_4$-equivariant Seiberg-Witten Floer homology. Further, we obtain two new invariants of homology cobordism, $\dl$ and $\du$, and two invariants of smooth knot concordance, $\Vl_0$ and $\Vu_0$. We also develop a formula for the involutive Heegaard Floer homology of large integral surgeries on knots.  We give explicit calculations in the case of L-space knots and thin knots. In particular, we show that $\Vl_0$ detects the non-sliceness of the figure-eight knot. Other applications include constraints on which large surgeries on alternating knots can be homology cobordant to other large surgeries on alternating knots.
\end {abstract}

\maketitle

\section {Introduction}

In \cite{Triangulations}, the second author resolved the remaining cases of the triangulation conjecture, by showing that there are manifolds of every dimension $n\geq 5$ that cannot be triangulated. The proof involves the construction of a $\pin$-equivariant version of Seiberg-Witten Floer homology. ($\pin$ is the group consisting of two copies of the complex unit circle with a map $j$ interchanging them such that $ij = -ji$ and $j^2=-1$.) From the module structure of this homology one extracts three non-additive maps 
$$\alpha, \beta, \gamma \co \Theta_{\Z}^3 \to \Z,$$
where $\Theta_\Z^3$ denotes the three-dimensional homology cobordism group. The maps $\alpha, \beta, \gamma$ are analogous to the Fr{\o}yshov-type correction terms arising from monopole or Heegaard Floer homology \cite{FroyshovSW, KMOS, AbsGraded}, but have the additional property that their reduction mod $2$ is equal to the Rokhlin invariant. Furthermore, we have $\beta(-Y)=-\beta(Y)$ for any homology sphere $Y$. This implies the non-existence of elements of order $2$ in $\Theta_\Z^3$ with odd Rokhlin invariant, which in turn disproves the triangulation conjecture---in view of the previous work of Galewski-Stern and Matumoto \cite{GS, Matumoto}.

The construction of $\pin$-equivariant version of Seiberg-Witten Floer homology in \cite{Triangulations} uses finite dimensional approximation, following \cite{Spectrum}, and it is only applicable to rational homology spheres. Doing calculations with this method is rather difficult, and at the moment only accessible when one has an explicit description of the Seiberg-Witten Floer complex, e.g. for Seifert fibrations \cite{Stoffregen}. An alternative construction was given by Lin in \cite{Lin}; this refines the Kronheimer-Mrowka definition of monopole Floer homology from \cite{KMBook}, and works for arbitrary $3$-manifolds. Recently, Lin established an exact triangle for his theory, which  allowed him to compute it for many examples \cite{LinExact}. However, some of Lin's computations make use of the isomorphism between monopole and Heegaard Floer homology \cite{KLT1, CGH2}.
 
Indeed, among Floer theories for three-manifolds, the one most amenable to computations is Heegaard Floer homology. This was introduced by Ozsv\'ath and Szab\'o in the early 2000's \cite{HolDisk, HolDiskTwo, HolDiskFour}. The definition starts with a pointed Heegaard diagram $\H=(\Sigma, \alphas, \betas, z)$ representing a three-manifold $Y$. One then takes the Lagrangian Floer homology of two tori in the symmetric product of the Heegaard surface $\Sigma$. There are four flavors of this construction, denoted $\HFhat$, $\HFp$, $\HFm$, and $\HFinfty$; we will use $\HFo$ to denote any of them, with $\circ \in \{\widehat{\phantom{a}}, +, -, \infty\}$.  Heegaard Floer homology was shown to be isomorphic to monopole Floer homology \cite{KLT1, CGH2}. One reason why Heegaard Floer homology is computationally tractable is because of the surgery formulas \cite{Knots, IntSurg, RatSurg, LinkSurg} which relate it to a similar invariant for knots, knot Floer homology \cite{Knots, RasmussenThesis}. In view of this, it would be desirable to construct a Heegaard Floer analog of $\pin$-equivariant Seiberg-Witten Floer homology. 

In the present paper we define a Heegaard Floer analog of $\Z_4$-equivariant Seiberg-Witten Floer homology, which we call {\em involutive Heegaard Floer homology}. Here, $\Z_4$ is the subgroup of $\pin$ generated by the element $j$. The $\Z_4$-equivariant theory does not have the full power of a $\pin$-equivariant one; in particular, one cannot use it to give another disproof of the triangulation conjecture, because the resulting homology cobordism invariants do not capture the Rokhlin invariant. Nevertheless, we will see that the information in involutive Heegaard Floer homology goes beyond that in ordinary Heegaard Floer homology. Moreover, we develop a formula for the involutive Heegaard Floer homology of large surgeries on knots, and this leads to many explicit calculations.

Both Seiberg-Witten and Heegaard Floer homology decompose as direct sums, indexed by the $\spinc$ structures on the $3$-manifold. In Seiberg-Witten theory, the element $j \in \pin$ gives a symmetry of the equations that takes a $\spinc$ structure to its conjugate, $\s \to \bar \s$. In Heegaard Floer theory, there is a similar conjugation symmetry, given by switching the orientation of the Heegaard surface, as well as swapping the $\alpha$ and the $\beta$ curves: 
$$ (\Sigma, \alphas, \betas, z) \to (-\Sigma, \betas, \alphas, z).$$
As noted in \cite[Theorem 2.4]{HolDiskTwo}, this induces isomorphisms
$$ \J \co \HFo(Y, \s) \xrightarrow{\phantom{o} \cong \phantom {oi}} \HFo(Y, \bar \s)$$
for any $\spinc$ structure $\s$ on $Y$. We have $\J^2=\id$, so $\J$ is an involution on $\HFo(Y).$ This involution was used in various arguments in the Heegaard Floer literature; see for example \cite{LiscaStipsicz2, LiscaOwens}.

We define involutive Heegaard Floer homology by making use of the construction of $\J$ {\em at the chain level}. Specifically, we have a Heegaard Floer chain group $\CFo(Y)$ and a map
$$ \inv \co \CFo(Y) \to \CFo(Y)$$
that induces the map $\J=\inv_*$ on homology. We define the $\circ$ flavor of involutive Heegaard Floer homology, $\HFIo(Y)$, to be the homology of the mapping cone:
$$
\CFo(Y) \xrightarrow{\phantom{o} Q (1+\inv) \phantom{o}} Q \ccdot \CFo(Y) [-1]. 
$$
Here $Q$ is just a formal variable, with $Q^2=0$, and $[-1]$ denotes a shift in grading. If we work with coefficients in $\Z_2$, then  Heegaard Floer groups come equipped with $\Z_2[U]$-module structures, and we get a $\Z_2[Q, U]/(Q^2)$-module structure on $\HFIo(Y)$. 

\begin{theorem}
\label{thm:invariance}
For any flavor $\circ \in \{\widehat{\phantom{a}}, +, -, \infty\}$, the isomorphism class of the involutive Heegaard Floer homology $\HFIo(Y)$, as a $\Z_2[Q, U]/(Q^2)$-module, is a three-manifold invariant.
\end{theorem}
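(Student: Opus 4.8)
The plan is to show that the mapping cone complex $\CFIo(Y)$ is, up to filtered chain homotopy equivalence, independent of the choices made in its construction—namely the pointed Heegaard diagram $\H$ representing $Y$, the almost complex structure $J$ on $\Sym^g(\Sigma)$, and the auxiliary data entering the definition of the chain map $\inv$. Since the homology of chain-homotopy-equivalent complexes is isomorphic (and the equivalence can be taken to respect the $\Z_2[Q,U]/(Q^2)$-action, as the variable $Q$ is formal and $U$ acts geometrically), this will suffice. The key input is that $\inv$ is well-defined only up to $\Z_2[U]$-equivariant chain homotopy: this should already have been established in the construction of $\inv$ earlier in the paper, because $\J$ is only canonically defined on homology. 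So the heart of the matter is a homological-algebra lemma: if $f, f' \co C \to C$ are chain maps with $f \simeq f'$ via a chain homotopy $H$ (all over $\Z_2[U]$, all grading-shifted appropriately), then the mapping cones of $Q(1+f)$ and $Q(1+f')$ are chain homotopy equivalent over $\Z_2[Q,U]/(Q^2)$.

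First I would record this mapping-cone lemma. Given $\del H + H \del = f + f'$, one writes down the candidate equivalence $\Phi \co \Cone(Q(1+f)) \to \Cone(Q(1+f'))$ by the matrix $\begin{pmatrix} \id & 0 \\ QH & \id \end{pmatrix}$ (acting on the two copies of $\CFo(Y)$), checks that it is a chain map using the homotopy relation and $Q^2 = 0$, and observes it is an isomorphism of complexes since its inverse is $\begin{pmatrix} \id & 0 \\ QH & \id \end{pmatrix}$ again (as $(QH)^2=0$). Thus independence of the auxiliary data for $\inv$ is immediate once $\inv$ is pinned down up to $U$-equivariant homotopy.

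Next I would handle the change of Heegaard data. Changing $(\H, J)$ to $(\H', J')$ produces the usual Ozsv\'ath–Szab\'o chain homotopy equivalence $\Psi \co \CFo(\H) \to \CFo(\H')$, itself unique up to $\Z_2[U]$-equivariant chain homotopy, and built from stabilizations/destabilizations, handleslides, and isotopies. The point to verify is that $\Psi$ intertwines the two involutions up to homotopy: $\Psi \circ \inv_{\H} \simeq \inv_{\H'} \circ \Psi$ over $\Z_2[U]$. This follows because $\inv$ is constructed by composing the tautological identification with the transition map for the particular Heegaard move $(\Sigma,\alphas,\betas,z)\to(-\Sigma,\betas,\alphas,z)$, and these transition maps satisfy the naturality (functoriality up to homotopy) properties of the Heegaard Floer TQFT; one traces through that both $\Psi\circ\inv_\H$ and $\inv_{\H'}\circ\Psi$ realize the same composite sequence of moves on homology, hence agree up to $U$-equivariant homotopy by the uniqueness of transition maps. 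Granting this, one extends $\Psi$ to a map of mapping cones $\CFIo(\H)\to\CFIo(\H')$ by the matrix $\begin{pmatrix}\Psi & 0\\ Q K & \Psi\end{pmatrix}$, where $K$ is a homotopy realizing $\Psi\inv_\H\simeq\inv_{\H'}\Psi$; checking this is a chain map uses $Q^2=0$ exactly as in the lemma, and it is a homotopy equivalence because $\Psi$ is (its homotopy inverse admits an analogous extension, and the composite is homotopic to the identity by another application of the lemma). Passing to homology then gives the claimed isomorphism of $\Z_2[Q,U]/(Q^2)$-modules, completing the proof.

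The main obstacle I anticipate is the naturality statement $\Psi\circ\inv_\H\simeq\inv_{\H'}\circ\Psi$ up to $\Z_2[U]$-equivariant homotopy. Unlike ordinary Heegaard Floer invariance, here one must track the interaction of the transition maps with the orientation-reversal-plus-curve-swap operation, so one needs that the Heegaard Floer transition maps are compatible with this symmetry in a suitably functorial way; the subtlety is that the conjugation symmetry is not a single Heegaard move but a composite, and one must be careful that the chain-level constructions can be arranged coherently. Verifying this coherence—essentially a diagram chase through stabilization, handleslide, and isotopy maps, invoking at each stage the uniqueness of transition maps up to $U$-equivariant homotopy (which in turn rests on the standard Heegaard Floer machinery)—is where the real work lies; everything else reduces to the formal mapping-cone manipulations above.
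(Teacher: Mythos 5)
Your overall architecture matches the paper's: a mapping-cone lemma handling the ambiguity in $\inv$ up to homotopy, the intertwining relation $\Psi\circ\inv_{\H}\simeq\inv_{\H'}\circ\Psi$ deduced from first-order naturality of the transition maps and conjugation, and the extension of $\Psi$ to the cones by the matrix $\begin{pmatrix}\Psi & 0\\ QK & \Psi\end{pmatrix}$. However, there is a genuine gap at the last step. You assert that this extended map is a chain homotopy equivalence because ``its homotopy inverse admits an analogous extension, and the composite is homotopic to the identity by another application of the lemma.'' That does not follow. Writing the composite of the two extended maps, its off-diagonal entry is $Q(K'\Psi+\Psi'K)$, and to homotope the composite to the identity of the cone you must produce a degree-one map $L_{21}$ with
$$\del L_{21}+L_{21}\del = Q\bigl(K'\Psi+\Psi'K+(1+\inv)L+L(1+\inv)\bigr),$$
where $L$ is the homotopy realizing $\Psi'\Psi\simeq\id$. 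This is a compatibility between the transition maps $\Phi(\H,\H')$ and the chain homotopies $\Upsilon(\H,\H')$ --- a secondary (order-two) naturality statement --- and it is exactly what is \emph{not} available from the Juh\'asz--Thurston machinery; the paper flags this explicitly in Section~\ref{sec:remarks} as the obstruction to upgrading Proposition~\ref{prop:CFI} from quasi-isomorphism to chain homotopy equivalence. Your first lemma (about cones of homotopic endomorphisms of a \emph{fixed} complex) does not apply here, because the issue is not replacing $1+\inv$ by a homotopic map but controlling the interaction of two independent homotopies.

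The theorem as stated is still reachable from your setup, but by a weaker and different closing argument: the map $\begin{pmatrix}\Psi & 0\\ QK & \Psi\end{pmatrix}$ preserves the two-step filtration of the mapping cone, and on the associated graded it is just $\Psi\oplus\Psi$, a chain homotopy equivalence; hence it is a quasi-isomorphism, which already gives an isomorphism of the homologies as $\Z_2[Q,U]/(Q^2)$-modules. This is what the paper does. You should replace the homotopy-equivalence claim by this filtration argument (and, for completeness, add the brief basepoint-independence step via an ambient diffeomorphism).
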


The ring $\Ring=\Z_2[Q, U]/(Q^2)$ is the cohomology ring of the classifying space $B\Z_4$, with $\Z_2$ coefficients. If we have a space with a $\pin$ action, then one can obtain its $\Z_4$-equivariant homology (as an $\Ring$-module) from its $S^1$-equivariant homology by constructing a mapping cone and then taking homology, just as we constructed $\HFIp$ from $\HFp$. Since $\HFp$ is supposed to correspond to $S^1$-equivariant Seiberg-Witten Floer homology, we see that $\HFIp$ should correspond to $\Z_4$-equivariant  Seiberg-Witten Floer homology. To construct the $\pin$-equivariant theory, one would have to complete the mapping cone to an infinite complex that involves not just the map $\inv$, but also the chain homotopy relating $\inv^2$ to the identity, and higher homotopies. To define these higher homotopies one would need to prove that Heegaard Floer homology is natural ``to infinite order,'' whereas currently this is established only to first order, by the work of Juh\'asz and Thurston \cite{Naturality}. We refer to Section~\ref{sec:motivation} below for more explanations.

The module $\HFIp(Y)$ decomposes as a direct sum indexed by the orbits of $\spinc$ structures under the conjugation action. The most interesting case is when we have a $\spinc$ structure $\s$ that comes from a spin structure, i.e., $\s = \bar \s$. We then obtain a group $\HFIp(Y, \s)$.

Furthermore, if we have a four-dimensional spin cobordism $(W, \t)$ from $(Y, \s)$ to $(Y', \s')$, we construct maps
$$ F^+_{W, \t, \aa} \co \HFIp(Y, \s) \to \HFIp(Y', \s').$$
A priori, these depend on some additional data $\aa$, which includes a choice of Heegaard diagrams for $Y$ and $Y'$ and a handle decomposition of the cobordism $W$. Although we expect the maps to not depend essentially on $\aa$, proving this would require results about higher order naturality that are not available by current techniques.

Recall that Heegaard Floer homology (for torsion $\spinc$ structures $\s$) can be equipped with an absolute grading with values in $\Q$; cf. \cite{AbsGraded}. When $Y$ is a rational homology sphere, the minimal grading of the infinite $U$-tower in $\HFp(Y, \s)$ gives the Ozsv\'ath-Szab\'o correction term $d(Y, \s) \in \Q$. When $\s$ is spin, the involutive Heegaard Floer homology $\HFIp(Y, \s)$ has two infinite $U$-towers, and by imitating \cite{AbsGraded} we obtain two new correction terms
$$\dl(Y, \s),\  \du(Y, \s) \in \Q$$
such that
$$ \dl(Y, \s)\leq d(Y, \s) \leq \du(Y, \s).$$

We also have a Fr{\o}yshov-type inequality for spin four-manifolds with boundary, analogous to \cite[Theorem 9.6]{AbsGraded}:
\begin{theorem} \label{thm:froyshov} 
Let $Y$ be a rational homology three-sphere, and $\s$ be a spin structure on $Y$. Then if $X$ is a smooth negative-definite four manifold $X$ with boundary $Y$, and $\t$ is a spin structure on $X$ such that $\t|_Y=\s$, then
$$\operatorname{rk}(H^2(X;\Z)) \leq 4\dl(Y, \s).$$
\end{theorem}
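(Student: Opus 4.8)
The plan is to upgrade the Ozsv\'ath--Szab\'o proof of the non-involutive Fr{\o}yshov inequality \cite[Theorem~9.6]{AbsGraded} to an $\Ring$-equivariant statement, using the involutive cobordism maps. As in \cite{AbsGraded}, it suffices to treat the case $b_1(X)=0$; then also $b_3(X)=b_4(X)=0$, so $\chi(X)=1+\operatorname{rk}(H^2(X;\Z))$. Set $W=X\setminus\operatorname{int}(B^4)$, a spin cobordism from $S^3$ to $Y$, and $\t_W:=\t|_W$. Since $\t_W$ is spin, $c_1(\t_W)=0$; and $\chi(W)=\operatorname{rk}(H^2(X;\Z))$, $\sigma(W)=-\operatorname{rk}(H^2(X;\Z))$ by negative-definiteness, so the cobordism-map grading shift is
\[
\Delta:=\frac{c_1(\t_W)^2-2\chi(W)-3\sigma(W)}{4}=\frac{\operatorname{rk}(H^2(X;\Z))}{4}.
\]
Recall that $F^+_{W,\t_W,\aa}\co\HFIp(S^3)\to\HFIp(Y,\s)$ is an $\Ring$-module homomorphism, homogeneous of degree $\Delta$ (it is built from the ordinary degree-$\Delta$ cobordism map together with a chain homotopy, so that the induced map of mapping cones is homogeneous of degree $\Delta$), and it commutes with the maps relating the four flavors.

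First I would show that the induced map on $\HFIinf$ is an isomorphism. Since $H^1(\cdot;\Q)=0$ for a rational homology sphere, the conjugation involution acts as the identity on $\HFinf$, so $(1+\inv)_*=0$ and the long exact sequence of the defining mapping cone degenerates into short exact sequences
\[
0\longrightarrow Q\,\HFinf(Z)[-1]\longrightarrow\HFIinf(Z)\longrightarrow\HFinf(Z)[-1]\longrightarrow 0,\qquad Z\in\{S^3,Y\},
\]
natural under cobordism maps. On the sub and the quotient, $F^\infty_{W,\t_W,\aa}$ restricts to (a $Q$-twist of) the ordinary map $F^\infty_{W,\t_W}\co\HFinf(S^3)\to\HFinf(Y,\s)$, an isomorphism because $W$ is negative-definite with $b_1(W)=0$ \cite[\S 9]{AbsGraded}; the five lemma then gives the claim. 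Since $F^+_{W,\t_W,\aa}$ commutes with the maps $\HFIinf\to\HFIp$, its image contains the tower part $\mathcal T(Y,\s):=\operatorname{Im}\bigl(\HFIinf(Y,\s)\to\HFIp(Y,\s)\bigr)$; more precisely it restricts to a degree-$\Delta$, $\Ring$-linear surjection $\mathcal T(S^3)\twoheadrightarrow\mathcal T(Y,\s)$.

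Then I would run the grading count. For $S^3$ the involution is trivial, so $\HFIp(S^3)$ has two $U$-towers, one being the image of $Q$, and with the normalizations of \cite{AbsGraded} one has $\dl(S^3)=\du(S^3)=0$. The two $U$-towers of $\mathcal T(Y,\s)$ carry $\du(Y,\s)$ and $\dl(Y,\s)$, the latter being the one contained in $\operatorname{Im}(Q)$. By $\Ring$-linearity, $F^+_{W,\t_W,\aa}$ maps the $\operatorname{Im}(Q)$-tower of $\mathcal T(S^3)$ onto the $\operatorname{Im}(Q)$-tower of $\mathcal T(Y,\s)$; if $g,g_0$ are the gradings of the bottoms of these two towers, then $\dl(Y,\s)-\dl(S^3)=g-g_0$ (the correction term differs from this grading by a universal constant). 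As that bottom element, of grading $g$, is the image of an element of the source tower of grading $g-\Delta$, we get $g-\Delta\ge g_0$, whence $\dl(Y,\s)-\dl(S^3)\ge\Delta$, i.e.\ $\operatorname{rk}(H^2(X;\Z))=4\Delta\le 4\dl(Y,\s)$.

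The hard part will be the last step: one must check that the involutive cobordism map interacts with the two-tower structure of $\HFIp$ exactly as the ordinary cobordism map does with the single $U$-tower of $\HFp$, so that the argument delivers $\dl$ and not the weaker invariant $d$. Underlying this is the need to arrange the cobordism maps for all four flavors to be simultaneously $\Ring$-linear, homogeneous of the expected degree, and compatible both with the natural maps and with $F^\infty$---i.e.\ to carry enough of the formal naturality of the non-involutive cobordism maps over to the involutive chain level. Granting this, the $\HFIinf$-isomorphism step is a routine consequence of \cite{AbsGraded} and the five lemma.
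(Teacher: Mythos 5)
Your overall strategy matches the paper's: puncture $X$ to get a spin negative-definite cobordism $W$ from $S^3$ to $Y$, show $F^{I,\infty}_{W,\t,\aa}$ is an isomorphism via the diagram of long exact sequences and the five lemma, and then compare the gradings of the bottoms of the $U$-towers under the degree-$\tfrac{b_2(X)}{4}$ map $F^{I,+}$. However, there is a genuine error in the final grading count: you have the two towers mislabeled. By the definitions in Section~\ref{sec:ds}, the tower contained in $\operatorname{Im}(Q)$ is the one whose bottom grading is $\du(Y,\s)$ (the ``second tower''), while $\dl(Y,\s)+1$ is the bottom grading of the \emph{other} tower, the one consisting of elements in $\operatorname{Im}(U^n)$ but \emph{not} in $\operatorname{Im}(U^nQ)$. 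Your mechanism --- ``by $\Ring$-linearity, $F^+$ maps the $\operatorname{Im}(Q)$-tower onto the $\operatorname{Im}(Q)$-tower'' --- is sound, but applied to the correctly labeled towers it only yields $\du(Y,\s) \geq b_2(X)/4$, i.e.\ $b_2(X) \leq 4\du(Y,\s)$, which is strictly weaker than the theorem since $\dl \leq d \leq \du$ (and strictly weaker in examples such as $\Sigma(2,3,7)$, where $\dl=-2$ but $\du=0$).

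To recover the stated inequality you must track the first tower, which is not an $\Ring$-submodule in any canonical way, so $\Ring$-linearity alone does not suffice; the missing ingredient is a parity argument. The grading shift $b_2(X)/4$ is congruent to $d(Y,\s) \pmod{2\Z}$, and the two towers of $\HFIp$ occupy complementary parities ($\dl$-tower in gradings $\equiv d+1$, $\du$-tower in gradings $\equiv d$). Hence any $y \in \HFIp(S^3)$ with $F^{I,+}(y)$ equal to the bottom element of the first tower of $\HFIp(Y,\s)$, in grading $\dl(Y,\s)+1$, must have odd grading, and therefore lies in the first tower of $\HFIp(S^3)$, whose bottom grading is $1$ (not $0$). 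The two shifts of $+1$ cancel:
$$\frac{b_2(X)}{4} = \gr\bigl(F^{I,+}(y)\bigr) - \gr(y) \leq \bigl(\dl(Y,\s)+1\bigr) - 1 = \dl(Y,\s).$$
This is exactly how the paper concludes. With the tower labels corrected and this parity step inserted, your proof goes through; as written, it does not establish the inequality for $\dl$.
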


If $Y$ is a $\Z_2$-homology sphere, then it admits a unique spin structure $\s$, and we can simply write $\dl(Y)$ and $\du(Y)$ for the corresponding invariants. Recall that two three-manifolds $Y_0$ and $Y_1$ are called homology cobordant (resp. $\Z_2$-homology cobordant or $\Q$-homology cobordant) if there exists a smooth, compact, oriented cobordism $W$ from $Y_0$ to $Y_1$ such that $H_*(W, Y_i; \Z)=0$ (resp. $H_*(W, Y_i; \Z_2)=0$ or $H_*(W, Y_i; \Q)=0$) for $i=0,1$. The homology cobordism group $\Theta^3_\Z$ is generated by oriented integer homology spheres, modulo the equivalence relation given by homology cobordism. Similarly, the $\Z_2$-homology cobordism group $\Theta^3_{\Z_2}$ is generated by oriented $\Z_2$-homology spheres, modulo $\Z_2$-homology cobordism.

\begin{theorem}
\label{thm:ds}
The correction terms $\dl, \du$ are invariants of $\Z_2$-homology cobordism, i.e., they descend to (non-additive) maps
$$\dl, \du \co \Theta^3_{\Z_2} \to \Q$$
Further, when $Y$ is an integer homology sphere then $\dl$ and $\du$ take even integer values, and so give maps
$$ \dl, \du \co \Theta^3_{\Z} \to 2\Z.$$  
\end{theorem}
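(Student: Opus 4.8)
The plan is to carry over Ozsv\'ath--Szab\'o's proof that the correction term $d$ is a $\Q$-homology cobordism invariant \cite{AbsGraded}, applied to the two $U$-towers of $\HFIp$, using the cobordism maps $F^+_{W,\t,\aa}$ constructed above and their analogues in the other flavors. Let $W$ be a $\Z_2$-homology cobordism from $Y_0$ to $Y_1$, where $Y_0,Y_1$ are $\Z_2$-homology spheres carrying their unique spin structures $\s_0,\s_1$. The first step is to record the relevant topology of $W$: since $H_*(W,Y_i;\Z_2)=0$, the groups $H_*(W,Y_i;\Z)$ are finite of odd order, so $W$ is in fact a $\Q$-homology cobordism; moreover $H^1(W;\Z_2)=H^2(W;\Z_2)=0$, so $W$ carries a unique spin structure $\t$, which necessarily restricts to $\s_0$ and $\s_1$, and one has $\chi(W)=\sigma(W)=0$ and $c_1(\t)=0$. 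Therefore the grading shift $\tfrac14\bigl(c_1(\t)^2-2\chi(W)-3\sigma(W)\bigr)$ vanishes, so $F^+_{W,\t,\aa}\co\HFIp(Y_0,\s_0)\to\HFIp(Y_1,\s_1)$ is a grading-preserving $\Ring$-module map, and likewise for the reversed cobordism $\bar W$.

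The second step is to understand the induced map on $\HFIinf$. Since $\inv_*=\J$ is a grading-preserving $\Z_2[U]$-automorphism of the cyclic module $\HFinf(Y_i,\s_0)\cong\Z_2[U,U^{-1}]$, it is the identity there, so $1+\inv$ is null at the level of $\HFinf$; consequently $\HFIinf(Y_i,\s_0)$ splits, as a graded $\Ring$-module, into $\HFinf(Y_i,\s_0)$ and a $Q$-shifted copy of it, and the map it induces is the corresponding ``doubled'' copy of the ordinary map $F^\infty_{W,\t,\aa}$. As $W$ is a $\Q$-homology cobordism with $b_1(W)=b_2^+(W)=0$, that ordinary map is an isomorphism \cite{AbsGraded}, so the map on $\HFIinf$ is an isomorphism of $\Ring$-modules; and, exactly as in the non-involutive setting, it commutes with the natural maps $\HFIinf(Y_i,\s_0)\to\HFIp(Y_i,\s_0)$.

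Now the argument proceeds as for $d$, run on each of the two towers. From the commuting squares and the surjectivity of the $\HFIinf$-map, the image of $\HFIinf(Y_1,\s_0)$ in $\HFIp(Y_1,\s_0)$ equals the $F^+_{W,\t,\aa}$-image of the corresponding submodule of $\HFIp(Y_0,\s_0)$; since $F^+_{W,\t,\aa}$ is grading-preserving and $\Ring$-linear---so that it respects the $Q$-action distinguishing the two towers---one deduces, just as in \cite{AbsGraded}, the inequalities $\du(Y_0,\s_0)\le\du(Y_1,\s_0)$ and $\dl(Y_0,\s_0)\le\dl(Y_1,\s_0)$; applying the same to $\bar W$ gives the reverse inequalities, hence equalities. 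Although $F^+_{W,\t,\aa}$ depends on the auxiliary data $\aa$, this is harmless: the argument uses only the existence of one grading-preserving $\Ring$-module map of this kind for each of $W$ and $\bar W$, together with the fact (Theorem~\ref{thm:invariance}) that $\du,\dl$ depend only on the isomorphism class of $\HFIo$. Every $\Z$-homology cobordism is a $\Z_2$-homology cobordism, so $\du,\dl$ descend to $\Theta^3_\Z$ as well.

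Finally, for integrality let $Y$ be an integer homology sphere. Then $d(Y)\in2\Z$, so $\HFinf(Y,\s_0)$ is supported in even gradings \cite{AbsGraded}. Chasing the grading conventions in the mapping cone defining $\HFIo$ (the degree of $Q$ and the shift $[-1]$) together with the definitions of $\du,\dl$---normalized so that $\du(S^3)=\dl(S^3)=d(S^3)=0$---one finds that $\du(Y)$ and $\dl(Y)$ have the same parity as $d(Y)$ and hence lie in $2\Z$. The main obstacle throughout is the input of the second paragraph: one must be sure that the involutive cobordism maps are $\Ring$-linear, that they are compatible with the flavor-change maps into $\HFIp$, and that the map they induce on $\HFIinf$ is an isomorphism. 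Granted that, the remainder is the grading bookkeeping of \cite{AbsGraded}.
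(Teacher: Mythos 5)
Your proposal is correct and follows essentially the same route as the paper: reduce to the statement that a spin rational homology cobordism induces an isomorphism on $\HFIinf$ with zero grading shift, deduce inequalities for $\dl,\du$ in both directions by reversing the cobordism, and get the parity claim from $\dl\equiv\du\equiv d\pmod{2\Z}$. The only cosmetic difference is that you see the isomorphism on $\HFIinf$ by splitting it into two copies of $\HFinf$ (using that $1+\inv_*$ vanishes there), whereas the paper applies the five lemma to the commutative diagram of long exact sequences from Proposition~\ref{prop:mapsHFI}; both arguments rest on the same input.
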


In some cases, for example when $Y$ is an L-space (i.e., $\HFhat(Y, \s) = \Z_2$ for every $\s \in \Spinc(Y)$), it turns out that $\dl(Y)=\du(Y)=d(Y)$. On the other hand, for the Brieskorn sphere $\Sigma(2,3,7)$ we have
 $$ \dl(\Sigma(2,3,7))=-2, \ \ \du(\Sigma(2,3,7))= d(\Sigma(2,3,7))=0.$$
Thus, whereas the usual correction term $d$ cannot tell that $\Sigma(2,3,7)$ is not homology null-cobordant, the invariant $\dl$ can. Of course, this can also be seen by other methods, e.g. using the Rokhlin invariant, which is $1$ for $\Sigma(2,3,7)$. More interesting is the following corollary:

\begin{corollary}
\label{cor:Lcor}
The L-spaces that are $\Z_2$-homology spheres generate a proper subgroup of $\Theta^3_{\Z_2}$. For example, $\Sigma(2,3,7)$ is not $\Z_2$-homology cobordant to any L-space. 
\end{corollary}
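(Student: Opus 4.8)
The plan is to deduce the corollary formally from Theorem~\ref{thm:ds} together with the two computations recorded just above it: that $\dl(Y)=\du(Y)$ whenever $Y$ is an L-space, and that $\dl(\Sigma(2,3,7))=-2\ne 0=\du(\Sigma(2,3,7))$.

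First I would pin down the subgroup in question. Write $S\subseteq\Theta^3_{\Z_2}$ for the set of classes represented by an oriented $\Z_2$-homology sphere which is an L-space; I claim $S$ is already a subgroup. Indeed $[S^3]\in S$; the set is closed under orientation reversal because $\HFhat(-Y)$ has the same total rank as $\HFhat(Y)$ while $|H_1(-Y;\Z)|=|H_1(Y;\Z)|$; and it is closed under connected sum because $Y_1\#Y_2$ is again a $\Z_2$-homology sphere and, by the Künneth-type connected sum formula of \cite{HolDiskTwo}, $\operatorname{rk}\HFhat(Y_1\#Y_2)=\operatorname{rk}\HFhat(Y_1)\cdot\operatorname{rk}\HFhat(Y_2)=|H_1(Y_1;\Z)|\cdot|H_1(Y_2;\Z)|=|H_1(Y_1\#Y_2;\Z)|$ when $Y_1,Y_2$ are L-spaces. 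Consequently the subgroup of $\Theta^3_{\Z_2}$ generated by the L-space $\Z_2$-homology spheres is exactly $S$.

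Then I would bring in the invariants. By Theorem~\ref{thm:ds}, $\dl$ and $\du$ are invariants of $\Z_2$-homology cobordism, so combining this with $\dl=\du$ on L-spaces we get $\dl(Z)=\du(Z)$ for every $[Z]\in S$. But $\Sigma(2,3,7)$, being an integer homology sphere, represents a class in $\Theta^3_{\Z_2}$ on which $\dl=-2\ne 0=\du$. Hence $[\Sigma(2,3,7)]\notin S$, which exhibits $S$ as a proper subgroup; and since any L-space $\Z_2$-homology cobordant to $\Sigma(2,3,7)$ would in particular be a $\Z_2$-homology sphere, no L-space is $\Z_2$-homology cobordant to $\Sigma(2,3,7)$.

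Within the corollary itself there is no genuine obstacle; it is a formal consequence of the invariance of $\dl,\du$ and the closure of the L-space condition under connected sum and orientation reversal. The substantive inputs are imported from elsewhere in the paper: the inequality $\dl(\Sigma(2,3,7))<\du(\Sigma(2,3,7))$, obtained from the involutive surgery formula applied to $\pm1$-surgery on the figure-eight knot, and the identity $\dl=\du=d$ for L-spaces, which uses that on a complex of rank one in each $\spinc$ structure $\inv$ may be arranged to act as the identity. The only point in the present argument that warrants a moment's care is the reduction in the second paragraph, namely that passing from the set of L-space $\Z_2$-homology spheres to the subgroup it generates produces nothing new.
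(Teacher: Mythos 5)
Your proof is correct and follows essentially the same route as the paper: $\dl=\du$ on L-spaces (Corollary~\ref{corollary:lspace}), $\dl(\Sigma(2,3,7))=-2\ne 0=\du(\Sigma(2,3,7))$ from the surgery computation, and $\Z_2$-homology cobordism invariance of $\dl,\du$ from Theorem~\ref{thm:ds}. You merely make explicit the point the paper leaves implicit, namely that the subgroup generated by L-space $\Z_2$-homology spheres consists entirely of classes of L-spaces because the L-space condition is closed under connected sum and orientation reversal.
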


Observe that L-spaces that are $\Z_2$-homology spheres include, for example, all double branched covers over alternating knots in $S^3$; cf. \cite{BrDCov}. We also remark that Corollary~\ref{cor:Lcor} can be obtained using $\pin$-equivariant Seiberg-Witten Floer homology, by showing that $\alpha=\beta=\gamma$ for L-spaces; this is a consequence of the Gysin sequence that relates the $\pin$- and $S^1$-equivariant theories. (See \cite[Proposition 3.10]{Lin} for a version of this.)

In contrast to Corollary~\ref{cor:Lcor}, the Brieskorn sphere $\Sigma(2,3,7)$ does bound a rational homology ball, and hence is $\Q$-homology cobordant to $S^3$; cf. \cite{FSExample}. This forces the $d$ invariant to be zero.

In our theory, the calculation of $\dl$ and $\du$ for $\Sigma(2,3,7)$ is done using an adaptation of the large surgery formula from \cite{Knots, RasmussenThesis} to the involutive setting. More generally, this adaptation allows us to calculate $\HFIp$ of a large integral surgery on a knot $K \subset S^3$ in terms of the knot Floer complex $\CFKi(S^3,K)$ and the analogue of the map $\inv$ on $\CFKi(S^3,K)$, which we denote by $\inv_K$. 

Let us denote by $S^3_p(K)$ the result of surgery on a knot $K \subset S^3$, with framing $p \in \Z$. Recall that the usual large surgery formula (cf. \cite[Theorem 4.4]{Knots} or \cite[Theorem 2.3]{IntSurg}) identifies $\HFp(S^3_p(K), [s])$, for $p \gg 0$, with the homology of a quotient complex $A_s^+$ of $\CFKi(S^3, K)$. Here, $s$ is an integer, $[s]$ is its mod $p$ reduction, and we use the standard identification of $\spinc$ structures on $S^3_p(K)$ with the elements of $\Z/p\Z$. In particular, for $s=0$ we have a spin structure, and the map $\inv_K$ induces a similar map $\inv_0$ on $A_0^+$. Let $\AI_0^+$ be the mapping cone
$$
A_0^+ \xrightarrow{\phantom{o} Q (1+\inv_0) \phantom{o}} Q \ccdot A_0^+ [-1]. 
$$

We now state the involutive large surgery formula. 

\begin{theorem}
\label{thm:Large}
Let $K \subset S^3$ be a knot, and let $g(K)$ be its Seifert genus. Then, for all integers $p \geq g(K),$ we have an isomorphism of relatively graded $\Ring$-modules
$$ \HFIp(S^3_p(K), [0]) \cong H_*(\AI_0^+).$$ 
\end{theorem}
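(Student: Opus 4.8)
The plan is to split the statement into a formal part and a geometric part. Formally, $\AI_0^+$ is the mapping cone of $Q(1+\inv_0)$ on $A_0^+$, and the argument establishing Theorem~\ref{thm:invariance} shows more generally that the homology of a mapping cone $\Cone(Q(1+f))$, as a graded $\Ring$-module, depends only on the $\Z_2[U]$-equivariant chain homotopy type of the complex together with the $\Z_2[U]$-equivariant homotopy class of the self-map $f$. Hence it is enough to produce a $U$-equivariant, relatively graded chain homotopy equivalence
\[
\Phi \co \CFp(S^3_p(K), [0]) \longrightarrow A_0^+
\]
that intertwines the conjugation map $\inv$ with $\inv_0$ up to homotopy, i.e.\ so that the square
\[
\begin{CD}
\CFp(S^3_p(K), [0]) @>{\inv}>> \CFp(S^3_p(K), [0]) \\
@V{\Phi}VV @VV{\Phi}V \\
A_0^+ @>{\inv_0}>> A_0^+
\end{CD}
\]
commutes up to chain homotopy. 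The existence of such a $\Phi$, ignoring the involutions, is exactly the ordinary large surgery formula \cite{Knots, IntSurg} in the self-conjugate $\spinc$ structure, valid for $p \geq g(K)$; the only new content is the homotopy-commutativity of the square.

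To establish that, I would revisit the Heegaard diagram behind the ordinary formula. Ozsv\'ath and Szab\'o build a diagram $\H_p$ for $S^3_p(K)$ from a doubly-pointed diagram $\H = (\Sigma, \alphas, \betas, w, z)$ for $(S^3, K)$ by replacing the meridional $\beta$-curve with a long curve that winds $p$ times through a neighborhood of an arc from $w$ to $z$; for $p \geq g(K)$ the part of $\CFp(\H_p, [0])$ supported in the winding region is identified, generator-by-generator and differential-by-differential, with the quotient complex $A_0^+$ of $\CFKi(\H)$. By definition, $\inv$ on $\CFp(\H_p, [0])$ is the composite $\nu \circ \eta$, where $\eta \co \CFp(\H_p, [0]) \to \CFp(\overline{\H_p}, [0])$ is the tautological identification induced by $(\Sigma, \alphas, \betas) \mapsto (-\Sigma, \betas, \alphas)$ (which fixes $S^3_p(K)$ and the self-conjugate structure $[0]$), and $\nu$ is a Juh\'asz--Thurston naturality equivalence $\CFp(\overline{\H_p}, [0]) \to \CFp(\H_p, [0])$. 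Likewise $\inv_K$ on $\CFKi(S^3, K)$ is built from the same recipe applied to the doubly-pointed diagram $\H$, and $\inv_0$ is the self-map it induces on $A_0^+$ (here one uses that $\inv_K$ respects the knot filtration up to the $i \leftrightarrow j$ swap, hence preserves the subcomplex $\CFKi\{\max(i,j) < 0\}$ and so descends to $A_0^+ = \CFKi\{\max(i,j) \geq 0\}$).

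The geometric crux is the claim that ``winding'' and ``conjugating'' commute up to Heegaard moves: conjugating $\H_p$ and then performing a controlled sequence of isotopies and handleslides produces the surgery diagram $(\overline{\H})_p$ associated to the conjugate knot diagram $\overline{\H}$, and moreover the sequence of moves can be organized so that it is supported first inside the winding region — where reversing the orientation of $\Sigma$ reverses the direction of winding, undone by an isotopy of the long curve — and then inside its complement, where it is literally the sequence of moves connecting $\overline{\H}$ to itself used in defining $\inv_K$. Granting this, $\eta$ factors up to homotopy through $\CFp((\overline{\H})_p, [0])$, the large surgery formula applied to $\overline{\H}$ identifies the latter with the quotient complex $\overline{A_0^+}$ of $\CFKi(\overline{\H})$, and under these identifications $\eta$ becomes the map on quotients induced by the tautological identification $\CFKi(\H) \to \CFKi(\overline{\H})$ while $\nu$ becomes the one induced by the corresponding naturality equivalence $\CFKi(\overline{\H}) \to \CFKi(\H)$. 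Their composite induces $\inv_0$ on $A_0^+$, so $\Phi \circ \inv$ is homotopic to $\inv_0 \circ \Phi$, as desired.

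The main obstacle is precisely this last piece of bookkeeping: one must check that the Heegaard moves relating $\overline{\H_p}$ to $(\overline{\H})_p$ can be chosen compatibly with those relating $\overline{\H}$ to $\H$, so that the portion of the naturality equivalence ``internal'' to the knot diagram is an honest representative of the one defining $\inv_K$, not merely something homotopic to it in a way one cannot control on the quotient complex $A_0^+$. Since $\inv$ and $\inv_K$ are each only defined up to homotopy, this matching is arranged by fixing at the outset the representative of $\inv_K$ coming from $\H$ and constructing the representative of $\inv$ for $S^3_p(K)$ from the same diagrammatic data. Once the square above is homotopy commutative, the theorem follows formally: $\Phi$ is $U$-equivariant and preserves the relative grading, and the formal variable $Q$ together with its grading shift enters $\AI_0^+$ and the mapping cone defining $\HFIp(S^3_p(K), [0])$ in identical fashion, so those two mapping cones are chain homotopy equivalent compatibly with the $\Ring$-action and the relative grading, giving the claimed isomorphism $\HFIp(S^3_p(K), [0]) \cong H_*(\AI_0^+)$ of relatively graded $\Ring$-modules.
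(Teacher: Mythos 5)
Your overall architecture is the same as the paper's: reduce to producing a quasi-isomorphism $\Phi\co \CFp(S^3_p(K),[0]) \to A_0^+$ together with a chain homotopy between $\inv_0\circ\Phi$ and $\Phi\circ\inv$, and then conclude by the two-step filtration argument on the mapping cones. The formal reductions at the beginning and end are fine. The gap is in the "geometric crux," which you correctly identify as the main obstacle but then dispose of with an assertion that does not hold as stated. The claim that $\overline{\H_p}$ can be carried to $(\overline{\H})_p$ by Heegaard moves "supported first inside the winding region\dots and then inside its complement," with the latter portion literally realizing the moves defining $\inv_K$, is the entire content of the theorem, and the paper's proof shows why it cannot be arranged so simply. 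First, the comparison has to be threaded through \emph{triple} Heegaard diagrams $(\Sigma,\alphas,\gammas,\betas,z)$ subordinate to the cobordism $W'_p(K)$, because the identification $\Phi$ is (away from the $p\gg 0$ regime) the triangle-counting map $\Gamma^+_{p,0}$, not a bijection of generators; and the conjugate of a triple diagram that is right-subordinate to $W'_p(K)$ is \emph{left}-subordinate, a configuration that no sequence of Heegaard moves can convert back to a right-subordinate one. The paper resolves this with an "abrupt" step in which the third set of curves is replaced wholesale (a quadruple diagram appears), and the required homotopy there is built from a count of pseudo-holomorphic quadrilaterals together with the fact that the two relevant triangle maps are each homotopic to the inclusion $A_0^+(\bullet)\hookrightarrow A_0^+(\bullet)\oplus A_0^+(\circ)$ for the knot in $Y\#(S^1\times S^2)$. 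Your sketch contains no analogue of this step, and without it the square you need does not commute up to a controllable homotopy.

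Two further points that your bookkeeping omits. The conjugate knot diagram $\bH$ has the roles of $w$ and $z$ exchanged, so the definition of $\inv_K$ requires pre-composing with the half Dehn twist $\psi$ along $K$; any compatible choice of moves for the surgery diagram must account for this, which is why the paper constructs its base sequence $\mm$ from a Morse function on a neighborhood of $K$ together with an explicit $180^\circ$-rotation diffeomorphism preserving a trace of the knot, and why it first performs a compound $\alpha$-stabilization so that $w,z$ sit on either side of both an $\alpha$- and a $\beta$-curve. Second, "reversing the orientation of $\Sigma$ reverses the direction of winding, undone by an isotopy of the long curve" is not an isotopy: undoing the winding in the conjugated diagram requires the standard sequence of the paper (an $\alpha$-handleslide, a compound destabilization and restabilization, $p$ $\gamma$-handleslides, and Dehn twists transferring the furling from one curve to the other). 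None of these objections is fatal to the strategy --- it is the right strategy --- but each is a construction you would have to supply, not a matter of fixing representatives at the outset.
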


Note that, in Heegaard Floer theory, large surgeries are considered those with coefficient $p \geq 2g(K)-1$. If we are only interested in the spin structure $s=0$, then the weaker inequality $p \geq g(K)$ suffices. Therefore, in this paper, a ``large'' surgery will mean one with coefficient $p \geq g(K)$. 

In order to compute $H_*(\AI_0^+)$, we need to understand the conjugation symmetry $\inv_K$ on the knot Floer complex. This can be determined explicitly for two important families of knots: 
\begin{itemize}
\item {\em L-space knots} (and their mirrors), those knots that admit a surgery that is an L-space; cf. \cite{OSLens}. These include all torus knots, Berge knots, and $(-2,3,2k+1)$ pretzel knots; 
\item {\em Floer homologically thin knots} (which we simply call {\em thin}), those for which the knot Floer homology is supported in a single diagonal; cf. \cite{RasmussenThesis, RasSurvey, MOQuasi}. These include all alternating knots \cite{AltKnots} and, more generally, all quasi-alternating knots \cite{MOQuasi}.
\end{itemize}

The key observation is that $\inv_K^2$ is equal to the map studied by Sarkar in \cite{SarkarMoving}, which corresponds to moving the basepoints around the knot. In the two cases above, knowing $\inv_K^2$ and the behavior of $\inv_K$ with respect to gradings suffices to determine $\inv_K$ up to chain homotopy. From knowledge of $\inv_K$ one can calculate $\HFIp$ for large surgeries on those knots (many of which are hyperbolic).

In fact, it should be noted that $\inv_K$ is in principle computable for all knots in $S^3$, using grid diagrams and the maps on grid complexes \cite{MOS, MOST}. Thus, $\HFIp$ is algorithmically computable for all large surgeries on knots. Although in this paper we limit ourselves to large surgeries, we expect that $\HFIp$ satisfies involutive analogues of the surgery exact triangle from \cite{HolDiskTwo}, of the general knot surgery formulas from \cite{IntSurg, RatSurg}, and perhaps of the link surgery formula from \cite{LinkSurg}. Thus, it may be possible to show that $\HFIp$ is algorithmically computable for all three-manifolds, along the lines of \cite{MOT}.

Going back to correction terms, recall that, for large $p$, the Ozsv\'ath-Szab\'o correction term of $S^3_p(K)$ in the spin structure $[0]$ is given by
\begin{equation}
\label{eq:dspk}
 d(S^3_p(K), [0])= d(L(p,1),0) - 2V_0(K) = \frac{p-1}{4} - 2V_0(K),
 \end{equation}
where $V_0(K) \in \Z$ is an invariant of (smooth) knot concordance, coming from the knot Floer complex of $K$; cf. \cite{RasmussenThesis, RasmussenGT, RatSurg, Peters, NiWu}. Similarly, using $\inv_K$ we obtain new concordance invariants $ \Vl_0(K)$ and $\Vu_0(K)$, and we have the following result.

\begin{theorem}
\label{thm:dLarge}
Let $K \subset S^3$ be a knot of Seifert genus $g(K)$. Then, for each integer $p \geq g(K)$, we have
$$  \dl(S^3_p(K), [0]) = \frac{p-1}{4} -2\Vl_0(K), \ \ \ \ \du(S^3_p(K), [0])=\frac{p-1}{4} -2\Vu_0(K).$$
\end{theorem}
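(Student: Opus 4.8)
The plan is to deduce the formula from the involutive large surgery formula (Theorem~\ref{thm:Large}) together with a direct inspection of the mapping cone complex $\AI_0^+$. Recall that the ordinary large surgery formula is realized by a chain homotopy equivalence $\CFp(S^3_p(K),[0]) \simeq A_0^+$, and that, in the standard normalization in which the unique $U$-tower of $H_*(A_0^+)$ has bottom grading $-2V_0(K)$, the attendant absolute grading shift is exactly $\frac{p-1}{4}$ (this is the content of \eqref{eq:dspk}, using $d(L(p,1),0)=\frac{p-1}{4}$). The first step is to check, from the proof of Theorem~\ref{thm:Large}, that this equivalence can be chosen to intertwine the conjugation map $\inv$ on $\CFp(S^3_p(K),[0])$ with $\inv_0$ on $A_0^+$ up to chain homotopy; coning off the maps $Q(1+\inv)$ and $Q(1+\inv_0)$ respectively then upgrades Theorem~\ref{thm:Large} to a \emph{graded} isomorphism $\HFIp(S^3_p(K),[0]) \cong H_*(\AI_0^+)$ carrying the same grading shift $\frac{p-1}{4}$ as in the non-involutive case.

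Second, I would read off the two infinite $U$-towers of $H_*(\AI_0^+)$ from the long exact sequence of the mapping cone $A_0^+ \xrightarrow{Q(1+\inv_0)} Q\cdot A_0^+[-1]$. Since $(\inv_0)_*$ is a grading-preserving involution which acts as the identity after $U$-localization, $(1+(\inv_0)_*)$ sends every tower class to a $U$-torsion class; consequently $H_*(\AI_0^+)$ has precisely two infinite $\mathcal{T}^+$-towers, one lifted from $\mathcal{T}^+ \subset H_*(A_0^+)$ and one coming from $Q\cdot\mathcal{T}^+ \subset H_*(Q\cdot A_0^+[-1])$, whose bottom gradings differ from $-2V_0(K)$ by corrections governed by the action of $\inv_0$ on the finite part of $H_*(A_0^+)$ and by the formal $Q$ and $[-1]$ shifts. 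By definition, these two bottom gradings are $-2\Vu_0(K)$ and $-2\Vl_0(K)$; that is, $\Vl_0(K)$ and $\Vu_0(K)$ are the involutive analogues of $V_0(K)$, extracted from $(\CFKi(S^3,K),\inv_K)$ (equivalently from $(A_0^+,\inv_0)$) exactly as $V_0(K)$ is extracted from $\CFKi(S^3,K)$, and in particular they are independent of $p$.

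Combining the two steps with the definitions of $\dl$ and $\du$ as the (appropriately shifted) bottom gradings of the two towers of $\HFIp(S^3_p(K),[0])$, the grading shift of Step~1 transports $-2\Vl_0(K)$ and $-2\Vu_0(K)$ to $\frac{p-1}{4}-2\Vl_0(K)$ and $\frac{p-1}{4}-2\Vu_0(K)$, which is exactly the asserted identity. As corollaries one recovers $\Vu_0(K)\le V_0(K)\le\Vl_0(K)$ from $\dl\le d\le\du$, the integrality statements for $\Vl_0,\Vu_0$ (for appropriate $K$) from Theorem~\ref{thm:ds}, and, since $S^3_p(K)$ and $S^3_p(K')$ are $\Z_2$-homology cobordant whenever $K$ and $K'$ are concordant, the concordance invariance of $\Vl_0$ and $\Vu_0$ from Theorem~\ref{thm:ds}.

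The main obstacle is the absolute-grading bookkeeping in Step~1: one must verify not merely the relatively graded isomorphism of Theorem~\ref{thm:Large}, but its on-the-nose compatibility with the ordinary large surgery identification on the base complex $A_0^+$, and then track the formal variable $Q$ and the shift $[-1]$ consistently through both mapping cones so that the two towers of $H_*(\AI_0^+)$ land in the gradings dictated by \eqref{eq:dspk} --- this is where the ``$\frac{p-1}{4}$'', the factor ``$2$'', and the various grading shifts built into the definitions of $\dl$ and $\du$ must all be reconciled. A secondary, essentially routine point is confirming that the description of $\Vl_0,\Vu_0$ obtained here from the two towers of $H_*(\AI_0^+)$ agrees with the intrinsic definition of these invariants in terms of $\CFKi(S^3,K)$ and $\inv_K$.
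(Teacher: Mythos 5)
Your proposal is correct and follows essentially the same route as the paper: invoke Theorem~\ref{thm:Large} together with the compatibility of its quasi-isomorphism with the exact triangles (i.e., with the ordinary large surgery identification on $A_0^+$), locate the two infinite towers of $H_*(\AI_0^+)$, and reduce the $p$-dependence entirely to the known formula $d(S^3_p(K),[0])=\frac{p-1}{4}-2V_0(K)$, leaving only relative-grading comparisons of tower bottoms that do not involve $p$. The only difference is one of presentation: the paper defines $\Vl_0,\Vu_0$ by the displayed surgery formulas (so the theorem is a tautology) and then proves $p$-independence via the intrinsic tower description, whereas you run the same computation in the opposite logical direction.
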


The calculation of $\Vl_0$ and $\Vu_0$ for L-space knots and mirrors of L-space knots can be found in Section~\ref{sec:Lspace}, and that for thin knots in Section~\ref{sec:thin}. Let us state the result for alternating knots:
\begin{theorem}
\label{thm:Alt}
Let $K$ be an alternating knot, with signature $\sigma$ and Arf invariant $\Arf \in \{0,1\}$. The values of  the triple $(\Vl_0, V_0, \Vu_0)$ for $K$ are given in the following tables. 

If $\sigma \leq 0$, then
\begin{center}
\begin{tabular}{| c || c | c|}
\hline 
$\sigma$ & $\Arf=0$ & $\Arf=1$ \\
\hline \hline
$-8k$ & $(2k, 2k, 2k)$ & $(2k+1, 2k, 2k)$ \\
\hline
$-8k-2$ & $(2k+1, 2k+1, 2k)$ & $(2k+1, 2k+1, 2k+1)$ \\
\hline
$-8k-4$ & $(2k+2, 2k+1, 2k+1)$ & $(2k+1, 2k+1, 2k+1)$ \\
\hline
$-8k-6$ & $(2k+2, 2k+2, 2k+2)$ & $(2k+2, 2k+2, 2k+1)$ \\
\hline
\end{tabular}
\end{center}

If $\sigma > 0$, then
\begin{center}
\begin{tabular}{| c || c | c|}
\hline 
$\sigma$ & $\Arf=0$ & $\Arf=1$ \\
\hline \hline
$8k$ & $(0, 0, -2k)$ & $(0, 0, -2k)$ \\
\hline
$8k+2$ & $(0, 0, -2k)$ & $(0, 0, -2k-1)$ \\
\hline
$8k+4$ & $(0, 0, -2k-1)$ & $(0, 0, -2k-1)$ \\
\hline
$8k+6$ & $(0, 0, -2k-2)$ & $(0, 0, -2k-1)$ \\
\hline
\end{tabular}
\end{center}
\end{theorem}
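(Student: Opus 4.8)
Since alternating knots are thin, with $\tau(K) = -\sigma(K)/2$, the strategy is to feed an explicit description of the conjugation map $\inv_K$ on the knot Floer complex of a thin knot into the involutive large surgery formula (Theorem~\ref{thm:Large}) and its corollary on correction terms (Theorem~\ref{thm:dLarge}). The first step is to recall the structure theorem for thin complexes: $\CFKi(S^3,K)$ is filtered chain homotopy equivalent to a model $St(\tau) \oplus B^{\oplus n}$, where $St(\tau)$ is the standard length-$|\tau|$ staircase, $B$ is the four-generator ``box'' complex spanning three consecutive Alexander gradings with ranks $1,2,1$, and $n = (\det K - 2|\tau| - 1)/4 \geq 0$. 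Since the boxes are acyclic and only $St(\tau)$ governs $\HFp$ of large surgeries, one immediately recovers $d(S^3_p(K),[0]) = \tfrac{p-1}{4} - 2V_0$ with $V_0 = \max\{0,\lceil -\sigma/4\rceil\}$; this is the middle column of both tables, so all the new content lies in $\Vl_0$ and $\Vu_0$.

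The heart of the argument is to pin down $\inv_K$ up to filtered chain homotopy. I would use that $\inv_K$ is a skew-filtered, grading-preserving chain homotopy automorphism, that $\inv_K^2$ is filtered homotopic to the Sarkar basepoint-moving map $\sarkar$ (the ``key observation'' recorded above), and that $\sarkar$ is computable on the model: it is homotopic to the identity on the staircase (both $\Phi\Psi$ and $\Psi\Phi$ vanish there) and acts on each box by the identity plus the long diagonal of the box, which is not filtered homotopic to the identity. On $St(\tau)$ these constraints are rigid and force $\inv_K$ onto the standard reflection, in agreement with the L-space knot and mirror computations of Section~\ref{sec:Lspace}. On the boxes, the key claim is a normal form: since $\sarkar$ is not homotopic to the identity on a single box, $\inv_K$ cannot fix a box summand, so it must permute the boxes in two-cycles --- which is possible exactly when $n$ is even --- while if $n$ is odd one box is necessarily coupled to the staircase. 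Establishing this normal form, in particular controlling how $\inv_K$ mixes the box summands with $St(\tau)$, is the step I expect to be the main obstacle.

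Granting the normal form, I would run the computation through Theorem~\ref{thm:Large}: from the model of $\CFKi(S^3,K)$ pass to $A_0^+$ together with the induced involution $\inv_0$, form the mapping cone $\AI_0^+ = \Cone\bigl(A_0^+ \xrightarrow{\,Q(1+\inv_0)\,} Q\ccdot A_0^+[-1]\bigr)$, and compute its homology as an $\Ring$-module, reading off the bottom gradings of its two $U$-towers; by Theorem~\ref{thm:dLarge} these yield $\Vl_0$ and $\Vu_0$. The staircase part of this computation is precisely the L-space-knot / mirror calculation and gives a ``baseline'' triple depending only on $\tau$, which one tabulates once. A pair of boxes swapped by $\inv_0$ contributes only to the part of $\HFIp$ away from the two towers, hence does not change $\Vl_0$ or $\Vu_0$; the only box effect on the correction terms comes from a single leftover box, present exactly when $n$ is odd, whose forced coupling to the staircase one computes directly to shift the bottom of one tower by two --- raising $\Vl_0$ by one, or lowering $\Vu_0$ by one, according to the sign and parity of $\tau$. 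Tracking the absolute $\Q$-grading through the surgery identification is the bookkeeping needed here to get the exact constants.

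It remains to express the parity of $n$ in terms of $\Arf(K)$ and $\sigma$. Graded Euler characteristic is additive over the splitting, $St(\tau)$ contributes $\pm(2|\tau|+1)$ and each box contributes $\pm 4$ to $\Delta_K(-1)$, so $\det(K) = |\Delta_K(-1)| = 2|\tau| + 1 + 4n \equiv 2|\tau| + 1 + 4(n \bmod 2) \pmod 8$. Combined with the classical equivalence $\Arf(K) = 0 \iff \det(K) \equiv \pm 1 \pmod 8$, this determines the parity of $n$ --- hence the presence or absence of a leftover box --- from $\Arf(K)$ and the residue of $\sigma$ modulo $8$ (which also fixes $|\tau| \bmod 4$ and the shape of the staircase). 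Finally I would assemble, case by case over the eight residues of $\sigma$ and the two values of $\Arf$, the baseline triple of Step~3 corrected by the leftover-box shift; matching $\max\{0,\lceil -\sigma/4\rceil\}$ and the shifted tower bottoms against the sixteen entries of the two tables is then a routine arithmetic check, which completes the proof.
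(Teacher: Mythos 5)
Your proposal follows the paper's argument essentially step for step: the staircase-plus-boxes model for thin complexes, the determination of $\inv_K$ up to filtered change of basis from the skew-filtered/grading constraints together with $\inv_K^2 \sim \sarkar$, the splitting off of pairs of boxes (with a single leftover box coupled to the staircase exactly when the number of boxes is odd), the explicit computation of the two $U$-towers in $H_*(\AI_0^+)$, and the translation of the box-count parity into $\Arf(K)$ via $\det(K) \bmod 8$. The normal-form step you flag as the main obstacle is precisely Proposition~\ref{propn:thin}, proved there by the change-of-basis argument you outline, so your route and the paper's coincide.
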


For example, the figure-eight knot $4_1$ (with the $+1$ surgery being $\Sigma(2,3,7)$) has $\sigma=0$ and $\Arf=1$. Therefore,
$$\Vl_0(4_1)=1, \ \ \  V_0(4_1)=\Vu_0(4_1)=0.$$
 
The figure-eight knot is not slice: classically, one can prove this by checking the Fox-Milnor condition on the Alexander polynomial \cite{FoxMilnor}, or (as above) by noting that $+1$ surgery on $4_1$ has non-trivial Rokhlin invariant. However, the non-sliceness of $4_1$ cannot be detected by most of the modern concordance invariants coming from Floer or Khovanov homology: $\tau$ from \cite{4BallGenus, RasmussenThesis}, $s$ from \cite{RasmussenMilnor}, $\delta$ from \cite{MOwens}, $d(S^3_1(K))=-2V_0(K)$ from \cite{AltKnots, RasmussenGT, Peters}, $\nu$ from \cite{RatSurg}, $\nu^+$ from \cite{HomWu}, $\varepsilon$ from \cite{HomEps}, and $\Upsilon_K(t)$ from \cite{UpsilonT} all vanish on amphichiral knots such as $4_1$. By contrast, our concordance invariant $\Vl_0$ does detect that $4_1$ is not slice.

Moreover, combining Theorems~\ref{thm:dLarge} and \ref{thm:Alt} we obtain various constraints on which large surgeries on an alternating knot can be homology cobordant to large surgeries on another alternating knot. For example, we have

\begin{corollary}
\label{cor:altHC}
Let $K$ and $K'$ be alternating knots such that $\sigma(K) \equiv 4\cdot \Arf(K) + 4 \pmod 8$. If $S^3_{p}(K)$ and $S^3_{p}(K')$ are $\Z_2$-homology cobordant for some odd $p \geq \max(g(K), g(K'))$, then $\sigma(K) =\sigma(K')$.
\end{corollary}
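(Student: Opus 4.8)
The plan is to convert a $\Z_2$-homology cobordism between the two surgeries into an equality of the correction-term invariants $\Vl_0$ and $\Vu_0$, and then to recover the signature via Theorem~\ref{thm:Alt}.

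First I would observe that, since $p$ is odd, both $S^3_p(K)$ and $S^3_p(K')$ are $\Z_2$-homology spheres (with $H_1 \cong \Z/p$), each carrying a unique spin structure, namely the class $[0]$; hence by Theorem~\ref{thm:ds} the invariants $\dl$ and $\du$ of the two surgeries agree in the class $[0]$. Since $p$ is the same integer for both knots and $p \geq \max(g(K), g(K'))$, Theorem~\ref{thm:dLarge} then lets me cancel the common term $\tfrac{p-1}{4}$ and deduce
$$\Vl_0(K) = \Vl_0(K') \qquad\text{and}\qquad \Vu_0(K) = \Vu_0(K').$$

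Next I would invoke the congruence $\sigma \equiv 4\Arf + 4 \pmod 8$. For an alternating knot subject to it one has $\sigma \equiv 0$ or $4 \pmod 8$, and moreover the Arf invariant is recovered from $\sigma$ ($\Arf = 0 \Leftrightarrow \sigma \equiv 4$, $\Arf = 1 \Leftrightarrow \sigma \equiv 0 \pmod 8$), so the pair $(\sigma, \Arf)$ is determined by $\sigma$ alone; by Theorem~\ref{thm:Alt} the pair $(\Vl_0, \Vu_0)$ is then an explicit function of $\sigma$, taking the values $(2k+1, 2k)$, $(2k+2, 2k+1)$, $(0, -2k)$, $(0, -2k-1)$ on the signatures $-8k$, $-8k-4$, $8k$, $8k+4$ ($k \geq 0$), respectively. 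These four families of pairs are pairwise disjoint, and within each family the pair determines $k$; hence $(\Vl_0, \Vu_0)$ is an injective function of $\sigma$ on the set of signatures congruent to $0$ or $4$ mod $8$. Applying this to $K$ and to $K'$ (both subject to the congruence), the equality of pairs obtained above forces $\sigma(K) = \sigma(K')$.

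The only place where there is anything to check is the verification of these values against the tables of Theorem~\ref{thm:Alt} together with the disjointness of the four families — a short finite inspection — and this is also where the congruence is essential: without it the pair $(\Vl_0, \Vu_0)$, and in fact even the full triple $(\Vl_0, V_0, \Vu_0)$, fails to determine $\sigma$ (for example, for $\sigma > 0$ the signatures $8k$ and $8k+2$ can both give the triple $(0, 0, -2k)$), so the conclusion would be false at that level of generality.
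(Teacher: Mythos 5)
Your first step is correct and matches the paper: for odd $p$ the surgeries are $\Z_2$-homology spheres with unique spin structure $[0]$, so Theorem~\ref{thm:ds} gives equality of $\dl$ and $\du$, and Theorem~\ref{thm:dLarge} converts this into $\Vl_0(K)=\Vl_0(K')$ and $\Vu_0(K)=\Vu_0(K')$. The gap is in the second step: the hypothesis of the corollary imposes the congruence $\sigma \equiv 4\Arf+4 \pmod 8$ only on $K$, not on $K'$, whereas your injectivity argument requires both knots to satisfy it. Once $K'$ ranges over all alternating knots, the pair $(\Vl_0,\Vu_0)$ no longer determines $\sigma$: from the tables of Theorem~\ref{thm:Alt}, $(\sigma,\Arf)=(-8k,1)$ and $(\sigma,\Arf)=(-8k-2,0)$ both give the pair $(2k+1,2k)$ (concretely, the figure-eight knot and $5_2$ both give $(1,0)$), and $(\sigma,\Arf)=(8k,1)$ and $(8k+2,0)$ both give $(0,-2k)$. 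So your argument as written only proves the corollary under the extra, unstated assumption that $K'$ also satisfies the congruence.

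To close the gap you need more $\Z_2$-homology cobordism invariants, which is exactly what the paper uses. First, the ordinary correction term $d$ together with Equation~\eqref{eq:dspk} gives $V_0(K)=V_0(K')$, upgrading your pair to the full triple $(\Vl_0,V_0,\Vu_0)$; this already kills the $(-8k,1)$ versus $(-8k-2,0)$ ambiguity, since those have different $V_0$. Second — and this is the step that cannot be recovered from correction terms at all — the generalized Rokhlin invariant is a $\Z_2$-homology cobordism invariant, and by a formula of Bohr--Lee one has $\mu(S^3_p(K)) = \tfrac{1}{8}(1-p)+\Arf(K) \pmod{2\Z}$ for odd $p$, so $\Arf(K)=\Arf(K')$. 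This is needed because even the full triple does not determine $\sigma$ for positive signatures (e.g., $(8k,1)$ and $(8k+2,0)$ both give $(0,0,-2k)$). With the triple and the Arf invariant both matched, and with $K$ satisfying the congruence, a table inspection does pin down $\sigma(K')=\sigma(K)$. Your closing remark correctly observes that the triple alone is insufficient without the congruence, but the congruence is only available for $K$; the Arf and $V_0$ comparisons are what substitute for it on the $K'$ side.
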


In the same spirit, if we combine Theorem~\ref{thm:froyshov} with Theorem~\ref{thm:Alt}, we obtain constraints on the intersection forms of spin four-manifolds with boundary a large surgery on an alternating knot.

The paper is organized as follows. In Section~\ref{sec:def} we define involutive Heegaard Floer homology and prove its invariance (Theorem~\ref{thm:invariance}). In Section~\ref{sec:motivation}
we explain in more detail why $\HFIp$ should correspond to $\Z_4$-equivariant Seiberg-Witten Floer homology. In Section~\ref{sec:prop} we establish a few properties of the involutive Heegaard Floer groups, and define the cobordism maps. In Section~\ref{sec:ds} we define the new correction terms $\dl, \du$ and prove Theorems~\ref{thm:froyshov} and \ref{thm:ds}. Section~\ref{sec:surgery} contains the proof of the involutive analog of the large surgery formula, Theorem~\ref{thm:Large}; we also prove there Theorem~\ref{thm:dLarge} and Corollary~\ref{cor:Lcor}, and show that $\dl$ and $\du$ are not homomorphisms. In Section~\ref{sec:Lspace} we apply the involutive large surgery formula to compute $\HFIp$ for large surgeries on (mirrors of) L-space knots. Large surgeries on thin knots are discussed in Section~\ref{sec:thin}, where we prove Theorem~\ref{thm:Alt} and Corollary~\ref{cor:altHC}.

\medskip
\noindent \textbf{Acknowledgements.} We thank Jennifer Hom, Andr{\'a}s Juh\'asz, Tye Lidman, Francesco Lin,  Robert Lipshitz and Sucharit Sarkar for helpful conversations. We are also grateful to the referees for many helpful suggestions.

\section{Definition}
\label{sec:def}
The goal of this section is to define involutive Heegaard Floer homology. We assume that the reader is familiar with regular Heegaard Floer homology, as in \cite{HolDisk, HolDiskTwo, HolDiskFour}. However, we start by reviewing a few concepts in order to fix notation, and to emphasize naturality issues.

\subsection{Heegaard Floer homology} Fix a closed, connected, oriented three-manifold $Y$. Denote by $\Spinc(Y)$ the space of $\spinc$ structures on $Y$, and pick some $\s \in \Spinc(Y)$. Heegaard Floer homology is computed from a \textit{pointed Heegaard diagram} for $Y$. A pointed Heegaard diagram is a set of data $H = (\Sigma, {\alphas}, {\betas}, z)$
where: 
\begin{itemize}
\item $\Sigma \subset Y$ is an embedded, oriented surface of genus $g$, that splits the three-manifold $Y$ into two handlebodies $U_0$ and $U_1$;  
\item ${\alphas} = \{\alpha_1,\dots,\alpha_g\}$ is a set of nonintersecting simple closed curves on $\Sigma$ which bound disks in $U_0$, and in fact span the kernel of $H_1(\Sigma; \Z) \to H_1(U_0; \Z)$; 
\item ${\betas}=\{\beta_1,\dots,\beta_g\}$ is a similar set of curves for $U_1$ instead of $U_0$, such that $\alpha_i \cap \beta_j$ is transverse for any $i,j$; 
\item $z\in \Sigma$ is a basepoint that does not lie on any of the alpha or beta curves.
\end{itemize}

 The Heegaard Floer groups are variations of Lagrangian Floer cohomology for the two tori 
 $$ \Ta = \alpha_1 \times \dots \times \alpha_g, \ \ \Tb = \beta_1 \times \dots \times \beta_g$$
inside the symmetric product $\Sym^g(\Sigma)$. There is a natural map $\s_z \co \Ta \cap \Tb \to \Spinc(Y)$, and we will focus on those intersection points $\x$ such that $\s_z(\x)=\s$. To define the Floer groups,  we need to impose an admissibility condition on $H$, depending on $\s$; cf. \cite[Section 4.2.2]{HolDisk}. We also need to choose a suitable (generic) family $J$ of almost complex structures on $\Sym^g(\Sigma)$. We will write $\H$ for the data $(H, J)$, which we call a {\em Heegaard pair}.

Given such a pair $\H$, the Heegaard Floer chain complex $\CFinfty(\H, \s)$ is freely generated over $\Z_2$ by pairs $[\x, i]$ with $\x \in \Ta \cap \Tb$ and $i\in \Z$, such that $\s_z(\x)=\s$. The differential is given by
$$ \del[\x, i] = \sum_{\{\y \in \Ta \cap \Tb \mid \s_z(\y)=\s\}} \sum_{\{ \phi \in \pi_2(\x, \y) \mid \mu(\phi)=1  \}} \# \Mh(\phi)\cdot [\y, i-n_z(\phi)].$$
Here, $\pi_2(\x, \y)$ is the space of homotopy classes of Whitney disks from $\x$ to $\y$, $\mu(\phi)$ is the Maslov index, $\Mh(\phi)$ is the moduli space of $J$-holomorphic disks in the class $\phi$ (modulo the action of $\R$), and $n_z(\phi)$ is the algebraic intersection number of $\phi$ with the divisor $\{z\} \times \Sym^{g-1}(\Sigma)$. There is an action of $\Z_2[U, U^{-1}]$ on $\CFinfty$, where $U$ acts by $U \cdot [\x, i]=[\x, i-1]$ and decreases relative grading by $2$. The other complexes $\CFp, \CFm$ and $\CFhat$ are obtained from $\CFinfty$ by considering only pairs $[\x, i]$ with $i \geq 0, i < 0$ and $i=0$. All three complexes have an induced $\Z_2[U]$-action, which is trivial in the case of $\CFhat$. 

We will write $\CFo(\H, \s)$ for any of the four flavors of the Heegaard Floer chain complex, and $\HFo(\H, \s)$ for the homology groups.

\begin{theorem}[Ozsv\'ath-Szab\'o \cite{HolDisk}]
\label{thm:OSinv}
The isomorphism class of $\HFo(\H, \s)$ (as a $\Z_2[U]$-module) is an invariant of the three-manifold $Y$. 
\end{theorem}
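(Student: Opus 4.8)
The plan is to follow the strategy of Ozsv\'ath--Szab\'o \cite{HolDisk}: reduce the independence of $\HFo(\H, \s)$ from the chosen Heegaard pair to checking invariance under a standard list of moves connecting any two pairs that represent a fixed $(Y, \s)$, and then show that each such move induces a $\Z_2[U]$-equivariant chain homotopy equivalence of the associated complexes. First I would invoke the Reidemeister--Singer type theorem: any two pointed Heegaard diagrams for $Y$ are connected by a finite sequence of isotopies of the $\alpha$- and $\beta$-curves in the complement of $z$, handleslides among the $\alpha$-curves (and, separately, among the $\beta$-curves), and (de)stabilizations; for a fixed diagram one must also allow changing the basepoint within a fixed region of $\Sigma \setminus (\alphas \cup \betas)$ and changing the family $J$ of almost complex structures. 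One must check in addition that admissibility for $\s$ can be preserved along the way, which follows because winding isotopies can be used to achieve admissibility and two admissible diagrams can be joined through admissible ones.

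The substantive part is a move-by-move verification. \textbf{Independence of $J$:} a generic path of almost complex structures from $J_0$ to $J_1$ defines continuation maps by counting index-zero solutions of the associated time-dependent equation, and a gluing argument applied to a generic homotopy of paths shows these are mutually inverse chain homotopy equivalences. \textbf{Isotopy invariance} of the $\alpha$- and $\beta$-curves can be absorbed into a change of $J$ via the moving-Lagrangians formalism, or handled directly by continuation maps. \textbf{Handleslide invariance} is the core of the argument: given $\gammas$ obtained from $\betas$ by a handleslide, one forms the triple diagram $(\Sigma, \alphas, \betas, \gammas, z)$, identifies the Floer homology of the pair $\Tb, \Tg$ with that of $\#^g(S^1 \times S^2)$ (which carries a distinguished top-degree generator $\Theta$), and defines a chain map by counting index-zero holomorphic triangles with one input $\Theta$; one then shows this is a chain homotopy equivalence using a model computation in the standard genus-one pieces together with the associativity law for triangle maps, applied to the reverse handleslide, so that the composite of the two maps is chain-homotopic to an isomorphism. \textbf{Stabilization invariance} is proved by a neck-stretching argument: when $\H'$ is obtained from $\H$ by connect-summing a genus-one handle whose two new curves meet in a single point $c$, Gromov compactness shows that for a sufficiently long neck the only contributing holomorphic curves are products of curves in the old $\Sym^g(\Sigma)$ with the constant disk at $c$, giving an isomorphism of chain complexes. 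At every stage, all maps and homotopies are built from moduli spaces of curves whose intersection multiplicity with $\{z\} \times \Sym^{g-1}(\Sigma)$ (or its triangle/square analogue) equals the recorded power of $U$, hence they are $\Z_2[U]$-equivariant; this upgrades each chain homotopy equivalence to one of $\Z_2[U]$-modules and yields the stated conclusion.

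The main obstacle is the handleslide invariance, which requires setting up the full theory of holomorphic triangles in $\Sym^g(\Sigma)$: transversality for the triangle moduli spaces, Gromov compactness with the correct codimension-one degenerations (disk bubbling and splitting along the three sides), and the associativity of the multiplication maps, which in turn rests on analyzing the ends of the moduli spaces of holomorphic squares. The stabilization step is also analytically delicate because of the neck-stretching limit, but it is conceptually more transparent once the relevant Gromov compactness and transversality statements are in hand.
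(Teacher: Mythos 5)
Your proposal is the standard Ozsv\'ath--Szab\'o argument, which is exactly the proof the paper cites for this theorem (and sketches again when proving Proposition~\ref{prop:JT}): reduce to Heegaard moves plus changes of $J$, then verify each move via continuation maps, holomorphic triangles for handleslides, and neck-stretching for stabilization. One small caution: the original handleslide argument in \cite{HolDisk} only directly yields quasi-isomorphisms rather than chain homotopy equivalences (the upgrade requires the additional observation, noted in the paper's proof of Proposition~\ref{prop:JT}, that the small-isotopy triangle map is chain homotopic to the nearest-point map), but quasi-isomorphism already suffices for the statement at the level of homology.
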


A stronger result was obtained by Juh\'asz and Thurston, who proved naturality of the invariant:
\begin{theorem}[Juh\'asz-Thurston \cite{Naturality}]
\label{thm:JT}
If we fix $Y$ and the basepoint $z \in Y$, then the $\Z_2[U]$-modules $\HFo(\H, \s)$ form a transitive system. That is to say, for any two Heegaard pairs $\H=(H, J)$ and $\H'=(H', J')$ we have a distinguished isomorphism
$$ \Psi(\H, \H') \co \HFo(\H, \s) \to \HFo(\H', \s),$$
such that for all $\H, \H', \H''$ we have:
\begin{enumerate}[(i)]
\item $\Psi(\H, \H)=\id_{\HFo(\H, \s)}$;
\item $\Psi(\H', \H'') \circ \Psi(\H, \H') = \Psi(\H, \H'').$ 
\end{enumerate}
\end{theorem}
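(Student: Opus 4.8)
The plan is to build the map $\Psi(\H,\H')$ as a composite of ``elementary move maps'' and then to prove that this composite is independent of all the choices involved; once that is done, properties~(i) and~(ii) follow formally. This is essentially the content of \cite{Naturality}, so I will only describe the architecture of the argument; I will freely use that $\HFo(\cdot,\s)$ is already \emph{abstractly} an invariant by Theorem~\ref{thm:OSinv}, and I note that working over $\Z_2$, as we do, sidesteps the sign subtleties that complicate the integral version of this statement.

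\textbf{Elementary moves.} First I would recall that any two Heegaard pairs with embedded surfaces representing the fixed $(Y,z)$ are joined by a finite sequence of \emph{elementary moves}: isotopies of the Heegaard diagram in $Y$ (in the complement of $z$), handleslides among the $\alpha$-curves or among the $\beta$-curves, stabilizations and destabilizations, and changes of the almost complex structure family $J$. To each such move Ozsv\'ath and Szab\'o \cite{HolDisk} associate a $\Z_2[U]$-module isomorphism on $\HFo(\cdot,\s)$: a continuation map for an isotopy or a change of $J$, a holomorphic-triangle map for a handleslide, and the canonical stabilization isomorphism for a (de)stabilization. Given a sequence of moves from $\H$ to $\H'$, I would then let $\Psi(\H,\H')$ be the corresponding composite of these isomorphisms, and set $\Psi(\H,\H):=\id$.

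\textbf{Independence of the sequence.} The crux is that this composite must not depend on the chosen sequence. I would formulate this as the statement that the edge-labeling above descends to a functor out of the fundamental groupoid of the graph $G$ whose vertices are isotopy classes of Heegaard pairs for $(Y,z)$ and whose edges are elementary moves; equivalently, every loop in $G$ is sent to the identity. Since abstract invariance is already known, this is purely a coherence statement, and I would attack it in two steps. \emph{(a)} Produce a \emph{complete} finite list of relations among elementary moves --- a collection of loops that normally generate $\pi_1(G)$ --- using handle-calculus and Cerf-theoretic considerations about how two move-sequences between the same pair of diagrams can differ, together with the classification of Heegaard splittings of $Y$ up to stabilization. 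The resulting relations fall into a few families: moves supported in disjoint regions of $\Sigma$ commute; a handful of ``triangle'' relations among handleslides; relations commuting a stabilization past an isotopy or handleslide performed away from the stabilization region; and relations coming from isotopies of the stabilization curves. \emph{(b)} Check that each listed relation maps to an equality of composites of isomorphisms --- the disjoint-support relations because the relevant holomorphic-disk and -triangle counts are local and the maps act on independent tensor factors; the handleslide relations from associativity of the holomorphic triangle maps, via the standard neck-stretching and counting-of-ends arguments of \cite{HolDisk, HolDiskTwo}; the stabilization relations because stabilization leaves the pertinent moduli spaces unchanged; and compatibility with $J$ from interpolating families of almost complex structures together with the usual properties of continuation maps.

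With well-definedness in hand, property~(i) holds by construction, and property~(ii) is immediate: concatenating a move-sequence from $\H$ to $\H'$ with one from $\H'$ to $\H''$ is a move-sequence from $\H$ to $\H''$ whose associated composite is exactly $\Psi(\H',\H'')\circ\Psi(\H,\H')$. I expect the main obstacle to be part~\emph{(a)}: identifying a genuinely complete set of relations for the graph of Heegaard moves --- in particular, ruling out ``exotic'' ways of relating two diagrams that are not consequences of the listed elementary relations --- is where essentially all of the difficulty lies, and it is $3$-manifold-topological rather than Floer-theoretic in nature. The verifications in part~\emph{(b)}, though lengthy, assemble from gluing and degeneration arguments that are by now standard.
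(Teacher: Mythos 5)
Your architecture is the right one, and it is essentially the architecture of Juh\'asz--Thurston; note, though, that the paper does not reprove this statement at all --- it imports it from \cite{Naturality} and only reworks the argument at the chain level in Proposition~\ref{prop:JT}, where the relevant coherence conditions are named explicitly as functoriality, commutativity, continuity, and \emph{handleswap invariance}.

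That last item is where your proposal has a genuine gap. In step \emph{(a)} you list the families of relations you expect to normally generate $\pi_1$ of the graph of Heegaard moves: disjoint-support commutations, triangle relations among handleslides, and commutations of stabilizations past other moves. These correspond to the first three axioms above, but they are \emph{not} a complete set: the essential new relation identified in \cite{Naturality} is the \emph{simple handleswap} loop --- a length-three loop built from two handleslides and a diffeomorphism supported in a genus-two stabilization region --- and Juh\'asz--Thurston prove that $\pi_1$ of the space of (isotopy diagrams of) Heegaard moves is normally generated by your ``obvious'' relations together with these handleswap loops. Ruling out ``exotic'' loops, which you correctly flag as the hard part, is done by studying singularities of $2$-parameter families of gradient-like vector fields (as the paper notes in Section~\ref{sec:remarks}); the output of that Cerf-theoretic analysis is precisely that the handleswap is the one non-obvious generator. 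Omitting it breaks the argument in two places: the generating set in \emph{(a)} is incomplete, and the corresponding verification in \emph{(b)} --- that the composite of the two handleslide triangle maps and the diffeomorphism map is the identity on homology --- is a nontrivial Floer-theoretic computation that does not follow from the associativity and locality arguments you invoke for the other relations. A second, smaller omission: your step \emph{(b)} must also handle diffeomorphisms of $\Sigma$ induced by ambient isotopies of the Heegaard surface in $Y$ (not just isotopies of the attaching curves within a fixed surface), since these appear as edges in the Juh\'asz--Thurston graph and enter the handleswap loop itself.
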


Given a transitive system, we can get a single module $\HFo(Y, z, \s)$ as the inverse limit of this system. We can identify any $\HFo(\H, \s)$ with $\HFo(Y, z, \s)$ in a canonical way. We usually drop $z$ from the notation and write $\HFo(Y, \s)$. We can also consider the direct sum over all $\spinc$ structures:
$$ \HFo(Y) := \bigoplus_{\s \in \Spinc(Y)} \HFo(Y, \s).$$

Although Juh\'asz and Thurston phrased their theorem in terms of homology, their methods actually give a result at the chain level:
\begin{proposition}
\label{prop:JT}
If we fix $(Y, z, \s)$, then the chain groups $\CFo(\H, \s)$ form a transitive system in the homotopy category of chain complexes of $\Z_2[U]$-modules. In other words, for every two Heegaard pairs $\H$ and $\H'$  we have a chain homotopy equivalence 
$$ \Phi(\H, \H') \co \CFo(\H, \s) \to \CFo(\H', \s),$$
satisfying the analogs of conditions (i) and (ii) from the statement of Theorem~\ref{thm:JT}, with equality replaced by chain homotopy. In fact, the maps $\Psi(\H, \H')$ are those induced on homology by $\Phi(\H, \H')$.
\end{proposition}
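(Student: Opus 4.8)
The plan is to unwind the proof of Theorem~\ref{thm:JT} and observe that every isomorphism $\Psi(\H,\H')$ appearing there is in fact induced by an explicit chain map which is a chain homotopy equivalence, and that the relations (i)--(ii) are already verified at the chain level up to chain homotopy in that argument. Concretely, Juh\'asz and Thurston prove that any two pointed Heegaard diagrams for $(Y,z)$ are connected by a finite sequence of elementary moves---isotopies of the attaching curves, handleslides among the $\alphas$ (resp. among the $\betas$), and (de)stabilizations---together with changes of the almost complex structure $J$ and of the isotopy class of the curves within an admissible range. To each such elementary move Ozsv\'ath and Szab\'o associate a chain map (continuation maps for isotopies and changes of $J$, the handleslide maps built by counting holomorphic triangles against a suitable top-graded generator in the Floer homology of the triple diagram, and the canonical (de)stabilization maps), and each of these is a chain homotopy equivalence of complexes of $\Z_2[U]$-modules. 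Composing along a path of moves from $\H$ to $\H'$ gives a candidate for $\Phi(\H,\H')$.

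The first step, then, is to record these chain-level maps and note each is a quasi-isomorphism, hence (working over the field $\Z_2$, or more precisely over $\Z_2[U]$ with the complexes being free and bounded-below in the appropriate sense) a chain homotopy equivalence; this already gives the existence of $\Phi(\H,\H')$ as a chain homotopy equivalence, with $\Psi(\H,\H')=\Phi(\H,\H')_*$. The second step is to check that this is well-defined up to chain homotopy, i.e.\ independent of the chosen sequence of moves: this is exactly the content of the Juh\'asz--Thurston argument, which verifies a finite list of ``commutation'' and ``distinguished triangle / pentagon'' relations among the elementary maps. One inspects their proof and observes that each such relation is proved by exhibiting an explicit chain homotopy (via counting holomorphic polygons in a multi-diagram, or via a one-parameter family of almost complex structures), and only afterward passed to homology; so the relations hold in the homotopy category of chain complexes. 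From well-definedness up to homotopy, conditions (i) and (ii) (with equality replaced by chain homotopy) are formal: $\Phi(\H,\H)$ can be taken to be the identity (the empty sequence of moves), and concatenation of paths gives $\Phi(\H',\H'')\circ\Phi(\H,\H')\simeq\Phi(\H,\H'')$.

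The main obstacle is bookkeeping rather than conceptual: one must make sure that \emph{every} move used by Juh\'asz and Thurston, including the more delicate ones involving changes of the complex structure or isotopies that cross the admissibility boundary, carries a chain-level homotopy equivalence, and that the list of relations they verify is complete and each relation is genuinely established by a chain homotopy (and not merely a homology-level computation). In particular, the handleslide maps depend on a choice of a cycle representing the top generator of the Floer homology of a small triple diagram, so one must check that two such choices give chain-homotopic maps---but this again is part of the standard package (the relevant Floer group is $\Z_2$ in the top degree, so any two cycles representing the generator differ by a boundary, giving the homotopy). Once these points are checked, the statement follows; I expect the verification to be essentially a translation of \cite{Naturality} into the homotopy category, with no new geometric input required. $\qed$
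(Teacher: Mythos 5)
Your overall strategy is the same as the paper's: promote the Juh\'asz--Thurston argument to the chain level, observing that each elementary move induces an explicit chain map, that the relations among these maps are established by explicit chain homotopies (in the paper's language, that $\CFo(\H,\s)$ is a strong Heegaard invariant in the homotopy category of complexes of $\Z_2[U]$-modules), and that conditions (i) and (ii) then follow formally from well-definedness up to homotopy and concatenation of sequences of moves.

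There is, however, one genuine gap: your claim that each elementary map, being a quasi-isomorphism, is automatically a chain homotopy equivalence. The parenthetical justification (``over the field $\Z_2$, or \dots over $\Z_2[U]$ with the complexes being free and bounded-below'') covers $\CFhat$ and, with some additional care, $\CFm$, but it does not apply to $\CFp$ or $\CFinf$: their underlying $\Z_2[U]$-modules are not projective, so a quasi-isomorphism between them is not a homotopy equivalence by general nonsense. This matters precisely for the handleslide maps, which are the one class of moves for which the Ozsv\'ath--Szab\'o arguments only establish quasi-isomorphism (the continuation, stabilization, and diffeomorphism maps are already homotopy equivalences on the nose). The paper closes this gap geometrically rather than algebraically: if $F$ and $G$ are the triangle maps associated to a handleslide and its inverse, then $G\circ F$ is chain homotopic to the composition of the triangle map of a small isotopy with the nearest-point map, and the small-isotopy triangle map is itself chain homotopic to the nearest-point map (Lipshitz; Ozsv\'ath--Stipsicz--Szab\'o); hence $G\circ F\sim\id$ and $F\circ G\sim\id$ for every flavor $\circ$. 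You would need either this argument or a flavor-by-flavor substitute (for instance, proving the equivalence first for $\CFm$ as a finitely generated free $\Z_2[U]$-complex and then deducing it for the other flavors) to complete your proof.
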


\begin{remark} The map $\Phi(\H,\H')$ is only unique up to chain homotopy. In the remainder of the paper, whenever we write $\Phi(\H, \H')$, we mean a representative of this chain homotopy class of maps. \end{remark}

\begin{proof}
The proofs of Theorems~\ref{thm:OSinv} and \ref{thm:JT} involve showing that any two Heegaard pairs are related by a sequence of the following moves: changing the almost complex structures $J$ (with the diagram $H$ being fixed); isotopies of the alpha and beta curves in $H$; handleslides of the curves in $H$; stabilizations and destabilizations of $H$; diffeomorphisms of $H$ induced by an ambient isotopy of $\Sigma$ in $Y$.\footnote{In fact, a diffeomorphism induced by an ambient isotopy can be obtained as the composition of some stabilizations and destabilizations. However, it is convenient to consider it as a separate move.} These moves induce chain homotopy equivalences between the respective Floer chain complexes. Indeed, this was shown in \cite{HolDisk} for most of the moves. The exceptions are handleslides, for which the argument in \cite[Section 9.2]{HolDisk} only shows that they induce quasi-isomorphisms. It is proved there that if $F$ and $G$ are the maps associated to a handleslide and its inverse, then $G \circ F$ is homotopic to the composition of the triangle map coming from a small isotopy with the nearest point map. However, one can further show that, for a small isotopy, the triangle map is chain homotopic to the nearest point map; see \cite[Proposition 11.3]{LipshitzCyl} and \cite[proof of Theorem 6.6]{OzsvathStipsicz}. This implies that $G \circ F$ is chain homotopic to the identity. The same goes for $F \circ G$, so we can conclude that handleslides actually induce chain homotopy equivalences. 

Given two Heegaard pairs $\H$ and $\H'$, we define the maps $\Phi(\H, \H')$ by choosing a sequence of moves relating $\H$ and $\H'$. We claim that different choices of moves yield chain homotopic maps. First, note that interpolating between two (families of) almost complex structures in two different ways produces chain homotopic maps, by the usual continuation arguments in Floer theory. (We are using here that the space of compatible almost complex structures is contractible.)

With regard to the moves on Heegaard diagrams, to show that they give rise to chain homotopic maps, in view of Theorem 2.39 in \cite{Naturality}, it suffices to prove that $\CFo(\H, \s)$ is a strong Heegaard invariant in the sense of \cite[Definition 2.33]{Naturality}, in the homotopy category of chain complexes of $\Z_2[U]$-modules. There are four conditions to be checked: functoriality, commutativity, continuity, and handleswap invariance. All of these are checked  in Sections 9.2 and 9.3 of \cite{Naturality}, in the context of proving the weaker statement that $\HFo(\H, \s)$ is a strong Heegaard invariant in the category of $\Z_2[U]$-modules. However, the proofs there actually work at the chain level, with the maps being considered up to chain homotopy. 

Given that the maps $\Phi(\H, \H')$ are well-defined up to chain homotopy, conditions (i) and (ii) are almost automatic. Indeed, for (i), when $\H=\H'$, we can consider the empty sequence of moves, so that $\Phi(\H, \H)$ is the identity. For (ii), we consider a sequence of Heegaard moves from $\H$ to $\H'$, and another from $\H'$ to $\H''$, and by composing them we get a sequence from $\H$ to $\H''$.
\end{proof}

We will sometimes write $\CFo(Y, \s)$ for $\CFo(\H, \s)$. This is justified by Proposition~\ref{prop:JT}, which says that the chain groups $\CFo(\H, \s)$ for different $\H$ are chain homotopy equivalent (although, of course, they are not usually isomorphic).

\subsection{The involution} \label{sec:invo} Now let us discuss the conjugation action on Heegaard Floer homology. Given a pointed Heegaard diagram $H=(\Sigma, \alphas, \betas, z)$, we define the conjugate diagram $\bh$ by 
$$ \bh = (-\Sigma, {\betas}, {\alphas}, z),$$
where $-\Sigma$ means $\Sigma$ with the orientation reversed. A family $J$ of almost complex structures on $\Sym^g(\Sigma)$ gives a conjugate family $\bJ$ on $\Sym^g(-\Sigma)$. If $\H=(H, J)$ is a Heegaard pair, we write $\bH$ for the conjugate pair $(\bh, \bJ)$.

Intersection points in $\Ta \cap \Tb$ for $H$ are in one-to-one correspondence with those for $\bh$, and this correspondence takes a $\spinc$ structure $\s$ to its conjugate $\bar \s$. Moreover, $J$-holomorphic disks with boundaries on $(\Ta, \Tb)$ are in one-to-one correspondence with $\bJ$-holomorphic disks with boundaries on $(\Tb, \Ta)$. Thus, as observed in \cite[Theorem 2.4]{HolDiskTwo}, we get a canonical isomorphism between Heegaard Floer chain complexes:
\begin{align*}
\eta \co \CFo(\H, \s) \xrightarrow{\phantom{u} \cong \phantom{u}} \CFo(\bH, \bar\s).
\end{align*}

Moreover, $\H$ and $\bH$ represent the same based three-manifold $(Y, z)$. According to Proposition~\ref{prop:JT}, we have a chain homotopy equivalence
\begin{align*}
\Phi(\bH, \H) \co \CFo(\bH, \bar \s) \xrightarrow{\phantom{u} \sim \phantom{u}} \CFo(\H, \bar \s).
\end{align*}
We denote by $\inv$ the composition of these two maps:
$$ \inv = \Phi(\bH, \H) \circ \eta \co \CFo(\H, \s) \to \CFo(\H, \bar \s).$$

\begin{lemma}
\label{lem:inv}
The map $\inv^2 \co \CFo(\H, \s) \to \CFo(\H, \s)$ is chain homotopic to the identity.
\end{lemma}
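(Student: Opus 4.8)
The plan is to unwind the definition $\inv = \Phi(\bH,\H)\circ\eta$ and track what happens when we compose it with itself, using the functoriality properties of the maps $\Phi$ and the naturality of $\eta$. First I would form $\inv^2 = \Phi(\bH,\H)\circ\eta\circ\Phi(\bH,\H)\circ\eta$, where the middle $\eta$ is the conjugation isomorphism $\CFo(\H,\s)\to\CFo(\bH,\bar\s)$ and the outer two maps go $\CFo(\H,\s)\xrightarrow{\eta}\CFo(\bH,\bar\s)\xrightarrow{\Phi(\bH,\H)}\CFo(\H,\bar\s)$, and then repeating with $\bar{\bar\s}=\s$. The key algebraic input is that $\eta$ is genuinely natural with respect to Heegaard moves: conjugating a Heegaard move gives a Heegaard move on the conjugate diagrams, so $\eta$ intertwines $\Phi(\bH,\H)$ (the map built from a sequence of moves relating $\bH$ and $\H$) with $\Phi(\bar\H, \bar{\bar\H}) = \Phi(\bar\H, \H)$... more precisely, $\eta\circ\Phi(\bH,\H)$ should equal (up to chain homotopy) $\Phi(\bar{\bH}, \bar\H)\circ\eta = \Phi(\H,\bar\H)\circ\eta$, since conjugation is an involution on diagrams. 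Using this to slide the inner $\eta$ past $\Phi(\bH,\H)$, and using $\eta^2=\id$ (conjugating twice returns the original diagram with original orientation, and on generators/disks this is the identity), I expect $\inv^2$ to collapse to $\Phi(\bH,\H)\circ\Phi(\H,\bH)$ up to chain homotopy.

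The second main step is then to invoke Proposition~\ref{prop:JT}, specifically the transitive-system property (ii) at the level of chain homotopy: $\Phi(\H,\bH)\circ\Phi(\bH,\H)$ is chain homotopic to $\Phi(\bH,\bH)$, which by (i) is chain homotopic to the identity on $\CFo(\bH,\bar\s)$; and symmetrically $\Phi(\bH,\H)\circ\Phi(\H,\bH) \simeq \Phi(\H,\H) \simeq \id$. Composing these chain homotopies gives $\inv^2\simeq\id$, which is exactly the claim.

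The step I expect to be the main obstacle is the first one: justifying carefully that $\eta$ is natural enough to commute (up to chain homotopy) with the equivalences $\Phi$. This requires checking that the conjugation construction of \cite{HolDiskTwo} is compatible with each of the elementary Heegaard moves — changing almost complex structures, isotopies, handleslides, (de)stabilizations, and diffeomorphisms induced by ambient isotopy — and with the continuation maps between them, so that a chosen sequence of moves from $\bH$ to $\H$ conjugates to a sequence of moves from $\H$ to $\bH$ realizing the ``same'' chain homotopy equivalence up to homotopy. One has to be a little careful that the orientation reversal of $\Sigma$ does not introduce sign or orientation subtleties in the moduli spaces; but over $\Z_2$ there are no signs, and the one-to-one correspondence of holomorphic disks noted after the definition of $\bh$ handles the geometry. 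An alternative, perhaps cleaner packaging is to observe that $\eta$ itself defines an isomorphism of transitive systems $\{\CFo(\H,\s)\}\to\{\CFo(\bH,\bar\s)\}$, so that $\eta\circ\Phi(\bH,\H)\circ\eta$ is (up to homotopy) the canonical equivalence $\CFo(\H,\s)\to\CFo(\bH,\s)$ within the system for $(Y,z,\s)$; then $\inv^2$ is the composite of this with $\Phi(\bH,\H)$ and is homotopic to $\Phi(\H,\H)\simeq\id$ by transitivity. Either way, once the naturality of $\eta$ is in hand, the rest is a formal manipulation with the already-established properties of $\Phi$.
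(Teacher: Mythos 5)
Your proposal is correct and follows essentially the same route as the paper: write $\inv^2=\Phi(\bH,\H)\circ\eta\circ\Phi(\bH,\H)\circ\eta$, observe that conjugating $\Phi(\bH,\H)$ by $\eta$ yields a map chain homotopic to $\Phi(\H,\bH)$ (because conjugating each Heegaard move gives the corresponding move between conjugate diagrams), and then conclude via the transitive-system properties of Proposition~\ref{prop:JT}. The obstacle you flag — the compatibility of $\eta$ with each elementary move — is exactly the point the paper's proof addresses with its parenthetical remark about identifying $J$- and $\bJ$-holomorphic curves.
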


\begin{proof}
We have $ \inv^2 =   \Phi(\bH, \H) \circ \eta \circ \Phi(\bH, \H) \circ \eta.$ Note that $\eta^2=1$, so the composition
\begin{equation}
\label{eq:etaPhi}
 \eta \circ \Phi(\bH, \H) \circ \eta \co \CFo(\H, \s) \to \CFo(\bH, \s)
 \end{equation}
is the conjugation of $\Phi(\bH, \H)$ by $\eta$.  Recall that $\Phi(\bH, \H)$ is the composition of maps associated to moves between the respective Heegaard pairs. When we conjugate any such map by $\eta$, we get the map associated to the corresponding move between the conjugate Heegaard pairs. (This uses the identification between $J$- and $\bJ$-holomorphic triangles.) In view of Proposition~\ref{prop:JT}, the map \eqref{eq:etaPhi} is chain homotopic to $\Phi(\H, \bH)$. Therefore,
$$ \inv^2 \sim \Phi(\bH, \H) \circ \Phi(\H, \bH) \sim \id_{\CFo(\H, \s)},$$
when in the last step we used the properties of a transitive system. \end{proof}

Lemma~\ref{lem:inv} implies that $\inv$ induces an involution 
$$\J =\inv_* \co \HFo(Y, \s)  \xrightarrow{\phantom{u} \cong \phantom{u}} \HFo(Y, \bar \s)$$ on Heegaard Floer homology. This was
already observed in \cite[Theorem 2.4]{HolDiskTwo}.
 
\begin{remark}
If we view Heegaard splittings as coming from self-indexing Morse functions on the three-manifold $Y$, then the equivalence $ \Phi(\bH, \H)$ is induced by moving from a Morse function $h$ to $-h$. 
 \end{remark}

\subsection{Involutive Heegaard Floer homology} \label{sec:CFI} Let $[\Spinc(Y)]$ denote the space of orbits of $\spinc$ structures on $Y$, under the conjugation action. An orbit $\varpi \in [\Spinc(Y)]$ is either of the form $\{\s\}$ with $\s = \bar \s$, or of the form $\{\s, \bar \s\}$ with $\s \neq \bar \s$. The former case corresponds to $\spinc$ structures that come from spin structures.

\begin{remark}
A $\spinc$ structure $\s$ with $\s = \bar \s$ admits $2^{b_1(Y)}$ lifts to a spin structure; see \cite[p. 124]{Lin}. By a slight abuse of terminology, when $\s = \bar \s$ we will refer to $\s$ as a spin structure without fixing a specific lift.
\end{remark}

Given $z \in Y$, a Heegaard pair $\H$ for $(Y, z)$, and an orbit $\varpi \in [\Spinc(Y)]$, set
$$\CFo(\H, \varpi) = \bigoplus_{\s \in \varpi} \CFo(\H, \s).$$

We define the {\em involutive Heegaard Floer complex} $\CFIo(\H, \varpi)$ to be the mapping cone complex
\begin{align} \label{eq:involutive0}
 \CFo(\H, \varpi) \xrightarrow{1+\inv} \CFo(\H, \varpi).
\end{align}

Given a complex $C_*$, we use $C[n]_*$ to denote the same complex with the grading shifted by $n$: $C[n]_k = C_{k+n}.$ Thus, as an abelian group, the cone complex above is
$$ \CFo(\H, \varpi)[-1] \oplus \CFo(\H, \varpi),$$
with the first factor being the domain of $1+\inv$ and the second the target.

To get more structure on this complex, it is helpful to introduce a formal variable $Q$ of degree $-1$ with $Q^2=0$, and write \eqref{eq:involutive0} as
\begin{align} \label{eq:involutive}
 \CFo(\H, \varpi) \xrightarrow{Q (1+\inv)} Q \ccdot \CFo(\H, \varpi)[-1].
\end{align}
Note that in the target the shift $[-1]$ cancels out the shift due to the variable $Q$, so in fact $Q \ccdot \CFo(\H, \varpi)[-1]$ is isomorphic to $\CFo(\H, \varpi)$ as a graded module.

We can re-write \eqref{eq:involutive} as
$$\bigl( \CFo(\H, \varpi)[-1] \otimes \Z_2[Q]/(Q^2), \partial + Q(1+\inv) \bigr ),$$ where $\partial$ is the ordinary Heegaard Floer differential. We write 
$$\del^\inv =  \partial + Q(1+\inv)$$
for the differential on $\CFIo(\H, \varpi)$. Observe that, by construction, $\CFIo(\H, \varpi)$ is a complex of modules over the ring 
$$\Ring = \Z_2[Q,U]/(Q^2),$$ with $Q$ and $U$ decreasing the grading by $1$ and $2$, respectively. 

\begin{proposition}
\label{prop:CFI}
The quasi-isomorphism class of the complex $\CFIo(\H, \varpi)$ (over $\Ring$) is an invariant of the pair $(Y, \varpi)$.
\end{proposition}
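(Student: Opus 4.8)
The plan is to adapt the argument of Lemma~\ref{lem:inv} so that the chain homotopy equivalences of Proposition~\ref{prop:JT} descend to quasi-isomorphisms between the involutive complexes attached to different Heegaard pairs. Throughout we fix the basepoint $z$; all pairs represent $(Y,z)$, and all maps and homotopies below are $\Z_2[U]$-linear. First, for a \emph{fixed} pair $\H$ the complex $\CFIo(\H,\varpi)$ is well-defined up to isomorphism: by Proposition~\ref{prop:JT} and the remark after it, $\inv$ is canonical up to chain homotopy, and if $\inv$, $\inv'$ are two representatives with $\inv+\inv'=\partial K+K\partial$ then the $\Ring$-linear map $x+Qy\mapsto x+Q(y+Kx)$ intertwines the differentials $\partial+Q(1+\inv)$ and $\partial+Q(1+\inv')$ and is its own inverse, hence an isomorphism of the two mapping cones. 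So we may use any convenient representative of $\inv$, and the real task is to compare different Heegaard pairs.

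The key step is to show that the transition map $\Phi(\H,\H')\co\CFo(\H,\varpi)\to\CFo(\H',\varpi)$ intertwines the involutions up to chain homotopy, i.e. $\Phi(\H,\H')\circ\inv_{\H}\simeq\inv_{\H'}\circ\Phi(\H,\H')$. I would prove this exactly in the manner of Lemma~\ref{lem:inv}. Writing $\inv_{\H}=\Phi(\bH,\H)\circ\eta$ and $\inv_{\H'}=\Phi(\bH',\H')\circ\eta$, one notes that conjugating $\Phi(\H,\H')$ by $\eta$ replaces each elementary move-map occurring in it by the move-map of the corresponding move between the conjugate diagrams (using the identification of $J$- with $\bJ$-holomorphic polygons), so that $\eta\circ\Phi(\H,\H')\circ\eta\simeq\Phi(\bH,\bH')$, equivalently $\eta\circ\Phi(\H,\H')\simeq\Phi(\bH,\bH')\circ\eta$. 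Feeding this into the transitive-system relation (ii) of Proposition~\ref{prop:JT}, applied twice, gives
\begin{align*}
\inv_{\H'}\circ\Phi(\H,\H') &= \Phi(\bH',\H')\circ\eta\circ\Phi(\H,\H')\\
&\simeq \Phi(\bH',\H')\circ\Phi(\bH,\bH')\circ\eta \simeq \Phi(\bH,\H')\circ\eta,\\
\Phi(\H,\H')\circ\inv_{\H} &= \Phi(\H,\H')\circ\Phi(\bH,\H)\circ\eta \simeq \Phi(\bH,\H')\circ\eta,
\end{align*}
so both composites are chain homotopic to $\Phi(\bH,\H')\circ\eta$, which proves the claim.

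Granting the claim, choose a homotopy $K$ realizing $\Phi(\H,\H')\circ(1+\inv_{\H})\simeq(1+\inv_{\H'})\circ\Phi(\H,\H')$; by the standard functoriality of the mapping cone for chain-homotopy-commutative squares, the pair $(\Phi(\H,\H'),K)$ induces a chain map $\widetilde\Phi\co\CFIo(\H,\varpi)\to\CFIo(\H',\varpi)$. It is $\Ring$-linear ($U$-linear because $\Phi(\H,\H')$ and $K$ are, $Q$-linear by construction) and preserves relative gradings. To see $\widetilde\Phi$ is a quasi-isomorphism, equip $\CFIo(\H,\varpi)$ with the two-step filtration $Q\ccdot\CFIo(\H,\varpi)\subset\CFIo(\H,\varpi)$; both filtration quotients are copies of $\CFo(\H,\varpi)$ (up to grading shift), $\widetilde\Phi$ respects the filtration, and it induces $\Phi(\H,\H')$ on the associated graded, hence an isomorphism there on homology. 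The five lemma applied to the long exact sequences of these filtrations then shows $\widetilde\Phi_*$ is an isomorphism. Independence of the basepoint $z$, if one wants it, follows along the same lines, the isotopies relocating $z$ being among the Heegaard moves covered by naturality.

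I expect the only non-formal ingredient to be the move-by-move verification underlying $\eta\circ\Phi(\H,\H')\circ\eta\simeq\Phi(\bH,\bH')$: for each type of Heegaard move --- change of almost complex structure, isotopy, handleslide, (de)stabilization, and diffeomorphism induced by an ambient isotopy --- one must check that conjugation by $\eta$ turns its transition map into that of the conjugated move, the handleslide case needing the usual care with the associated holomorphic triangle maps. This is, however, precisely the input already used in the proof of Lemma~\ref{lem:inv}, so no new geometric work is needed. A minor caveat is that $\widetilde\Phi$ depends on the auxiliary homotopy $K$, so one obtains a canonical quasi-isomorphism class but not a canonical map --- which is exactly what the proposition asserts.
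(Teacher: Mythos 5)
Your proposal is correct and follows essentially the same route as the paper: the well-definedness of $\inv$ up to chain homotopy from Proposition~\ref{prop:JT}, the conjugation-by-$\eta$ argument from Lemma~\ref{lem:inv} combined with the transitive-system property to show $\Phi(\H,\H')$ intertwines the involutions up to homotopy, and the two-step filtration on the mapping cones to upgrade this to a quasi-isomorphism. The only point handled differently is basepoint independence, where the paper transports the data by a diffeomorphism $\phi$ with $\phi(z)=z'$ rather than appealing to naturality (which Juh\'asz--Thurston establish only for a fixed basepoint), but this does not affect the substance of the argument.
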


\begin{proof}
Note that in the defintion of $\inv$ we used the map $\Phi(\bH, \H)$, which was constructed from a sequence of Heegaard moves relating $\bH$ to $\H$. Thus, a priori, $\CFIo(\H, \varpi)$ depends not only on $\H$, but also on that sequence of moves. However, Proposition~\ref{prop:JT} guarantees that  $\Phi(\bH, \H)$ is well-defined up to chain homotopy. Therefore, so is $\inv$. Since the mapping cones of chain homotopy maps are homotopic, we conclude that changing the sequence of moves only changes $\CFIo(\H, \varpi)$ by a homotopy equivalence.

Next, fix the basepoint $z \in Y$ and suppose that we have a different Heegaard pair $\H'$ for $(Y, z)$. Let $\inv'$ be the corresponding map from $\CFo(\H', \varpi)$ to $\CFo(\H',  \varpi)$, which is the composition of $\Phi(\bH', \H')$ with an involution $\eta'$. Consider the diagram
\begin{equation}
\label{eq:xy}
\xymatrixcolsep{4pc}\xymatrix{
 \CFo(\H, \varpi) \ar[d]_{\Phi(\H, \H')} \ar[r]^{ Q(1+\inv) \phantom{ulu} } & Q \ccdot  \CFo(\H, \varpi)[-1] \ar[d]^{\Phi(\H, \H')}\\
 \CFo(\H', \varpi) \ar[r]^{ Q (1+\inv') \phantom{ulu}  } & Q \ccdot  \CFo(\H', \varpi)[-1].
}
\end{equation}
We claim that this diagram commutes up to chain homotopy. This is equivalent to showing that
\begin{equation}
\label{eq:zeroe}
 \Phi(\H, \H') \circ \Phi(\bH, \H) \circ \eta \sim \Phi(\bH', \H') \circ \eta' \circ \Phi(\H, \H').
 \end{equation}
We have $\eta' \circ \Phi(\H, \H') \sim \Phi(\bH, \bH') \circ \eta$. (This is similar to the discussion of conjugation by $\eta$ in the proof of Lemma~\ref{lem:inv}.) Further, by Proposition~\ref{prop:JT} we have $$\Phi(\bH', \H') \circ \Phi(\bH, \bH') \sim \Phi(\bH, \H') \sim  \Phi(\H, \H') \circ \Phi(\bH, \H),$$ 
so \eqref{eq:zeroe} follows.

Let $$\Upsilon(\H, \H') \co \CFo(\H, \varpi) \to Q \ccdot  \CFo(\H', \varpi)[-1]$$
be the chain homotopy that makes the diagram \eqref{eq:xy} commute. Let us add $\Upsilon(\H, \H')$ to that diagram as a diagonal map from the upper left to the lower right corner. Together with the two vertical maps, this gives a chain map $\Phi^I(\H, \H')$ between the two rows, i.e., between $\CFIo(\H, \varpi)$ and $\CFIo(\H', \varpi)$. To see that $\Phi^I(\H, \H')$ is a quasi-isomorphism, note that any mapping cone comes equipped with a natural two-step filtration; in the diagram \eqref{eq:xy}, we set the filtration level to be $1$ for the left column and $0$ for the right column. The chain map we constructed respects the filtration, and it induces a quasi-isomorphism on the associated graded. (This is because on the associated graded we only see the vertical maps $\Phi(\H, \H')$, which are chain homotopy equivalences.) A filtered chain map that induces a quasi-isomorphism on the associated graded must be a quasi-isomorphism itself.

We have shown that the quasi-isomorphism type of $\CFIo(\H, \varpi)$ is an invariant of the triple $(Y, z, \varpi)$. It remains to prove independence of the basepoint $z$. If we have another basepoint $z' \in Y$, pick a diffeomorphism $\phi\co Y \to Y$ such that $\phi(z)=z'$. Given a Heegaard pair $\H$ for $(Y, z)$ and a sequence of moves from $\bH$ to $\H$, we can apply the diffeomorphism $\phi$ to obtain a Heegaard pair $\phi(\H)$ for $(Y, z')$, as well as a sequence of moves from $\overline{\phi(\H)}=\phi(\bH)$ to $\phi(\H)$. The resulting modules $\CFIo(\H, \varpi)$ and $\CFIo(\phi(\H), \varpi)$ are clearly isomorphic.
\end{proof}

We define the involutive Heegaard Floer homology $\HFIo(Y, \varpi)$ to be the homology of the complex $\CFIo(\H,\varpi)$. Summing over all $\spinc$ orbits, we set
$$ \HFIo(Y) := \bigoplus_{\varpi \in [\Spinc(Y)]} \HFIo(Y, \varpi).$$

Proposition~\ref{prop:CFI} shows that the isomorphism class of $\HFIo(Y, \varpi)$, as an $\Ring$-module, is an invariant of $(Y, \varpi)$. This implies Theorem~\ref{thm:invariance}.

\subsection{Remarks on naturality}
\label{sec:remarks}
With $(Y, z)$ fixed, let us construct an oriented one-dimensional simplicial complex (an oriented multigraph) $K^1$ as follows. We take the vertices of $K^1$ to be all the admissible Heegaard pairs $\H=(H, J)$. Then, for each standard move (change in $J$, isotopy, handleslide, stabilization), we draw an edge from the initial to the final Heegaard pair. Thus, to each vertex of $K^1$ we associate a complex $\CFo(\H, \s)$, and to each edge $e$ a chain homotopy equivalence $\Phi(e)$ between the respective complexes. Theorem~\ref{thm:OSinv} can then be viewed as a consequence of  the connectedness of $K^1$. 

Furthermore, Proposition~\ref{prop:JT} implies that whenever we have three vertices $v_0, v_1, v_2$ in $K^2$, along with edges $e_{01}, e_{12}, e_{02}$ (with $e_{ij}$ going from $v_i$ to $v_j$), the equivalence $\Phi(e_{02})$ is chain homotopic to $\Phi(e_{12}) \circ \Phi(e_{01})$ or, since we work with mod $2$ coefficients,
$$ \Phi(e_{02}) + \Phi(e_{12}) \circ \Phi(e_{01}) \sim 0.$$
 The chain homotopy depends on some choices (of handleswaps, of $2$-parameter families of isotopic curves or almost complex structures, etc.) For each such choice, let us attach a $2$-simplex  to the loop in $K^1$ formed by the union of the three edges. This produces a simplicial complex $K^2$, where to each $2$-simplex $f$ we have associated a chain homotopy $\Upsilon(f)$. Theorem~\ref{thm:JT} can then be viewed as a consequence of the fact that $K^2$ is simply connected. (This should be compared with the discussion in Appendix A in \cite{Naturality}, which contains an elementary proof of simple connectivity for a $2$-complex of handleslides.)

While Theorem~\ref{thm:JT} and Proposition~\ref{prop:JT} give a naturality result ``of order one," one could ask for more. Whenever we have the $2$-skeleton of a tetrahedron in $K^2$, with vertices $v_0, v_1, v_2, v_3$, edges $e_{ij}$ oriented from $v_i$ to $v_j$ for $i < j$, and faces $f_{ijk}$ for $i < j < k$, there is an associated sum of compositions 
$$ \Upsilon(f_{013}) +\Upsilon(f_{023}) + \Upsilon(f_{123}) \circ \Phi(e_{01}) + \Phi(e_{23}) \circ \Upsilon(f_{012}).$$
We conjecture that this sum is chain homotopic to zero. One should be able to attach a $3$-simplex  to the existing skeleton for each natural choice of such a ``higher chain homotopy,'' and obtain a $2$-connected simplicial complex $K^3$. This would be a naturality result of order two. Moreover, one should be able to continue this process and hope for $(n-1)$-connected complexes $K^n$ for all $n$, such that their union (the geometric realization of a simplicial set) is weakly contractible. This would be a naturality result of infinite order.

Our expectation is based on the situation in Seiberg-Witten theory \cite{KMBook, Spectrum}, where monopole Floer homology is defined starting from a contractible set of choices (metrics, perturbations, base connections). In contrast, the analogous result in Heegaard Floer theory seems much harder to obtain. A Heegaard diagram corresponds to a (self-indexing) Morse function on $Y$, which gives a gradient vector field on $Y$. To prove naturality of order one, Juh\'asz and Thurston had to study singularities of $2$-parameter families of gradients. To prove naturality of infinite order, one would need to understand singularities of $n$-parameter families of gradients, for all $n$.

Although they are beyond the scope of the current work, let us now mention two ways in which Proposition~\ref{prop:CFI} could be strengthened. Both of them would involve proving a kind of naturality result of order two.

The first improvement would be to replace invariance up to quasi-isomorphism (in Proposition~\ref{prop:CFI}) with invariance up to chain homotopy equivalence. This would require constructing a homotopy inverse to the quasi-isomorphism $\Phi^I(\H, \H')$. A good candidate is $\Phi^I(\H', \H)$. To prove that $\Phi^I(\H, \H') \circ \Phi^I(\H', \H) \sim \id$, one needs a commutation result between maps of the form $\Phi(\H, \H')$ and chain homotopies of the form $\Upsilon(\H, \H')$. In principle, this is an instance of order two naturality. However, in the case at hand, a proof may be more tractable by choosing $\Phi(\H', \H)$ to follow the same Heegaard moves as $\Phi(\H, \H')$, but in reverse, and by choosing $\Upsilon(\H', \H)$ to be a suitable reverse of $\Upsilon(\H, \H')$.

The second possible strengthening is to show naturality of order one for $\CFIo(\H, \varpi)$. In the spirit of Proposition~\ref{prop:JT}, this would mean that the complexes $\CFIo(\H, \varpi)$ for different $\H$ form a transitive system in the homotopy category. This would give a construction of $\HFIo(Y, \varpi)$ as a well-defined $\Ring$-module. A proof would require the following: As we move from $\H$ to itself by a sequence of Heegaard moves, we get a map $\Phi'(\H,\H)$, which is chain homotopic to the identity by a homotopy $\Upsilon(\H, \H)$ given by Proposition~\ref{prop:JT}. There is a similar conjugate homotopy $\Upsilon(\bH, \bH)$, and we need to prove a commutation result between these $\Upsilon$ maps and the equivalences $\Phi(\H, \bH)$. This is again an instance of order two naturality. In fact, since we used naturality of order one for $\CFo$ to prove Proposition~\ref{prop:CFI} (which is invariance of the isomorphism class for $\CFIo$, i.e., naturality of order zero), it is not surprising that proving naturality of order one for $\CFIo$ requires naturality of order two for $\CFo$.

\section{Comparison to Seiberg-Witten theory}
\label{sec:motivation}

By work of Kutluhan-Lee-Taubes \cite{KLT1}, or Collin-Ghiggini-Honda and Taubes \cite{CGH2, Taubes}, Heegaard Floer homology is now known to be isomorphic to monopole (Seiberg-Witten) Floer homology as defined by Kronheimer-Mrowka \cite{KMBook}. An alternative construction of Seiberg-Witten Floer homology for rational homology spheres was given by the second author in \cite{Spectrum}, and work by Lidman and the second author establishes the equivalence of this to the Kronheimer-Mrowka theory \cite{LidmanManolescu}. Under these equivalences, the variant $\HFhat(Y)$ of Heegaard Floer homology corresponds to the ordinary (non-equivariant) homology of the suspension spectrum $\swf(Y)$ defined in \cite{Spectrum}, and the variant $\HFp(Y)$ corresponds to the $S^1$-equivariant homology of $\swf(Y)$.  

The spectrum $\swf(Y)$ comes equipped with a $\pin$ action extending the $S^1$ action, and its $\pin$-equivariant homology was studied in \cite{Triangulations}. Since $\pin$ is an extension of $\Z_2$ by $S^1$, we expect that the Heegaard Floer analogue of $\pin$-equivariant Seiberg-Witten Floer homology should be a $\Z_2$-equivariant version of $\HFp(Y)$. Further, the extra $\Z_2$-action in Seiberg-Witten theory is the conjugation symmetry, which corresponds to $\inv$ on $\HFp(Y)$.

In order to understand what a potential $\Z_2$-equivariant version of $\HFp(Y)$ (or ``$\pin$ Heegaard Floer homology'') would look like, recall that a model for constructing $\Z_2$-equivariant Floer homology (in a different setting) was given by Seidel and Smith \cite{SeidelSmith}. Their starting point was $\Z_2$-equivariant Morse theory, in which one does Morse theory on the homotopy quotient 
$$M_{\borel} := M \times_{\Z_2} E\Z_2.$$ This space is a fiber bundle over $B\Z_2=\rp^{\infty}$ with fiber $M$. By equipping the base $\rp^{\infty}$ with a standard metric and Morse function (with one critical point in each nonnegative degree), and equipping the fibers with a suitable family of metrics and functions, one can show that $H_*(M_{\borel}; \Z_2) = H_*^{\Z_2}(M; \Z_2)$ is the homology of a complex 
$$ (C_{*, \borel}(M), \del_{\borel}),$$ 
as follows. As a vector space, the complex $C_{*, \borel}(M)$ is freely generated by pairs $[x, j]$, where $x$ is a generator of the usual Morse complex $C_*(M)$ and $j$ is a nonnegative integer. The degree of $[x, j]$ is $(\operatorname{deg } x) + j$. Further, there is a $H^*(B\Z_2; \Z_2) = \Z_2[Q]$ action on $C_{*, \borel}(M)$, given by $Q \cdot [x, j] = [x,j-1].$ The differential $\del_{\borel}$ decomposes as 
\begin{equation}
\label{eq:borel}
 \del_{\borel} = \del + Q \cdot (1 + \inv) + Q^2 \cdot H + \dots
 \end{equation}
where $\del$ is the ordinary Morse differential for $M$, and the higher terms are Morse continuation maps parametrized by gradient flow lines on the base $\rp^{\infty}$. In particular, $\inv \co C_*(M) \to C_*(M)$ is a chain map that induces the involution on $H_*(M)$, and the next term $H$ is a chain homotopy between $\inv^2$ and the identity. 

In view of this, a potential $\Z_2$-equivariant version of $\HFp(Y)$ (or ``$\pin$ Heegaard Floer homology'') should be the homology of a complex whose generators are $[\x, i, j]$ for $\x \in \Ta \cap \Tb$ and $i, j \in \Z_{\geq 0}$, with a differential similar to \eqref{eq:borel}. This would come with $U$ and $Q$ actions given by $U \cdot [\x, i, j] = [\x, i-1, j]$ and $Q \cdot [\x, i, j] = [\x, i, j-1]$. However, the $U$ action would generally not commute with the differential. Rather, given that this should mimic $\pin$-equivariant Seiberg-Witten Floer homology and therefore be a module over $H^*(\Bpin; \Z_2)=\Z_2[Q, V]/(Q^3)$ as in \cite{Triangulations}, we only expect the $U^2=V$ action to commute with the differential, and we further expect the $Q$ action to satisfy $Q^3=0$ on homology.

In any case, a definition of $\pin$ Heegaard Floer homology along these lines is difficult for the following reasons. The $Q^2$ term in \eqref{eq:borel} involves the chain homotopy $H$ between $\inv^2$ and the identity. Such a homotopy exists because of the usual naturality (of order one) in Heegaard Floer homology, but proving that a complex involving $H$ is well-defined up to chain homotopy would involve showing that any two choices of $H$ are related by a higher homotopy, and this would mean proving naturality of order two. (Compare Section~\ref{sec:remarks}.) Moreover, naturality of order two would be involved in even defining the $Q^3$ term in the differential. To define the whole differential would require a proof of naturality up to arbitrary orders in Heegaard Floer homology, and this is not accessible by current methods.

These limitations have led us to settle for a truncation of the complex \eqref{eq:borel}, in which we set $Q^2=0$. The result is the involutive Heegaard Floer homology defined in Section~\ref{sec:def}. This is a module over the ring $\Ring=\Z_2[Q,U]/(Q^2)$, which can be identified with $H^*(B\Z_4; \Z_2)$. In fact, in the Introduction we claimed that $\HFIp(Y)$ should correspond to $\Z_4$-equivariant Seiberg-Witten Floer homology, where $\Z_4 \subset \pin$ is the subgroup generated by $j$. This is justified by the following result.

\begin{proposition}
\label{prop:SWF}
Let $X$ be a paracompact Hausdorff space with a $\pin$ action, and fix a model for the classifying bundle $\Epin$.  Let $C_*^{S^1}(X; \Z_2)$ be the singular chain complex of the $S^1$ homotopy quotient $X \times_{S^1} \Epin$, with an involution $\inv$ coming from the action of $j \in \pin$ on $X \times \Epin$.  Then, we have an isomorphism of $\Ring$-modules
\begin{equation}
\label{eq:z4}
 H_*^{\Z_4}(X; \Z_2) \cong H_*(\Cone(C_*^{S^1}(X; \Z_2) \xrightarrow{Q (1+\inv) } Q \ccdot C_*^{S^1}(X; \Z_2)[-1])).
 \end{equation}
\end{proposition}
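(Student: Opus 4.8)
The plan is to identify both sides of \eqref{eq:z4} as the homology of the Borel construction for the $\Z_4$-action, using the standard machinery of fibrations over classifying spaces. First I would recall that $\Z_4 \subset \pin$ sits in an extension $1 \to S^1 \cap \Z_4 \to \Z_4 \to \Z_2 \to 1$, but more usefully that $\Z_4$ itself can be realized as a subgroup of $\pin$ with $S^1 \cap \Z_4$ being just $\{\pm 1\}$... actually the cleanest approach is purely group-theoretic: write the action of $\Z_4$ on $X$ and form $E\Z_4$. Since $j$ has order $4$ in $\pin$ but $j^2 = -1 \in S^1$, the subgroup $\Z_4 = \langle j \rangle$ intersects $S^1$ in $\{\pm 1\} \cong \Z_2$, and the quotient $\Z_4/(\Z_4 \cap S^1) \cong \Z_2$ is generated by the image of $j$. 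So there is a short exact sequence $1 \to \Z_2 \to \Z_4 \to \Z_2 \to 1$ with the first $\Z_2$ coming from $S^1$.

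The key step is then to model $E\Z_4$ as a total space fibered over $E\Z_2$ (for the quotient $\Z_2$) with fiber $E\Z_2$ (for the normal $\Z_2 \subset S^1$), i.e. $B\Z_4$ fibers over $B\Z_2 = \rp^\infty$ with fiber $B\Z_2 = \rp^\infty$. Equivalently, I would build $X \times_{\Z_4} E\Z_4$ in two stages: first quotient $X \times \Epin$ by the $S^1$-action — more precisely by the subgroup $\{\pm 1\}$, but one can pass all the way to $S^1$ since $S^1/\{\pm 1\} \cong S^1$ and one then still has a residual circle — hmm, let me instead do it directly. Take $E\Z_4 = \Epin$ restricted along $\Z_4 \hookrightarrow \pin$; this is a valid (infinite-dimensional, contractible, free) $\Z_4$-space. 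Then $X \times_{\Z_4} E\Z_4 = (X \times E\Z_4)/\Z_4$. I factor this quotient as $\bigl((X \times E\Z_4)/S^1\bigr)$ — wait, $S^1$ doesn't act freely unless we use only $\{\pm1\}$. The correct factorization: $(X \times E\Z_4)/\Z_4 = \bigl((X \times E\Z_4)/\langle -1\rangle\bigr)/\bigl(\Z_4/\langle -1\rangle\bigr) = \bigl((X \times E\Z_4)/\Z_2\bigr)/\Z_2$, where the inner $\Z_2 = \langle -1 \rangle \subset S^1$ and the outer $\Z_2$ is generated by (the image of) $j$. Now $(X \times E\Z_4)/\langle -1 \rangle$ is a model for $X \times_{S^1} \Epin$ up to homotopy — because $E\Z_4$ with the $\langle -1\rangle$-action, after we also remember it carries a residual free $S^1/\langle -1\rangle$-action making $X \times_{\langle -1\rangle} E\Z_4$ a space over $B(S^1) $ — this is getting delicate; the honest statement I want is simply that $C_*^{S^1}(X;\Z_2) = C_*(X\times_{S^1}\Epin;\Z_2)$ computes $H_*^{S^1}(X)$ and carries a residual $\Z_2 = \langle j\rangle/\langle j^2\rangle$ action inducing $\inv$, because $j$ normalizes $S^1$ in $\pin$ and $j^2 \in S^1$.

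With that in hand, the proof assembles as follows. The outer quotient by $\Z_2 = \langle \bar\jmath\rangle$ means $H_*^{\Z_4}(X;\Z_2) = H_*^{\Z_2}\bigl(X \times_{S^1}\Epin;\Z_2\bigr)$ where the $\Z_2$ acts by $\inv$. Now invoke the Seidel–Smith / Borel-complex description recalled in the paper (equation \eqref{eq:borel} and the surrounding discussion): for a $\Z_2$-space $M$ with $C_*(M;\Z_2)$ and involution $\inv$, one has $H_*^{\Z_2}(M;\Z_2) = H_*\bigl(C_*(M) \otimes \Z_2[Q], \del + Q(1+\inv) + Q^2 H + \cdots\bigr)$. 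Apply this with $M = X\times_{S^1}\Epin$. The point is that here $\inv^2$ is literally the identity on the nose (not just up to homotopy), since $j^2 \in S^1$ acts on $X \times_{S^1}\Epin$ as the identity; therefore one may choose $H = 0$ and all higher homotopies to vanish, so the Borel differential truncates exactly to $\del + Q(1+\inv)$ with no $Q^2 = 0$ assumption needed — it holds honestly. That is precisely the mapping cone on the right-hand side of \eqref{eq:z4}, and the $\Ring = \Z_2[Q,U]/(Q^2) = H^*(B\Z_4;\Z_2)$-module structures match by construction ($U$ coming from the $S^1$-tower, $Q$ from $H^*(B\Z_2)$ of the outer quotient, $Q^2 = 0$ since $H^*(B\Z_2;\Z_2) = \Z_2[Q]$ contributes only $1$ and $Q$ before the fiber $\rp^\infty$'s classes get absorbed... more precisely the Serre spectral sequence of $B\Z_4 \to B\Z_2$ with fiber $\rp^\infty$ shows $H^*(B\Z_4;\Z_2)$ is $\Z_2[Q,U]/(Q^2)$ as claimed).

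I expect the main obstacle to be the bookkeeping that makes ``$\inv^2 = \id$ strictly'' precise: one must choose the model $E\Z_4$ and the reduction $C_*^{S^1}(X)$ so that the residual action of $j$ squares exactly to the identity chain map, not merely to something chain homotopic to it. Concretely, $j^2 = -1 \in S^1$ acts trivially on the $S^1$-quotient $X\times_{S^1}\Epin$ only if we have genuinely quotiented by the full circle containing $j^2$; since $j$ and $S^1$ together generate a copy of $S^1 \rtimes \Z_2$ (actually the subgroup they generate is $\pin$ itself, but the relevant quotient $\pin/S^1 = \Z_2$), the residual automorphism of $X\times_{S^1}\Epin$ induced by $j$ has order exactly $2$. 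Making this airtight — i.e. checking that $\pin/S^1 \cong \Z_2$ acts on $X\times_{S^1}\Epin$ and that $C_*$ of this action gives $\inv$ with $\inv^2 = \id$ on chains — is the one genuinely load-bearing verification; everything else (the two-stage Borel construction, the spectral sequence identifying $H^*(B\Z_4;\Z_2)$, naturality of the cone under the $\Ring$-action) is standard.

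Alternatively, and perhaps more cleanly, one avoids the strict-identity subtlety entirely by a pure fibration argument: $X\times_{\Z_4}E\Z_4 \to B\Z_4$ has fiber $X$; filter by the skeleta of $B\Z_4$ using a CW structure with one cell in each dimension (as for $\rp^\infty$, appropriate to $H^*(B\Z_4;\Z_2)=\Z_2[Q,U]/(Q^2)$), and compare the resulting filtered complex with the mapping cone $\Cone(C_*^{S^1}(X;\Z_2) \xrightarrow{Q(1+\inv)} Q\cdot C_*^{S^1}(X;\Z_2)[-1])$, which likewise carries the two-step filtration from the cone. The associated graded of the cone is two copies of $C_*^{S^1}(X;\Z_2)$, matching the two $\Z_2$-cells' worth of fiber homology over each $\rp^\infty$-cell of the base $B\Z_2$; a filtered quasi-isomorphism then follows from agreement on $E_1$ together with the identification of the $d_1$ differential with $Q(1+\inv)$. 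I would present whichever of these two routes reads more transparently, flag the $\inv^2=\id$ point as the crux, and defer the routine spectral-sequence computation of $H^*(B\Z_4;\Z_2)$ to a citation.
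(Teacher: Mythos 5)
There is a genuine gap, and it sits exactly at the step you yourself flag as the crux. The identification $H_*^{\Z_4}(X;\Z_2)=H_*^{\Z_2}(X\times_{S^1}\Epin;\Z_2)$ is false. You correctly note that $\Z_4\cap S^1=\{\pm1\}$, so the two-stage quotient reads $X\times_{\Z_4}\Epin=\bigl(X\times_{\{\pm1\}}\Epin\bigr)/\Z_2$; but $X\times_{\{\pm1\}}\Epin$ is \emph{not} a model for $X\times_{S^1}\Epin$ (the natural map between them is a circle bundle with fiber $S^1/\{\pm1\}$). If you quotient by all of $S^1$ first and then by the residual $\Z_2=\pin/S^1$ (which acts freely), you land on $X\times_{\pin}\Epin$, so $H_*^{\Z_2}(X\times_{S^1}\Epin;\Z_2)$ computes $H_*^{\pin}(X;\Z_2)$, not $H_*^{\Z_4}(X;\Z_2)$. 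These differ already for $X=pt$: $H_*(B\Z_4;\Z_2)$ has rank one in every degree, while $H_*(\Bpin;\Z_2)$ vanishes in degrees $\equiv 3\pmod 4$. A second, independent error compounds this: even when $\inv^2=\id$ holds strictly, the Borel complex for a $\Z_2$-action is $C_*(M)\otimes\Z_2[Q]$ with \emph{all} powers of $Q$ (one copy of $C_*(M)$ per cell of $\rp^\infty$); choosing $H=0$ kills the higher corrections to the differential but does not truncate the complex to two terms. The two-term cone in \eqref{eq:z4}, where $Q^2=0$, computes the homology of the mapping torus $(X\times_{S^1}\Epin)\times_{\Z_2}S^1$, i.e.\ the $S^1$-stage rather than the $S^\infty$-stage of the Borel construction. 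Both of your intermediate identifications are therefore wrong, and the same argument would "prove" the false statement $H_*^{\pin}(X;\Z_2)\cong H_*(\Cone(Q(1+\inv)))$.

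The actual content of the proposition is a comparison of two genuinely different circle bundles over $X\times_{\pin}\Epin$: the mapping torus $(X\times_{S^1}\Epin)\times_{\Z_2}S^1$ (a principal bundle, whose homology is the right-hand side of \eqref{eq:z4}) and $X\times_{\Z_4}\Epin$ (a non-principal circle bundle with fiber $\pin/\Z_4\cong S^1$, whose homology is the left-hand side). The paper shows these two bundles have the same second Stiefel--Whitney class, namely the pullback of $Q^2\in H^2(\Bpin;\Z_2)$, and concludes via the chain-level mod $2$ Gysin sequence, which presents the homology of any circle bundle as the cone on cap product with a cocycle representing $w_2$. Your argument never engages with this comparison. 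The "alternative route" at the end does not repair matters: the skeletal filtration of $X\times_{\Z_4}E\Z_4\to B\Z_4$ has infinitely many stages (one per cell of $B\Z_4$), so there is no filtered map to the two-step cone to compare on $E_1$, and the proposed matching of "two cells' worth of fiber homology over each cell of $B\Z_2$" is an expectation, not a construction.
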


\begin{proof}
Let 
$$ Y:= (X \times_{S^1} \Epin) \times_{\Z_2} S^1$$ 
be the mapping torus of $X \times_{S^1} \Epin$ associated to the involution $j$. The right hand side of \eqref{eq:z4} is the homology of $Y$. Since $ (X \times_{S^1} \Epin) / {\Z_2} \cong X \times_{\pin} \Epin$, we can view $Y$ as a principal bundle
\begin{equation}
\label{eq:bundle1}
S^1 \hookrightarrow Y \twoheadrightarrow X \times_{\pin} \Epin.
\end{equation}
Principal circle bundles over a base $B$ are classified by homotopy classes of maps $B \to BS^1$, that is, by elements in $H^2(B; \Z)$ (their Euler classes). In our case the classifying map is the composition 
$$X \times_{\pin} \Epin \to \Bpin \to BS^1,$$ 
where the latter map comes from the quotient $\pin/ \Z_4 \cong S^1$.

On the other hand, since $\pin/ \Z_4 \cong S^1$, we also have a fiber bundle (which is not a principal bundle)
\begin{equation}
\label{eq:bundle2}
S^1 \hookrightarrow  X \times_{\Z_4} \Epin \twoheadrightarrow  X \times_{\pin} \Epin.
\end{equation}

General circle bundles over a base $B$ are classified by homotopy classes of maps $B \to BO(2)$, and they can be made into principal bundles if and only if they are orientable, that is, when the classifying map factors through $BSO(2) \cong BS^1$. Each circle bundle $S^1 \hookrightarrow T \twoheadrightarrow B$ has an associated second Steifel-Whitney class $w_2(T) \in H^2(B; \Z_2)$. Cap product with $w_2(T)$ gives one of the maps in the Gysin sequence on homology
$$  \dots \to H_*(T; \Z_2) \to H_*(B; \Z_2) \xrightarrow{\frown w_2(T)} H_{*-2}(B; \Z_2) \to \cdots$$

Recall that the Gysin sequence is constructed starting from  the short exact sequence of a pair,
$$ 0 \to C_*(T; \Z_2) \to  C_*(M; \Z_2) \to C_*(M, T; \Z_2) \to 0,$$
where $M$ is the disk bundle associated to $T$, i.e., the mapping cylinder for the projection $p \co T \to B$. Moreover, we have a Thom quasi-isomorphism $C_*(M, T) \to C_{*-2}(B), \ x \to p_*(x) \frown  \zeta,$ where $\zeta \in C^*(M, T)$ is a representative of the Thom class; since we work over a field, the quasi-isomorphism is in fact a chain homotopy equivalence. We also have another equivalence $p_*: C_*(M)  \to C_*(B)$. Putting everything together, we get equivalences:
$$  C_*(T; \Z_2)\xrightarrow{\sim} \Cone(C_*(M, T; \Z_2) \to C_*(M; \Z_2) ) \xrightarrow{\sim} \Cone (C_*(B; \Z_2) \xrightarrow{\frown w} C_{*-2}(B; \Z_2)),$$
where $w$ is any cocycle representing $w_2(T)$. These equivalences are canonical up to chain homotopy. Therefore, if we have two circle bundles $T, T'$ over $B$ with the same second Stiefel-Whitney class, then the homologies $H_*(T; \Z_2)$ and $H_*(T'; \Z_2)$ are canonically isomorphic.

In our setting, the circle bundles \eqref{eq:bundle1} and \eqref{eq:bundle2} have the same second Stiefel-Whitney class, namely the pull-back to $H^*(X \times_{\pin} \Epin; \Z_2)$ of the generator $Q^2 \in H^2(\Bpin; \Z_2) \cong \Z_2$. This implies that 
\begin{equation}
\label{eq:isoxy}
H_*^{\Z_4}(X; \Z_2) \cong H_*(Y; \Z_2).
\end{equation}

It remains to show that this isomorphism preserves the $\Ring$-module structures. To see this, consider the Gysin sequence on cohomology, at the chain level, which gives an isomorphism similar to \eqref{eq:isoxy}:
$$
H^*_{\Z_4}(X; \Z_2) \cong H^*(Y; \Z_2).
$$
In the particular case $X = pt$, the isomorphism gives
$$ H^*(B\Z_4; \Z_2) \cong H^*((\Epin/S^1) \times_{\Z_2} S^1; \Z_2) \cong \Ring.$$
For general $X$, the projection $X \to pt$ yields a commutative diagram
\begin{equation}
\label{eq:cmd}
\xymatrixcolsep{4pc}\xymatrix{
H^*_{\Z_4}(X; \Z_2) \ar[d] \ar[r]^{\cong} & H^*(Y; \Z_2) \ar[d] \\
H^*_{\Z_4}(pt;\Z_2)  \ar[r]^-{\cong} & H^*((\Epin/S^1) \times_{\Z_2} S^1; \Z_2).}
\end{equation}

The $\Ring$-module actions on $H_*^{\Z_4}(X; \Z_2)$ and $H_*(Y; \Z_2)$ are given by cap products with cohomology elements pulled back under the vertical maps in \eqref{eq:cmd}. The fact that the diagram commutes implies that the isomorphism \eqref{eq:isoxy} commutes with the $\Ring$-actions.\end{proof}

In view of Proposition~\ref{prop:SWF}, we make the following

\begin{conjecture}
For any rational homology sphere $Y$ and spin structure $\s$ on $Y$, we have an $\Ring$-module isomorphism 
$$ \HFIp_*(Y, \s) \cong H_*^{\Z_4}(\swf(Y, \s); \Z_2),$$
where $\swf(Y, \s)$ is the Seiberg-Witten Floer spectrum from \cite{Spectrum}.
\end{conjecture}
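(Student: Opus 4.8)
The plan is to deduce the conjecture from two ingredients that already exist in the literature, together with one new naturality statement. The first ingredient is the identification of $\HFp(Y,\s)$ with the $S^1$-equivariant Seiberg--Witten Floer homology of $\swf(Y,\s)$, obtained by combining the isomorphism between Heegaard Floer and Kronheimer--Mrowka monopole Floer homology \cite{KLT1, CGH2, Taubes} with the identification of the latter with $H_*^{S^1}(\swf(Y,\s);\Z_2)$ from \cite{LidmanManolescu}. The second ingredient is Proposition~\ref{prop:SWF}, which expresses $H_*^{\Z_4}(\swf(Y,\s);\Z_2)$ as the homology of the mapping cone $\Cone\bigl(C_*^{S^1}(\swf(Y,\s);\Z_2) \xrightarrow{Q(1+\inv)} Q\ccdot C_*^{S^1}(\swf(Y,\s);\Z_2)[-1]\bigr)$, with its natural $\Ring$-module structure. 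Since $\HFIp(Y,\s)$ is by definition the homology of the analogous mapping cone built from $\CFp(Y,\s)$ and the Heegaard Floer map $\inv$, what remains is to compare these two mapping cones.

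To carry this out I would promote the composite equivalence of the first ingredient to a quasi-isomorphism of chain complexes $f \co \CFp(Y,\s) \to C_*^{S^1}(\swf(Y,\s);\Z_2)$ --- working with a singular or cellular chain model for the $S^1$ homotopy quotient, as in the proof of Proposition~\ref{prop:SWF} --- and then show that $f$ intertwines the Heegaard Floer conjugation symmetry $\inv$ with the involution on $C_*^{S^1}(\swf(Y,\s);\Z_2)$ induced by the action of $j\in\pin$, up to chain homotopy, i.e. $f\circ\inv \sim \inv\circ f$. Granting this, the filtered mapping-cone argument already used in the proof of Proposition~\ref{prop:CFI} produces a quasi-isomorphism between $\CFIp(Y,\s)$ and the mapping cone appearing in Proposition~\ref{prop:SWF}; passing to homology and invoking that proposition yields the desired isomorphism. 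Compatibility with the $\Ring$-actions then follows by checking that the two identifications of $H^*(B\Z_4;\Z_2)$ with $\Ring$ agree, which reduces to the commutativity of a square analogous to \eqref{eq:cmd}.

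The hard part will be the conjugation-equivariance of $f$. On all sides the conjugation symmetry is induced by the same geometric operation --- the action of $j$ on the Seiberg--Witten equations, equivalently the passage from a Heegaard (or Morse) datum to its conjugate --- so one expects $f$ to be equivariant; but extracting this from \cite{KLT1, CGH2, Taubes, LidmanManolescu} requires revisiting those long and intricate arguments and producing a chain-level statement rather than merely a statement on homology. Because the mapping cone is sensitive to the chain-homotopy class of $\inv$, the intertwining must be controlled by an explicit homotopy; and since $\inv$ is itself canonical only up to chain homotopy --- it depends on a choice of Heegaard moves, and on the Seiberg--Witten side on choices of metrics and perturbations --- the precise assertion is a coherence statement of the same ``order two naturality'' flavor discussed in Section~\ref{sec:remarks}. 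The cleanest route, if it can be arranged, is to prove $\Z_2$-equivariant refinements (with respect to the conjugation action) of the equivalences of \cite{KLT1, CGH2, Taubes} and \cite{LidmanManolescu}, thereby upgrading $f$ to an equivariant chain homotopy equivalence; this seems within reach of current techniques but has not been carried out.

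Finally, a complete proof should account for the fact that $\swf(Y,\s)$ is produced by finite-dimensional approximation, so Proposition~\ref{prop:SWF} must be applied to the approximating $\pin$-spaces and the result obtained in the limit, using that the $\pin$-action extends the $S^1$-action and that the relevant involution on each approximating space is the one induced by $j$. This step is routine given the formalism of \cite{Spectrum, Triangulations}, but it should be made explicit, together with a check that the grading shifts introduced by the (de)suspension match on the two sides.
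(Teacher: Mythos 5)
The statement you are addressing is stated in the paper as a \emph{conjecture}, and the paper offers no proof of it --- only the motivation supplied by Proposition~\ref{prop:SWF} together with the expectation that $\HFp$ corresponds to $S^1$-equivariant Seiberg--Witten Floer homology. Your proposal is essentially that motivation expanded into a proof skeleton, and it correctly isolates where the difficulty lies; but it is a program, not a proof, and you say so yourself (``this seems within reach of current techniques but has not been carried out'').

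The concrete gap is the chain-level, conjugation-equivariant refinement of the equivalences of \cite{KLT1, CGH2, Taubes} and \cite{LidmanManolescu}. Those results identify homology groups (as $\Z_2[U]$-modules); they do not produce a chain map $f \co \CFp(Y,\s) \to C_*^{S^1}(\swf(Y,\s);\Z_2)$ together with a homotopy $f\circ\inv \sim \inv\circ f$. This cannot be sidestepped by working on homology: the paper shows (Section~\ref{subsec:lht}, and the remark after Corollary~\ref{cor:hatHFI}) that $\HFIp$ is \emph{not} determined by $\inv_*$ on $\HFp$ --- the mapping cone genuinely depends on the chain-homotopy class of $\inv$ and, for coherence, on the choice of homotopy, which is the ``order two naturality'' issue of Section~\ref{sec:remarks}. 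Until an equivariant comparison of that strength is established, the argument does not close, and the statement remains (as the paper presents it) a conjecture. Your outline is a reasonable roadmap toward a proof, but the essential step is precisely the one that is missing from the literature.
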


\section{Properties}
\label{sec:prop}

In this section we establish a few properties of the involutive Heegaard Floer homology groups $\HFIo(Y, \varpi)$ defined in Section~\ref{sec:def}. In this section, we will frequently have cause to refer to maps between involutive Heegaard Floer homology groups $\HFI$. Since we do not know that involutive Heegaard Floer homology is a natural object, this always means that the map is defined for a particular set of Heegaard data $\H$ for $Y$ (and may be a different map for a different set of Heegaard data).

\subsection{Basic facts}

First, observe that since $\CFIo = (\CFo[-1] \otimes (\Z_2[Q]/(Q^2))$ as a vector space, the (relative and absolute) gradings on $\CFo$ induce gradings on $\CFIo$. Let us write $c_1(\varpi)$ for the first Chern class of any representative of $\varpi$, and let $\mathfrak{d}(\varpi) = \operatorname{gcd}\{ \langle c_1(\varpi), \xi \rangle \mid \xi \in H_2(Y; \Z) \}.$ Then, $\CFIo(Y, \varpi)$ has an absolute $\Z_2$ grading and a relative $\Z/{\mathfrak{d}(\varpi)}\Z$ grading. Further, when $c_1(\varpi)$ is torsion, there is an absolute $\Q$-grading lifting the $\Z/{\mathfrak{d}(\varpi)}\Z$ grading. (Compare \cite{HolDisk}, \cite{AbsGraded}.)

We have the following exact sequences, in analogy with the corresponding long exact sequences for ordinary Heegaard Floer homology.

\begin{proposition} \label{propn:longexact} The involutive Heegaard Floer groups have long exact sequences
\begin{align*}
\cdots \rightarrow \HFIhat(Y,\varpi) \rightarrow \HFIp(Y,\varpi) \xrightarrow{\cdot U} \HFIp(Y, \varpi) \rightarrow \cdots \\
\cdots \rightarrow \HFIm(Y,\varpi) \xrightarrow{i} \HFIinf(Y, \varpi) \xrightarrow{\pi} \HFIp(Y, \varpi) \rightarrow \cdots 
\end{align*}
\noindent where $i$ and $\pi$ denote the maps induced by inclusion and projection.
\end{proposition}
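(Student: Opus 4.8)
The plan is to obtain both long exact sequences by applying elementary homological algebra to the mapping cone description of $\CFIo$. Recall that $\CFIo(\H,\varpi)$ is, by construction, the mapping cone of $Q(1+\inv)\co \CFo(\H,\varpi)\to Q\ccdot\CFo(\H,\varpi)[-1]$. The key general fact I would invoke is that a short exact sequence of chain complexes $0\to A_*\to B_*\to C_*\to 0$ that is compatible with a self-map $f$ (i.e.\ $f$ restricts to a map of short exact sequences) induces a short exact sequence of mapping cones $0\to \Cone(f|_A)\to \Cone(f|_B)\to \Cone(f|_C)\to 0$, and hence a long exact sequence on homology by the snake lemma. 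Equivalently, one can think of $\CFIo$ as a total complex of a double complex and compare the relevant filtrations.

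First I would treat the $U$-sequence. The ordinary Heegaard Floer complexes fit into a short exact sequence $0\to \CFhat(\H,\varpi)\to \CFp(\H,\varpi)\xrightarrow{\cdot U}\CFp(\H,\varpi)\to 0$ (this is the standard sequence realizing multiplication by $U$). Since $\inv$ is a $\Z_2[U]$-equivariant chain map — it is built from $\eta$ and $\Phi(\bH,\H)$, both of which are $U$-equivariant — the map $Q(1+\inv)$ commutes with this short exact sequence. Taking mapping cones termwise therefore yields $0\to \CFIhat(\H,\varpi)\to \CFIp(\H,\varpi)\xrightarrow{\cdot U}\CFIp(\H,\varpi)\to 0$, and the associated long exact sequence in homology is the first displayed sequence. (One should note the $U$-action on $\CFIhat$ is trivial, exactly as for $\CFhat$, so the outer maps in the sequence are the $U$-multiplication maps claimed.) Second, for the $i,\pi$ sequence, I would start from the standard short exact sequence $0\to \CFm(\H,\varpi)\xrightarrow{i}\CFinf(\H,\varpi)\xrightarrow{\pi}\CFp(\H,\varpi)\to 0$ and run the identical argument, using again that $\inv$ preserves the $\CFm\subset\CFinf$ filtration (it does, since $\inv$ preserves the $i$-grading on pairs $[\x,i]$ up to the shift recorded by $n_z$, which is the same behaviour as the ordinary differential). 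Taking cones gives $0\to\CFIm\to\CFIinf\to\CFIp\to 0$, whence the second long exact sequence.

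The one point requiring a small remark, rather than a genuine obstacle, is the identification of the connecting and edge maps in the resulting long exact sequences with the maps named in the statement — namely that the maps $\CFIhat\to\CFIp$, $\cdot U$, $i$, and $\pi$ are literally the ``inclusion,'' ``$U$-multiplication,'' ``inclusion,'' and ``projection'' maps on the involutive level. This is immediate from the construction: each of these maps is the cone of the corresponding ordinary map, so it is given by the ordinary map in each $Q$-degree and in particular is still inclusion/projection/multiplication. The main thing to be careful about is grading conventions: the $[-1]$ shift in the target of $Q(1+\inv)$ and the shift built into the $\Cone$ construction must be tracked so that the long exact sequences have the stated arrows and degrees; this is a routine bookkeeping check using the conventions fixed in Section~\ref{sec:CFI}. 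I do not expect any essential difficulty beyond this, since everything reduces to the homological algebra of mapping cones together with the $\Z_2[U]$-equivariance and filtration-compatibility of $\inv$, both of which are clear from the definition of $\inv$.
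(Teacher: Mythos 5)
Your proposal is correct and follows essentially the same route as the paper: the paper also forms the termwise short exact sequence of mapping cones from the standard short exact sequence $0\to \CFm\to\CFinf\to\CFp\to 0$ (using that $Q(1+\inv)$ commutes with inclusion and projection) and takes the associated long exact sequence in homology, proving the second sequence explicitly and noting the first is similar. Your additional remarks on $U$-equivariance of $\inv$ and on the identification of the edge maps are accurate and fill in exactly the details the paper leaves implicit.
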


\begin{proof} We prove the second assertion; the first is similar. From the definitions of the ordinary Heegaard Floer chain complexes, there is a short exact sequence $0 \rightarrow \CFm(Y,\s) \rightarrow \CFinf(Y,\s) \rightarrow \CFp(Y, \s)\rightarrow 0$. We have the following commutative diagram.
\begin{center}
\begin{tikzpicture}[node distance = 4 cm, auto]
\node(11){$0$};
\node(12)[right = .5 cm of 11]{$Q\ccdot \CFm(Y,\s)[-1]$};
\node(13)[right of =12]{$Q \ccdot\CFinf(Y, \s)[-1]$};
\node(14)[right of =13]{$Q\ccdot \CFp(Y, \s)[-1]$};
\node(15)[right = .5 cm of 14]{$0$};
\path[->](11)edge(12);
\path[->](12)edge node [above]{$i$}(13);
\path[->](13)edge node [above]{$\pi$}(14);
\path[->](14)edge (15);
\node(1)[above = 1.15 cm of 11]{$0$};
\node(2)[above = 1 cm of 12]{$\CFm(Y,\s)$};
\node(3)[above = 1 cm of 13]{$\CFinf(Y, \s)$};
\node(4)[above = 1 cm of 14]{$\CFp(Y, \s)$};
\node(5)[above = 1.2 cm of 15]{$0$};
\path[->](1)edge(2);
\path[->](2)edge node [above]{$i$}(3);
\path[->](3)edge node [above]{$\pi$}(4);
\path[->](4)edge(5);
\path[->](2)edge node[right]{$\cdot Q(1+\inv)$}(12);
\path[->](3)edge node[right]{$\cdot Q(1+\inv)$}(13);
\path[->](4)edge node[right]{$\cdot Q(1+\inv)$}(14);
\end{tikzpicture}
\end{center}

Commutativity of this diagram induces a short exact sequence of chain maps between the mapping cones:
$$0 \rightarrow \CFIm(Y,\varpi) \rightarrow \CFIinf(Y, \varpi) \rightarrow \CFIp(Y, \varpi)\rightarrow 0$$
which in turn gives rise to the desired long exact sequence.
\end{proof}

In addition to the involutive Heegaard Floer complexes $\CFIo(Y, \varpi)$, we can also consider their duals, the cochain complexes $\CFI_{\circ}(Y, \varpi)$. In ordinary Heegaard Floer homology, the dual complex to $\CFo(Y, \varpi)$ is denoted $\CF_{\circ}(Y, \varpi)$, and then $\CFI_{\circ}(Y, \varpi)$ can be viewed as the mapping cone of $Q(1+\inv^\sharp)$ on $\CF_{\circ}(Y, \varpi)$, where $\inv^\sharp$ is the dual map to $\iota$ on cochain complexes. 

By analogy with the reduced Heegaard Floer groups $\HFred^+(Y, \s)$ from \cite[Definition 4.7]{HolDisk}, we define the {\em reduced involutive Heegaard Floer homology} as
\begin{equation}
\label{eq:HFIred}
 \HFIred^+(Y, \varpi) := \HFIp(Y, \varpi)/ \operatorname{Im}(U^n) \text{ for } n \gg 0.
 \end{equation}
Equivalently, $\HFIred^+(Y, \varpi)$ is the cokernel of the map $\pi \co \HFIinf(Y, \varpi) \to \HFIp(Y, \varpi)$ from the second exact sequence in Proposition~\ref{propn:longexact}.

\subsection{Orientation reversal} 
We would like to consider the behavior of the involutive invariant under orientation reversal. For context, let us quickly recall the situation for ordinary Heegaard Floer homology. Recall that there is a natural bijection $\Spinc(Y) \simeq \Spinc(-Y)$ by regarding a nowhere-vanishing vector field over $Y$ as a nowhere-vanishing vector field over $-Y$. Ozsv{\'a}th and Szab{\'o} \cite[Section 5]{HolDiskFour} define a pairing map
$$\CFinf(Y,\s) \times \CFinf(-Y,\s) \rightarrow \Z $$
via 
\begin{align*}
\langle[{\bf x},i], [{\bf y},j]\rangle = \begin{cases} 1 \text{ if } {\bf x}={\bf y} \text{ and } i+j+1=0 \\
0 \text{ otherwise} \end{cases}
\end{align*}
which satisfies $\langle \del_{Y} \alpha, \beta \rangle = \langle \alpha, \del_{-Y} \beta \rangle$ and $\langle U\alpha, \beta \rangle = \langle \alpha, U \beta \rangle$. For torsion $\spinc$ structures, this induces isomorphisms of chain complexes between $\CFinf(Y,\s)$ and the cochain complex $CF_{\infty}(-Y,\s)$ and between $\CFp(Y,\s)$ and the cochain complex $\CF_{-}(-Y,\s)$, in both cases via $[{\bf x},i] \mapsto [{\bf x}, -i-1]^*$. 

In order to define an analogous pairing for involutive Heegaard Floer homology on the chain level, we must take some care with the choices made in producing appropriate chain maps $\inv$ for $Y$ and $-Y$. As in Section \ref{sec:def}, let $\H = (\Sigma, {\alphas}, {\betas}, z)$ be a Heegaard diagram for $Y$, and let $\bH = (-\Sigma, {\betas}, {\alphas}, z)$. Assume we have chosen a sequence of moves between $\bH$ and $\H$ inducing a chain homotopy equivalence $\Phi(\bH,\H) \co \CFinf(\bH,\bar{\s}) \rightarrow \CFinf(\H,\bar{\s})$, so that the chain map $\inv_Y$ is the composition
$$\inv_Y \co \CFinf(\H,\s) \xrightarrow{\eta} \CFinf(\bH,\bar{\s}) \xrightarrow{\Phi(\bH,\H)} \CFinf(\H, \bar{\s})$$
where $\eta$, as before, is the canonical isomorphism between $\CFinf(\H,\s)$ and $\CFinf(\bH,\bar{\s})$. Now, let $-\H = (\Sigma, {\betas}, {\alphas}, z)$ be our choice of Heegaard diagram for $-Y$, and let the sequence of moves connecting $-\bH = (-\Sigma, {\alphas}, {\betas}, z)$ to $-\H$ be the same moves as in the previous sequence, but in the opposite order. Then we have a map
$$\inv_{-Y} \co \CFinf(-\H,\s) \xrightarrow{\eta} \CFinf(-\bH,\bar{\s}) \xrightarrow{\Phi(-\bH,-\H)} \CFinf(-\H, \bar{\s}).$$

With this in mind, we define a pairing $$\langle,\rangle^{\inv} \co \CFIinf(\H,\varpi) \times \CFIinf(-\H,\varpi) \rightarrow \mathbb Z$$ by requiring that $\langle [{\bf x},i], [{\bf y},j] \rangle^{\inv} = \langle Q[{\bf x},i], Q[{\bf y},j] \rangle^{\inv} =0$ and
\begin{align*}
\langle [{\bf x},i], Q[{\bf y},j] \rangle^{\inv} &= \langle Q[{\bf x},i], [{\bf y}, j]\rangle^{\inv} = \begin{cases} 1 \text{ if } {\bf x}={\bf y} \text{ and } i+j+1=0 \\
0 \text{ otherwise.} \end{cases}
\end{align*}
The point of our care in constructing $\inv_Y$ and $\inv_{-Y}$ is the following lemma.

\begin{lemma} \label{lemma:iotapair} The pairing $\langle,\rangle^{\inv}$ satisfies
$$\langle Q(\inv_Y[{\bf x}, i]), [{\bf y},j] \rangle^{\inv} = \langle [{\bf x}, i], Q(\inv_{-Y}[{\bf y},j]) \rangle^{\inv}.$$
\end{lemma}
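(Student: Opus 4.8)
The strategy is to reduce the identity to the adjunction property already known for the ordinary Ozsv\'ath--Szab\'o pairing $\langle,\rangle$, together with the compatibility of the pairing with the map $\eta$ and with the equivalences $\Phi$. First I would unwind the definitions: since $\langle[\x,i],[\y,j]\rangle^\inv=0$ and $\langle Q[\x,i],Q[\y,j]\rangle^\inv=0$, both sides of the claimed identity reduce to the $\langle Q(\cdot),(\cdot)\rangle^\inv$ and $\langle(\cdot),Q(\cdot)\rangle^\inv$ pieces, which are governed by the ordinary pairing $\langle,\rangle$ on $\CFinf(\H,\cdot)$ and $\CFinf(-\H,\cdot)$ via the formula $\langle Q a, b\rangle^\inv=\langle a, Qb\rangle^\inv=\langle a,b\rangle$ for $a\in\CFinf(\H,\varpi)$, $b\in\CFinf(-\H,\varpi)$. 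So the lemma is equivalent to the statement
$$\langle \inv_Y[\x,i],[\y,j]\rangle=\langle[\x,i],\inv_{-Y}[\y,j]\rangle,$$
i.e.\ that $\inv_{-Y}$ is adjoint to $\inv_Y$ under the ordinary pairing.

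Next I would break $\inv_Y=\Phi(\bH,\H)\circ\eta$ and $\inv_{-Y}=\Phi(-\bH,-\H)\circ\eta$ into their constituent pieces and check adjointness factor by factor. For the canonical isomorphism $\eta$: the point is that $-\H$ was chosen to be $(\Sigma,\betas,\alphas,z)$ and $-\bH$ to be $(-\Sigma,\alphas,\betas,z)$, so that on the nose $-\H=\bh$ and $-\bH=H$ as Heegaard diagrams (with suitably conjugated almost complex structures); thus $\eta_Y\colon\CFinf(\H,\s)\to\CFinf(\bH,\bar\s)$ and $\eta_{-Y}\colon\CFinf(-\H,\s)\to\CFinf(-\bH,\bar\s)$ are "the same" identification at the level of generators (each sends $[\x,i]\mapsto[\bar\x,i]$), and one checks directly from the combinatorial formula defining $\langle,\rangle$ that $\langle\eta_Y a,b'\rangle=\langle a,\eta_{-Y}^{-1}b'\rangle$ for appropriate $a,b'$ — this is immediate since $\eta$ is a generator-by-generator bijection preserving $i$, and the pairing only sees matching generators and the constraint $i+j+1=0$. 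For the chain homotopy equivalences $\Phi$: here I would invoke the duality between the continuation/triangle/stabilization maps associated to a Heegaard move and those associated to the reversed move, exactly the statement that Ozsv\'ath--Szab\'o use to prove $\CFinf(-Y,\s)\cong CF_\infty(Y,\s)$. Concretely, each elementary move map $F$ appearing in $\Phi(\bH,\H)$ has a dual $F^\sharp$ under $\langle,\rangle$ which is precisely the elementary move map appearing (in reversed position) in $\Phi(-\bH,-\H)$, because $-\bH$ is connected to $-\H$ by the same sequence of moves run backwards. Composing these adjunctions in order — and noting that reversing the order of composition is exactly what adjunction of a composite does — yields $\langle\Phi(\bH,\H)u,v\rangle=\langle u,\Phi(-\bH,-\H)v\rangle$, and combining with the $\eta$ adjunction gives the lemma.

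The main obstacle, and the step deserving the most care, is the adjunction statement for the $\Phi$ maps: one must know not merely that $\CFinf(Y)$ and $\CFinf(-Y)$ are dual complexes, but that the \emph{specific} move maps are mutually dual, move by move, so that running the move sequence backwards for $-\bH\to-\H$ produces literally the transpose of the composite $\Phi(\bH,\H)$. This is where the deliberate choice — made in the paragraph preceding the lemma — to define the move sequence for $-\H$ as the reversal of the one for $\H$ is doing all the work; without it the two maps would only agree up to chain homotopy and the chain-level identity could fail. I would therefore state the per-move duality as a known fact (it is implicit in \cite[Section 5]{HolDiskFour} and the standard proof that orientation reversal dualizes Heegaard Floer homology at the chain level), cite it, and then the verification for the composite is bookkeeping: transpose of a composite reverses the order, which matches the reversed move sequence. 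A final routine check confirms that the $Q$-equivariance is built into the definition of $\langle,\rangle^\inv$, so multiplying through by $Q$ on either side does not disturb the argument.
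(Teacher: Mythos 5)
Your proposal is correct and follows essentially the same route as the paper: the paper's proof also rests on the observation that interchanging the $\alpha$ and $\beta$ curves and running the chosen move sequence in reverse turns every pseudo-holomorphic disk or triangle around, so the coefficient of $[\y,-j-1]$ in $\inv_Y[\x,i]$ equals the coefficient of $[\x,-i-1]$ in $\inv_{-Y}[\y,j]$. Your factor-by-factor adjointness check ($\eta$ first, then the per-move duality of the $\Phi$ maps) is just a slightly more structured organization of the same coefficient-tracking argument, and you correctly identify that the deliberate reversal of the move sequence in the definition of $\inv_{-Y}$ is what makes the identity hold at the chain level rather than only up to homotopy.
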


\begin{proof} Each map of chain complexes induced by one of the moves in the sequence we have chosen to connect $\bH$ and $\H$ counts either rigid pseudo-holomorphic disks of appropriate index or pseudo-holomorphic triangles; interchanging ${\alphas}$ and ${\betas}$ and running the sequence in the opposite order has the effect that the direction of each disk or triangle is reversed. Suppose that $\inv_Y([{\bf x},i])$, written as a sum of generators of $\CFinfty(\H, \bar{\s})$ such that each appears exactly once, contains $[{\bf y},-j-1]$. Then there is a chain of pseudo-holomorphic disks and triangles in the maps comprising $\Phi(\bH,\H)$ from ${\bf x}$ to ${\bf y}$. This implies that there is a chain of holomorphic disks and triangles in the maps comprising $\Phi(-\bH,-\H)$ from ${\bf y}$ to ${\bf x}$, and, by considering intersections with the basepoint, that $\inv_{-Y}([{\bf y},j])$, again written as a sum of intersection points, must contain $[{\bf x}, -i-1]$. \end{proof}

We can now prove the following.

\begin{lemma} \label{lemma:identities} Under the pairing $\langle,\rangle^{\inv}$, we have the identities
\begin{align*}
\langle \alpha, \delinv_{-Y} \beta \rangle^{\inv} &= \langle \delinv_Y \alpha, \beta \rangle^{\inv} \\
\langle \alpha, U\beta \rangle^{\inv} &= \langle U\alpha, \beta \rangle^{\inv}.
\end{align*}\end{lemma}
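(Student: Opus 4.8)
The plan is to verify each identity directly from the definitions, using the corresponding identities for the ordinary Heegaard Floer pairing together with Lemma~\ref{lemma:iotapair}. Recall that the differential on $\CFIinf$ is $\delinv = \del + Q(1+\inv)$, and that the pairing $\langle,\rangle^\inv$ pairs a non-$Q$ class with a $Q$ class (and vice versa) via the Ozsv\'ath--Szab\'o pairing, and kills pairs of classes that are both $Q$ or both non-$Q$.

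\begin{proof}
We check the identities on generators and extend bilinearly. Write a general element of $\CFIinf(\H, \varpi)$ as $a + Qb$ with $a, b \in \CFinf(\H,\varpi)$, and similarly $c + Qd$ for an element of $\CFIinf(-\H,\varpi)$. By definition of $\langle,\rangle^\inv$, we have $\langle a + Qb, c+Qd\rangle^\inv = \langle a, d\rangle + \langle b, c\rangle$, where $\langle,\rangle$ on the right is the ordinary pairing between $\CFinf(\H,\varpi)$ and $\CFinf(-\H,\varpi)$.

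For the first identity, compute
$$\delinv_Y(a + Qb) = \del_Y a + Q\bigl((1+\inv_Y)a + \del_Y b\bigr),$$
so
$$\langle \delinv_Y(a+Qb), c + Qd\rangle^\inv = \langle \del_Y a, d\rangle + \langle (1+\inv_Y)a + \del_Y b,\, c\rangle.$$
On the other side,
$$\delinv_{-Y}(c+Qd) = \del_{-Y} c + Q\bigl((1+\inv_{-Y})c + \del_{-Y} d\bigr),$$
so
$$\langle a + Qb,\, \delinv_{-Y}(c+Qd)\rangle^\inv = \langle a,\, (1+\inv_{-Y})c + \del_{-Y} d\rangle + \langle b, \del_{-Y} c\rangle.$$
Now compare term by term. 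The equality $\langle \del_Y a, d\rangle = \langle a, \del_{-Y} d\rangle$ and $\langle \del_Y b, c\rangle = \langle b, \del_{-Y} c\rangle$ are the Ozsv\'ath--Szab\'o adjointness of $\del$. The equality $\langle a, c\rangle = \langle a, c\rangle$ is trivial. Finally, $\langle \inv_Y a, c\rangle = \langle a, \inv_{-Y} c\rangle$ is exactly the content of Lemma~\ref{lemma:iotapair} (after unwinding the $Q$-bookkeeping in the statement of that lemma, which says precisely $\langle Q(\inv_Y[\x,i]),[\y,j]\rangle^\inv = \langle[\x,i], Q(\inv_{-Y}[\y,j])\rangle^\inv$). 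Summing these equalities gives the first identity.

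For the second identity, observe that $U$ commutes with $Q$ and acts on $\CFIinf$ by acting on each $\CFinf$ summand, so $U(a+Qb) = (Ua) + Q(Ub)$. Hence
$$\langle U(a+Qb), c+Qd\rangle^\inv = \langle Ua, d\rangle + \langle Ub, c\rangle = \langle a, Ud\rangle + \langle b, Uc\rangle = \langle a+Qb,\, U(c+Qd)\rangle^\inv,$$
where the middle equality is the $U$-adjointness $\langle U\alpha,\beta\rangle = \langle\alpha, U\beta\rangle$ of the ordinary pairing. This completes the proof.
\end{proof}

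\medskip
\noindent\emph{Remark on the plan.} The only nonroutine input is Lemma~\ref{lemma:iotapair}; everything else is the formal bilinear bookkeeping of the mapping cone pairing combined with the already-established adjointness properties of the Ozsv\'ath--Szab\'o pairing. Thus there is no real obstacle here beyond being careful with signs of indices and the placement of the formal variable $Q$ — the substance of the orientation-reversal argument was already done in constructing $\inv_Y$ and $\inv_{-Y}$ via mutually reversed sequences of Heegaard moves and in proving Lemma~\ref{lemma:iotapair}.
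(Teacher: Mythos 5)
Your proof is correct and takes essentially the same route as the paper: both arguments reduce the identities to the adjointness of $\del_Y$/$\del_{-Y}$ and of $U$ under the ordinary Ozsv\'ath--Szab\'o pairing, with Lemma~\ref{lemma:iotapair} supplying the one nontrivial cross term $\langle \inv_Y a, c\rangle = \langle a, \inv_{-Y} c\rangle$. The only difference is organizational — you expand a general element $a+Qb$ at once, while the paper runs through the four generator cases separately — and your bookkeeping of the $Q$-variable and the mapping-cone differential is accurate throughout.
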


\begin{proof} First, suppose that $\alpha =[{\bf x},i]$ and $\beta =Q[{\bf y},j]$. Let $\del_{Y}$ denote the usual Heegaard Floer differential for $Y$, so that $\delinv_Y[{\bf x},i] = \del_Y[{\bf x},i] + Q(1+\inv_Y)[{\bf x},i]$. Similarly, $\delinv_{-Y}(Q[{\bf y},j])=Q\del_{-Y}[{\bf y},j]$. Thus, we have
\begin{align*}
\langle \delinv_Y \alpha, \beta \rangle^{\inv} &= \langle \del_Y[{\bf x},i] + Q(1+\inv_Y)[{\bf x},i], Q[{\bf y}, j]\rangle^{\inv} \\
&=\langle \del_Y[{\bf x},i], Q[{\bf y}, j]\rangle^{\inv} \\
&=\langle [{\bf x},i], Q\del_{-Y}[{\bf y}, j] \rangle^{\inv} \\
&=\langle \alpha, \delinv_{-Y} \beta \rangle^{\inv}
\end{align*}
where the third equality comes from the fact that $J$-holomorphic disks from ${\bf x}$ to ${\bf y}$ counted by $\del_{Y}$ are in one-to-one correspondence with $-J$ holomorphic disks from ${\bf y}$ to ${\bf x}$ counted by $\del_{-Y}$. The case in which $\alpha =Q[{\bf x},i]$ and $\beta=[{\bf y},j]$ can now be obtained from this case by reversing the roles of $Y$ and $-Y$.

Next, suppose that $\alpha = [{\bf x},i]$, $\beta = [{\bf y},j]$. Then $\delinv_{Y}(\alpha) = \del_Y[{\bf x},i]+Q(1+\inv_Y)[{\bf x},i]$ and $\delinv_{-Y}(\beta)=\del_{-Y}[{\bf y},j]+Q(1+\inv_{-Y})[{\bf y},j]$. We have
\begin{align*}
\langle \delinv_Y \alpha, \beta \rangle &= \langle \del_Y[{\bf x},i] + Q(1+\inv_Y)[{\bf x},i], [{\bf y}, j]\rangle^{\inv} \\
&=\langle Q(1+\inv_Y)[{\bf x},i], [{\bf y}, j]\rangle^{\inv} \\
&=\langle Q[{\bf x},i], [{\bf y}, j]\rangle^{\inv} + \langle Q [\inv_Y({\bf x}), i], [{\bf y}, j] \rangle^{\inv}\\
&=\langle [{\bf x},i], Q[{\bf y}, j]\rangle^{\inv} + \langle [{\bf x},i], Q(\inv_{-Y}[{\bf y},j])\rangle^{\inv}\\
&=\langle [{\bf x},i], Q(1+\inv_{-Y})[{\bf y}, j] \rangle^{\inv} \\
&=\langle [{\bf x}, i], \del_{-Y}[{\bf y},j]+Q(1+\inv_{-Y})[{\bf y},j] \rangle^{\inv} \\
&=\langle \alpha, \delinv_{-Y} \beta \rangle^{\inv}.
\end{align*}
The fourth, most important, equality comes from Lemma \ref{lemma:iotapair}. Finally, in the case that $\alpha = Q[{\bf x},i]$ and $\beta=Q[{\bf y},j]$, both sides of the claimed identity are trivial.

The second statement is straightforward since $U[{\bf x},i]=[{\bf x},i-1]$.
\end{proof} 

By Lemma \ref{lemma:identities} the pairing we have defined on $\CFIinf$ descends to give pairings
\begin{align*}
\langle , \rangle^{\inv} & \co \HFIinf(Y,\varpi) \times \HFIinf(-Y,\varpi) \rightarrow \Z, \\
\langle , \rangle^{\inv} & \co \HFIp(Y,\varpi) \times \HFIm(-Y,\varpi) \rightarrow \Z.
\end{align*}
In particular, for $c_1(\varpi)$ torsion, this gives isomorphisms between $\CFIinf(Y,\varpi)$ and the cochain complex $CFI_{\infty}(-Y,\varpi)$, and between $\CFIp(Y,\varpi)$ and the cochain complex $\CFI_{-}(-Y,\varpi)$, in both cases via $[{\bf x},i] \mapsto [Q{\bf x}, -i-1]^*$ and $[Q{\bf x},i] \mapsto [{\bf x},-i-1]^*$. 

At this point we recall that the analogous isomorphism on $\CFinf$ in ordinary Heegaard Floer homology, which goes by $[{\bf x},i] \mapsto [{\bf x}, -i-1]^*$, takes $\CFinf_r(Y,\s)$ to $\CF_{\infty}^{-r-2}(-Y,\s)$ \cite[Proposition 7.11]{HolDiskFour}. Because the absolute grading on $\CFIo$ is induced by the absolute grading on $\CFo$, we immediately have the following.

\begin{proposition} \label{propn:reversalisomorphisms} If $\s$ is a torsion $\spinc$ structure on $Y$ and $\varpi$ is its orbit under conjugation, there are isomorphisms
\begin{align*}
\CFIinf_r(Y, \varpi) \rightarrow \CFI_{\infty}^{-r-1}(-Y,\varpi) \\
\CFIp_r(Y, \varpi) \rightarrow \CFI_{-}^{-r-1}(-Y,\varpi)
\end{align*}
which induce isomorphisms on homology
\begin{align*}
\mathcal D^{I,\infty} \co \HFIinf_r(Y, \varpi) \rightarrow \HFI_{\infty}^{-r-1}(-Y,\varpi) \\
\mathcal D^{I,+} \co \HFIp_r(Y, \varpi) \rightarrow \HFI_{-}^{-r-1}(-Y,\varpi).
\end{align*}
\end{proposition}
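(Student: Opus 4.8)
The plan is to derive Proposition~\ref{propn:reversalisomorphisms} as a formal consequence of Lemma~\ref{lemma:identities}, exactly the way the corresponding statement is obtained in ordinary Heegaard Floer theory, with the only new ingredients being the bookkeeping of the variable $Q$ and the grading shift that is inherited from $\CFo$. First I would observe that Lemma~\ref{lemma:identities} says precisely that the pairing $\langle,\rangle^{\inv}$ is a perfect pairing of chain complexes which intertwines $\delinv_Y$ with $\delinv_{-Y}$ and $U$ with $U$; hence the adjoint of this pairing is an isomorphism of chain complexes $\CFIinf(Y,\varpi)\to \CFI_\infty(-Y,\varpi)$ (and, restricting, $\CFIp(Y,\varpi)\to \CFI_-(-Y,\varpi)$), given on generators by $[\x,i]\mapsto [Q\y,-i-1]^*$ and $[Q\x,i]\mapsto[\y,-i-1]^*$ as already recorded in the paragraph preceding the statement. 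So the content of the proposition is entirely about gradings.

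Next I would track the grading. The key input is \cite[Proposition 7.11]{HolDiskFour}, recalled in the excerpt: the ordinary duality map $[\x,i]\mapsto[\x,-i-1]^*$ carries $\CFinf_r(Y,\s)$ to $\CF_\infty^{-r-2}(-Y,\s)$, i.e.\ it is grading-reversing with a shift of $-2$. Now $\CFIo(Y,\varpi)=(\CFo(Y,\varpi)[-1])\otimes \Z_2[Q]/(Q^2)$, so an element $[\x,i]$ sitting in grading $r$ of $\CFIo$ corresponds to an element of grading $r+1$ in $\CFo$, and the element $Q[\x,i]$ has grading $r-1$ in $\CFIo$, corresponding to grading $r$ in $\CFo$ (here I am using that $\deg Q=-1$ and the $[-1]$ shift). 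Feeding the class of $[\x,i]\in\CFIinf_r$ through the duality: its underlying $\CFinf$-grading is $r+1$, the ordinary duality sends it to $\CF_\infty$-cograding $-(r+1)-2=-r-3$; but the target generator is $[Q\y,-i-1]^*$, whose $\CFIo$-cograding differs from its underlying $\CF_\infty$-cograding by the same $Q$-and-shift bookkeeping in reverse, raising it by $2$; hence it lands in $\CFI_\infty^{-r-3+2}=\CFI_\infty^{-r-1}$. The same arithmetic on the $[Q\x,i]\mapsto[\y,-i-1]^*$ branch gives the identical shift, so the two pieces of the isomorphism are grading-consistent, and we obtain the stated isomorphisms $\CFIinf_r(Y,\varpi)\to\CFI_\infty^{-r-1}(-Y,\varpi)$ and $\CFIp_r(Y,\varpi)\to\CFI_-^{-r-1}(-Y,\varpi)$. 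Passing to homology yields $\mathcal D^{I,\infty}$ and $\mathcal D^{I,+}$.

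I would write this up in roughly three steps: (1) state that Lemma~\ref{lemma:identities} makes the adjoint of $\langle,\rangle^{\inv}$ a chain isomorphism commuting with the differentials and $U$-action, and identify it on generators; (2) compute the grading shift by combining \cite[Proposition 7.11]{HolDiskFour} with the definition $\CFIo=\CFo[-1]\otimes\Z_2[Q]/(Q^2)$, $\deg Q=-1$, checking that both $Q$-parities match; (3) pass to homology. The main obstacle — really the only subtlety — is the grading accounting in step (2): one must be careful that the $[-1]$ shift and the degree of $Q$ are applied consistently on source and target and that the ``$-2$'' in the ordinary duality becomes ``$-1$'' after the two $Q$-shifts cancel one unit. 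Everything else is formal, since Lemmas~\ref{lemma:iotapair} and \ref{lemma:identities} have already done the work of verifying that the pairing respects $\delinv$, which is the genuinely new phenomenon relative to the non-involutive setting (it is what forces the careful choice of the conjugation data for $-Y$ to be the reversed sequence of Heegaard moves).
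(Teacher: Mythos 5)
Your strategy is exactly the paper's: the chain\--level isomorphism is the adjoint of the pairing $\langle\,,\rangle^{\inv}$, whose compatibility with $\delinv$ and $U$ is Lemma~\ref{lemma:identities}, and the only remaining content is the grading shift. Unfortunately, the grading computation in your step (2) --- which you correctly identify as the one genuine subtlety --- is wrong at both intermediate stages, and you land on $-r-1$ only because the two errors cancel. With the paper's convention $C[n]_k = C_{k+n}$, an element of $\CFo$-degree $d$ has $\CFIo$-degree $d+1$ (not $d-1$), and $Q$ times it has $\CFIo$-degree $d$; this is also why the map $\HFIo(Y,\s)\to\HFo(Y,\s)$ in Proposition~\ref{propn:exact} \emph{decreases} grading by one. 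So if $[\x,i]$ has $\CFIinf$-grading $r$, its underlying $\CFinf$-grading is $r-1$, not $r+1$, and ordinary duality sends it to $\CF_{\infty}$-cograding $-(r-1)-2=-r-1$. On the target side, your uniform ``raise by $2$'' is likewise incorrect: the cograding of $[Q\y,j]^*$ in $\CFI_{\infty}(-Y)$ \emph{equals} that of $[\y,j]^*$ in $\CF_{\infty}(-Y)$ (the $Q$ and the $[-1]$ cancel), whereas the cograding of $[\y,j]^*$ in $\CFI_{\infty}(-Y)$ exceeds it by $1$. The two branches therefore carry different corrections ($0$ and $+1$), not an ``identical shift'' of $+2$; the correct accounting is $r\mapsto(-r-1)+0$ on the first branch and $r\mapsto(-r-2)+1$ on the second, both giving $-r-1$.

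In your version the source-side error and the target-side error offset each other on each branch, so the statement you reach is the right one, but the argument as written would not survive a line-by-line check (and the assertion that $[\x,i]$ has $\CFo$-grading $r+1$ while $Q[\x,i]$ ``corresponds to grading $r$ in $\CFo$'' is not consistent with any single convention for the shift). The repair is purely mechanical: redo the bookkeeping with the sign of $[-1]$ taken from the paper's definition $C[n]_k=C_{k+n}$. The paper's own proof is exactly this corrected computation and nothing more.
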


\begin{proof} Let $r$ be the absolute grading of $[{\bf x},i]$ as an element of $\CFinf(Y,\s)$. Then the absolute grading of $[{\bf x},i]$ as an element of $\CFIinf(Y)$ is $r+1$ and the absolute grading of its image $Q[{\bf x},-i-1]^*$ as an element of $\CFIinf(-Y)$ is $-r-2 = -(r+1)-1$. Similarly, the absolute grading of $Q[{\bf x},i]$ as an element of $\CFIinf(-Y)$ is $r$ and the absolute grading of its image $[{\bf x},-i-1]^*$ is $-r-2+1 = -r-1$.\end{proof} 

Using the universal coefficients theorem for cohomology, Proposition~\ref{propn:reversalisomorphisms} allows us to calculate $\HFIm(-Y, \varpi)$ from knowledge of $\HFIp(Y, \varpi)$. Further, once we know $\HFIm(-Y, \varpi)$, we can get $\HFIp(Y, \varpi)$ by analyzing the second exact triangle from Proposition~\ref{propn:longexact}.

\subsection{Relation to the usual Heegaard Floer groups}
\begin{proposition} \label{propn:nonspin}
Let $\varpi \in [\Spinc(Y)]$ be an orbit of the form $\{\s, \bar \s\}$ with $\s \neq \bar \s$. Then, we have an isomorphism of graded $\Ring$-modules
$$ \HFIo(Y, \varpi) \cong \HFo(Y, \s)[-1] \oplus \HFo(Y, \s),$$
with trivial multiplication by $Q$. 
\end{proposition}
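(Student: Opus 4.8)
The plan is to exploit the fact that, since $\s\neq\bar\s$, we have $\CFo(\H,\varpi)=\CFo(\H,\s)\oplus\CFo(\H,\bar\s)$ as complexes of $\Z_2[U]$-modules and the chain map $\inv$ merely interchanges the two summands: writing $\inv=\bigl(\begin{smallmatrix}0&\inv_2\\\inv_1&0\end{smallmatrix}\bigr)$ with $\inv_1\co\CFo(\H,\s)\to\CFo(\H,\bar\s)$ and $\inv_2\co\CFo(\H,\bar\s)\to\CFo(\H,\s)$ honest chain maps, Lemma~\ref{lem:inv} tells us $\inv_1,\inv_2$ are chain homotopy equivalences with $\inv_2\inv_1\simeq\id$ and $\inv_1\inv_2\simeq\id$. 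First I would normalize the map $1+\inv$. The block matrices $g=\bigl(\begin{smallmatrix}\id&0\\\inv_1&\id\end{smallmatrix}\bigr)$ and $h=\bigl(\begin{smallmatrix}\id&\inv_2\\0&\id\end{smallmatrix}\bigr)$ are chain automorphisms of $\CFo(\H,\varpi)$ (each is an involution over $\Z_2$ and commutes with $\del$ since $\inv_1,\inv_2$ are chain maps), and a one-line computation gives $h\circ(1+\inv)\circ g=\bigl(\begin{smallmatrix}\id+\inv_2\inv_1&0\\0&\id\end{smallmatrix}\bigr)$. Applying $g$ to the source and $h$ to the target of the map $Q(1+\inv)$ in the cone \eqref{eq:involutive} produces an isomorphism of complexes of $\Z_2[U]$-modules between $\CFIo(\H,\varpi)$ and the mapping cone of $Q\,\bigl(\begin{smallmatrix}\id+\inv_2\inv_1&0\\0&\id\end{smallmatrix}\bigr)$ on $\CFo(\H,\s)\oplus\CFo(\H,\bar\s)$.

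That cone splits as $\Cone\bigl(Q(\id+\inv_2\inv_1)\co\CFo(\H,\s)\to Q\ccdot\CFo(\H,\s)[-1]\bigr)\oplus\Cone\bigl(Q\co\CFo(\H,\bar\s)\to Q\ccdot\CFo(\H,\bar\s)[-1]\bigr)$. The second summand is the cone of an isomorphism, hence contractible. For the first, since $\inv_2\inv_1\simeq\id$ the map $\id+\inv_2\inv_1$ is null-homotopic, hence so is $Q(\id+\inv_2\inv_1)$ (multiply the homotopy by $Q$); the cone of a null-homotopic map is isomorphic to the cone of the zero map, so the first summand is isomorphic, as a complex of $\Z_2[U]$-modules, to $\Cone\bigl(0\co\CFo(\H,\s)\to Q\ccdot\CFo(\H,\s)[-1]\bigr)=\CFo(\H,\s)[-1]\oplus Q\ccdot\CFo(\H,\s)[-1]$ with vanishing connecting map. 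Passing to homology, and recalling that the non-$Q$ (domain) factor carries the shift $[-1]$ while $Q\ccdot\CFo(\H,\s)[-1]\cong\CFo(\H,\s)$ as graded modules, this yields $\HFIo(Y,\varpi)\cong\HFo(Y,\s)[-1]\oplus\HFo(Y,\s)$ as graded $\Z_2[U]$-modules. (Along the way one uses that $\inv$, hence $g$, $h$ and the homotopies, preserve the relevant gradings and the $U$-action, which is automatic since $\inv$ is assembled from the grading- and $U$-preserving maps $\eta$ and $\Phi(\bH,\H)$.)

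What remains, and is the delicate point, is triviality of multiplication by $Q$: the normalization above does not commute with the $Q$-action, so this is not visible from the splitting. I would argue directly on the original complex. On homology, $\J=\inv_*$ carries the summand $\HFo(Y,\s)$ isomorphically onto $\HFo(Y,\bar\s)$ and conversely, and one checks at once that then $\ker(1+\J)=\im(1+\J)$ (both equal $\{(\gamma,\J\gamma):\gamma\in\HFo(Y,\s)\}$; equivalently $\J$ freely permutes a basis, so the associated $\Z_2$-equivariant Tate cohomology vanishes). Now take any cycle $\xi=\xi_0+Q\xi_1$ of $\CFIo(\H,\varpi)$ with $\xi_0,\xi_1\in\CFo(\H,\varpi)$; the equation $(\del+Q(1+\inv))\xi=0$ forces $\del\xi_0=0$ and $(1+\inv)\xi_0=\del\xi_1$, so $[\xi_0]\in\ker(1+\J)=\im(1+\J)$. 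Choosing a cycle $\zeta$ and a chain $w$ in $\CFo(\H,\varpi)$ with $\xi_0=(1+\inv)\zeta+\del w$, we get $(\del+Q(1+\inv))(\zeta+Qw)=Q(1+\inv)\zeta+Q\del w=Q\xi_0$, so $Q\cdot[\xi]=[Q\xi_0]=0$. Together with the previous paragraph this gives the isomorphism as graded $\Ring$-modules with trivial $Q$-action. The main obstacle throughout is bookkeeping: the $Q$-action is intertwined with the very map $1+\inv$ one normalizes, so triviality of $Q$ must be extracted separately rather than read off the direct-sum splitting.
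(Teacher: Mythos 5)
Your proof is correct, and while the first half follows the same basic strategy as the paper (a change of basis on the mapping cone that splits off an acyclic piece), the two halves are organized differently enough to be worth comparing. For the underlying $\Z_2[U]$-module structure, the paper performs a single change of basis $a' = a + f(a) + QW(a)$, $Qb' = Qb + Qg(b)$ (with $W$ the homotopy between $gf$ and $\id$) and splits the cone as $A' \oplus QA \oplus (B \oplus QB')$, verifying by hand that the last summand is acyclic; your version factors this into two cleaner standard steps -- conjugating $1+\inv$ by the block-triangular automorphisms $g,h$ to diagonalize it, then invoking that the cone of an isomorphism is contractible and that the cone of a null-homotopic map is isomorphic to the cone of zero. (Both routes require, and you correctly note, that all maps and homotopies involved are $U$-equivariant and grading-preserving.) The more substantive divergence is the $Q$-triviality argument: the paper reads it off from the explicit form of cycles in the new basis ($Qa' = Q(a+f(a)) = \del a$ when $\del_A a = 0$), whereas you give a purely homological argument on the original complex, showing that any cycle $\xi_0 + Q\xi_1$ has $[\xi_0] \in \ker(1+\J) = \im(1+\J)$ and hence $Q[\xi]$ bounds. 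Your argument is more general -- it shows $Q$ acts trivially on $\HFIo$ whenever the Tate cohomology of $\J$ vanishes on $\HFo(Y,\varpi)$, which is automatic when $\J$ freely swaps two summands -- and it cleanly sidesteps the fact (which you correctly flag) that the normalizing change of basis does not commute with the $Q$-action, so triviality of $Q$ cannot be read off the splitting itself.
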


\begin{proof}
Recall that for the map $\inv \co \CFo(Y, \s) \rightarrow \CFo(Y, \bar{\s})$ is the composition of a chain isomorphism and a chain homotopy equivalence, and is therefore itself a chain homotopy equivalence, with homotopy inverse $\inv \co \CFo(Y, \bar{\s}) \rightarrow \CFo(Y, \s)$. 

For notational simplicity, consider the general situation of two $\Z_2[U]$-complexes $(A, \del_A)$ and $(B, \del_B)$ with $U$-equivariant chain homotopy equivalences $f\co A \rightarrow B$ and $g\co B \rightarrow A$ which are homotopy inverses. Consider the chain complex $C=((A\oplus B)[-1])\otimes \Z_2[Q]/(Q^2)$ with differential $\del$ given by
\begin{align*}
&a \mapsto \del_A a + Qa + Qf(a) 
&&b \mapsto \del_B b + Qb + Qg(b) \\
&Qa \mapsto Q \del_A a 
&&Qb \mapsto Q \del_B b
\end{align*}
Let $W \co A \to A$ be the homotopy between $gf$ and the identity. We do a change of basis to $C$, replacing $A$ with $A'$ consisting of elements $a' = a+f(a)+QW(a)$ and $QB$ by $QB'$ with elements $Qb' = Qb+Qg(b)$. Because $f$, $g$, and $W$ are $U$-equivariant, the change of basis map is $U$-equivariant as well. Furthermore, it is a chain isomorphism; explicitly we have
\begin{align*}
\del(a') &= \del(a+f(a)+QW(a)) \\
		&= \del_A(a) + Qa + Qf(a) + \del_B f(a) + Qf(a) + Qgf(a) + Q\del_A W(a) \\
		&= \del_A(a) + f\del_A(a) + Q(a+gf(a) + \del_A W(a))\\
		&= \del_A(a) + f\del_A(a) +QW(\del_A(a))\\
		&= (\del(a))'\\
\del(Qb') &= \del(Q(b+g(b)))\\
		 &= Q\del_B b + Q\del_B g(b)\\
		 &= (\del(Qb))' 
\end{align*}
Furthermore, with respect to this change of basis the elements in $QA$ and $B$ have the following differentials
\begin{align*}
\del(Qa) &= Q\del_A(a)\\
\del(b) &= \del_B(b) + Qb +Qg(b)\\
		&= \del_B(b)+Qb'
\end{align*}
We see that $C$ splits as a direct sum of the complexes $(A',\del|_{A'})$, $(QA,\del_A)$, and $(B\oplus QB', \del|_{B \oplus QB'})$. The last summand is acyclic, so we see that the homology of $C$ is 
$$ H_{*-1}(A') \oplus H_*(QA).$$
However, the map $(A,\del_A)\rightarrow (A', \del|_{A'})$ is a chain isomorphism, so we conclude that the homology of $C$ is isomorphic to
$$ H_{*-1}(A) \oplus H_*(A).$$ 
The $Q$ action is trivial because if $a'$ is a cycle, then $\del_A a + \del_Bf(a) + Q \del_A W(a)=0$, so in fact all three of these summands are zero. It follows that $Qa'=Q(a+f(a))=\del a,$ so $Q[a']=0$ in homology. 

Applying this result to our case with $A=\CFo(Y,\s)$, $B=\CFo(Y,\bar{\s})$, $f=\inv$ and $g=\inv$  yields the desired conclusion. \end{proof}

In light of the previous proposition, we will focus on $\HFIo(Y, \varpi)$ for $\varpi$ consisting of a single element $\s$ with $\s = \bar \s$ (that is, $\s$ is spin). In this case we simply write $\CFIo(Y, \s)$ for $\CFIo(Y, \varpi)$, and $\HFIo(Y, \s)$ for $\HFIo(Y, \varpi)$.

\begin{proposition} \label{propn:exact} Let $\s$ be a spin structure on $Y$. Then, there is an exact triangle of $U$-equivariant maps relating $\HFIo$ to $\HFo$:
\begin{equation}
\label{pic:exact}
\begin{tikzpicture}[baseline=(current  bounding  box.center)]
\node(1)at(0,0){$\HFIo(Y,\s)$};
\node(2)at (-2,1){$\HFo(Y, \s)$};
\node(3)at (2,1){$Q \ccdot \HFo(Y,\s)[-1]$};
\path[->](2)edge node[above]{$Q(1+ \inv_*)$}(3);
\path[->](3)edge (1);
\path[->](1)edge(2);
\end{tikzpicture}
\end{equation}
Here, the map $\HFIo(Y, \s) \to \HFo(Y, \s)$ decreases grading by $1$, and the other two maps are grading-preserving.
\end{proposition}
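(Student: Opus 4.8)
The plan is to exhibit the triangle~\eqref{pic:exact} as the long exact sequence in homology coming from the tautological short exact sequence attached to a mapping cone, and then to identify the connecting homomorphism with $Q(1+\inv_*)$.

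First I would unwind the definition of $\CFIo(Y,\s)$. As a graded $\Z_2[U]$-module it is $\CFo(Y,\s)[-1]\oplus Q\ccdot\CFo(Y,\s)[-1]$, with differential $\del^\inv=\del+Q(1+\inv)$. The summand $Q\ccdot\CFo(Y,\s)[-1]$---the target of the cone map---is a subcomplex: restricted to it the term $Q(1+\inv)$ vanishes since $Q^2=0$, so its differential is just $\del$. The quotient is $\CFo(Y,\s)[-1]$, again with differential $\del$. This gives a short exact sequence of complexes of $\Z_2[U]$-modules
\begin{equation*}
0\to Q\ccdot\CFo(Y,\s)[-1]\xrightarrow{\ i\ }\CFIo(Y,\s)\xrightarrow{\ \pi\ }\CFo(Y,\s)[-1]\to 0,
\end{equation*}
with $i$ the inclusion of the subcomplex and $\pi$ the projection; both are $U$-equivariant, and grading-preserving with the shown shifts. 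Here one uses that $\inv=\Phi(\bH,\H)\circ\eta$ is a grading-preserving, $\Z_2[U]$-equivariant chain map---so that $\del^\inv$ is a differential and $U$ acts by chain maps---which in turn follows from the corresponding properties of $\eta$ and $\Phi(\bH,\H)$.

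Next I would pass to the associated long exact sequence in homology. Since $H_*(\CFo(Y,\s)[-1])=\HFo(Y,\s)[-1]$ and $H_*(Q\ccdot\CFo(Y,\s)[-1])=Q\ccdot\HFo(Y,\s)[-1]$, it reads
\begin{equation*}
\cdots\to Q\ccdot\HFo(Y,\s)[-1]\xrightarrow{\ i_*\ }\HFIo(Y,\s)\xrightarrow{\ \pi_*\ }\HFo(Y,\s)[-1]\xrightarrow{\ \delta\ }Q\ccdot\HFo(Y,\s)[-1]\to\cdots.
\end{equation*}
Unrolling this, and identifying $\HFo(Y,\s)[-1]$ with $\HFo(Y,\s)$ with grading lowered by one, reproduces~\eqref{pic:exact}: the edge $Q\ccdot\HFo(Y,\s)[-1]\to\HFIo(Y,\s)$ is $i_*$, which preserves grading, while the edge $\HFIo(Y,\s)\to\HFo(Y,\s)$ is $\pi_*$ composed with this identification, hence decreases grading by one, as asserted.

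It remains to identify the connecting map $\delta$ with $Q(1+\inv_*)$; this is the standard computation for a mapping cone. A class in $H_*(\CFo(Y,\s)[-1])$ is represented by a $\del$-cycle $x\in\CFo(Y,\s)[-1]$; regarding $x$ as an element of $\CFIo(Y,\s)$ and applying $\del^\inv$, the term $\del x$ vanishes, so $\del^\inv x=Q(1+\inv)x$, which already lies in the subcomplex $Q\ccdot\CFo(Y,\s)[-1]$. Hence $\delta[x]=[Q(1+\inv)x]=Q(1+\inv_*)[x]$, i.e.\ $\delta=Q(1+\inv_*)$; and since the degree $-1$ coming from $Q$ is cancelled by the $[-1]$ in the target, this map is grading-preserving. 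I do not anticipate a real obstacle here; the only points needing care are the bookkeeping of the two grading shifts (from $Q$ and from $[-1]$) and the verification that $\inv$, and hence the whole cone construction, is $U$-equivariant---which, as noted, comes from the corresponding properties of $\eta$ and $\Phi(\bH,\H)$.
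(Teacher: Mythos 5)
Your proof is correct and is exactly the argument the paper has in mind: the paper simply observes that $\CFIo(Y,\s)$ is a mapping cone and cites the standard fact (Weibel, Proposition 1.5.2) that the homology of a mapping cone fits into such a triangle, which is precisely the short exact sequence, long exact sequence, and connecting-map computation you have written out. Your version just supplies the details, including the grading bookkeeping, so there is nothing to change.
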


\begin{proof} This follows directly from the definitions: $\CFIo(Y,\varpi)$ is a mapping cone of chain complexes, and the homology of any mapping cone fits into such an exact triangle (cf., for example, \cite[Proposition 1.5.2]{Weibel}). 
\end{proof}

\begin{corollary} 
\label{cor:hatHFI}
Let $\s$ be a spin structure on $Y$. Then $\HFIhat(Y,\s)$ is isomorphic to the homology of the mapping cone $$\HFhat(Y,\s) \xrightarrow{Q(1+\inv_*)} Q \ccdot \HFhat(Y,\s)[-1]$$
as $\mathcal R$-modules (with trivial $U$ action). 
\end{corollary}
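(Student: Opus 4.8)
The goal is to show that for a spin structure $\s$, the involutive hat group $\HFIhat(Y,\s)$ — which by definition is the homology of the mapping cone of $Q(1+\inv)$ acting on the chain complex $\CFhat(Y,\s)$ — is computed instead by the mapping cone of $Q(1+\inv_*)$ acting on the \emph{homology} $\HFhat(Y,\s)$. The essential point is that $\CFhat(Y,\s)$ is a finitely generated chain complex over the field $\Z_2$, hence it is chain homotopy equivalent (as a $\Z_2$-complex) to its homology $\HFhat(Y,\s)$ equipped with zero differential; and this equivalence can be chosen to intertwine $\inv$ with $\inv_*$ up to homotopy. Once one has such a homotopy equivalence commuting with the two copies of $1+\inv$ up to chain homotopy, the induced map on mapping cones is a quasi-isomorphism, just as in the proof of Proposition~\ref{prop:CFI}.

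\textbf{Key steps.} First, choose a chain homotopy equivalence $h \co \CFhat(Y,\s) \to (\HFhat(Y,\s), 0)$ and a homotopy inverse $h' \co (\HFhat(Y,\s),0) \to \CFhat(Y,\s)$; this exists because every complex of vector spaces over $\Z_2$ is formal. Since the $\hat U$ action on $\CFhat$ is trivial, there are no $\Z_2[U]$-equivariance subtleties here. Second, set $\inv_* := h \circ \inv \circ h' \co \HFhat(Y,\s) \to \HFhat(Y,\s)$; this is, up to the identifications, exactly the map induced by $\inv$ on homology, so the notation is consistent. One then checks that the square with horizontal maps $Q(1+\inv)$ on $\CFhat$ and $Q(1+\inv_*)$ on $\HFhat$ and vertical maps $h$ (and $Q h$ on the target copy, shifted) commutes up to chain homotopy: indeed $h\circ Q(1+\inv) = Q(1+h\inv h' h) \simeq Q(1+h\inv) \simeq Q(1+\inv_*)\circ h$, using $h'h\simeq \id$. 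Third, feed this homotopy-commuting square into the standard mapping-cone machinery exactly as in the last two paragraphs of the proof of Proposition~\ref{prop:CFI}: the homotopy $\Upsilon$ making the square commute provides the diagonal component of a filtered chain map between the two cones, and since on the associated graded this map is just $h$ on each summand — a chain homotopy equivalence — it is a quasi-isomorphism on the total complexes. Hence the homologies agree. Fourth, observe that both cones are built so that $Q$ decreases grading by $1$ and $U$ acts as zero, so the quasi-isomorphism is automatically a map of graded $\Ring$-modules; this proves the $\Ring$-module statement.

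\textbf{Alternative, cleaner route.} One can bypass the chain-level homotopy argument entirely and argue purely homologically: the short exact sequence of complexes $0 \to Q\ccdot\CFhat[-1] \to \CFIhat \to \CFhat \to 0$ from the mapping cone yields the exact triangle of Proposition~\ref{propn:exact} relating $\HFIhat(Y,\s)$, $\HFhat(Y,\s)$, and $Q\ccdot\HFhat(Y,\s)[-1]$ with connecting map $Q(1+\inv_*)$. Meanwhile the mapping cone of $Q(1+\inv_*)$ on homology sits in \emph{its own} exact triangle with the same three groups and the same connecting map. A five-lemma / naturality argument then identifies the two middle terms. However, to make the five-lemma conclusion into an $\Ring$-module isomorphism one still wants the comparison map to come from an actual chain map, so I would present the chain-level argument as the main proof and perhaps only remark on this alternative.

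\textbf{Main obstacle.} The argument is essentially routine; the only point requiring care is the bookkeeping in the homotopy-commuting square and the verification that the resulting comparison map on cones respects gradings and the $Q$- and $U$-actions. In particular one must be slightly careful that the homotopy $\Upsilon$ is itself $U$-equivariant (here trivially so, since $\hat U=0$), so that the diagonal term of the cone map is $\Ring$-linear; this is exactly parallel to the filtration/associated-graded argument already carried out in the proof of Proposition~\ref{prop:CFI}, so I would invoke that proof rather than repeat it.
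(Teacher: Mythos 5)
Your main argument is correct, but it is not the route the paper takes; the paper's proof is essentially your ``alternative, cleaner route,'' and it is shorter than you feared. The paper simply invokes the exact triangle of Proposition~\ref{propn:exact} and observes that, because $\HFhat(Y,\s)$ and $\HFIhat(Y,\s)$ are $\Z_2$-vector spaces with trivial $U$-action, the long exact sequence breaks into split short exact sequences
$0 \to \operatorname{ker}(Q(1+\inv_*)) \to \HFIhat(Y,\s) \to \operatorname{coker}(Q(1+\inv_*))[-1] \to 0$;
the resulting kernel-plus-cokernel is literally the homology of the two-term complex $Q(1+\inv_*)$ on $\HFhat(Y,\s)$, and the $Q$-action is checked by hand to be the induced map from the kernel to the cokernel. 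No five lemma and no chain-level comparison map are needed, so the $\Ring$-module issue you flag is resolved directly. Your main proof instead uses formality of $\CFhat(Y,\s)$ over the field $\Z_2$ to replace the chain complex by its homology, transports $\inv$ to $\inv_*$ along the equivalence, and compares mapping cones as in Proposition~\ref{prop:CFI}; this is longer but more structural, makes $\Ring$-linearity automatic, and correctly isolates what is special about the hat flavor (a homotopy equivalence $\CFhat \simeq \HFhat$ compatible with the trivial $U$-action), which is precisely what fails for $\CFp$ as a $\Z_2[U]$-complex and explains the paper's remark that the analogue is false for the other flavors. One cosmetic slip: in your displayed chain of homotopies the composite $h\circ Q(1+\inv)$ should read $Q(h+h\inv)$ and $Q(1+\inv_*)\circ h$ equals $Q(h+h\inv h'h)$; the homotopy $h'h\simeq \id$ then gives the homotopy-commutativity as intended.
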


\begin{proof} Because $\HFhat(Y,\s)$ and $\HFIhat(Y,\s)$ are $\Z_2$-vector spaces, the exact triangle of Proposition \ref{propn:exact} splits, and we have
$$0 \rightarrow \operatorname{ker}(Q(1+\inv_*)) \rightarrow \HFIhat(Y,\s) \rightarrow \operatorname{coker}(Q(1+\inv_*))[-1] \rightarrow 0.$$
We see that $\HFIhat(Y,\s) = \operatorname{ker}(Q(1+\inv_*))[-1] \oplus \operatorname{coker}(Q(1+\inv_*))[-1]$, with the $Q$ action agreeing with the map $\operatorname{ker}(Q(1+\inv_*)) \xrightarrow{\cdot Q} \operatorname{coker}(Q(1+\inv_*))$. \end{proof}

Note that Corollary~\ref{cor:hatHFI} implies that the Euler characteristic of $\HFIhat(Y, \s)$ is always zero.

\medskip

Interestingly, the analog of Corollary~\ref{cor:hatHFI} does not hold for the other versions. For example,  in Section~\ref{subsec:lht} we will compute  $\HFIp(-\Sigma(2,3,7))$, and we will see that 
it is not isomorphic to the mapping cone of $1 + \inv_*$ on $\HFp$. Thus, to understand $\HFIp$ we need to study $\inv$ at the chain level, before taking homology.

\subsection{L-spaces} \label{sec:Lspaces}
In Heegaard Floer theory, a three-manifold $Y$ is called an {\em L-space} if for all $\spinc$ structures $\s$ on $Y$, we have $\HFhat(Y, \s) \cong \Z_2$ (in some grading) or, equivalently, $\HFp(Y, \s) \cong \Tower := \Z_2[U, U^{-1}]/\Z_2[U]$. An L-space is necessarily a rational homology sphere.

\begin{corollary} \label{corollary:lspace} Let $Y$ be an L-space, and $\s$ a spin structure on $Y$. Then 
$$\HFIp(Y, \s) \cong \HFp(Y, \s)[-1] \otimes_{\mathbb \Z_2[U]} \mathcal R.$$ \end{corollary}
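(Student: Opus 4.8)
The plan is to prove the chain-level refinement that, for an L-space $Y$ and a spin structure $\s$, the conjugation map $\inv$ on $\CFp(Y,\s)$ is $U$-equivariantly chain homotopic to the identity; the corollary is then formal. Passing through the chain level is genuinely necessary here: as the remark following Corollary~\ref{cor:hatHFI} points out, one cannot in general recover $\HFIp$ from the mapping cone of $1+\inv_*$ on homology, and in our case $1+\inv_*=0$ on $\HFp(Y,\s)\cong\Tower$ anyway (the only grading-preserving $\Z_2[U]$-automorphism of $\Tower$ is the identity), so on homology alone one only obtains a short exact sequence, which need not split.

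Granting the homotopy --- that is, a $U$-equivariant degree $1$ map $H$ with $\del H+H\del=1+\inv$ on $\CFp(Y,\s)$ --- the corollary follows by a change of basis in the mapping cone. The map $\theta:=\id+QH$ is a degree-preserving $\Ring$-module automorphism of $\CFp(Y,\s)[-1]\otimes_{\Z_2[U]}\Z_2[Q]/(Q^2)$ (an involution, since $Q^2=0$), and a direct computation shows that it conjugates the differential $\del^\inv=\del+Q(1+\inv)$ into the untwisted differential $\del$. Hence $\CFIp(Y,\s)$ is isomorphic, as a complex of $\Ring$-modules, to $\CFp(Y,\s)[-1]\otimes_{\Z_2[U]}\Ring$; and since $\Ring\cong\Z_2[U]\oplus Q\Z_2[U]$ is free over $\Z_2[U]$, tensoring with it is exact, so on homology $\HFIp(Y,\s)\cong\HFp(Y,\s)[-1]\otimes_{\Z_2[U]}\Ring$.

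To construct $H$, I would first produce it on $\CFm(Y,\s)$ and then propagate. Choosing a Heegaard diagram with finitely many intersection points makes $\CFm(Y,\s)$ a finite-rank free $\Z_2[U]$-complex; the map $\inv$ restricts to it, since the maps composing $\inv$ never raise the $i$-coordinate, and it induces the identity on $\HFm(Y,\s)$, as that is the only grading-preserving $\Z_2[U]$-automorphism. Because $Y$ is an L-space, $\HFm(Y,\s)\cong\Z_2[U]$ is free (this follows from $\HFhat(Y,\s)\cong\Z_2$ via the exact sequence relating $\HFm$ and $\HFhat$); since $\Z_2[U]$ is a principal ideal domain and $\CFm(Y,\s)$ is a complex of free modules with free homology, it is $U$-equivariantly chain homotopy equivalent to $\Z_2[U]$ with zero differential, supported in degree $d(Y,\s)$. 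On that model every grading-preserving endomorphism inducing the identity on homology equals the identity, and all chain homotopies vanish, so $\inv\simeq\id$ $U$-equivariantly on $\CFm(Y,\s)$. Since $\CFinf(Y,\s)=\CFm(Y,\s)\otimes_{\Z_2[U]}\Z_2[U,U^{-1}]$ with $\inv$ the localization of its restriction to $\CFm(Y,\s)$, the homotopy localizes to a $U$-equivariant homotopy on $\CFinf(Y,\s)$ preserving $\CFm(Y,\s)$; it therefore descends to the quotient $\CFp(Y,\s)=\CFinf(Y,\s)/\CFm(Y,\s)$ and gives the desired $H$.

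The main obstacle is the structural input: identifying $\CFm(Y,\s)$ up to $U$-equivariant chain homotopy equivalence with $\Z_2[U]$ for an L-space, and arranging the resulting nullhomotopy of $1+\inv$ to be $U$-equivariant and compatible with the $U$-power filtration so that it descends to $\CFp(Y,\s)$. Everything else --- the involution $\theta$ and the conjugation computation, flatness of $\Ring$ over $\Z_2[U]$, and the fact that the shift $[-1]$ in the statement is already built into the definition of $\CFIo$ --- is routine bookkeeping.
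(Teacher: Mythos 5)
Your proposal is correct, but it takes a genuinely different and stronger route than the paper. The paper stays entirely at the level of homology: since $Y$ is an L-space, $\inv_*=\id$ on $\HFp(Y,\s)$, so the exact triangle \eqref{pic:exact} degenerates to a short exact sequence exhibiting $\HFIp(Y,\s)$ as an extension of $\HFp(Y,\s)[-1]$ by $Q\ccdot\HFp(Y,\s)[-1]$; because these two towers sit in different degrees mod $2\Z$ and $U$ has degree $-2$, the extension is forced to split. In particular, your opening claim that passing through the chain level is ``genuinely necessary'' is not accurate --- the parity argument closes the gap on homology, and it also pins down the $Q$-action, since $Q$ on the mapping cone homology is the composite $g\circ Q\circ h$ in the notation of \eqref{pic:exact2}. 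What your argument buys instead is the chain-level statement that $\inv\simeq\id$ $U$-equivariantly on $\CFp(Y,\s)$, from which the conjugation by $\theta=\id+QH$ gives an isomorphism of complexes $\CFIp(Y,\s)\cong\CFp(Y,\s)[-1]\otimes_{\Z_2[U]}\Ring$, not merely an isomorphism on homology; this is strictly more information and isolates exactly why the phenomenon behind the remark after Corollary~\ref{cor:hatHFI} (e.g.\ for $-\Sigma(2,3,7)$) cannot occur for L-spaces. The individual steps all check out: $\inv$ does preserve $\CFm(Y,\s)$, the structure theory of finitely generated free graded complexes over the graded PID $\Z_2[U]$ with free homology $\HFm(Y,\s)\cong\Z_2[U]$ does give $\CFm(Y,\s)\simeq\Z_2[U]\oplus(\text{contractible})$ and hence $\inv\simeq\id$ there, the homotopy localizes to $\CFinf$ and descends to the quotient $\CFp$ by $U$-equivariance, and the computation $\theta\,\del^{\inv}\,\theta=\del$ is correct since $Q^2=0$. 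So the proof is sound; it is simply longer than necessary for the stated corollary.
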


\begin{proof} If $Y$ is an L-space, there is exactly one homology class in each grading $\HFp_r(Y,\s)$. Therefore, since $\inv$ is grading-preserving and $U$-equivariant, $\inv_*$ is either the zero map or the identity. Since $\inv_*^2=\Id$, we see that $\inv_*$ is the identity map on $\HFp(Y,\s)$. Therefore the map $(1+\inv_*)\co\HFp(Y,\s) \rightarrow Q\ccdot \HFp(Y,\s)[-1]$ must be zero. From the exact triangle (\ref{pic:exact}) we see that $\HFIp(Y, \s)$ is an extension of $\HFp(Y,\s)[-1]$ by $Q\ccdot \HFp(Y,\s)[-1]$.  Since these two towers are supported in different degrees mod $2\Z$, we conclude that the extension is trivial, implying the result.\end{proof}

\subsection{Cobordism maps}
\label{sec:cobordisms}
Suppose we have a connected, oriented four-dimensional cobordism $W$ between connected three-manifolds $Y$ and $Y'$. In \cite{HolDiskFour}, Ozsv\'ath and Szab\'o construct maps
$$ F^{\circ}_{W, \s} \co \HFo(Y, \s|_Y) \to \HFo(Y', \s|_{Y'}),$$
where $\s$ is a $\spinc$ structure on $W$. Strictly speaking, the maps $F^{\circ}_{W, \s}$ also depend on the choice of a path $\gamma$ from the basepoint on $Y$ to the basepoint on $Y'$; see \cite{JuhaszSutured} and \cite{Zemke} for a discussion of this in the hat case. However, we drop $\gamma$ from the notation for simplicity.

There is a conjugation symmetry acting on $\spinc$ structures on $W$. If $\varpi$  is an equivalence class under this symmetry (consisting of either one or two elements), we define
$$ F^{\circ}_{W, \varpi} = \sum_{\s \in \varpi} F^{\circ}_{W, \s} \co \HFo(Y, \varpi|_Y) \to \HFo(Y', \varpi|_{Y'}).$$

The purpose of this subsection is to construct similar maps in involutive Heegaard Floer homology.

\begin{proposition}
\label{prop:mapsHFI}
With $W$ and $\varpi$ as above, there exist maps
$$ F^{I, \circ}_{W, \varpi, \aa} \co \HFIo(Y, \varpi|_Y) \to \HFIo(Y', \varpi|_{Y'})$$
depending on some additional data $\aa$, such that the exact triangles of the form \eqref{pic:exact} fit into commutative diagrams
\begin{equation}
\label{eq:commd}
\xymatrix{
\dots \ar[r] &\HFo(Y, \varpi|_Y) \ar[d]_{F^{\circ}_{W, \varpi}} \ar[r]^-{Q(1+\inv_*)} &Q\ccdot \HFo(Y, \varpi|_Y)[-1]  \ar[d]_{F^{\circ}_{W, \varpi}} \ar[r] &\HFIo(Y, \varpi|_Y)  \ar[d]_{F^{I, \circ}_{W, \varpi, \aa}} \ar[r] &\dots \\
\dots \ar[r] &\HFo(Y', \varpi|_{Y'})  \ar[r]^-{Q(1+\inv_*)} & Q\ccdot \HFo(Y', \varpi|_{Y'})[-1]  \ar[r] & \HFIo(Y', \varpi|_{Y'})  \ar[r] &\dots
}
\end{equation}
\end{proposition}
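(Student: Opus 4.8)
The plan is to imitate the construction of the invariance map $\Phi^I(\H, \H')$ from the proof of Proposition~\ref{prop:CFI}, but now with the vertical arrows being cobordism maps rather than change-of-diagram maps. Concretely, fix Heegaard data $\H$ for $Y$ and $\H'$ for $Y'$, together with a compatible set of Heegaard moves and triple diagrams realizing the cobordism $W$ as a composition of handle attachments; call this package of choices $\aa$. At the chain level we then have a map $f^\circ_{W,\varpi}\co \CFo(\H,\varpi|_Y) \to \CFo(\H',\varpi|_{Y'})$ inducing $F^\circ_{W,\varpi}$ on homology. The first step is to show that the square
\begin{equation*}
\xymatrix{
\CFo(\H,\varpi|_Y) \ar[d]_{f^\circ_{W,\varpi}} \ar[r]^-{Q(1+\inv)} & Q\ccdot\CFo(\H,\varpi|_Y)[-1] \ar[d]^{f^\circ_{W,\varpi}}\\
\CFo(\H',\varpi|_{Y'}) \ar[r]^-{Q(1+\inv')} & Q\ccdot\CFo(\H',\varpi|_{Y'})[-1]
}
\end{equation*}
commutes up to a chain homotopy $\Upsilon_{W}$. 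Given such a $\Upsilon_{W}$, the triple $(f^\circ_{W,\varpi}, f^\circ_{W,\varpi}, \Upsilon_{W})$ assembles, exactly as in Proposition~\ref{prop:CFI}, into a chain map $F^{I,\circ}_{W,\varpi,\aa}\co \CFIo(\H,\varpi|_Y)\to\CFIo(\H',\varpi|_{Y'})$ between the two mapping cones; passing to homology gives the desired map, and the commutativity of \eqref{eq:commd} is then automatic, since \eqref{eq:commd} is just the long exact sequence of the two cones together with the evident maps between them induced by a chain map of cones.

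The heart of the argument is therefore producing the homotopy $\Upsilon_W$, i.e.\ showing $f^\circ_{W,\varpi}\circ(1+\inv) \sim (1+\inv')\circ f^\circ_{W,\varpi}$, equivalently $f^\circ_{W,\varpi}\circ\inv \sim \inv'\circ f^\circ_{W,\varpi}$. Unwinding the definition of $\inv$ as $\Phi(\bH,\H)\circ\eta$, this amounts to two facts. First, the cobordism map commutes (on the nose, or up to canonical homotopy) with the conjugation isomorphism $\eta$: running the cobordism's triangle counts on the conjugate diagrams $\bH$, $\bH'$ — with orientations reversed and $\alpha$, $\beta$ curves swapped — computes the cobordism map $f^\circ_{\bar W,\bar\varpi}$, and since $W$ with the conjugate $\spinc$ data is diffeomorphic to $W$, we get $\eta'\circ f^\circ_{W,\varpi}\sim f^\circ_{\bar W,\bar\varpi}\circ\eta$. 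This is the analogue, for cobordism maps, of the statement ``conjugating a move map by $\eta$ gives the corresponding move map on the conjugate diagrams'' used in Lemma~\ref{lem:inv}; it is essentially the conjugation invariance of the Ozsv\'ath–Szab\'o cobordism maps, which holds at the level of homology by \cite{HolDiskFour} and, one checks, at the chain level up to homotopy by the same diagram-theoretic arguments. Second, $f^\circ_{W,\varpi}$ commutes up to homotopy with the change-of-diagram equivalences $\Phi(\bH,\H)$ and $\Phi(\bH',\H')$; this is precisely the statement that the cobordism map is a well-defined morphism in the transitive system of Proposition~\ref{prop:JT}, i.e.\ that it is compatible with the chain homotopy equivalences induced by Heegaard moves. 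This compatibility is built into the construction of $f^\circ_{W,\varpi}$ once one chooses $\aa$ so that the moves used to define $\Phi(\bH,\H)$ on the $Y$ side and $\Phi(\bH',\H')$ on the $Y'$ side are extended over the cobordism in a consistent way. Composing these two facts yields $\inv'\circ f^\circ_{W,\varpi}\sim f^\circ_{W,\varpi}\circ\inv$, which is what we need.

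The main obstacle is that, just as with the invariance map $\Phi^I$ in Proposition~\ref{prop:CFI}, both the map $F^{I,\circ}_{W,\varpi,\aa}$ and the homotopy $\Upsilon_W$ depend on the auxiliary data $\aa$ (the Heegaard diagrams, the handle decomposition of $W$, the chosen sequences of moves, the path between basepoints), and removing this dependence would require a naturality statement of order two for Heegaard Floer homology, which is not available. We therefore do not claim $F^{I,\circ}_{W,\varpi,\aa}$ is canonical — only that for each choice of $\aa$ such a map exists and makes \eqref{eq:commd} commute — and this is all that is asserted in the proposition. A minor technical point to handle along the way is admissibility: one must choose the Heegaard triples realizing $W$ so that all the intermediate diagrams appearing in both the cobordism maps and the conjugation/move maps are simultaneously admissible for the relevant $\spinc$ structures, but this is routine by winding transverse to the relevant periodic domains, exactly as in \cite{HolDiskFour}.
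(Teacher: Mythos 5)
Your proposal is correct and follows essentially the same route as the paper: the key inputs are chain-level conjugation invariance of the Ozsv\'ath--Szab\'o cobordism maps together with their compatibility with the change-of-diagram equivalences, which produce the homotopy filling the $Q(1+\inv)$ square, and the resulting map of mapping cones automatically yields the commutative diagram of long exact sequences. The only organizational difference is that the paper decomposes $W$ into individual handle attachments and builds the involutive map piece by piece (interleaving handle maps $f_i$ with change-of-diagram maps $g_i$), whereas you treat the full cobordism map at once; the substance is the same.
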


\begin{proof}
To begin with, we choose the following:  
\begin{enumerate}[(i)]
\item A decomposition of $W$ as
$$W = W_1 \cup_{Y_1} W_2 \cup_{Y_3} \dots \cup_{Y_{n-1}} W_n,$$
where $W_i$ is cobordism between three-manifolds $Y_{i-1}$ and $Y_i$, for each $i=1, \dots, n$, such that $Y_0=Y$ and $Y_n = Y'$. We require that exactly one of the $W_i$, say $W_j$, consists of the addition of two-handles. (There can be several two-handles in $W_j$, or even none. Thus, $W_j$ is given by surgery on a possibly empty framed link $\mathbb{L} \subset Y_{j-1}$.) Further, for $i < j$, the cobordism $W_i$ consists of a single one-handle addition, and for $i > j$, it consists of a single three-handle addition;
\item A basepoint $z$ on $Y$ such that attaching the handles in each $W_i$ is always done away from $z$; this gives a path $\gamma$ from $z \in Y$ to a basepoint $z'\in Y'$.
\item For each $i < j$ (so that $W_i$ is a one-handle), a choice of Heegaard pairs $\H_{i-1}$ for $Y_{i-1}$ and $\H'_i$ for $Y_i$, such that $\H'_i$ is obtained from $\H_{i-1}$ by a connected sum with a standard diagram for $S^1 \times S^2$, as in \cite[Section 4.3]{HolDiskFour};
\item For the value $j$ such that $W_j$ consists of two-handles, a choice of a bouquet for the framed link $\mathbb{L}$, as well as a Heegaard triple (together with almost complex structures) subordinate to that bouquet, as in \cite[Section 4.1]{HolDiskFour}. When restricted to $Y_{j-1}$ and $Y_j$ this gives Heegaard pairs $\H_{j-1}$ and $\H'_j$, respectively;
\item For each $i > j$ (so that $W_i$ is a three-handle), a choice of Heegaard pairs $\H_{i-1}$ for $Y_{i-1}$ and $\H'_i$ for $Y_i$, such that $\H_{i-1}$ is obtained from $\H'_i$ by a connected sum with a standard diagram for $S^1 \times S^2$, as in \cite[Section 4.3]{HolDiskFour};
\item For each $i=1, \dots, n-1,$ a sequence of Heegaard moves relating the Heegaard pairs $\H'_i$ to $\H_i$ for $Y_i$. These give rise to chain homotopy equivalences $\Phi(\H'_i, \H_i)$ from $\CFo(\H'_i, \varpi|_{Y_i})$ to $\CFo(\H_i, \varpi|_{Y_i}).$
\end{enumerate}

Note that a decomposition of $W$ as in (i) above can be obtained from a self-indexing Morse function on $W$. Moreover, the data (i)-(vi) is what was needed to define the cobordism maps between ordinary Heegaard Floer complexes in \cite{HolDiskFour}.

To define cobordism maps between involutive Heegaard Floer complexes, we will use some additional choices $\aa_i$ and $\bb_i$ to construct chain maps
$$ f_i = F^{I, \circ}_{W_i, \varpi|_{W_i}, \aa_i} \co \CFIo(\H_{i-1}, \varpi|_{Y_i}) \to  \CFIo( \H'_{i}, \varpi|_{Y_{i}})$$
for $i=1, \dots, n,$ and
$$ g_i = \Phi^I(\H'_i, \H_i; \bb_i) \co \CFIo(\H'_i, \varpi|_{Y_i}) \to \CFIo(\H_i, \varpi|_{Y_i})$$
for $i=1, \dots, n-1$. Note that before defining $f_i$ and $g_i$, we will first need to construct their domains and targets. (Indeed, recall that the involutive Heegaard Floer chain complexes depend on sequences of moves that relate a diagram to its conjugate.) Once $f_i$ and $g_i$ are constructed, we will set
$$ f^{I,\circ}_{W, \varpi, \aa} = f_n \circ g_{n-1} \circ f_{n-1} \circ \dots \circ f_2 \circ g_1 \circ f_1$$
and then let $F^{I,\circ}_{W, \varpi, \aa}$ be the map induced by $ f^{I,\circ}_{W, \varpi, \aa}$ on homology. The total data $\aa$ will consist of (i)-(vi) above, together with the choices $\aa_i$ and $\bb_i$ at each step.

For each $i=1, \dots, n$, we define $f_i$ as follows. We choose a sequence of Heegaard moves from $\bH_{i-1}$ to $\H_{i-1}$. These produce a chain homotopy equivalence 
$$\Phi(\bH_{i-1}, \H_{i-1})\co \CFo(\bH_{i-1}, \varpi|_{Y_{i-1}}) \to \CFo(\H_{i-1}, \varpi|_{Y_{i-1}})$$
which can be used to construct the involutive complex $\CFIo(\H_{i-1}, \varpi|_{Y_{i-1}}).$ We also choose (independently) a sequence of moves from $\bH'_{i}$ to $\H'_{i}$, which give a homotopy equivalence $\Phi(\bH'_i, \H'_i)$ and a complex $\CFIo(\H'_i, \varpi|_{Y_i}).$ We now consider the diagram 
\begin{equation}
\label{eq:handles}
\xymatrixcolsep{4pc}\xymatrix{
 \CFo(\bH_{i-1}, \varpi|_{Y_{i-1}}) \ar[d]_{\Phi(\bH_{i-1}, \H_{i-1})} \ar[r]^{ \bar f^{\circ}_{W_i, \varpi|_{W_i}}} & \CFo(\bH'_i, \varpi|_{Y_i})\ar[d]^{\Phi(\bH'_i, \H'_i)}\\
 \CFo(\H_{i-1}, \varpi|_{Y_{i-1}}) \ar[r]^{f^{\circ}_{W_i, \varpi|_{W_i}} } & \CFo(\H'_i, \varpi|_{Y_i})
}
\end{equation}
where the horizontal maps are chain maps induced by handle additions in Heegaard Floer theory, as in \cite{HolDiskFour}. Note that these horizontal maps also depend on the data $\aa$. Observe that the compositions $\Phi(\bH'_i, \H'_i) \circ \bar f^{\circ}_{W_i, \varpi|_{W_i}}$ and $f^{\circ}_{W_i, \varpi|_{W_i}}  \circ \Phi(\bH_{i-1}, \H_{i-1})$ can also be viewed as cobordism maps associated to $W_i$. By the well-definedness results for cobordism maps proved in \cite{HolDiskFour}, any two such maps are related by chain homotopies. It follows that the diagram \eqref{eq:handles} commutes up to chain homotopy. If we let $\Upsilon_i$ be a chain homotopy of this type, we can combine it with $\bar f^{\circ}_{W_i, \varpi|_{W_i}}$ and $ f^{\circ}_{W_i, \varpi|_{W_i}}$ to construct the desired map $f_i$ from $\CFIo(\H_{i-1}, \varpi|_{Y_i})$ to  $\CFIo( \H'_{i}, \varpi|_{Y_{i}})$; we are using here conjugation invariance of the cobordism maps (Theorem 3.6 in \cite{HolDiskFour}) at the chain level. Note that the data $\aa_i$ needed for defining $f_i$ consists of the two sequences of Heegaard moves, together with the chain homotopy $\Upsilon_i$.

The maps $g_i$ are constructed just as in the proof of Proposition~\ref{prop:CFI}. The corresponding data $\bb_i$ consists of chain homotopies $\Upsilon(\H'_i, \H_i)$ from $\CFI(\H'_i, \varpi|_{Y_i})$ to $Q \ccdot \CFo(\H_i, \varpi|_{Y_i}) [-1].$

This concludes the definition of the map $F^{I, \circ}_{W, \varpi, \aa}$. The commutativity of \eqref{eq:commd} follows from the construction.
\end{proof}

\begin{remark}
We conjecture that the map $F^{I, \circ}_{W, \varpi, \aa}$ depends on $\aa$ only through the choice of the path $\gamma$. However, since we did not prove naturality for $\HFIo$ (cf. the remarks at the end of Section~\ref{sec:remarks}), we cannot prove the conjecture with the available technology. In fact, since $\aa$ includes a choice of Heegaard diagram for $Y$ and $Y'$, even the target and domain of  $F^{I, \circ}_{W, \varpi, \aa}$ are not yet well-defined as three-manifold invariants; only their isomorphism classes are.
\end{remark}

We have the following analogue of the composition law in Heegaard Floer theory, \cite[Theorem 3.4]{HolDiskFour}.

\begin{proposition}
\label{prop:compose}
Suppose we have cobordisms $W$ from $Y$ to $Y'$, and another cobordism $W'$ from $Y'$ to $Y''$. 
Let $W$ be equipped with an equivalence class of $\spinc$ structures $\varpi$, and with some additional data $\aa$ as in Proposition~\ref{prop:mapsHFI}. Similarly, we let $W'$ be equipped with a class $\varpi'$ and data $\aa'$. Then, we can find data $\aa_{\operatorname{tot}}$ for the cobordism $W \cup W'$, such that the following gluing result holds:
$$ F^{I,\circ}_{W', \varpi', \aa'} \circ F^{I,\circ}_{W, \varpi, \aa} = \sum_{\{\zeta \in [\Spinc(W \cup W')] \mid  \zeta|_{W}=\varpi, \zeta|_{W'}=\varpi'\}} F^{I,\circ}_{W \cup W', \zeta, \aa_{\operatorname{tot}}}.$$
\end{proposition}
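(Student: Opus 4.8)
The plan is to build $\aa_{\operatorname{tot}}$ by literally concatenating the data $\aa$ and $\aa'$, and then to observe that, for this choice, both sides of the claimed identity unwind to the same composition of chain maps. First I would recall the shape of the maps produced by Proposition~\ref{prop:mapsHFI}: by construction $f^{I,\circ}_{W,\varpi,\aa}$ is a composition $f_n \circ g_{n-1} \circ f_{n-1} \circ \cdots \circ g_1 \circ f_1$, in which each $f_i$ is assembled from the ordinary Heegaard Floer handle map $f^{\circ}_{W_i,\varpi|_{W_i}}$ for a block of handles of a single index, together with a chain homotopy $\Upsilon_i$ filling the square \eqref{eq:handles}, and each $g_i = \Phi^I(\H'_i,\H_i;\bb_i)$ is a change-of-diagram map as in Proposition~\ref{prop:CFI}. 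There is a completely parallel description of $f^{I,\circ}_{W',\varpi',\aa'}$ in terms of the blocks, handle maps, and change-of-diagram maps recorded in $\aa'$.

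Next I would assemble $\aa_{\operatorname{tot}}$. Take as the handle decomposition of $W \cup W'$ the concatenation of the given decompositions of $W$ and of $W'$. Each piece is still a single-index handle block, so the construction of Proposition~\ref{prop:mapsHFI} carries over essentially verbatim: the ordinary cobordism map for each block is defined as in \cite{HolDiskFour}, and the squares \eqref{eq:handles} still commute up to chain homotopy for exactly the same reason as before (well-definedness and conjugation-invariance of the ordinary handle maps). In particular the hypothesis in Proposition~\ref{prop:mapsHFI} that there be precisely one two-handle block was only a convenience and is never used; here we simply allow the two two-handle blocks, one coming from $W$ and one from $W'$. For every Heegaard pair, all block data, and all interpolating chain homotopies inside the $W$-part we use the corresponding entries of $\aa$, and inside the $W'$-part we use $\aa'$. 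The only genuinely new interior level is $Y'$, where the concatenated decomposition needs a change-of-diagram map from the Heegaard pair that $\aa$ assigns to $Y'$ (as the outgoing boundary of $W$) to the one that $\aa'$ assigns to it (as the incoming boundary of $W'$); since these coincide — which is implicit in $F^{I,\circ}_{W',\varpi',\aa'}\circ F^{I,\circ}_{W,\varpi,\aa}$ being defined in the first place — we may take this middle map to be the identity.

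With these choices, unwinding $\sum_{\zeta} f^{I,\circ}_{W\cup W',\zeta,\aa_{\operatorname{tot}}}$ yields exactly $\bigl(f'_m\circ\cdots\circ g'_1\circ f'_1\bigr)\circ\bigl(f_n\circ\cdots\circ g_1\circ f_1\bigr)$, summed over the relevant $\zeta$; that is, the chain-level map underlying $F^{I,\circ}_{W',\varpi',\aa'}\circ F^{I,\circ}_{W,\varpi,\aa}$, provided the sum over $\zeta$ matches the independent sums over $\spinc$ structures on the two-handle block of $W$ restricting to $\varpi$ and on the two-handle block of $W'$ restricting to $\varpi'$. Establishing this last matching is the one step that is not purely formal, and for it I would invoke the Mayer--Vietoris bookkeeping already at the heart of the Ozsv\'ath--Szab\'o composition law \cite[Theorem 3.4]{HolDiskFour}: restriction identifies $\{\zeta\in[\Spinc(W\cup W')] : \zeta|_W=\varpi,\ \zeta|_{W'}=\varpi'\}$ with the set of pairs of such $\spinc$ classes on the two two-handle blocks (the one- and three-handle blocks carrying unique restrictions), and under this identification $f^{I,\circ}_{W\cup W',\zeta,\aa_{\operatorname{tot}}}$ is the corresponding composite. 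Nothing new is needed here, precisely because at the chain level the involutive handle maps are built block-by-block out of the ordinary ones. Passing to homology then gives the stated equality.

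The step I expect to require the most care — rather than a deep idea — is the bookkeeping forced on us by the absence of naturality for $\HFIo$: one cannot appeal to canonical identifications of the groups involved, and so must track the specific Heegaard data (and the specific sequences of moves used to conjugate them) throughout, choosing $\aa_{\operatorname{tot}}$ to be literally compatible with $\aa$ and with $\aa'$ in the way described above. This is exactly why the proposition asserts only the \emph{existence} of a suitable $\aa_{\operatorname{tot}}$, instead of a composition law valid for arbitrary independent choices of auxiliary data.
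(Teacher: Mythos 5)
Your overall strategy --- concatenate $\aa$ and $\aa'$ and reduce everything to the Ozsv\'ath--Szab\'o composition law --- is the right one in spirit (the paper itself offers nothing beyond ``similar to \cite[Theorem 3.4]{HolDiskFour}''), but the specific reduction has a gap at its central step. The data in Proposition~\ref{prop:mapsHFI} is required to contain \emph{exactly one} two-handle block, and this is not a removable convenience: it is what makes the symbol $F^{I,\circ}_{W\cup W',\zeta,\aa_{\operatorname{tot}}}$ on the right-hand side refer to a single triangle map, coming from a Heegaard triple subordinate to a bouquet for the union of the two framed links, rather than to whatever composite one chooses to write down. If you simply allow two two-handle blocks and define the glued map by concatenation, the identity you are asked to prove becomes circular: you have essentially defined the right-hand side to be the left-hand side. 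To prove the proposition as stated you must (a) rearrange the concatenated handle decomposition into normal form --- sliding the one-handles of $W'$ past the two- and three-handles of $W$, and the three-handles of $W$ past the handles of $W'$ --- and (b) identify the composite of the two two-handle triangle maps with the single triangle map for the combined framed link. Step (b) is the analytic heart of \cite[Theorem 3.4]{HolDiskFour}: it is the associativity of holomorphic triangle maps, proved by counting holomorphic rectangles, and it, together with the commutation lemmas in (a), is where the sum over $\zeta$ with $\zeta|_W=\varpi$, $\zeta|_{W'}=\varpi'$ is actually established. Describing this as ``Mayer--Vietoris bookkeeping'' misplaces the content.

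In the involutive setting there is a further check you omit: each block map $f_i$ is not just the ordinary handle map but a map of mapping cones carrying a chain homotopy $\Upsilon_i$ filling the conjugation square \eqref{eq:handles}. The commutation and associativity homotopies from (a) and (b) must therefore be chosen compatibly on the conjugate diagrams as well, so that the rearranged data still assembles into a chain map of mapping cones homotopic to the original composite; this is routine but is exactly the bookkeeping that the absence of naturality for $\HFIo$ forces one to do explicitly, and it is presumably why the statement asserts only the existence of some $\aa_{\operatorname{tot}}$. None of this is fatal to your approach --- the needed lemmas are all in \cite{HolDiskFour} and their involutive upgrades follow the same pattern as your treatment of the squares \eqref{eq:handles} --- but as written the proposal skips the step that carries essentially all of the mathematical content.
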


The proof of Proposition~\ref{prop:compose} is similar to that of  \cite[Theorem 3.4]{HolDiskFour}.

Next, let us recall, from \cite[Theorem 7.1]{HolDiskFour}, that if $W$ is a cobordism from $Y$ to $Y'$ equipped with a $\spinc$ structure $\s$ whose restrictions $\s|_{Y}$ and $\s|_{Y'}$ are both torsion, then for $x \in \HFo(Y,\s|_{Y})$,
\begin{equation}
\label{eq:GradingShift}
{\gr}(F^\circ_{W,\s}(x)) - {\gr}(x) = \frac{c_1(\s)^2 - 2\chi(W) - 3\sigma(W)}{4}.
\end{equation}
Because our cobordism maps are induced by the Ozsv{\'a}th-Szab{\'o} maps, we have the same result for involutive Heegaard Floer homology:

\begin{lemma}Let $W$ be a cobordism from $Y$ to $Y'$ equipped with an equivalence class of $\spinc$ structures, $\varpi$, whose restrictions $\varpi|_{Y}$ and $\varpi|_{Y'}$ are both torsion. Let $c_1(\varpi)$ be $c_1(\s)$ for any $\s \in \varpi$. Let also $\aa$ be some additional data for $W$, as in Proposition~\ref{prop:mapsHFI}. Then for $x \in \HFIo(Y,\varpi|_{Y})$,
$$
{\gr}(F^{I,\circ}_{W, \varpi, \aa}(x)) - {\gr}(x) = \frac{c_1(\varpi)^2 - 2\chi(W) - 3\sigma(W)}{4}.
$$
\end{lemma}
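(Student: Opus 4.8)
The plan is to read the grading shift off directly from the construction of $F^{I,\circ}_{W,\varpi,\aa}$ given in the proof of Proposition~\ref{prop:mapsHFI}, taking the ordinary grading-shift formula \eqref{eq:GradingShift} as a black box. Recall that the underlying chain map is a composition $f^{I,\circ}_{W,\varpi,\aa}=f_n\circ g_{n-1}\circ f_{n-1}\circ\cdots\circ g_1\circ f_1$, where each $g_i=\Phi^I(\H'_i,\H_i;\bb_i)$ is a naturality map between involutive complexes and each $f_i$ is the involutive handle map associated to an elementary cobordism $W_i$. Since the $\Z$-grading on $\CFIo(\H,\varpi)=\CFo(\H,\varpi)[-1]\otimes\Z_2[Q]/(Q^2)$ (with $\deg Q=-1$) is induced from the grading on $\CFo$, it suffices to check that each $g_i$ is homogeneous of degree $0$ and that each $f_i$ is homogeneous of degree $\delta_i$, where $\delta_i$ denotes the grading shift of the ordinary cobordism map $F^\circ_{W_i,\varpi|_{W_i}}$, and then to add the $\delta_i$.

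\textbf{Homogeneity of the building blocks.} For the naturality maps $g_i$: the map $\Phi^I(\H'_i,\H_i;\bb_i)$ assembled in the proof of Proposition~\ref{prop:CFI} has as its two ``vertical'' components the equivalences $\Phi(\H'_i,\H_i)$ on $\CFo$, which preserve the absolute grading (being built from the standard Heegaard-move maps), and as its ``diagonal'' component the chain homotopy $\Upsilon(\H'_i,\H_i)$, which is a homotopy between two grading-preserving chain maps and hence raises the $\CFo$-grading by $1$; under the shifts built into the mapping cone these assemble to a map which is homogeneous of degree $0$ on $\CFIo$. For the elementary cobordism maps $f_i$: by construction $f_i$ is built from the ordinary handle (or triangle) maps $f^\circ_{W_i,\varpi|_{W_i}}$ and $\bar f^\circ_{W_i,\varpi|_{W_i}}$, which are homogeneous of degree $\delta_i$ already at the chain level (the chain-level refinement of \cite[Theorem 7.1]{HolDiskFour}, valid since $\varpi|_{\partial W_i}$ is torsion and conjugation preserves the grading), together with the chain homotopy $\Upsilon_i$ of the square \eqref{eq:handles}. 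Since the two compositions in \eqref{eq:handles} both have degree $\delta_i$, the homotopy $\Upsilon_i$ is homogeneous of degree $\delta_i+1$; inserting it as the diagonal of the cone map (together with $\eta$ and the $\Phi$'s, and using conjugation invariance of the cobordism maps exactly as in the proof of Proposition~\ref{prop:mapsHFI}), one checks the bookkeeping: the top component contributes shift $\delta_i$, the bottom component $Qf^\circ_{W_i}$ contributes shift $\delta_i$, and the diagonal carries $\CFIo$-grading $r+1$ to $\CFIo$-grading $r+\delta_i+1$, again a shift of $\delta_i$. Hence $f_i$ is homogeneous of degree $\delta_i$.

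\textbf{Assembling and adding.} Composing the homogeneous maps $f_i$ and $g_i$, we conclude that $f^{I,\circ}_{W,\varpi,\aa}$, and therefore $F^{I,\circ}_{W,\varpi,\aa}$, is homogeneous of degree $\sum_{i=1}^n\delta_i$. Finally, $\chi$ is additive under gluing along $3$-manifolds (with $\chi(Y_i)=0$), $\sigma$ is additive by Novikov additivity, and, since all the relevant $\spinc$ structures restrict torsionally to the separating $Y_i$, the self-intersection number $c_1^2$ is additive as well; this additivity is precisely the computation Ozsv\'ath and Szab\'o use to deduce \eqref{eq:GradingShift} for a general cobordism from the handle-by-handle grading shifts, and it gives $\sum_i\delta_i=\frac{c_1(\varpi)^2-2\chi(W)-3\sigma(W)}{4}$. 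Passing to homology completes the proof.

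\textbf{Main obstacle.} There is no conceptual difficulty here; the only thing requiring care is the degree bookkeeping inside the two mapping-cone constructions --- verifying that the homotopy components $\Upsilon(\H'_i,\H_i)$ and $\Upsilon_i$ have exactly the degrees needed for $g_i$ and $f_i$ to be homogeneous --- together with the (standard but not entirely trivial) fact that the ordinary Ozsv\'ath--Szab\'o handle and triangle maps are homogeneous already at the chain level rather than merely on homology. One might also hope to argue purely on homology using the commutative diagram \eqref{eq:commd} and the exact triangle of Proposition~\ref{propn:exact}, but a diagram chase there leaves open the possibility of a filtration-lowering correction term of degree $\neq\delta$, so the chain-level argument above is the clean route.
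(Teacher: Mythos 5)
Your proposal is correct and follows the same route the paper takes: the paper justifies the lemma in one line ("our cobordism maps are induced by the Ozsv\'ath--Szab\'o maps") and gives no further proof, while you have simply filled in the degree bookkeeping for the cone components $\Upsilon(\H'_i,\H_i)$ and $\Upsilon_i$ and the additivity of $c_1^2$, $\chi$, and $\sigma$ under the handle decomposition. Nothing in your argument deviates from the intended one, and the checks you perform are the right ones.
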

\medskip

\section{New correction terms}
\label{sec:ds}

\subsection{Definitions}
In this section, we introduce new correction terms arising from $\HFIp(Y)$. In order to motivate the definition, recall from \cite{AbsGraded} that the ordinary correction term $d(Y, \mathfrak s)$ associated to a rational homology three-sphere $Y$ and a $\spinc$ structure $\s$ is the lowest homological degree of any element in $U^n\HFp(Y, \mathfrak s)$, where $n\gg 0$. Equivalently, $d(Y,\s)$ is the minimal grading $r$ such that the map $\pi \co \HFinf_r(Y,\s) \rightarrow \HFp_r(Y,\s)$ is nontrivial. Or, more concretely, one can show that $\HFp(Y, \s)$ can be decomposed (non-canonically) as $\T^{+}\oplus \HFred^+$, where $\T^+$ is an infinite $U$-tower as in Section~\ref{sec:Lspaces}, and $\HFred^+ := \coker(\pi)$ is a finite dimensional $\mathbb Z_2$-vector space (with some $U$-action). Then, $d(Y, \s)$ is simply the lowest degree of an element in $\mathcal T^{+}$.

We mimic this construction to produce two new correction terms. For $\mathfrak s$ a spin structure on $Y$, we consider the exact triangle \eqref{pic:exact},
\begin{equation}
\begin{tikzpicture}[baseline=(current  bounding  box.center)]
\label{pic:exact2}
\node(1)at(0,0){$\HFIp(Y,\s)$};
\node(2)at (-2,1){$\HFp(Y, \s)$};
\node(3)at (2,1){$Q \ccdot \HFp(Y,\s)[-1]$};
\path[->](3)edge node[below right]{$g$}(1);
\draw[->](1)edge node[below left]{$h$}(2);
\path[->](2)edge node[above]{$Q (1+\inv_*)$}(3);
\end{tikzpicture}
\end{equation}
consisting of $U$-equivariant maps, here given names for convenience. For $r \gg 0$, we have that $\HFp_r(Y, \s)$ is either trivial or a one-dimensional $\Z_2$-vector space. Since $\inv_*$ is an isomorphism, this implies that $\inv_*$ is the identity, so $Q (1+\inv_*)$ is trivial. Hence, the elements of $\HFp_r(Y, \s)$ for $r$ large are of the form $h(x)$ for $x \in \HFIp(Y,\s)$, necessarily such that $x \in \operatorname{Im}(U^n), x \not \in \operatorname{Im}(U^nQ)  \text{ for } n \gg 0$. This allows us to define the {\em lower involutive correction term} as follows.
$$\dl(Y,\s) = \min \{r \mid \exists \ x \in \HFIp_r(Y, s), x \in \operatorname{Im}(U^n), x \not \in \operatorname{Im}(U^nQ)  \text{ for } n \gg 0\} - 1.$$
Meanwhile, if $y \in Q\HFp_r(Y, \s)[-1]$ for $r$ large, then $y \in \operatorname{Im}(U^n)$ and $y$ is not in the image of $Q (1+\inv_*)$, so it must map to a non-zero element $x = g(y)  \in \operatorname{Im}(U^nQ).$ We define the {\em upper involutive correction term} as 
$$\du(Y,\s) = \min \{r \mid \exists \ x \in \HFIp_r(Y, s), x \neq 0, \ x  \in \operatorname{Im}(U^nQ)  \text{ for } n \gg 0\}.$$

More concretely, the exact triangle \eqref{pic:exact2} implies that, as a $\Z_2[U]$-module, $\HFIp(Y,\s)$ decomposes (non-canonically) as 
$$\T^+ \oplus \T^+ \oplus \HFIp_{\operatorname{red}}(Y, \s),$$
where $\HFIp_{\operatorname{red}}(Y, \s)$ is a finite dimensional $\Z_2$-vector space. One infinite tower $\T^+$ contains elements in gradings congruent to $d(Y, \s)+1$ modulo $2\Z$; we call it the {\em first tower}.  The other tower lies in the image of multiplication by $Q$ and has elements in gradings congruent to $d(Y, \s)$ modulo $2\Z$. We call it the {\em second tower}. Thus, $\dl(Y, \s)$ is one less than the grading of the lowest element in the first tower, and $\du(Y, \s)$ is the grading of the lowest element in the second tower. It is worth stressing that this decomposition of $\HFIp(Y,\s)$ is only as a direct sum of $\mathbb Z_2[U]$ modules; the $\mathcal R$-module structure need not respect this decomposition.

Alternatively, we can think of $\du(Y,\s)=t$ as the minimal degree such that $t \equiv d(Y,\s)$ modulo $2\Z$  and $\pi \co \HFIinf_t(Y,\s) \rightarrow \HFIp_t(Y,\s)$ is nontrivial, and of $\dl(Y,\s)+1=r$ as the minimal degree such that $r \equiv d(Y,\s)+1$ modulo $2\Z$  and $\pi \co \HFIinf_r(Y,\s) \rightarrow \HFIp_r(Y,\s)$ is nontrivial.

Note that, by construction, we have
\begin{equation}
\label{eq:mod2}
 \du(Y, \s) \equiv \dl(Y, \s) \equiv d(Y, \s) \pmod {2\Z}.
 \end{equation}

\subsection{Properties} Let us prove some basic properties of the invariants we have defined.

\begin{proposition} \label{propn:basicproperties} The involutive correction terms satisfy the inequalities
$$ \dl(Y, \s)\leq d(Y, \s) \leq \du(Y, \s).$$
\end{proposition}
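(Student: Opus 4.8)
The plan is to derive both inequalities from the commutative ladder relating the exact triangles of Proposition~\ref{propn:exact} for the flavors $\infty$ and $+$, together with the descriptions of $d$, $\dl$, $\du$ in terms of the connecting maps $\pi$ of Proposition~\ref{propn:longexact}. The key preliminary observation is that $\inv_*$ acts as the identity on $\HFinf(Y,\s)$: since $Y$ is a rational homology sphere and $\s$ is torsion, $\HFinf(Y,\s)\cong\Z_2[U,U^{-1}]$ is one-dimensional in each grading in which it is supported, so the $U$-equivariant, grading-preserving isomorphism $\inv_*$ has no choice but to be the identity. Hence the map $Q(1+\inv_*)$ in the exact triangle \eqref{pic:exact2} for the flavor $\infty$ vanishes identically, and that triangle reduces to a short exact sequence
$$0 \longrightarrow Q\ccdot\HFinf(Y,\s)[-1] \xrightarrow{g^{\infty}} \HFIinf(Y,\s) \xrightarrow{h^{\infty}} \HFinf(Y,\s) \longrightarrow 0,$$
in which $g^{\infty}$ preserves grading and $h^{\infty}$ lowers it by $1$. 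Since $\HFinf(Y,\s)$ is supported in gradings $\equiv d(Y,\s)\pmod{2\Z}$, counting dimensions grading by grading in this sequence shows: in each grading $t\equiv d(Y,\s)\pmod{2\Z}$ the map $g^{\infty}$ is an isomorphism onto $\HFIinf_t(Y,\s)$, and in each grading $r\equiv d(Y,\s)+1\pmod{2\Z}$ the map $h^{\infty}\co\HFIinf_r(Y,\s)\to\HFinf_{r-1}(Y,\s)$ is an isomorphism.

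Next I would write down the commutative ladder obtained by applying the chain-level projections from the $\infty$ complexes onto the $+$ complexes --- these induce the maps $\pi$ on $\HFinf$/$\HFp$ and $\pi^I$ on $\HFIinf$/$\HFIp$ of Proposition~\ref{propn:longexact} --- to the exact triangles of Proposition~\ref{propn:exact}:
$$\xymatrix{
Q\ccdot\HFinf(Y,\s)[-1] \ar[d]_{Q\pi} \ar[r]^-{g^{\infty}} & \HFIinf(Y,\s) \ar[d]_{\pi^I} \ar[r]^-{h^{\infty}} & \HFinf(Y,\s) \ar[d]^{\pi} \\
Q\ccdot\HFp(Y,\s)[-1] \ar[r]^-{g^+} & \HFIp(Y,\s) \ar[r]^-{h^+} & \HFp(Y,\s).
}$$
This commutes because the projections constitute a chain map between the two mapping cones and the structure maps $g$, $h$ of a mapping cone are natural. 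I would then use the reformulations stated just before Proposition~\ref{propn:basicproperties}: $d(Y,\s)$ is the least grading $r$ for which $\pi\co\HFinf_r\to\HFp_r$ is nonzero, $\du(Y,\s)$ is the least grading $t\equiv d(Y,\s)\pmod{2\Z}$ for which $\pi^I\co\HFIinf_t\to\HFIp_t$ is nonzero, and $\dl(Y,\s)+1$ is the least grading $r\equiv d(Y,\s)+1\pmod{2\Z}$ for which $\pi^I\co\HFIinf_r\to\HFIp_r$ is nonzero.

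With these in hand the two inequalities are short diagram chases. For $d(Y,\s)\le\du(Y,\s)$: set $t=\du(Y,\s)$ and pick $\xi\in\HFIinf_t(Y,\s)$ with $\pi^I(\xi)\neq0$. Since $t\equiv d(Y,\s)\pmod{2\Z}$ we may write $\xi=g^{\infty}(Q\alpha)$ with $\alpha\in\HFinf_t(Y,\s)$, and then $0\neq\pi^I(\xi)=g^+\bigl(Q\pi(\alpha)\bigr)$, which forces $\pi(\alpha)\neq0$; hence $\pi\co\HFinf_t\to\HFp_t$ is nonzero, so $d(Y,\s)\le t$. For $\dl(Y,\s)\le d(Y,\s)$: put $d=d(Y,\s)$ and pick $\alpha\in\HFinf_d(Y,\s)$ with $\pi(\alpha)\neq0$; using the isomorphism $h^{\infty}\co\HFIinf_{d+1}(Y,\s)\xrightarrow{\sim}\HFinf_d(Y,\s)$ choose $\tilde\alpha$ with $h^{\infty}(\tilde\alpha)=\alpha$, so that $h^+\bigl(\pi^I(\tilde\alpha)\bigr)=\pi(\alpha)\neq0$ and therefore $\pi^I(\tilde\alpha)\neq0$. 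Thus $\pi^I\co\HFIinf_{d+1}\to\HFIp_{d+1}$ is nonzero, and since $d+1\equiv d(Y,\s)+1\pmod{2\Z}$ this gives $\dl(Y,\s)+1\le d+1$.

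The only delicate point, and the step I expect to take the most care, is the grading bookkeeping behind the structural claims about the $\infty$-flavor short exact sequence: one must verify that the two grading-parity classes split off exactly as asserted --- so that, after the shift from the formal variable $Q$ cancels the shift $[-1]$, the maps $g^{\infty}$ and $h^{\infty}$ are genuine isomorphisms in the gradings used above --- and, relatedly, that the ladder of mapping-cone triangles commutes strictly with the projection maps. Everything else is formal.
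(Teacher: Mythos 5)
Your argument is correct, but it is routed differently from the paper's. The paper's proof is a two-line chase carried out entirely inside the $+$-flavor triangle \eqref{pic:exact2}: it takes the lowest-degree element $x=g(y)$ in $\operatorname{Im}(U^n g)$ realizing $\du$, uses $U$-equivariance of $g$ to conclude that $y$ lies in the infinite tower of $Q\ccdot\HFp(Y,\s)[-1]$, hence $d\le\deg(y)=\du$, and obtains the other inequality symmetrically from $h$. You instead pass to the $\infty$-flavor, note that $1+\inv_*$ vanishes there so the triangle degenerates to a short exact sequence that splits by grading parity, and then chase the naturality ladder for the projection $\pi$, using the reformulations of $d$, $\dl$, $\du$ in terms of where $\pi$ and $\pi^I$ are nontrivial (which the paper states just before the proposition, so you are entitled to them). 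The content is the same --- ``in $\operatorname{Im}(U^n)$ for $n\gg0$'' is equivalent to ``in the image of the $\infty$-flavor under $\pi$'' --- but your route costs some extra bookkeeping about $\HFIinf$ that the paper's avoids, while buying structural facts (the parity splitting of $\HFIinf$ and the commuting ladder) that the paper itself needs anyway for Proposition \ref{prop:reversal} and Theorem \ref{thm:froyshov}. All the steps you flag as delicate check out.
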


\begin{proof} Looking at the exact triangle \eqref{pic:exact2}, let $x=g(y)$ be the element of lowest degree in the image of $U^ng$ for $n \gg 0$, so that the homological degree of $x$ is $\du(Y,s)$. We have $\operatorname{deg}(x)=\operatorname{deg}(y)$. Furthermore, by assumption $x \in U^n \HFIp(Y, \s)$ for $n \gg 0$, and $g$ is $U$-equivariant, leading us to conclude that $y \in U^nQ\HFp(Y, \s)[-1]$ for $n \gg 0$. Therefore $d(Y, \s) \leq \deg(y)=\du(Y, \s)$. 

By analyzing the map $h$ in \eqref{pic:exact2}, we obtain the other inequality.\end{proof}

\begin{proposition}\label{prop:reversal} The involutive correction terms are related under orientation reversal by $$\dl(Y, \s) = -\du(-Y, \s).$$
\end{proposition}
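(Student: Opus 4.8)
The plan is to imitate the Ozsv\'ath--Szab\'o argument that $d(Y,\s)=-d(-Y,\s)$, feeding the duality of Proposition~\ref{propn:reversalisomorphisms} into the description of $\dl$ and $\du$ via the map $\pi\co\HFIinf\to\HFIp$. Recall from the discussion preceding \eqref{eq:mod2} that $\dl(Y,\s)+1$ is the least integer $r$ with $r\equiv d(Y,\s)+1\ (\mathrm{mod}\ 2\Z)$ for which $\pi\co\HFIinf_r(Y,\s)\to\HFIp_r(Y,\s)$ is nontrivial, and that $\du(-Y,\s)$ is the least integer $t$ with $t\equiv d(-Y,\s)\ (\mathrm{mod}\ 2\Z)$ for which $\pi\co\HFIinf_t(-Y,\s)\to\HFIp_t(-Y,\s)$ is nontrivial. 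Using $d(-Y,\s)=-d(Y,\s)$ (the analogous fact for the ordinary correction term, \cite{AbsGraded}), the congruences $r\equiv d(Y,\s)+1$ and $-r-1\equiv d(-Y,\s)$ modulo $2\Z$ are equivalent.

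The first step is to upgrade Proposition~\ref{propn:reversalisomorphisms} to a statement about the maps $\pi$ and $i$ of Proposition~\ref{propn:longexact}. The short exact sequence $0\to\CFIm(Y,\varpi)\to\CFIinf(Y,\varpi)\xrightarrow{\pi}\CFIp(Y,\varpi)\to 0$ is carried by $\langle,\rangle^{\inv}$ to the dual of the corresponding sequence for $-Y$: the pairing restricts to a perfect pairing $\CFIp(Y,\varpi)\times\CFIm(-Y,\varpi)\to\Z$, and since pairing $[\x,i]$ against $Q[\y,j]$ with $j<0$ only sees the part of $[\x,i]$ with $i\ge 0$, the quotient map $\pi$ for $Y$ is adjoint to the inclusion $i\co\CFIm(-Y,\varpi)\hookrightarrow\CFIinf(-Y,\varpi)$; this is where the chain-level compatibility established in Lemmas~\ref{lemma:iotapair} and \ref{lemma:identities} is used. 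Passing to homology and applying the universal coefficients theorem over $\Z_2$, I would conclude that $\pi\co\HFIinf_r(Y,\s)\to\HFIp_r(Y,\s)$ is nontrivial if and only if $i\co\HFIm_{-r-1}(-Y,\s)\to\HFIinf_{-r-1}(-Y,\s)$ is nontrivial.

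The second step is to read off the tower structure of $\HFIinf(-Y,\s)$ and the behaviour of $i$ on it. Since $-Y$ is a rational homology sphere, $\HFinf(-Y,\s)\cong\Z_2[U,U^{-1}]$, and $\inv_*$ is the identity on it (it is a grading-preserving, $U$-equivariant involution of a module that is one-dimensional in each nonzero degree; cf.\ the proof of Corollary~\ref{corollary:lspace}). Hence the mapping cone defining $\HFIinf$ has trivial connecting map, so $\HFIinf(-Y,\s)$ is one-dimensional in every degree, i.e.\ consists of exactly two $U$-towers, one of parity $d(-Y,\s)$ and one of parity $d(-Y,\s)+1$ modulo $2\Z$, with no reduced summand. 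On the tower of parity $d(-Y,\s)$, exactness of $\HFIm\xrightarrow{i}\HFIinf\xrightarrow{\pi}\HFIp$ shows that $\operatorname{Im}(i)=\operatorname{Ker}(\pi)$ is precisely the set of degrees $k<\du(-Y,\s)$, so by \eqref{eq:mod2} the largest $k\equiv d(-Y,\s)\ (\mathrm{mod}\ 2\Z)$ at which $i$ is nontrivial is $\du(-Y,\s)-2$. Combining the two steps and substituting $k=-r-1$,
\[
\dl(Y,\s)+1 \;=\; -1-\max\{\,k : k\equiv d(-Y,\s)\ (\mathrm{mod}\ 2\Z),\ i\text{ nontrivial in degree }k\,\}\;=\;-1-\bigl(\du(-Y,\s)-2\bigr),
\]
so $\dl(Y,\s)=-\du(-Y,\s)$.

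I expect the main obstacle to be the bookkeeping in the first step: verifying that the chain-level reversal isomorphism really intertwines the projection $\pi$ for $Y$ with the dual of the inclusion $i$ for $-Y$, and keeping exact track of the grading shift $r\mapsto -r-1$ coming from Proposition~\ref{propn:reversalisomorphisms} together with the parity matching, since the final identity hinges entirely on these. The tower analysis of the second step is routine and parallels the treatment of the ordinary $d$-invariant in \cite{AbsGraded}.
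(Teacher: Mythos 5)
Your proposal is correct and follows essentially the same route as the paper's proof: characterize $\dl$ and $\du$ via the minimal degrees where $\pi\co\HFIinf\to\HFIp$ is nontrivial, use the exact sequence and the two-tower structure of $\HFIinf$ to convert this into the maximal degree where $i\co\HFIm\to\HFIinf$ is nontrivial, and then apply the duality pairing of Proposition~\ref{propn:reversalisomorphisms} together with universal coefficients to exchange $\pi_r$ for $Y$ with $i_{-r-1}$ for $-Y$. The only cosmetic difference is that you make the parity bookkeeping via $d(-Y,\s)=-d(Y,\s)$ explicit, which the paper leaves implicit.
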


\begin{proof} Let $\dl(Y,\s)+1=r$ be the minimal degree such that $r \equiv d(Y,\s)+1$ modulo $2\Z$  and $\pi \co \HFIinf_r(Y,\s) \rightarrow \HFIp_r(Y,\s)$ is nontrivial. In view of the long exact sequence
$$\cdots \rightarrow \HFIm(Y,\s) \xrightarrow{i} \HFIinf(Y,\s) \xrightarrow{\pi} \HFIp(Y,\s) \rightarrow \cdots$$
we see that if $k$ is the maximal grading such that $k \equiv d(Y,\s)+1$ modulo $2\Z$  and the map $i_{k}$ is nontrivial, then $k=r-2=\dl(Y,\s)-1$. Similarly, since $\du(Y,\s)=t$ is the minimal grading such that $t \equiv d(Y,\s)$ modulo $2\Z$  and the map $\HFIinf_t(Y,\s) \xrightarrow{\pi} \HFIp_t(Y,\s)$ is nontrivial, we see that if $\ell$ is the maximal grading such that $\ell \equiv d(Y,\s)$ modulo $2\Z$  and $i_{\ell}$ is nontrivial, then $\ell = t-2 = \du(Y,\s)-2$.

Now, from the discussion in Section \ref{sec:prop} leading up to Proposition \ref{propn:reversalisomorphisms} , we have a commutative diagram
\begin{center}
\begin{tikzpicture}
\node(1){$\HFIinf_r(Y,\s)$};
\node(2)[right = 2 cm of 1]{$\HFIp_r(Y,\s)$};
\node(3)[below = 1 cm of 1]{$\HFI_{\infty}^{-r-1}(-Y, \s)$};
\node(4)[below = 1 cm of 2]{$\HFI_{-}^{-r-1}(-Y, \s)$};
\path[->](1)edge node[above] {$\pi_r$}(2);
\path[->](3)edge node[above]{$i^{-r-1}$}(4);
\path[->](1)edge node[left]{$\mathcal D^{I,\infty}$}(3);
\path[->](2)edge node [right] {$\mathcal D^{I,+}$}(4);
\end{tikzpicture}
\end{center}
The vertical maps are the duality isomorphisms between involutive Heegaard Floer homology and cohomology introduced in Proposition~\ref{propn:reversalisomorphisms}. Using the universal coefficients theorem and the fact that $\HFIinf(Y,\s)$ is a free module in every dimension, we see that the image of $i^{-r-1}$ is nontrivial if and only if the image of $i_{-r-1}$ is nontrivial. This implies that if $r=\dl(Y,\s)+1$ is the minimal grading such that $r \equiv d(Y,\s)+1$ modulo $2\Z$  and $\pi_r$ is nontrivial, then $-r-1 = -\dl(Y,\s)-2$ is the maximal grading such that $i_{-r-1}\co \HFIm_{-r-1}(-Y,\s)\rightarrow \HFIinf_{-r-1}(-Y,\s)$ is nontrivial. Therefore $\du(-Y,\s) = (-\dl(Y,\s)-2)+2=-\dl(Y,\s)$.\end{proof}

Next, recall from \cite[Theorem 1.3]{AbsGraded} that, when $Y$ is an integer homology sphere, the  correction term $d(Y) \in 2\Z$ is related to the Casson invariant $\lambda(Y) \in \Z$ and to the Euler characteristic $\chi(\HFred^+(Y))$ by the formula
$$ \lambda(Y)=\chi(\HFred^+(Y)) - \frac{1}{2}d(Y).$$

In the involutive setting, here is the formula for the Euler characteristic of the group $\HFIred^+(Y)$ defined in \eqref{eq:HFIred}.

\begin{proposition}
For any integer homology sphere $Y$, we have
$$\chi(\HFIred^+(Y))= \frac{1}{2}\bigl( \du(Y) - \dl(Y) \bigr).$$ 
\end{proposition}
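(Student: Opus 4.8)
The plan is to compute the Euler characteristic of the truncation $\HFIp(Y)_{\leq M}:=\bigoplus_{r\leq M}\HFIp_r(Y)$ for an even integer $M\gg 0$ in two different ways and compare. Throughout, since $Y$ is an integer homology sphere all gradings are integers and $d(Y)\in 2\Z$, so $\dl(Y)$ and $\du(Y)$ are even by \eqref{eq:mod2}; moreover $\HFp(Y)$ and $\HFIp(Y)$ are finite--dimensional in each grading and bounded below, while $\HFred^+(Y)$ and $\HFIred^+(Y)$ are finite--dimensional. We use the convention $\chi(V)=\sum_r(-1)^r\dim_{\Z_2}V_r$.

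First, I would use the decomposition of $\HFIp(Y)$ recorded just before Proposition~\ref{propn:basicproperties}: as a $\Z_2[U]$--module, $\HFIp(Y)\cong\mathcal{T}^+\oplus\mathcal{T}^+\oplus\HFIred^+(Y)$, where the first tower has bottom grading $\dl(Y)+1$ (odd, as $\dl(Y)$ is even) and the second has bottom grading $\du(Y)$ (even). Counting degreewise, the first tower contributes $-(M-\dl(Y))/2$ to $\chi(\HFIp(Y)_{\leq M})$, the second contributes $(M-\du(Y))/2+1$, and the finite reduced part contributes $\chi(\HFIred^+(Y))$ in full; the $M$--dependence cancels, yielding
$$\chi\bigl(\HFIp(Y)_{\leq M}\bigr)=\tfrac12\bigl(\dl(Y)-\du(Y)\bigr)+1+\chi\bigl(\HFIred^+(Y)\bigr)$$
for all even $M\gg 0$.

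Second, I would compute the same quantity from the exact triangle \eqref{pic:exact2}. Under the grading--preserving isomorphism $Q\ccdot\HFp(Y,\s)[-1]\cong\HFp(Y,\s)$, the connecting map $Q(1+\inv_*)$ is identified with $1+\inv_*\co\HFp(Y,\s)\to\HFp(Y,\s)$. From the associated long exact sequence one reads off, in each grading $r$, the identity $\dim\HFIp_r(Y)=\dim\HFp_r(Y)+\dim\HFp_{r-1}(Y)-\dim I_r-\dim I_{r-1}$, where $I_r:=\im\bigl((1+\inv_*)|_{\HFp_r(Y,\s)}\bigr)$. For $r\gg 0$ the group $\HFp_r(Y,\s)$ is one--dimensional and $\inv_*$, being a grading--preserving $U$--equivariant involution, must be the identity there, so $I_r=0$; thus $I:=\bigoplus_rI_r$ is finite--dimensional. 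Multiplying the displayed identity by $(-1)^r$ and summing over $r\leq M$, the two $I$--sums cancel once $M$ exceeds the support of $I$, and the $\HFp$--sums telescope to $\chi(\HFp(Y)_{\leq M})-\chi(\HFp(Y)_{\leq M-1})=(-1)^M\dim\HFp_M(Y)$. Since $d(Y)$ is even, for even $M\gg 0$ we have $\HFp_M(Y)\cong\Z_2$, so this equals $1$; hence $\chi(\HFIp(Y)_{\leq M})=1$.

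Comparing the two computations gives $\chi(\HFIred^+(Y))=\tfrac12\bigl(\du(Y)-\dl(Y)\bigr)$, as claimed. The one point requiring care is the bookkeeping in the second step: one must keep track of the gradings of the two towers modulo $2$ and use the finite--dimensionality of the ``error'' terms ($I$, and the reduced parts of $\HFp$ and $\HFIp$) so that the truncated alternating sums stabilize and the telescoping is legitimate. Everything else is formal homological algebra applied to the exact triangle together with the standard structure of $\HFp$ of a rational homology sphere.
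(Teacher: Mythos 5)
Your proposal is correct and follows essentially the same strategy as the paper: compute the truncated Euler characteristic of $\HFIp(Y)$ in two ways, once from the exact triangle \eqref{pic:exact} relating it to $\HFp(Y)$ and once from the decomposition into two $U$-towers plus $\HFIred^+(Y)$, and compare. The only differences are bookkeeping choices — the paper truncates at an odd degree $2n-1$ where $\HFp$ vanishes (so the truncated triangle is exact and the Euler characteristic is immediately $0$), whereas you truncate at an even degree and handle exactness degreewise via the finitely supported image terms $I_r$, obtaining $1$ instead of $0$; both lead to the same conclusion.
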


\begin{proof}
Consider the exact triangle~\eqref{pic:exact} relating $\HFIp(Y)$ to $\HFp(Y)$. Let us truncate all the groups and thus focus on degrees $\leq 2n-1$, for $n \gg 0$. Since $d(Y)$ is even, we have $\HFp_{2n-1}(Y)=0$ for $n \gg 0$, so there is an exact triangle relating the truncated groups. By taking Euler characteristics we get
$$\chi (\HFI^+_{\leq 2n-1}(Y))= \chi (\HF^+_{\leq 2n-1}(Y))-\chi (\HF^+_{\leq 2n-1}(Y))=0.$$

On the other hand, $\HFI^+_{\leq 2n-1}(Y)$ decomposes (non-canonically) as a direct sum of $\HFIred^+(Y)$ and two truncated $U$-towers. The first truncated tower starts in degree $\dl(Y)+1$ and ends in degree $2n-1$, and thus has $\tfrac{1}{2}\dl(Y) + n$ generators. The second truncated tower starts in degree $\du(Y)$ and ends in degree $2n-2$, so it has $\tfrac{1}{2}\du(Y) + n$ generators. The conclusion follows readily from this.
\end{proof}

We now prove the version of Fr{\o}yshov's inequality for spin cobordisms that was announced in the 
Introduction.

\begin{proof}[Proof of Theorem~\ref{thm:froyshov}] Recall from \cite[Section 9]{AbsGraded} that if $W$ is a negative definite cobordism from $Y_1$ to $Y_2$, then the map $F_{W,\t}\co \HFinf(Y_1,\s_1) \rightarrow \HFinf(Y_2,\s_2)$ is an isomorphism. (Here $\s_1 = \t|_{Y_1}$ and $\s_2 = \t|_{Y_2}$.) Therefore, from the diagram of long exact sequences \eqref{eq:commd},
\begin{center}
\begin{tikzpicture}[node distance = 2 cm, auto]
\node(1){$\HFIinf(Y_1,\s_1)$};
\node(2)[left = 1 cm of 1]{$Q\ccdot \HFinf(Y_1,\s_1)[-1]$};
\node(3)[right = 1 cm of 1]{$\HFinf(Y_1,\s_1)$};
\node(4)[left =1 cm of 2]{$\cdots$};
\node(5)[right = 1 cm of 3]{$\cdots$};
\node(6)[below = 1 cm of 1]{$\HFIinf(Y_2,\s_2)$};
\node(7)[below = 1 cm of 2]{$Q\ccdot \HFinf(Y_2,\s_2)[-1]$};
\node(8)[below = 1 cm of 3]{$\HFinf(Y_2,\s_2)$};
\node(9)[below = 1.2 cm of 4]{$\cdots$};
\node(10)[below = 1.2 cm of 5]{$\cdots$};
\path[->](4)edge(2);
\path[->](2)edge(1);
\path[->](1)edge(3);
\path[->](3)edge(5);
\path[->](9)edge(7);
\path[->](7)edge(6);
\path[->](6)edge(8);
\path[->](8)edge(10);
\path[->](1)edge node[left]{$F_{W, \t, \aa}^{I, \infty}$}(6);
\path[->](2)edge node[left]{$F_{W,\t}^{\infty}$}(7);
\path[->](3)edge node[left]{$F_{W, \t}^{\infty}$}(8);
\end{tikzpicture}
\end{center}
and the five lemma we see that $F^{I, \infty}_{W, \t, \aa}$ is also an isomorphism. Notice that this isomorphism is also $\mathcal R$-equivariant. 

Now, following the strategy of \cite[Proposition 9.6]{AbsGraded}, we delete a ball from $X$ to obtain  a spin, negative definite cobordism $W$ from $S^3$ to $Y$. The isomorphism $\HFinf(S^3) \rightarrow \HFinf(Y,\s)$ shifts gradings upward by $-\frac{2\chi(W) + 3\sigma(W)}{4} = \frac{b_2(X)}{4}$, so $d(Y,\s) = \frac{b_2(X)}{4} \mod 2\Z$. Consider the following commutative square.
\begin{center}
\begin{tikzpicture}
\node(1){$\HFIinf_r(S^3,\s_0)$};
\node(2)[right = 1 cm of 1]{$\HFIinf_{\dl(Y)+1}(Y,\s)$};
\node(3)[below = 1 cm of 1]{$\HFIp_r(S^3,\s_0)$};
\node(4)[below = .9 cm of 2]{$\HFIp_{\dl(Y)+1}(Y,\s)$};
\path[->](1)edge node[above]{$F_{W,\t, \aa}^{I,\infty}$}(2);
\path[->](3)edge node[above]{$F_{W,\t, \aa}^{I,+}$}(4);
\path[->](1)edge node[left]{$\pi$}(3);
\path[->](2)edge node[right]{$\pi$}(4);
\end{tikzpicture}
\end{center}
We see there must be some element $y \in \HFIp(S^3)$ with the property that $\operatorname{gr}(F^{I,+}_{W,\t, \aa}(y)) = \dl(Y,\s) + 1$. Because the grading shift in the cobordism map is still by $\frac{b_2(X)}{4}$ and $\dl(Y,\s) \equiv d(Y,\s)$ modulo $2\Z$ , the homological grading of $y$ must be odd. (That is, $y$ lies in the first tower in $\HFIp(S^3)$.) But the lowest odd homological grading in $\HFIinf(S^3)$ is 1, and therefore we have
$$\frac{b_2(X)}{4} = \operatorname{gr}(F^{I,+}_{W,\t, \aa}(y)) - \operatorname{gr}(y) \leq (\dl(Y,\s)+1)-1.$$
The conclusion follows.\end{proof}

\begin{proposition} \label{prop:ddTheta} Let $(Y_1, \s_1)$ and $(Y_2,\s_2)$ be rational homology spheres equipped with spin structures, and let $(W,\t)$ be a spin rational homology cobordism between them. Then the involutive correction terms of $(Y_1,\s_1)$ and $(Y_2,\s_2)$ are equal.\end{proposition}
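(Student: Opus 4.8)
The plan is to follow verbatim the structure of the proof of Theorem~\ref{thm:froyshov}, together with the classical fact that the ordinary correction term is unchanged under a rational homology cobordism \cite{AbsGraded}. Since $\t$ is spin, its conjugation orbit is $\varpi=\{\t\}$, and $\varpi|_{Y_i}=\{\s_i\}$ with $\s_i=\t|_{Y_i}$ spin, so the involutive cobordism maps of Proposition~\ref{prop:mapsHFI} are available. The first thing I would do is record the homological-algebra consequences of $W$ being a rational homology cobordism: from $H_*(W,Y_i;\Q)=0$ we get $H_*(W;\Q)\cong H_*(Y_i;\Q)$, hence $\chi(W)=0$, $b_2^{\pm}(W)=0$ (so $\sigma(W)=0$), and $H^2(W;\Q)=0$, so $c_1(\t)$ is torsion and $c_1(\t)^2=0$. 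By the grading-shift formula \eqref{eq:GradingShift} and its involutive analogue, each of the maps $F^{\circ}_{W,\t}$ and $F^{I,\circ}_{W,\t,\aa}$ therefore preserves the absolute grading. The hypothesis is symmetric in $Y_1$ and $Y_2$ (reverse the orientation of $W$), so it will suffice to prove one inequality in each of $\dl(Y_1,\s_1)\le\dl(Y_2,\s_2)$ and $\du(Y_1,\s_1)\le\du(Y_2,\s_2)$.

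Next I would note that, because $b_2^+(W)=0$, the map $F^{\infty}_{W,\t}\co\HFinf(Y_1,\s_1)\to\HFinf(Y_2,\s_2)$ is an isomorphism, exactly as recalled in the proof of Theorem~\ref{thm:froyshov}. Feeding this into the morphism of long exact sequences \eqref{eq:commd} at the $\infty$-level, where two of the three vertical arrows are this isomorphism, the five lemma gives that $F^{I,\infty}_{W,\t,\aa}\co\HFIinf(Y_1,\s_1)\to\HFIinf(Y_2,\s_2)$ is a grading-preserving isomorphism. I would also record that $d(Y_1,\s_1)=d(Y_2,\s_2)=:d$, either by citing invariance of the Ozsv\'ath--Szab\'o correction term under rational homology cobordism \cite{AbsGraded} or, directly, from the fact that $F^{\infty}_{W,\t}$ is a grading-preserving isomorphism and $\HFinf(Y_i,\s_i)$ is supported in gradings $\equiv d(Y_i,\s_i)\pmod 2$.

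The main step is then to push these isomorphisms through the description of $\dl$ and $\du$ in terms of the map $\pi\co\HFIinf\to\HFIp$: recall $\dl(Y,\s)+1$ is the least grading $r$ with $r\equiv d+1\pmod 2$ for which $\pi_r\co\HFIinf_r(Y,\s)\to\HFIp_r(Y,\s)$ is nonzero, and $\du(Y,\s)$ is the least such grading congruent to $d$. Since the involutive cobordism maps are assembled levelwise from the ordinary ones, they commute with the inclusion- and projection-induced maps, in particular with $\pi$, so in every grading $r$ we have a commuting square whose top arrow is the isomorphism $F^{I,\infty}_{W,\t,\aa}$, whose bottom arrow is $F^{I,+}_{W,\t,\aa}$, and whose vertical arrows are $\pi$. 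Taking $r=\dl(Y_2,\s_2)+1$ and choosing $z\in\HFIinf_r(Y_2,\s_2)$ with $\pi(z)\neq0$, I would pull $z$ back to some $w\in\HFIinf_r(Y_1,\s_1)$ with $F^{I,\infty}_{W,\t,\aa}(w)=z$; commutativity then forces $F^{I,+}_{W,\t,\aa}(\pi(w))=\pi(z)\neq0$, so $\pi(w)\neq0$, and minimality of $\dl(Y_1,\s_1)+1$ (valid since $r\equiv d+1\pmod 2$) yields $\dl(Y_1,\s_1)\le\dl(Y_2,\s_2)$. The same argument applied with $-W\co Y_2\to Y_1$ gives the reverse inequality, hence $\dl(Y_1,\s_1)=\dl(Y_2,\s_2)$; replacing the residue $d+1$ by $d$ throughout proves $\du(Y_1,\s_1)=\du(Y_2,\s_2)$.

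I do not expect a genuine obstacle: the argument is a transcription of the proof of Theorem~\ref{thm:froyshov}. The only point that needs a little care is that neither $\HFIo(Y_i,\s_i)$ nor the cobordism maps are canonical, since they depend on the auxiliary data $\aa$; but this is harmless, because for any fixed choice of $\aa$ the map $F^{I,\circ}_{W,\t,\aa}$ fits into the commutative diagram \eqref{eq:commd}, realizes the stated grading shift, and commutes with $\pi$, while $\dl$ and $\du$ depend only on the isomorphism class of $\HFIp$. The remaining routine check is the vanishing of the grading shift, i.e.\ verifying that the topological hypotheses on $W$ are strong enough, which is the computation in the first paragraph above.
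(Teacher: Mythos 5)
Your proposal is correct and follows essentially the same route as the paper: the paper's proof likewise observes that $F^{I,\infty}_{W,\t,\aa}$ is an isomorphism (via the five lemma, as in Theorem~\ref{thm:froyshov}), deduces $\dl(Y_2,\s_2)\geq\dl(Y_1,\s_1)$ and $\du(Y_2,\s_2)\geq\du(Y_1,\s_1)$ from the vanishing grading shift, and concludes by reversing the orientation of $W$. You merely spell out in more detail the commuting square with $\pi$ that the paper leaves implicit in the word ``consequently.''
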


\begin{proof} As in the proof of Theorem~\ref{thm:froyshov}, the map $F_{W,\t, \aa}^{I, \infty} \co \HFIinf(Y_1,\s_1) \rightarrow \HFIinf(Y_2,\s_2)$ is an isomorphism, and consequently 
\begin{align*}
\dl(Y_2,\s_2)-\dl(Y_1, \s_1) \geq \frac{-2\chi(W)-3\sigma(W)}{4}, \\
\du(Y_2,\s_2)-\du(Y_1, \s_1) \geq \frac{-2\chi(W)-3\sigma(W)}{4}.
\end{align*}
For $W$ a rational homology cobordism, this implies that $\dl(Y_2,\s_2) \geq \dl(Y_1, \s_1)$ and $\du(Y_2,\s_2) \geq \du(Y_1, \s_1)$. But reversing the orientation of $W$ gives the opposite inequalities as well.\end{proof}

\begin{proof}[Proof of Theorem~\ref{thm:ds}]
Every $\Z_2$-homology sphere has a unique spin structure, and a $\Z_2$-homology cobordism between $\Z_2$-homology spheres is a spin rational homology cobordism. Thus, the claim about $\dl$ and $\du$ descending to $\Theta^3_{\Z_2}$ follows from Proposition~\ref{prop:ddTheta}. 

When $Y$ is an integer homology sphere, $d(Y)$ is an even integer (cf. \cite{AbsGraded}). In view of \eqref{eq:mod2}, so are $\dl(Y)$ and $\du(Y)$. 
\end{proof}
 
The maps $\dl$ and $\du$ are not group homomorphisms; see Section~\ref{sec:nonadditive} below for an explanation.

\section{The large surgery formula}
\label{sec:surgery}

Ozsv\'ath and Szab\'o \cite{Knots} and, independently, Rasmussen \cite{RasmussenThesis} proved a formula expressing the Heegaard Floer homology of a large surgery on a knot in terms of the knot Floer complex. Our goal here is to prove an involutive analogue of their formula.

Throughout the section, $Y$ will be an oriented integer homology three-sphere, and $K \subset Y$ will be an oriented knot. (However, with minor modifications, all our results can be extended to null-homologous knots in any three-manifold $Y$.) In practice, we will mostly be interested in the case $Y=S^3$.

\subsection{The conjugation symmetry on the knot Floer complex}
\label{sec:iotaHFK}
Let $ H = (\Sigma, \alphas, \betas, w, z)$
be a doubly-pointed Heegaard diagram that represents $K \subset Y$, and let $J$ be a suitable family of almost complex structures on the symmetric product. We then say that $\H=(H, J)$ is a choice of {\em Heegaard data} for $K$.

Ozsv\'ath-Szab\'o and Rasmussen \cite{Knots, RasmussenThesis} defined a version of Heegaard Floer homology for knots, called knot Floer homology; see \cite{HFKsurvey} for a survey. In their theory, to Heegaard data $\H$ one associates a $\Z$-graded, doubly filtered complex $\CFKi(\H)$, as follows. First, one defines maps $A, M \co \Ta \cap \Tb \to \Z$, called the Alexander and Maslov gradings. Then, as a $\Z_2$-vector space, $\CFKi(\H)$ is set to have generators
$$ U^{-i}\x= [\x, i, j], \ \ \x \in \Ta \cap \Tb, \ i, j \in \Z \text{ such that } A(\x) = j-i.$$
We can also view $\CFKi(\H)$ as a free $\Z_2[U, U^{-1}]$-module with generators $\x \in \Ta \cap \Tb$. The differential is given by counting pseudo-holomorphic disks, with the values of $i$ and $j$ keeping track of going over the $w$ and $z$ basepoints, respectively. The $\Z \oplus \Z$ filtration is given by $\Filt([\x, i, j])=(i, j)$. The functions $A$ and $M$ can be extended to the generators of $\CFKi(\H)$, by setting
$$ A([\x, i, j])=j, \ \ M([\x, i, j])=M(\x) + 2i.$$
The knot Floer complex $\CFKi(\H)$ is usually drawn in the $(i,j)$ plane, with the generators represented by dots and the differential by arrows; see Figure~\ref{fig:lht} below for an example.

The knot Floer complex is natural:
\begin{proposition}
\label{prop:JT2}
If we fix $(Y, K, w, z)$, then the complexes $\CFKi(\H)$ form a transitive system in the homotopy category of $(\Z \oplus \Z)$-filtered chain complexes. In other words, for every two Heegaard pairs $\H$ and $\H'$ representing $(Y, K, w, z)$, we have a $(\Z \oplus \Z)$-filtered chain homotopy equivalence 
$$ \Phi(\H, \H') \co \CFo(\H, \s) \to \CFo(\H', \s),$$
well-defined up $(\Z \oplus \Z)$-filtered chain homotopy; moreover, these equivalences satisfy the analogs of conditions (i) and (ii) from the statement of Theorem~\ref{thm:JT}, with equality replaced by $(\Z \oplus \Z)$-filtered chain homotopy. 
\end{proposition}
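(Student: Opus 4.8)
The plan is to transcribe the proof of Proposition~\ref{prop:JT}, carrying out every step in the category of $(\Z\oplus\Z)$-filtered chain complexes of $\Z_2[U]$-modules rather than merely in the category of $\Z_2[U]$-modules. First I would recall that any two choices of Heegaard data $\H$, $\H'$ for the same quadruple $(Y, K, w, z)$ are connected by a finite sequence of elementary moves: a change of the almost complex structures with the diagram fixed; an isotopy of the $\alpha$- or $\beta$-curves supported in the complement of $\{w,z\}$; a handleslide among the $\alpha$- or $\beta$-curves, again supported away from $\{w,z\}$; a stabilization or destabilization; and a diffeomorphism induced by an ambient isotopy of $\Sigma$ in $Y$ fixing $K$ and the two basepoints. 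That such a list of moves suffices — i.e., that the relevant ``space of doubly-pointed diagrams'' is connected — is the doubly-pointed refinement of the Juh\'asz--Thurston connectedness theorem, proved by the same Cerf-theoretic argument with the basepoints treated as marked points.

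Next I would check that each of these moves induces a $(\Z\oplus\Z)$-filtered chain homotopy equivalence. For changes of almost complex structure, isotopies, stabilizations and diffeomorphisms this is already contained in \cite{Knots}: the maps there are manifestly filtered, since the relevant holomorphic curves have nonnegative multiplicities at both $w$ and $z$. The only delicate point is the handleslide maps, which a priori are only shown to be filtered quasi-isomorphisms. Exactly as in the proof of Proposition~\ref{prop:JT}, one writes the composite of a handleslide with its inverse as a small-isotopy triangle map composed with a nearest-point map, and then invokes the filtered refinement of the statement that for a small isotopy the triangle map is chain homotopic to the nearest-point map (compare \cite[Proposition 11.3]{LipshitzCyl} and \cite[proof of Theorem 6.6]{OzsvathStipsicz}); this upgrades the handleslide maps to filtered chain homotopy equivalences.

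To define $\Phi(\H,\H')$ one chooses a sequence of moves and composes the associated filtered equivalences; the substantive claim is that the resulting $(\Z\oplus\Z)$-filtered chain homotopy class is independent of the chosen sequence. As in the proof of Proposition~\ref{prop:JT}, this reduces, via the filtered analogue of \cite[Theorem 2.39]{Naturality}, to verifying that $\CFKi(\H)$ is a strong Heegaard invariant in the homotopy category of $(\Z\oplus\Z)$-filtered complexes — that is, to checking functoriality, commutativity, continuity, and handleswap invariance. These are precisely the verifications carried out in Sections~9.2--9.3 of \cite{Naturality}; as noted there the arguments already work at the chain level and up to chain homotopy, and one only has to observe that every chain homotopy they produce is assembled from filtered holomorphic curve counts and is therefore itself filtered. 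Once well-definedness up to filtered chain homotopy is established, conditions~(i) and~(ii) are immediate: for (i) take the empty sequence of moves, and for (ii) concatenate a sequence of moves from $\H$ to $\H'$ with one from $\H'$ to $\H''$.

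The main obstacle is bookkeeping rather than conceptual: one must be vigilant that every map, homotopy, and higher coherence appearing in the Juh\'asz--Thurston machinery respects \emph{both} filtrations simultaneously, which forces all disks, triangles and quadrilaterals under consideration to avoid $w$ and $z$ (or to meet them with controlled, nonnegative multiplicity). The one genuinely non-formal input is the filtered sharpening of the handleslide/nearest-point comparison above; everything else is a transcription of the proof of Proposition~\ref{prop:JT} with ``chain homotopy'' replaced throughout by ``$(\Z\oplus\Z)$-filtered chain homotopy.''
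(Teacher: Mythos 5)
Your proposal is correct and follows exactly the route the paper intends: the paper itself only remarks that Proposition~\ref{prop:JT2} ``is the analogue of Proposition~\ref{prop:JT}, and has a similar proof,'' and your write-up is precisely that transcription into the $(\Z\oplus\Z)$-filtered setting, including the one delicate point (upgrading the handleslide maps from filtered quasi-isomorphisms to filtered homotopy equivalences via the small-isotopy triangle/nearest-point comparison).
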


Proposition~\ref{prop:JT2} is the analogue of Proposition~\ref{prop:JT}, and has a similar proof. Note that a version of naturality (at the level of homology) for link Floer homology was proved in \cite[Theorem 1.8]{Naturality}.

Let us now imitate the constructions from Section~\ref{sec:invo}. Let $\bh=(-\Sigma, \betas, \alphas, z, w)$ and $\bH=(\bh, \bJ)$. By \cite[Section 3.5]{Knots}, there is a canonical isomorphism
\begin{align*}
\eta_K \co \CFKi(\H) \xrightarrow{\phantom{u} \cong \phantom{u}} \CFKi(\bH),\ \ \eta([\x, i, j])=[\x, j, i].
\end{align*}

Moreover, $\H$ and $\bH$ represent the same knot $K \subset Y$, albeit with the roles of the $w$ and $z$ basepoints switched. Let $\psi: Y \to Y$ be a self-diffeomorphism of $Y$ given by a half Dehn twist along the oriented knot $K$, so that $\psi(K) = K$, $\psi(w)=z$, $\psi(z)=w$, and $\psi$ is the identity outside a small neighborhood of $K$. Note that $\psi$ is isotopic to the identity. Further, the push-forward $\psi_*(\bH)$ represents the same doubly-pointed knot as $\H$. Proposition~\ref{prop:JT2} gives a $(\Z \oplus \Z)$-filtered chain homotopy equivalence
\begin{align*}
\Phi(\bH, \H)  
\co \CFKi(\psi_*(\bH)) \xrightarrow{\phantom{u} \sim \phantom{u}} \CFKi(\H).
\end{align*}

By pre-composing $\Phi(\bH, \H)$  with the isomorphism induced by $\psi$, we obtain a $(\Z \oplus \Z)$-filtered chain homotopy equivalence
\begin{align*}
\Phi(\bH, \H) \co \CFKi(\bH) \xrightarrow{\phantom{u} \sim \phantom{u}} \CFKi(\H),
\end{align*}
well-defined up $(\Z \oplus \Z)$-filtered chain homotopy.

Then, we set
$$ \inv_K = \Phi(\bH, \H) \circ \eta_K \co \CFKi(\H) \to \CFKi(\H).$$
This is the analogue of the map $\inv$ from Section~\ref{sec:invo}. By construction, it has the property that if $[\y, i', j']$ is a term in $\inv_K([\x, i, j])$, then
\begin{equation}
\label{eq:filterediota}
M([\y, i', j'])=M([\x, i, j]), \ \ i' \leq j, \ \ j' \leq i. 
\end{equation}
The first equality says that $\inv_K$ is grading-preserving, and the two inequalities say that $\inv_K$ is {\em skew-filtered}---in the sense that it is filtered as a map from $(\CFKi(\H), \Filt)$ to $(\CFKi(\H), \bFilt)$, where $\bFilt([\x, i, j])=(j, i)$. This follows from the fact that $\Phi(\bH,H)$ is a filtered map. Furthermore, by construction, $\inv_K$ is a skew-filtered quasi-isomorphism, i.e., it induces an isomorphism between the homology of the associated graded complexes.

This motivates the following.
\begin{definition}
\label{def:CFKI}
A {\em set of $\CFKI$-data} $\Ch=(C, \Filt, M, \del, \inv)$ consists of a free, finitely generated $\Z_2[U, U^{-1}]$-module $C$, equipped with a $(\Z \oplus \Z)$-filtration $\Filt=(i,j)$ and a $\Z$-grading $M$ such that the action of $U$ decreases $\Filt$ by $(1,1)$ and $M$ by $2$. Further, there is a differential $\del \co C \to C$ that preserves $\Filt$ and decreases $M$ by $1$, and a grading-preserving, skew-filtered quasi-isomorphism $\inv \co C \to C$.
\end{definition}

Let $\Ch=(C, \Filt, M, \del, \inv)$ and $\Ch'=(C', \Filt', M', \del', \inv')$ be two sets of $\CFKI$-data. A {\em morphism} $(f, S) \co \Ch \to \Ch'$ consists of  a filtered (and grading-preserving) chain map $f \co C \to C'$ such that $f \circ \inv$ and $\inv' \circ f$ are chain homotopic via a skew-filtered chain homotopy $S \co C \to C'[-1]$. We say that $(f, S)$ is a {\em quasi-isomorphism} if $f$ induces an isomorphism on the homology of the associated graded complexes. We say that $\Ch$ and $\Ch'$ are quasi-isomorphic if they are related by a chain of (back and forth) quasi-isomorphisms. The composition of two morphisms $(f,S)\co \Ch \to \Ch'$ and $(g,T) \co \Ch \to \Ch'$ is given by
$$(g,T)\circ (f,S)=(g\circ f, g\circ S+ T\circ f).$$

\begin{proposition}
The quasi-isomorphism type of the set of $\CFKI$-data
$$(\CFKi(\H), \Filt, M, \del, \inv_K)$$ is an invariant of the oriented knot $K \subset Y$. 
\end{proposition}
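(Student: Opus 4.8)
The plan is to follow the proof of Proposition~\ref{prop:CFI}, working throughout with filtered complexes and keeping careful track of which of the two filtrations $\Filt$ and $\bFilt$ each map and homotopy respects. Fix two choices of Heegaard data $\H$ and $\H'$ for $K\subset Y$, and first assume they have the same underlying basepoints $w,z$. Write $\inv'_K$ for the map constructed from $\H'$ in the same way $\inv_K$ was constructed from $\H$, and set $f=\Phi(\H,\H')\co\CFKi(\H)\to\CFKi(\H')$. By Proposition~\ref{prop:JT2}, $f$ is a $(\Z\oplus\Z)$-filtered, grading-preserving chain homotopy equivalence, well-defined up to filtered chain homotopy; in particular it induces an isomorphism on the homology of the associated graded complexes. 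To promote $f$ to a morphism of $\CFKI$-data it remains to exhibit a skew-filtered chain homotopy $S\co\CFKi(\H)\to\CFKi(\H')[-1]$ with $\del'S+S\del=f\circ\inv_K+\inv'_K\circ f$.

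For this, expand $\inv_K=\Phi(\bH,\H)\circ\eta_K$ and $\inv'_K=\Phi(\bH',\H')\circ\eta_{K'}$. As in the proof of Lemma~\ref{lem:inv}, conjugating the sequence of Heegaard moves defining $\Phi(\H,\H')$ by the canonical isomorphisms $\eta_K,\eta_{K'}$ and by the half Dehn twist $\psi$ produces the corresponding sequence of moves between the conjugate data; hence $\eta_{K'}\circ\Phi(\H,\H')\sim\Phi(\bH,\bH')\circ\eta_K$, the homotopy being the $\eta$-conjugate of a $(\Z\oplus\Z)$-filtered homotopy, and so filtered once the filtrations are matched up via the $\eta$'s. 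Combining this with the transitivity relations $\Phi(\H,\H')\circ\Phi(\bH,\H)\sim\Phi(\bH,\H')$ and $\Phi(\bH',\H')\circ\Phi(\bH,\bH')\sim\Phi(\bH,\H')$ of Proposition~\ref{prop:JT2}, we see that both $f\circ\inv_K=\Phi(\H,\H')\circ\Phi(\bH,\H)\circ\eta_K$ and $\inv'_K\circ f=\Phi(\bH',\H')\circ\eta_{K'}\circ\Phi(\H,\H')$ are chain homotopic to $\Phi(\bH,\H')\circ\eta_K$. Composing these homotopies, and reading off the bookkeeping ($\eta_K$ converts $\Filt$ into $\bFilt$, and each $\Phi$ is filtered for the relevant filtration on each side), yields the required skew-filtered $S$, so $(f,S)$ is a quasi-isomorphism of $\CFKI$-data.

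It remains to eliminate the other choices. The map $\inv_K$ used a choice of sequence of moves from $\bH$ to $\H$; by Proposition~\ref{prop:JT2} any two such sequences give maps $\Phi(\bH,\H)$ that are filtered-chain-homotopic, so changing the sequence changes $\inv_K$ by a skew-filtered homotopy $S_0$, and $(\id,S_0)$ is a quasi-isomorphism between the two resulting sets of $\CFKI$-data. To compare data with different basepoints on $K$, choose a diffeomorphism $\phi\co Y\to Y$ isotopic to the identity with $\phi(K)=K$ carrying one pair of basepoints to the other, and apply $\phi$ to all of the data---Heegaard diagrams, almost complex structures, and the sequences of moves, including those entering the definitions of the $\eta_K$'s and of $\psi$; this gives an honest isomorphism of $\CFKI$-data. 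Concatenating these comparisons shows that the quasi-isomorphism type of $(\CFKi(\H),\Filt,M,\del,\inv_K)$ does not depend on $\H$, and hence is an invariant of the oriented knot $K\subset Y$.

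The main obstacle is the filtration bookkeeping. Because $\inv_K$ is skew-filtered rather than filtered, each homotopy in the chain of relations above is filtered with respect to $\Filt$ on one side and $\bFilt$ on the other, and one has to check that their composite $S$ indeed lands in the class required by the definition of a morphism, namely that $S$ is filtered as a map $(\CFKi(\H),\Filt)\to(\CFKi(\H'),\bFilt')[-1]$. The auxiliary statement that conjugating a Heegaard move by $\eta_K$ (and $\psi$) is the corresponding move between conjugate diagrams is the filtered counterpart of what was already used in Lemma~\ref{lem:inv}; verifying that it is compatible with the $w,z$ bookkeeping---this is where the half-twist $\psi$ enters---is routine.
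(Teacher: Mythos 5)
Your proposal is correct and follows exactly the route the paper intends: the paper's own proof is the one-line remark that the argument is similar to that of Proposition~\ref{prop:CFI}, and your write-up is a faithful adaptation of that argument to the knot setting, with the appropriate extra bookkeeping (filtered versus skew-filtered maps, the half Dehn twist $\psi$, and the basepoint-moving diffeomorphism). No gaps.
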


\begin{proof}
This is similar to the proof of Proposition~\ref{prop:CFI}.
\end{proof}

\begin{remark}
Given a set of $\CFKI$-data $\Ch=(C, \Filt, M, \del, \inv)$, by analogy with the definition of $\CFI$ in Section~\ref{sec:CFI}, we can construct an associated complex
$$\CI := (C[-1] \oplus Q \ccdot C[-1], \del + Q \ccdot (1+\inv)).$$ 
A morphism, respectively quasi-isomorphism, of $\CFKI$-data induces a morphism, respectively quasi-isomorphism, between the associated $\CI$, as chain complexes over the ring $\Z_2[Q, U, U^{-1}]/(Q^2)$. However, we are losing some information in this process, because (since the identity is filtered and $\inv$ is skew-filtered) there is no natural $(\Z \oplus \Z)$-filtration on $\CI$. 
\end{remark}
 
When trying to compute the map $\inv_K$ for specific knots, we often know a simplified complex that is filtered chain homotopic to $\CFKi(\H)$, and it is helpful to transfer $\inv_K$ to that complex, to make calculations easier. Thus, the following lemma will be useful.

\begin{lemma}
\label{lem:transfer}
If $\Ch=(C, \Filt, M, \del, \inv)$ is a set of $\CFKI$-data, and let $(C', \Filt', M', \del')$ be a filtered complex as in Definition~\ref{def:CFKI}, but without the map $\inv$. Suppose that $(C, \Filt, M, \del)$ and $(C', \Filt', M', \del')$ are filtered chain homotopy equivalent. Then, there exists a grading-preserving, skew-filtered quasi-isomorphism $\inv'$ of $(C', \del')$, such that $\Ch' := (C', \Filt', M', \del', \inv')$ is a set of $\CFKI$-data quasi-isomorphic to $\Ch$. 
\end{lemma}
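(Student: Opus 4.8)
The plan is to transport the map $\inv$ across the given filtered chain homotopy equivalence and then repair the result so that it becomes an honest skew-filtered quasi-isomorphism. Write $F \co C \to C'$ and $G \co C' \to C$ for the filtered chain homotopy equivalences, with filtered chain homotopies $H \co C \to C$ between $G \circ F$ and $\id_C$, and $H' \co C' \to C'$ between $F \circ G$ and $\id_{C'}$. The naive candidate is $\inv' := F \circ \inv \circ G$. This is automatically a chain map and grading-preserving. It is also skew-filtered: $G$ is filtered, $\inv$ is skew-filtered, and $F$ is filtered, so the composite turns the filtration $\Filt'$ into the reversed filtration $\bFilt'$. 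The same reasoning shows $\inv'$ is a skew-filtered quasi-isomorphism, since on the homology of the associated graded it is the composite of the isomorphisms induced by $F$, $\inv$, and $G$. So $\Ch' = (C', \Filt', M', \del', \inv')$ is already a set of $\CFKI$-data; the only remaining task is to check that it is quasi-isomorphic to $\Ch$ in the sense of the category of $\CFKI$-data defined just before the lemma.

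For that I would exhibit an explicit morphism $(F, S) \co \Ch \to \Ch'$ and show it is a quasi-isomorphism. Since $F$ is already a filtered quasi-isomorphism, what remains is to produce a skew-filtered chain homotopy $S \co C \to C'[-1]$ between $F \circ \inv$ and $\inv' \circ F = F \circ \inv \circ G \circ F$. But $G \circ F \sim \id_C$ via $H$, so $F \circ \inv \circ G \circ F$ and $F \circ \inv$ differ by $F \circ \inv \circ (\del H + H \del)$, and since $F$, $\inv$ are (skew-)filtered and $H$ is filtered, the homotopy $S := F \circ \inv \circ H$ (possibly together with a term $F \circ \inv' $-correction, depending on signs, which vanish mod $2$) is skew-filtered and does the job: a short computation gives $\del' S + S \del = F\inv H \del + F \inv \del H$ — wait, one must be careful that $\inv$ need not commute with $\del$ on the nose, but it does since $\inv$ is a chain map, so $F \inv (\del H + H \del) = F \inv (\id - GF) = F\inv - \inv' F$, as required. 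Thus $(F, S)$ is the desired quasi-isomorphism of $\CFKI$-data.

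The main obstacle is purely bookkeeping: verifying that every map in sight respects the correct filtration (filtered versus skew-filtered) and the $M$-grading, so that all the composites land in the category of $\CFKI$-data and the constructed homotopy $S$ is genuinely \emph{skew}-filtered rather than merely filtered. In particular one should note that the filtered chain homotopies $H, H'$ furnished by filtered homotopy equivalence can be chosen filtered (this is the content of Proposition~\ref{prop:JT2}-style naturality, or simply of the definition of filtered chain homotopy equivalence), so composing them with the skew-filtered $\inv$ produces skew-filtered maps. No holomorphic geometry enters; once the bookkeeping is in place the proof is formal, exactly parallel to the transfer arguments used in the proof of Proposition~\ref{prop:CFI}.
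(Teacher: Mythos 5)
Your proposal is correct and follows essentially the same route as the paper: the paper also sets $\inv' = f\,\inv\, g$ and exhibits the morphism $(f, f\,\inv\, w)$, where $w$ is the filtered homotopy between $gf$ and the identity (your $S = F\,\inv\, H$). The verification that $\del' S + S\del = F\inv(\del H + H\del) = F\inv + \inv' F$ mod $2$ is exactly the computation the paper relies on.
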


\begin{proof}
Let $f \co C \to C'$ and $g \co C' \to C$ be the two chain homotopy equivalences, so that $fg - 1 = \del v+v\del$ and $gf-1 = \del w + w \del$. We set $\iota' = f \iota g$. Then, we construct morphisms
$$ (f, f\iota w) \co \Ch \to \Ch', \ \ \ (g, w\iota g) \co \Ch' \to \Ch.$$
(Here $f\iota w$ plays the role of the skew-filtered chain homotopy $S \co C \rightarrow C'[-1]$ in the definition of a morphism between sets of $\CFKI$ data, and similarly for $w \iota g$.) Either of these is a quasi-isomorphism, because $f$ and $g$ are chain homotopy equivalences and hence quasi-isomorphisms.
\end{proof}

\subsection{The Sarkar map}
\label{sec:Sarkar}
Lemma~\ref{lem:inv} implies that the map $\iota$ induces an involution on Heegaard Floer homology. By contrast, for knots, recall that in constructing $\iota_K$ we used Heegaard moves that   take the basepoint $w$ to $z$ and vice versa, following arcs on the knot $K$. Therefore, a similar argument as the one in Lemma~\ref{lem:inv} shows that
 $$\iota_K^2 \sim \sarkar,$$
where $\sarkar$ is the map on $\CFKi(\H)$ induced by moving the basepoints once around $K$, i.e., associated to the positive Dehn twist around $K$ in the sense of \cite[Section 3]{SarkarMoving}. 
The map $\sarkar$ is a filtered, grading-preserving chain map, well-defined up to filtered chain homotopy equivalence. 

As we shall see in Section~\ref{sec:thin}, in some cases $\iota_K$ is determined by its behavior with respect to the grading and filtration, together with (partial) knowledge of its square $\sarkar$. 
To find $\sarkar$, we can use the results from \cite{SarkarMoving}. There, Sarkar considered the complex $g\CFKm(\H)$, freely generated over $\Z_2[U]$ by intersection points $\x \in \Ta \cap \Tb$, and with differential
$$ \del \x = \sum_{\y \in \Ta \cap \Tb} \sum_{\{ \phi \in \pi_2(\x, \y) \mid \mu(\phi)=1, n_z(\phi)=0  \}} \# \Mh(\phi)\cdot U^{n_w(\phi)}\y.$$

We can view $g\CFKm(\H)$ as the associated graded of the subcomplex $C(i \leq 0) \subset$ $\CFKi(\H)$ generated by $[\x, i, j]$ with $i \leq 0$, where the associated graded is taken with respect to the vertical filtration by $j$. 

The following is Theorem 1.1 in \cite{SarkarMoving}.
\begin{proposition}[Sarkar \cite{SarkarMoving}]
\label{prop:sarkar}
Let $K \subset S^3$ be an oriented knot, and let $\H$ be a choice of Heegaard data for $K$. Then, the map $\sarkar_g$ induced by $\sarkar$ on the complex $g\CFKm(\H)$ is chain homotopic to $1 + \Psi \Phi$, where 
$$ \Psi (\x) = \sum_{\y \in \Ta \cap \Tb} \sum_{\{ \phi \in \pi_2(\x, \y) \mid \mu(\phi)=1, n_z(\phi)=1  \}} \# \Mh(\phi)\cdot U^{n_w(\phi)}\y$$
and
$$\Phi(\x)=\sum_{\y \in \Ta \cap \Tb} \sum_{\{ \phi \in \pi_2(\x, \y) \mid \mu(\phi)=1, n_z(\phi)=0  \}} \# \Mh(\phi)\cdot n_w(\phi) U^{n_w(\phi)-1}\y.$$
Further, the square $(\sarkar_g)^2$ is chain homotopic to the identity.
\end{proposition}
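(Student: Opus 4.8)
The plan is to realize the point-pushing diffeomorphism $\psi$ that defines $\sarkar$ as an explicit composition of elementary moves, each supported near a single intersection of the knot with a Heegaard curve, to identify the chain map on $g\CFKm(\H)$ induced by each such move, and then to multiply these contributions together; this is the strategy of \cite{SarkarMoving}.

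\emph{Step 1 (reduction to single crossings).} Choose a doubly-pointed Heegaard diagram $\H$ for $K$ in which the two arcs of $K$ project to an arc $a$ on $\Sigma$ from $z$ to $w$ disjoint from $\betas$ and transverse to $\alphas$, and an arc $b$ from $w$ back to $z$ disjoint from $\alphas$ and transverse to $\betas$. Moving $z$ once around $K$ amounts to dragging it first along $a$ and then along $b$; even though $\psi$ is isotopic to the identity, $\sarkar$ is the resulting monodromy of the change-of-basepoint maps, so it need not be chain homotopic to the identity. Passing to $g\CFKm(\H)$, one writes $\sarkar_g$ as the composition, over the points of $(a\cup b)\cap(\alphas\cup\betas)$ taken in the order they are met, of the elementary change-of-basepoint maps $T_c$ obtained by pushing $z$ transversely once across the curve $c$. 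Each $T_c$ is supported in a neighborhood of its crossing point and is realized by a ``finger move'' of $c$ past the fixed basepoint $z$.

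\emph{Step 2 (the local computation and the algebra of $\Phi,\Psi$).} For each crossing I would analyze $T_c$ by the nearest-point-plus-correction technique of \cite[Section 9.2]{HolDisk} (equivalently by stretching the neck along the finger): the finger sweeping once past $z$ contributes $T_c \simeq 1 + \Psi_c \Phi_c$, where $\Phi_c$ counts the index-one disks with $n_z=0$ that run into the finger, weighted by the multiplicity $n_w$, and $\Psi_c$ counts those with $n_z=1$. Composing over all crossings of $a$ and $b$, and using the homotopy relations $\Phi^2\simeq 0$, $\Psi^2\simeq 0$, $\Phi\Psi+\Psi\Phi\simeq 0$ among the basepoint-derivative maps—each established by counting the ends of the relevant one-dimensional moduli spaces while tracking $n_w$ and $n_z$—the cross terms cancel and the telescoping product collapses to $1 + \Psi\Phi$ with $\Phi=\sum_c\Phi_c$ and $\Psi=\sum_c\Psi_c$; identifying these two sums with the maps of the statement is then routine.

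\emph{Step 3 (the square and the main obstacle).} For $(\sarkar_g)^2\simeq 1$ one squares the formula: over $\Z_2$ we have $(1+\Psi\Phi)^2 = 1 + \Psi\Phi\Psi\Phi$, and $\Psi\Phi\Psi\Phi \simeq \Psi(\Psi\Phi)\Phi = \Psi^2\Phi^2 \simeq 0$ by the relations of Step~2 (alternatively, $\sarkar^2$ is the monodromy of $z$ traversing $K$ twice, and Steps 1--2 apply verbatim to the doubled loop). The main obstacle is Step~2: pinning down the elementary maps $T_c$ finely enough to extract the $n_w$-weighted disk counts—in particular the multiplicity factor that makes $\Phi$ a formal $U_w$-derivative—and verifying that the many cross terms in the composition cancel through a coherent system of homotopies rather than merely pairwise. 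Proving $\Phi^2\simeq 0$ and $\Psi^2\simeq 0$ is itself a genuine point, since in characteristic two these are not formal consequences of $\del^2=0$ and require a more refined degeneration argument (e.g.\ counting disks through an auxiliary marked point).
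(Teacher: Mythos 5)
First, some context: the paper does not prove this proposition at all --- it is quoted verbatim as Theorem~1.1 of \cite{SarkarMoving} --- so your argument can only be measured against Sarkar's, not against anything internal to this paper. That said, your overall architecture (realize the point-pushing map as a composition of finger moves along the two arcs of the knot trace, compute each continuation map by a degeneration argument, then use the relations $\Phi^2 \simeq 0$, $\Psi^2 \simeq 0$, $\Phi\Psi \simeq \Psi\Phi$ to collapse the composite) is the right general shape, and your closing observation --- that $\Phi^2 \simeq 0$ and $\Psi^2 \simeq 0$ are not formal consequences of $\del^2 = 0$ in characteristic two and require divided-power second derivatives of the differential, i.e.\ counts of disks with auxiliary marked points --- is correct and shows you see where real content lies. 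The algebra in Step~3 is fine granted Step~2.

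The gap is the local formula at the heart of Step~2. You assert that the elementary move $T_c$ obtained by pushing $z$ once across a single curve $c$ satisfies $T_c \simeq 1 + \Psi_c\Phi_c$. This cannot be right as stated. The deviation of a single continuation map from the nearest-point map is computed by stretching the neck on rigid (parametrized index-zero) configurations; such a configuration contributes a \emph{one-story} disk count weighted by the relevant basepoint multiplicities, not a composition of two separate index-one counts. The two-story term $\Psi\Phi$ in Sarkar's formula is a global phenomenon: the two arcs of the knot trace play asymmetric roles --- the arc crossing the $\alpha$-curves interacts with $w$ and is responsible for the $U$-derivative count $\Phi$, while the arc crossing the $\beta$-curves interacts with $z$ and is responsible for the $n_z=1$ count $\Psi$ --- and the quadratic term appears only when the corrections coming from the two arcs are composed. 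By planting $\Psi_c\Phi_c$ at every single crossing you have in effect assumed the answer locally, and the subsequent ``telescoping,'' in which all cross terms between corrections at different crossings must be shown to cancel or assemble into exactly $\Psi\Phi$, is precisely the non-routine part of the proof; as written, Step~2 is a redistribution of the theorem over the crossings rather than an argument for it. To repair it you would need to identify the genuine correction term of each elementary move (a single weighted disk count), prove that the corrections along the $\alpha$-arc sum to something homotopic to $\Phi$ composed with an auxiliary map and those along the $\beta$-arc to $\Psi$ composed with its inverse, and only then multiply out.
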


Sucharit Sarkar also informed us of the following conjecture.\footnote{Since the first draft of this paper appeared, this conjecture has been proved by Ian Zemke \cite[Theorem B]{Zemke3}.}
\begin{conjecture}[Sarkar]
\label{conj:sarkar}
Let $K$ be an oriented, null-homologous knot in a three-manifold $Y$, and let $\H$ be a choice of Heegaard data for $K$. Let $\del =\sum_{i, j \geq 0} \del_{i j}$ be the differential on the complex $\CFKi(\H)$, where the term $\del_{ij}$ decreases the two filtration levels by $i$ and $j$, respectively. Then, up to $(\Z \oplus \Z)$-filtered chain homotopy, the map $\sarkar$ on the complex $\CFKi(\H)$ is given by the formula
$$ \sarkar \sim 1 + U^{-1} \bigl( \sum_{\substack {i, j \geq 0 \\ i \text{ odd} } } \del_{ij} \bigr ) \circ  \bigl ( \sum_{\substack {i, j \geq 0 \\ j \text{ odd} } } \del_{ij} \bigr ).$$
\end{conjecture}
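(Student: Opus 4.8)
By definition $\sarkar$ is the chain map induced on $\CFKi(\H)$ by the self-diffeomorphism of $Y$ that drags the basepoints once around the oriented knot $K$ (a positive Dehn twist supported in a neighborhood of $K$); it is well-defined up to $(\Z \oplus \Z)$-filtered chain homotopy. Write $D_w = \sum_{i \text{ odd},\, j \ge 0} \del_{ij}$ and $D_z = \sum_{i \ge 0,\, j \text{ odd}} \del_{ij}$, so the conjectured formula reads $\sarkar \sim 1 + U^{-1} D_w D_z$. The plan is to compute $\sarkar$ at the chain level by adapting the argument of \cite{SarkarMoving}, but retaining \emph{both} basepoint multiplicities rather than passing to the $j$-associated graded $g\CFKm(\H)$ on which Sarkar works; alternatively, one can bypass the explicit moduli analysis and deduce the formula from Zemke's graph-cobordism TQFT \cite{Zemke}.

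\emph{Step 1 (algebraic preliminaries).} Expanding $\del^2 = 0$ and collecting terms by bidegree in the $(\Z \oplus \Z)$-filtration gives $\sum_{i'+i'' = c,\, j'+j'' = d} \del_{i'j'} \del_{i''j''} = 0$ for every $(c,d)$. Over $\Z_2$ this at once implies that $D_w$ and $D_z$ are chain maps (the cross terms have odd total $w$-, resp.\ $z$-, multiplicity and therefore vanish), and hence that $1 + U^{-1} D_w D_z$ is a chain map, using that $U$ is central and that $D_w D_z \del = \del D_w D_z$. Inspecting Maslov and filtration shifts shows $U^{-1} D_w D_z$ is grading-preserving and $(\Z \oplus \Z)$-filtered, matching the known behaviour of $\sarkar$. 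These operators are the knot-Floer incarnations of the basepoint (relative homology) actions $\Phi_w, \Phi_z$ in Zemke's formalism; in particular one gets $(D_w)^2 \sim 0 \sim (D_z)^2$ via explicit homotopies, which is useful bookkeeping though not strictly needed for the statement.

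\emph{Step 2 (identification of $\sarkar$, the substantive part).} Realize the point-pushing diffeomorphism in a Heegaard diagram adapted to $K$ as a finite sequence of finger moves and handleslides that carry $w$ along an arc of $K$, past $z$, and back. Sarkar's Theorem~1.1 (recalled as Proposition~\ref{prop:sarkar}) computes the induced map on $g\CFKm(\H)$ as $1 + \Psi \Phi$; I would lift this to the full doubly-filtered complex, treating $w$ and $z$ symmetrically. As the neck around the basepoint arc is stretched, a holomorphic disk (or triangle) contributing to $\sarkar - 1$ degenerates into a disk crossing $w$ an odd number of times followed by one crossing $z$ an odd number of times, with the power of $U$ recording the common boundary multiplicity along the arc; summing these degenerations assembles precisely into $U^{-1} D_w D_z$, the even-multiplicity configurations cancelling in $\Z_2$ against continuation terms as in \cite{SarkarMoving}. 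The more robust alternative is to observe that the point-pushing map is the graph-action map of $Y \times I$ decorated by a suitable theta-graph, and to read off $\sarkar \sim 1 + \Phi_z \circ \Phi_w$ from the composition law and basepoint relations of \cite{Zemke}, which under the $\CFKi$ normalization is the claimed right-hand side. A useful consistency check on normalizations: the formula is symmetric under $w \leftrightarrow z$, as it must be, since conjugating $\sarkar$ by the isomorphism $\eta_K$ of \S\ref{sec:iotaHFK} turns $w$-pushing into $z$-pushing (i.e.\ $\sarkar^{-1}$), $\sarkar^2 \sim \id$ over $\Z_2$, and $\eta_K$ interchanges $D_w$ and $D_z$ while fixing $U$.

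\emph{Main obstacle.} Everything hard lies in Step~2. Sarkar's computation on $g\CFKm(\H)$ already requires a delicate count of moduli spaces through the finger-move sequence; promoting it to $\CFKi(\H)$ means the $z$-crossings can no longer be discarded, so additional broken configurations and boundary degenerations — invisible on the associated graded — must be identified and shown to organize into $U^{-1} D_w D_z$ with nothing left over. The TQFT route trades this for the burden of importing the graph-action formalism wholesale and matching its grading shifts and variable conventions with those of $\CFKi$; pinning down the exact power of $U$ in that dictionary is itself a genuine, if bookkeeping-heavy, task. Steps~1 and~3 are routine by comparison.
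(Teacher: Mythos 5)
The statement you are attempting is stated in the paper as Conjecture~\ref{conj:sarkar} and is \emph{not} proved there; the authors only record it (attributing it to Sarkar) and note in a footnote that it was subsequently established by Zemke. So there is no argument in the paper to compare yours against, and the relevant question is whether your proposal itself constitutes a proof. It does not. Your Step~1 is fine: expanding $\del^2=0$ by bidegree and summing over bidegrees with odd $z$-multiplicity (resp.\ odd $w$-multiplicity) does show that $D_z$ and $D_w$ are chain maps over $\Z_2$, and the grading/filtration bookkeeping for $U^{-1}D_wD_z$ is routine. But Step~2 --- which you yourself flag as ``the substantive part'' --- is never carried out. You describe two possible strategies (a neck-stretching degeneration analysis lifting Proposition~\ref{prop:sarkar} from $g\CFKm(\H)$ to the doubly-filtered complex, or an appeal to the graph-cobordism TQFT) and then, under ``Main obstacle,'' explain why each is hard; neither the moduli-space cancellation scheme nor the TQFT dictionary (including the exact power of $U$) is actually established. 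A plan that identifies where the difficulty lies is not a proof of the statement.

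Two smaller cautions. First, your ``consistency check'' asserts $\sarkar^2\sim\id$ on $\CFKi(\H)$ and that conjugation by $\eta_K$ sends $\sarkar$ to $\sarkar^{-1}$ while swapping $D_w$ and $D_z$; the paper only records $(\sarkar_g)^2\sim\id$ on the associated graded, and verifying that $1+U^{-1}D_zD_w$ is a homotopy inverse of $1+U^{-1}D_wD_z$ requires controlling the anticommutator $D_wD_z+D_zD_w$ up to homotopy, which you have not done. Second, for what it is worth, the TQFT route you sketch is essentially the one Zemke ultimately used, so the strategy is sound --- but as written your submission establishes only the easy algebraic preliminaries, not the conjecture.
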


\subsection{Preliminaries on the large surgery formula}
\label{sec:large}
Let us recall the large surgery formula for Heegaard Floer homology, and its proof. The original references are \cite{Knots} and \cite{RasmussenThesis}; here we use the notation from \cite{IntSurg}.

Fix an integer $p > 0$. Let $Y_p(K)$ be the manifold obtained by surgery along $K$ with coefficient $p$, and $W_p(K)$ be the two-handle surgery cobordism from $Y$ to $Y_p(K)$. We denote by $W'_p(K)$ the cobordism from $Y_p(K)$ to $Y$ obtained by turning around $-W_p(K)$. Let $F$ be a Seifert surface for $K$, and $\hF \subset W'_p(K)$ the surface obtained from $F$ by capping it off with the core of the two-handle. 

The $\spinc$ structures over $Y_p(K)$ are identified with the elements of $\Z/p\Z$ as follows. The  structure $\s \in \Spinc(Y_p(K))$ corresponds to $[s] \in \Z/p\Z$ if there is an extension of $\s$ to $W'_p(K)$ such that
$$ \langle c_1(\s), [\hF]\rangle - p \equiv 2s \pmod{2p}.$$

Pick Heegaard data $\H = (H, J)$ for $K \subset Y$, with 
$$H =(\Sigma, \alphas, \betas, w, z)$$ being such that $w$ and $z$ are on each side of the beta curve $\beta_g = \mu$ (the meridian of the knot). Here, $g=g(\Sigma)$ is the genus of the Heegaard surface $\Sigma$, which should not be confused with the Seifert genus of the knot, $g(K)$. 

From this we get Heegaard data $\H_p=(H_p, J_p)$ for $Y_p(K)$, where
$$ H_p=(\Sigma, \alphas, \gammas, z)$$
is such that the first $g-1$ curves in the set $\gammas$ differ from the corresponding ones in $\betas$ by small Hamiltonian isotopies, whereas the last curve $\gamma_g$ is the result of winding a knot longitude $\lambda$ a number of times around $\beta_g$. For simplicity, we can assume that $\lambda$ is the Seifert longitude, so that the winding is done $p$ times. Observe that  
$(\Sigma, \alphas, \gammas, \betas, z)$ is a triple Heegaard diagram that represents the cobordism $W'_p(K)$. See Figure~\ref{fig:triple}.

\begin {figure}
\begin {center}
\input{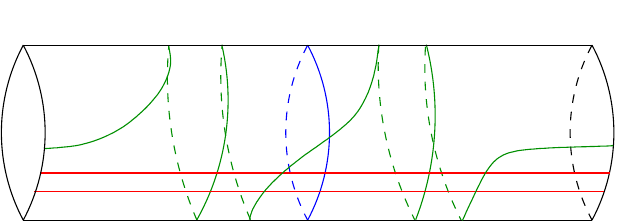_t}
\caption {A triple Heegaard diagram that represents the cobordism $W'_p(K)$. We show here the neighborhood of the meridional curve $\beta_g$. This curve can intersect any number of alpha curves; two are shown in the picture.}
\label{fig:triple}
\end {center}
\end {figure}

For each $s \in \Z$, we consider the subcomplex of $\CFKi(\H)$ generated by $[\x, i, j]$ with $i < 0 \text{ and } j < s$:
$$A_s^- = C\{i < 0 \text{ and } j < s\}$$
Let
$$ A_s^+ = C\{i \geq 0 \text{ or } j \geq s\}.$$
be the corresponding quotient complex.

\begin{theorem}[Ozsv\'ath-Szab\'o \cite{Knots}, Rasmussen \cite{RasmussenThesis}]
\label{thm:LargeSurg}
There exists $N \geq 0$ such that for all $p \geq N$ and for each $s \in \Z$ with $|s|\leq p/2$, we have an isomorphism of relatively graded $\Z_2[U]$-modules
\begin{equation}
\label{eq:LargeSurg}
 \HFp(Y_p(K), [s]) \cong H_*(A_s^+).
 \end{equation}
\end{theorem}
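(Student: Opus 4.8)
The plan is to follow the original argument of Ozsv\'ath--Szab\'o \cite{Knots} (see also Rasmussen \cite{RasmussenThesis}, and \cite{IntSurg} for the reformulation in the notation used here), working directly with the triple Heegaard diagram $(\Sigma, \alphas, \gammas, \betas, z)$ introduced above, in which $\gamma_g$ is obtained by winding the Seifert longitude $p$ times around the meridian $\beta_g$. The guiding principle is that, for $p$ large, both the generators and the holomorphic disks contributing to $\CFp(\H_p, [s])$ can be read off from the knot Floer complex $\CFKi(\H)$, localized near the winding region.

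First I would set up the bijection on generators. An intersection point $\y \in \Ta \cap \Tg$ differs from a unique $\x \in \Ta \cap \Tb$ only in the meridional coordinate, where it sits on one of the $p$ parallel strands of $\gamma_g$; the choice of strand, together with $\x$, determines a pair $(i,j) \in \Z^2$ with $A(\x) = j - i$. A computation with the surgery cobordism $W'_p(K)$ and the capped surface $\hF$ identifies the $\spinc$ structure $\s_z(\y)$ in terms of $i$, $j$, and $s$; one then checks that, for $|s| \le p/2$ and $p$ large, the points $\y$ with $\s_z(\y) = [s]$ correspond bijectively to the generators $[\x, i, j]$ of $A_s^+$, i.e.\ those with $i \ge 0$ or $j \ge s$, and that this bijection intertwines the two $U$-actions.

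Next comes the identification of differentials, which is the technical heart of the argument. A Maslov index one class $\phi \in \pi_2(\y, \y')$ for the diagram $(\Sigma, \alphas, \gammas)$ restricts, away from the winding region, to a domain for a class $\psi \in \pi_2(\x, \x')$ in the doubly-pointed knot diagram $(\Sigma, \alphas, \betas, w, z)$, and the multiplicities $n_w(\psi)$, $n_z(\psi)$ record precisely the drops in the two filtration levels $(i,j)$; thus $\phi$ contributes a term of the type prescribed by the differential on $A_s^+$. To suppress the extra classes that wrap around the meridian I would assign the winding region a small area --- equivalently, take $p \ge N$ for a suitable $N$ --- and invoke Gromov compactness: for such $p$ the only index one holomorphic representatives are supported near the original diagram (and are nearest-point maps in the first $g-1$ coordinates), so $\# \Mh(\phi)$ agrees with the count defining $\del$ on $\CFKi(\H)$. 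Hence the differential on $\CFp(\H_p, [s])$ matches that on $A_s^+$ under the bijection above. Finally, the relative $\Z_2[U]$-grading on $\HFp(Y_p(K), [s])$ is determined up to an overall shift by the grading shift formula for the cobordism maps, and this shift is matched with the Maslov grading on $A_s^+$.

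I expect the main obstacle to be precisely this holomorphic curve analysis: showing that for $p \ge N$ the relevant moduli spaces for the surgered diagram are in bijection with those computing $H_*(A_s^+)$, uniformly over all $s$ with $|s| \le p/2$. This requires a careful area/energy filtration together with a transversality argument localized in the winding region, and is exactly what is carried out in \cite[Theorem 4.4]{Knots} and \cite{RasmussenThesis}; granting it, the generator count and the $\spinc$-structure bookkeeping are routine.
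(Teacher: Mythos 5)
Your proposal follows the same Ozsv\'ath--Szab\'o winding-region strategy that the paper invokes, and your treatment of the generators and the $\spinc$ bookkeeping matches the paper's. The substantive difference is in how the differentials are identified. The paper (following \cite{Knots, IntSurg}) never compares index-one disks in the two diagrams directly: it defines the chain map $\Gamma^+_{p,s} \co \CFp(\H_p, s) \to A_s^+$ by counting holomorphic triangles in the triple diagram $(\Sigma, \alphas, \gammas, \betas, z)$ with a vertex at $\Theta$, checks that it carries $\CFm(\H_p,s)$ into $A_s^-$, and then shows that for $p \gg 0$ this chain map is an isomorphism of complexes because, with respect to the area filtration, its leading term is the ``small triangle'' bijection on generators. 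This packaging matters in two ways. First, your claim that for large $p$ ``the only index one holomorphic representatives are supported near the original diagram'' is precisely the delicate point of a direct disk-matching argument --- one must exclude index-one classes whose domains wrap around the winding region --- whereas the triangle-map formulation needs only the chain-map property (Gromov compactness for triangles) together with upper-triangularity, not a disk-by-disk bijection. Second, the explicit map $\Gamma^+_{p,s}$ is reused in Section~\ref{sec:Proof} to construct the homotopy $R^+_p$ for the involutive refinement, and the paper further pins down the effective bound $p \geq g(K) + |s|$ a posteriori via the mapping-cone formula of \cite{IntSurg} (Proposition~\ref{prop:LargeSurg}); your argument yields only ``some $N$,'' which suffices for the statement as quoted but not for the way the theorem is applied later.
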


The proof of Theorem~\ref{thm:LargeSurg} involves defining a chain map 
\begin{equation}
\label{eq:XiInf}
\Gamma^\infty_{p,s} \co \CFi(\H_p, s) \to \CFKi(\H)
\end{equation}
 by
\begin{equation}
\label{eq:XiFormula}
 \Gamma^\infty_{p,s} ([\x,i]) = \sum_{\y \in \Ta \cap \Tb} \sum_{\substack{\psi \in \pi_2(\x, \Theta, \y) \\ \mu(\psi)=0, \ n_w(\psi)-n_z(\psi)=s} } \# \M(\psi) \cdot [\y, i-n_w(\psi), i-n_z(\psi)].
 \end{equation}
We use here the standard notational conventions in Heegaard Floer theory. Thus, $\psi$ is a homotopy class of triangles with boundaries on the three tori $\Ta, \Tb, \Tg$, and with one vertex at the intersection point $\Theta \in \Tb \cap \Tg$ that represents the maximal degree generator in homology. Note that the homotopy class $\psi$ produces a $\spinc$ structure $\s_{\psi}$ on $W'_p(K)$, and the condition $n_w(\psi)-n_z(\psi)=s$ is equivalent to $\langle c_1(\s_\psi), [\hF] \rangle = 2s-p$.

One can easily check that $\Gamma^\infty_{p,s}$ takes the subcomplex $\CFm(\H_p, s) \subset \CFi(\H_p, s)$ to the subcomplex $A_s^- \subset \CFKi(\H)$. Therefore, we get an induced map on quotient complexes,
\begin{equation}
\label{eq:Xi}
\Gamma^+_{p,s} \co \CFp(\H_p, s) \to A_s^+.
\end{equation}

One shows that for $p \gg 0$, the map $\Gamma^+_{p,s}$ is an isomorphism of chain complexes. How large $p$ has to be depends on $s$ and on the Heegaard diagram under consideration. However, a posteriori, by using the surgery exact triangle and the adjunction inequality, we see that if the isomorphism \eqref{eq:LargeSurg} holds for $p \gg 0$, then it must hold for all $p \geq g(K)+|s|$. This implies that we can take $N=2g(K)-1$ in Theorem~\ref{thm:LargeSurg}; see \cite[Remark 4.3]{Knots}.

In fact, we have the following.

\begin{proposition}
\label{prop:LargeSurg}
For every $p, s \in \Z$ with $p \geq g(K) + |s|$, the map $\Gamma^+_{p,s}$ induces an isomorphism on homology.
\end{proposition}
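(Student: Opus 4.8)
The plan is to bootstrap from the case of large surgery coefficient, which is Theorem~\ref{thm:LargeSurg}, down to the sharp range $p\ge g(K)+|s|$ by a truncation of the integer surgery mapping cone. By Theorem~\ref{thm:LargeSurg} and its proof in \cite{Knots, RasmussenThesis}, there is a threshold $N=N(K)$ such that, for a suitably wound Heegaard triple $(\Sigma,\alphas,\gammas,\betas,z)$ and every $p\ge N$, the map $\Gamma^+_{p,s}$ of \eqref{eq:Xi} is an isomorphism of chain complexes, hence induces an isomorphism on homology. This is the base case, and it remains to treat $g(K)+|s|\le p<N$.

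For this I would use the integer surgery formula of \cite{IntSurg}. Restricted to the conjugation orbit $[s]\in\Z/p\Z$, it identifies $\CFp(Y_p(K),[s])$, up to quasi-isomorphism, with the mapping cone of
$$
\bigoplus_{t\equiv s\ (p)}A^+_t \xrightarrow{\ \{v^+_t\}+\{h^+_t\}\ } \bigoplus_{t\equiv s\ (p)}B^+_t,\qquad B^+_t\cong\CFp(Y),
$$
where $v^+_t\co A^+_t\to B^+_t$ is the projection onto $C\{i\ge 0\}$ and $h^+_t\co A^+_t\to B^+_{t+p}$ is the projection onto $C\{j\ge t\}$ (followed by the identification with $\CFp(Y)$); and --- this is the point --- the identifying quasi-isomorphism is assembled from triangle counts of the form \eqref{eq:XiFormula}, with $\Gamma^+_{p,s}$ itself appearing as its $A^+_s$-component. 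It is standard that $v^+_t$ is a quasi-isomorphism for $t\ge g(K)$ (a generator of its kernel $C\{i<0,j\ge t\}$ would have $j=A(\x)+i<g(K)$ since $i<0$, contradicting $j\ge t\ge g(K)$) and that $h^+_t$ is a quasi-isomorphism for $t\le -g(K)$. The hypothesis $p\ge g(K)+|s|$ is precisely what guarantees that every $t\equiv s\pmod p$ with $t\ne s$ satisfies $t\ge g(K)$ (if $t>s$) or $t\le -g(K)$ (if $t<s$): the nearest such index has $|t|=p-|s|\ge g(K)$, and moreover one may always choose the Gaussian eliminations so that $A^+_s$ is the summand left uncancelled (if $v^+_s$ or $h^+_s$ happens to be a quasi-isomorphism, use instead the other incoming quasi-isomorphism at the relevant $B^+$-vertex, which exists exactly because $p\ge g(K)+|s|$). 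Cancelling all the other $A^+_t$ against $B^+_t$ or $B^+_{t+p}$ and letting the induced zig-zag maps telescope off to $\pm\infty$ leaves a complex quasi-isomorphic to $A^+_s$, the equivalence being projection onto the surviving summand. Composing yields $H_*(\Gamma^+_{p,s})\co\HFp(Y_p(K),[s])\xrightarrow{\ \cong\ }H_*(A^+_s)$. (Alternatively, one can run a descending induction on $p$ directly, using the surgery exact triangle relating $Y_p(K)$, $Y_{p+1}(K)$ and $Y_0(K)$ together with the adjunction inequality $\HFp(Y_0(K),[t])=0$ for $|t|\ge g(K)$ to kill the unwanted term, in the spirit of \cite[Remark 4.3]{Knots}.)

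The step I expect to be the real work, as opposed to bookkeeping, is the emphasized claim: that the quasi-isomorphism of \cite{IntSurg} can be arranged so that its composition with the truncation to $A^+_s$ recovers $\Gamma^+_{p,s}$, and not merely some abstract isomorphism $\HFp(Y_p(K),[s])\cong H_*(A^+_s)$. This comes down to the naturality of the triangle-counting maps \eqref{eq:XiFormula} with respect to the one- and three-handle maps and the composition law used in \cite{IntSurg} --- equivalently, to the compatibility of $\Gamma^\infty_{p,s}$ with the cobordism maps appearing in the surgery exact triangle. One way to package this without importing all of \cite{IntSurg}: $\Gamma^\infty_{p,s}$ is a $U$-equivariant chain map between $\Z_2[U,U^{-1}]$-complexes each with homology $\Z_2[U,U^{-1}]$, hence a quasi-isomorphism as soon as it is nonzero on homology, which one sees from a single minimal-area triangle in the wound triple (here $p\ge g(K)+|s|$ ensures such a triangle is unique in the relevant $\spinc$ class); since $\Gamma^\infty_{p,s}$ carries $\CFm(\H_p,s)$ into $A^-_s$ and induces $\Gamma^+_{p,s}$ on the quotients, the long exact sequences of the pairs $(\CFi(\H_p,s),\CFm(\H_p,s))$ and $(\CFKi(\H),A^-_s)$ reduce the Proposition to the same statement for $\Gamma^-_{p,s}$, which is then settled by the minus-flavor version of the truncation above. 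Assembling these ingredients completes the proof.
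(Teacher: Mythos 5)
Your proposal is correct and follows essentially the same route as the paper: both arguments invoke the integer surgery mapping cone of \cite{IntSurg}, use the adjunction inequality to see that $v^+_t$ (for $t\ge g(K)$) and $h^+_t$ (for $t\le -g(K)$) are quasi-isomorphisms under the hypothesis $p\ge g(K)+|s|$, cancel everything except $A^+_s$, and appeal to the proof of the surgery formula for the identification of the resulting projection with the map induced by $\Gamma^+_{p,s}$. The paper phrases the cancellation as acyclicity of the complementary subcomplex via a filtration argument rather than explicit Gaussian elimination, but this is the same computation.
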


\begin{proof}
By the general integer surgery formula \cite{IntSurg}, the homology $\HFp(Y_p(K), [s])$ can be computed as the homology of a mapping cone complex $\X_{s}^+(p)$ of the form
$$\input{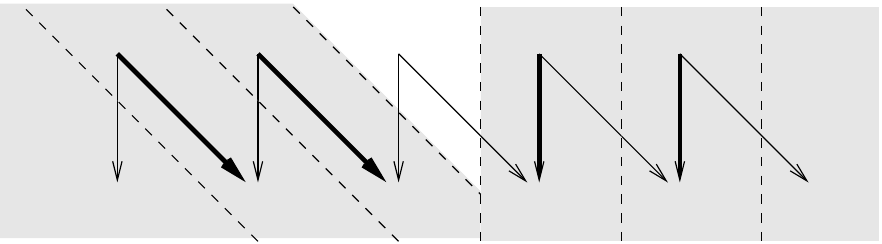_t}$$

Here, each $B^+$ is a copy of $\CFp(Y)$, the vertical arrows represent maps $v^+_{s+pi} \co A^+_{s+pi} \to B^+$, and the diagonal arrows represent maps $h^+_{s+pi}\co A^+_{s+pi} \to B^+$. 

Observe that $A^+_s$ is a quotient complex of $\X_{s}^+(p)$. The natural projection induces a map
$$\HFp(Y_p(K), [s])  \to H_*(A_s^+).$$
Furthermore, it follows from the proof of the surgery formula in \cite{IntSurg} that this map is the same as the one induced by $\Gamma^+_{p,s}$ on homology.

Next, from the adjunction inequality for knot Floer homology (Theorem 5.1 in \cite{Knots}) we know that $ v^+_{s+pi}$ are quasi-isomorphisms for $s+pi \geq g(K)$, and $h^+_{s+pi}$ are quasi-isomorphisms for $s+pi \leq -g(K)$. Hence, if $p \geq g(K)+|s|$, all the thick arrows pictured in the mapping cone complex above are quasi-isomorphisms. By standard filtration arguments, the subcomplex of $\X_{s}^+(p)$ shaded in the picture (whose quotient complex is $A_s^+$) is acyclic. The conclusion follows.
\end{proof}

Our goal is to prove an involutive analogue of Theorem~\ref{thm:LargeSurg}. In view of Proposition~\ref{propn:nonspin}, it suffices to focus on spin structures. There are two spin structures on $Y_p(K)$ when $p$ is even, and one when $p$ is odd. In this paper we will only consider the spin structure corresponding to $s=0$.

The map $\inv_K$ on $\CFKi(\H)$ induces a map 
 $$\inv_0 \co A^+_0 \to A^+_{0}.$$ 
 
 We set
$$\AI_0^+ = (A^+_0[-1]\otimes \Z_2[Q]/(Q^2), \del + Q(1+\inv_0)).$$

The involutive analogue of Theorem~\ref{thm:LargeSurg} is then Theorem~\ref{thm:Large} from the Introduction, which states that, for $p \geq g(K)$, there is an isomorphism
\begin{equation}
\label{eq:LargeHFI}
 \HFIp(S^3_p(K), [0]) \cong H_*(\AI_0^+).
 \end{equation}

To prove \eqref{eq:LargeHFI}, it suffices to construct a chain homotopy
\begin{equation}
\label{eq:Rps}
 R^+_{p} \co \CFp(\H_p, [0]) \to A_0^+
 \end{equation}
between the compositions $\iota_0 \circ \Gamma^+_{p,0}$ and $\Gamma^+_{p,0} \circ \iota$. Indeed, if we had  
 $R^+_{p}$, we could combine it with $\Gamma^+_{p,0}$ to produce a chain map between the mapping cone complexes $\CFIp(\H_p, [0])$ and $\AI_0^+$, as in the diagram
\[
\xymatrix{
\CFp(\H_p, [0]) \ar[r]^{\phantom{uuu}\Gamma^+_{p,0}} \ar[d]_{Q(1 + \inv)} \ar[dr]^{R^+_{p}} & A_0^+  \ar[d]^{Q(1 + \inv_0)}\\
Q \ccdot \CFp(\H_p, [0])[-1] \ar[r]_{\phantom{uuuuuu} \Gamma^+_{p,0}} & Q\ccdot A_0^+[-1]
}
\]

By Proposition~\ref{prop:LargeSurg}, the horizontal maps are quasi-isomorphisms for $p \geq g(K)$. If we consider the natural two-step filtrations on the mapping cones $\CFIp(\H_p, [0])$ and $\AI_0^+$, since the map on the associated graded is a quasi-isomorphism, so is the combined map from $\CFIp(\H_p, [0])$ to $\AI_0^+$. (Compare the argument at the end of the proof of Proposition~\ref{prop:CFI}.) This would imply Theorem~\ref{thm:Large}.

It remains to construct the chain homotopy $R^+_{p}$. Note that $\Gamma^+_{p,0}$ is a filtered version of the cobordism map associated to the cobordism $W'_p(K)$. Thus, the construction of $R^+_{p,0}$ will be inspired by the proof of conjugation invariance for cobordism maps, Theorem 3.6 in \cite{HolDiskFour}; compare Section~\ref{sec:cobordisms}. Roughly, since the map $\iota$ on $ \CFp(\H_p, [0])$ is determined by a sequence of Heegaard moves from $\bH_p$ to $\H_p$, and the map $\iota_0$ on $A_0^+$ is determined by a sequence of moves from $\bH$ to $\H$, what we need to do is to choose these sequences in a compatible way, so that $R^+_{p}$ will be given by a suitable count of pseudo-holomorphic quadrilaterals. The way to choose the two sequences of Heegaard moves will be explained in Section~\ref{sec:HMsurgery}.

\subsection{Compound stabilizations}

Before moving forward, it is helpful to introduce a new kind of composite Heegaard move. Suppose we are given a Heegaard diagram $H=(\Sigma, \alphas, \betas, z)$ and a path $\zeta$ on $\Sigma$ that is disjoint from the $\beta$ curves and the $z$ basepoint. The path $\zeta$ may intersect some $\alpha$ curves in its interior (such as $\alpha_1, \alpha_2, \alpha_3$ shown in the top left corner of Figure~\ref{fig:compound}). An {\em compound $\alpha$-stabilization} along $\zeta$ consists of introducing a one-handle on $\Sigma$ with its feet at the ends of $\zeta$, turning $\zeta$ into a new $\beta$-circle, and making the co-core of the handle into a new $\alpha$-circle.\footnote{Compound $\alpha$-stabilizations are the same as $(0, l)$-stabilizations, in the terminology of \cite[Definition 6.26]{Naturality}.} This construction is shown in the top row of Figure~\ref{fig:compound}. It can be viewed as the composition of ordinary Heegaard moves (an isotopy, a stabilization, and some $\alpha$-handleslides). 

\begin {figure}
\begin {center}
\input{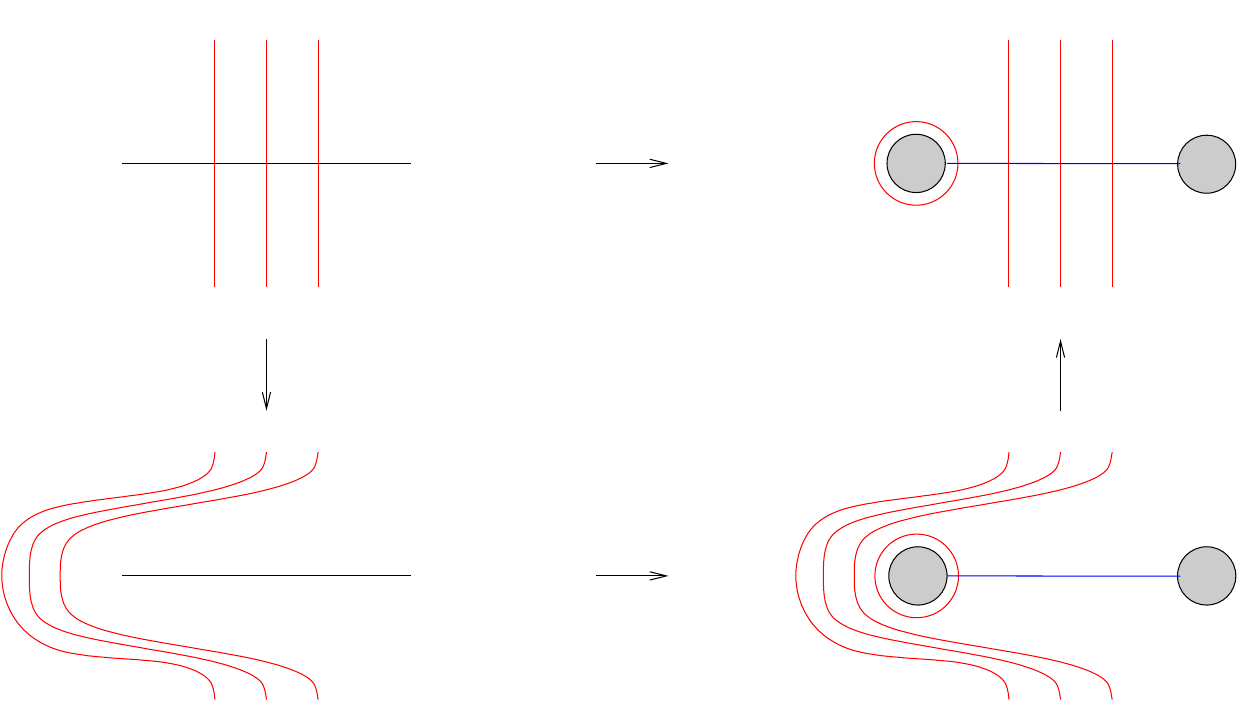_t}
\caption {A compound $\alpha$-stabilization (top row), viewed as an the combination of an isotopy, an ordinary stabilization, and several $\alpha$-handleslides.}
\label{fig:compound}
\end {center}
\end {figure}

The reverse process to a compound $\alpha$-stabilization is called a compound $\alpha$-destabilization. We define compound $\beta$-stabilizations and destabilizations similarly, by switching the roles of the $\alpha$ and $\beta$ curves.

\subsection{Heegaard moves from surgery}
\label{sec:HMsurgery}
Recall that we have a doubly-pointed, triple Heegaard diagram of the form
$$ (\Sigma, \alphas, \gammas, \betas, w, z),$$ 
as shown in Figure~\ref{fig:triple}. Here, $(\Sigma, \alphas, \betas, w, z)$ represents the knot $K \subset Y$, and $(\Sigma, \alphas, \gammas, \betas, z)$ represents the two-handle cobordism $W'_p(K)$ from $Y_p(K)$ to $Y$.

We seek to find a sequence of Heegaard moves relating the knot diagrams $(-\Sigma, \betas, \alphas, z, w)$ to $(\Sigma, \alphas, \betas, w, z)$, such that they induce (in some way) a sequence of moves from $(-\Sigma, \gammas, \alphas, z)$ to $(\Sigma, \alphas, \gammas, z)$. The former  sequence will be used to define the map $\inv_0$ on $A_0^+$, and the latter to define the map $\inv$ on the Heegaard Floer complex of the surgery $Y_p(K)$. This will enable us to construct the chain homotopy \eqref{eq:Rps}.

First, notice that, in order for the roles of the $\alpha$ and $\beta$ curves to be more symmetric, it is helpful to consider a Heegaard diagram for $K \subset Y$ such that $w$ and $z$ are not only on each side of a $\beta$ curve, but also on each side of an $\alpha$ curve. This can be arranged by doing a compound $\alpha$-stabilization, as in Figure~\ref{fig:KnotStabilized}. (The same picture appeared in the proof of conjugation symmetry for knot Floer homology; see \cite[Figure 4]{Knots}.) Now $w$ and $z$ are on each side of the newly introduced curve $\alpha_{g+1}$. 

\begin {figure}
\begin {center}
\input{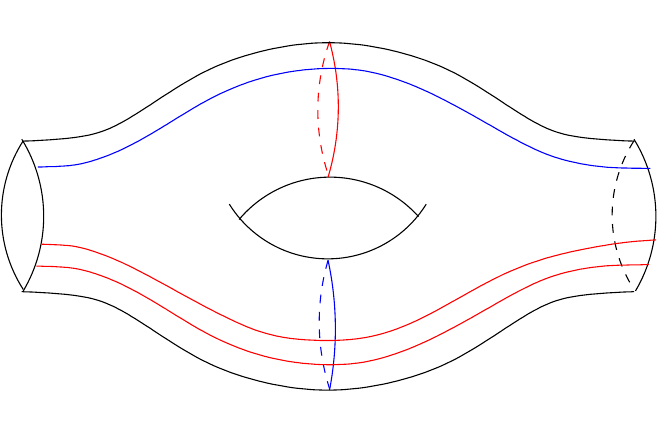_t}
\caption {The diagram $(\Sigma, \alphas, \betas, w, z)$ from Figure~\ref{fig:triple}, after a compound $\alpha$-stabilization.}
\label{fig:KnotStabilized}
\end {center}
\end {figure}

From now on let us write
$$ H = (\Sigma, \alphas, \betas, w, z)$$
for the stabilized diagram in Figure~\ref{fig:KnotStabilized}. 

Let us also consider the diagram 
$$H'=(\Sigma', \alphas',  \betas', w', z)$$ shown on the right of Figure~\ref{fig:Base}. This is obtained from the (unstabilized) $\alpha$-$\beta$ diagram from Figure~\ref{fig:triple} by replacing $\beta_g$ with a knot longitude $\beta_g'$, as well as replacing $w$ with a new basepoint $w'$ on the back side of the cylinder. Thus, we have $$\alphas = \alphas' \cup \{\alpha_{g+1}\}, \ \ \ \betas= (\betas' \setminus \{\beta_g'\}) \cup \{\beta_g, \beta_{g+1}\}.$$

\begin {figure}
\begin {center}
\input{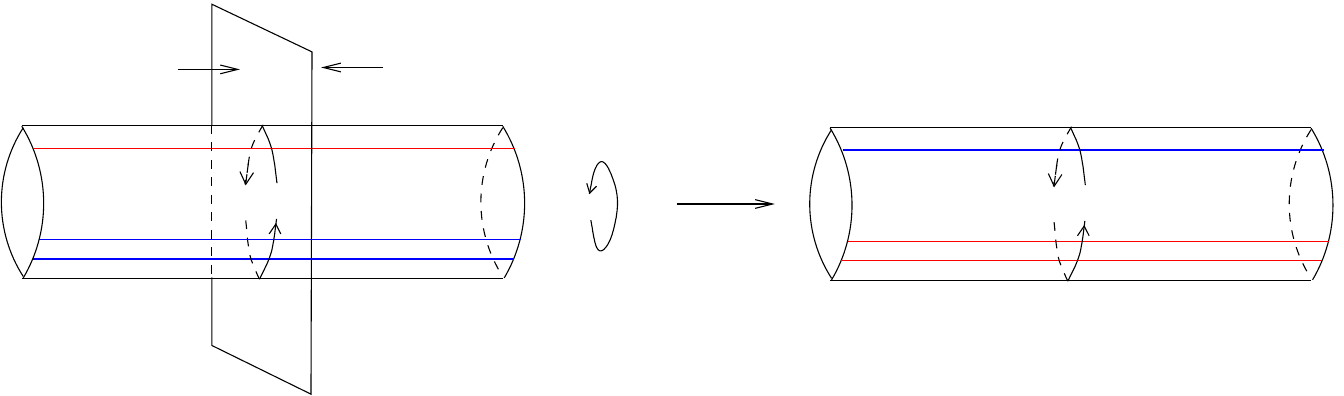_t}
\caption {The arrow labeled $\mm$ indicates a base sequence of Heegaard moves going from the diagram $\bh'$ to $H'$. The alpha and beta curves are switched in the two diagrams, and so is the orientation of the surface. The moves include a diffeomorphism that reflects the shown cylinder into the center plane, and also rotates it by $180^\circ$ about the core axis.}
\label{fig:Base}
\end {center}
\end {figure}

In the diagram $H'$, we can connect $z$ to $w'$ by an arc $\xi$ that intersects $\beta'_g$, and we can connect $w'$ to $z$ by another arc $\xi'$ that intersects some alpha curves ($\alpha_1$ and $\alpha_2$ in our picture). The diagram $H'$ represents the meridian $\mu$ for $K$, viewed as a knot inside the longitudinal surgery $Y_{0}(K)$. We choose a suitable sequence of Heegaard moves $\mm$ that go from the diagram $\bh' = (-\Sigma', \betas', \alphas', z, w')$ to $H'$.  We call this a {\em base sequence of moves}. The base sequence is supposed to satisfy certain assumptions, which will be discussed shortly.

In general, if we have a Heegaard diagram representing a knot inside a three-manifold, we can connect the two basepoints by an arc in the complement of the alpha curves, and by another arc in the complement of the beta curves; we call their union a {\em trace of the knot} on the Heegaard diagram. In the case at hand, an example of a trace is the union $c=\xi \cup \xi'$. Note that the orientation of the knot induces an orientation on the trace.

The base sequence of moves $\mm$ consists of some stabilizations, destabilizations, curve isotopies, handleslides, and diffeomorphisms that come from an ambient isotopy of the Heegaard surface inside the three-manifold.

Without loss of generality, we can assume that the diagram $H'$ has already been stabilized as many times as needed, so that $\mm$ only consists of moves that do not change the surface (curve isotopies and handleslides), together with some diffeomorphism coming from an ambient isotopy; see for example \cite[Theorem 1.1, part 2]{LaudenbachCerf}. Note that the diffeomorphism must be orientation-reversing, because it takes $-\Sigma'$ to $\Sigma'$. Furthermore, it must map a trace of the knot on $-\Sigma'$ to a trace on $\Sigma'$, as oriented curves.

In fact, we can assume that the diffeomorphism takes the trace $c=\xi \cup \xi'$ to itself, preserving an annular neighborhood $A$ of that trace on the Heegaard surface. To arrange this, we choose a regular neighborhood $U$ of the knot $K$ (a solid torus) that also contains the curve $c$, as in Figure~\ref{fig:nbhd}. We then fix a self-indexing Morse function $f$ on $U$ with only two critical points, one of index $3$ and one of index $0$, both on the knot $K$, such that $K$ consists of two Morse trajectories between these critical points, and the level set $f^{-1}(3/2)$ intersects $U$ in an annular neighborhood $A$ of $c$. Let $h$ denote the self-diffeomorphism of $U$ given by a $180^\circ$ rotation along the longitude, followed by a $180^\circ$ rotation along the meridian. We can assume that $f=(3-f)\circ h$, and that $h$ preserves both the knot and the trace (and rotates them by $180^\circ$). Note that $h|_{\del U}$ is isotopic to the identity (being a composition of rotations), so we can extend $h$ to a diffeomorphism of all of $Y$ such that $h$ is the identity outside a slightly larger neighborhood $U' \supset U$. We now extend $f$ to a self-indexing Morse function on all of $Y$, with only index $1$ and $2$ critical points outside $U$. This produces the Heegaard diagram $H'$. Moreover, we interpolate between $f$ and $(3-f)\circ h$ on the complement of $U$, without changing the values of the function on $\del U$. We let $\mm$ be the sequence of Heegaard moves induced by this interpolation, via Cerf theory, combined with the diffeomorphism $h$ inside $U$.

\begin {figure}
\begin {center}
\input{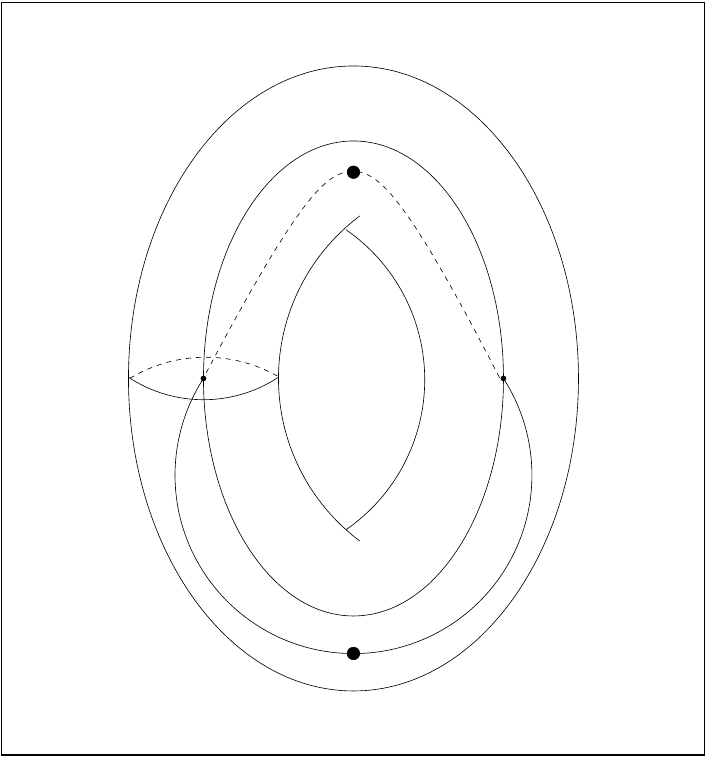_t}
\caption {A neighborhood $U$ of the knot and a fixed trace. The two bigger black dots indicate the index $0$ and index $3$ critical points of the Morse function $f$. The plane $\pi$ cuts $U$ into two halves, along the level set $A=f^{-1}(3/2)$, which contains $c = \xi \cup \xi'$. Half of the knot is in front of $\pi$ and half is behind. 
}
\label{fig:nbhd}
\end {center}
\end {figure}

Note that the diffeomorphism $h$  takes a neighborhood of the trace (the cylinder $A$ shown on the left of Figure~\ref{fig:Base}) to the cylinder on the right.  Since the orientation of the trace is preserved but that of the surface is reversed, the two boundaries of the cylinder must be swapped. Moreover, the diffeomorphism swaps the basepoints $w'$ and $z$. In fact, from our construction we see that in the part of the diagram shown in Figure~\ref{fig:Base}, the diffeomorphism consists of a reflection into the plane of the trace, followed by a $180^\circ$ rotation about the core of the cylinder. 

The reason we started with a base sequence of moves $\mm$ as above is because it induces a well-behaved set of moves from $\bh$ to $H$. Indeed, we can view $H$ (the diagram in Figure~\ref{fig:KnotStabilized}) as obtained from $H'$ by adding a one-handle with feet near $z$ and $w'$, re-labeling $w'$ as $w$, re-labeling $\beta'_g$ as $\beta_{g+1}$, and introducing two new curves $\alpha_{g+1}$ and $\beta_{g}$. Let us say that $H$ is obtained from $H'$ by {\em drilling}, and so is $\bh$ from $\bh'$. With this in mind, observe that any Heegaard move (isotopy or handlelside) of the $\alpha$ curves that is part of the sequence $\mm$ induces a move of the corresponding $\alpha$ curves on the diagram obtained by drilling, without involving the new curve $\alpha_{g+1}$. Indeed, the original $\alpha$-moves are supposed to not cross the basepoints $w'$ and $z$, so we may as well assume that they do not cross the arc $\xi$. It is then clear that there are similar moves in Figure~\ref{fig:MovesKnot}. By the same token, we can assume that the original $\beta$-moves do not cross the arc $\xi'$, and therefore induce $\beta$-moves on the diagrams obtained by drilling, without involving $\beta_g$. Finally, the diffeomorphism $h$ that is part of the moves $\mm$ induces one on $\bh$ that is still given by reflection in the vertical plane, composed with a $180^\circ$ rotation along the horizontal axis in Figure~\ref{fig:MovesKnot}.

\begin {figure}
\begin {center}
\input{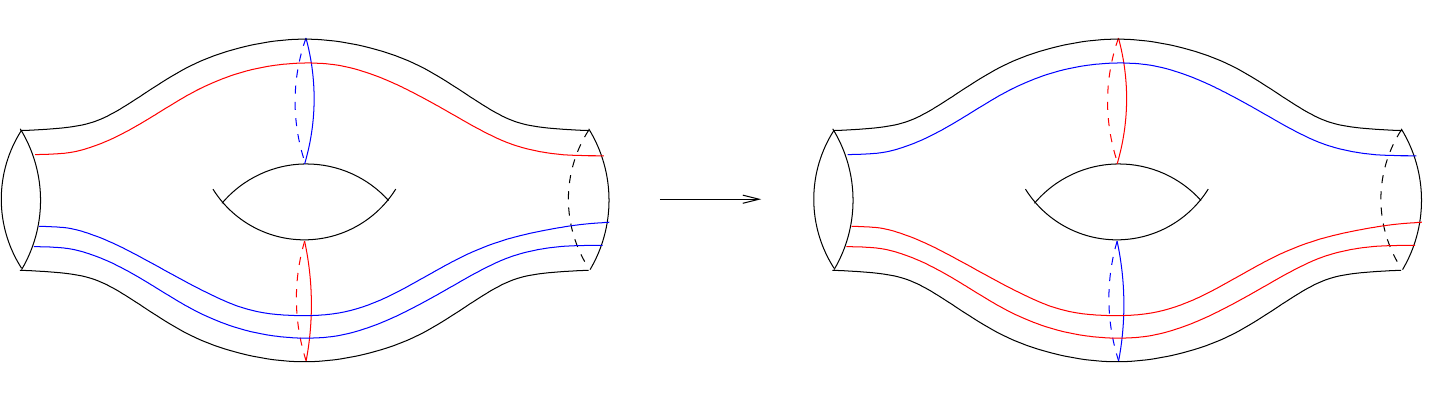_t}
\caption {The base sequence of moves $\mm$ from Figure~\ref{fig:Base}, after drilling, induces a set of moves ending at the diagram $H$ from Figure~\ref{fig:KnotStabilized}. Here, $H$ is shown on the right hand side of the picture. The left hand side represents the conjugate diagram $\bh$ (modulo an isotopy that moves $z$ and $w$ lower in the diagram, on each side of the curve $\alpha_{g+1}$, and keeps all the curves fixed).  Note that, because of the $180^\circ$ rotation involved in $\mm$, the curves $\alpha_{g+1}$ and $\beta_g$ are interchanged in the two pictures. Also, the left vs. right position of $w$ and $z$ is switched because of the reflection in the center plane.}
\label{fig:MovesKnot}
\end {center}
\end {figure}

We have now identified a sequence of Heegaard moves that relate the knot diagrams $\bh$ and $H$.  We would like to have a related sequence of moves for the Heegaard Floer complexes of the surgery $Y_p(K)$. For this, we extend $H$ to a triple Heegaard diagram
$$ (\Sigma, \alphas, \gammas,  \betas, w, z),$$
where in the new set of curves $\gammas$ we exchanged $\beta_g$ for a longitude $\gamma_g$, twisted $p$ times around $\beta_g$. (Compare Figure~\ref{fig:triple}.) For our stabilized diagram $H$, the result of adding the curves $\gamma$ is shown on the right hand side of Figure~\ref{fig:BaseTriple}. Furthermore, we can follow the same sequence of moves as in Figure~\ref{fig:MovesKnot} (induced by $\mm$), with $\gamma$ curves instead of $\beta$ curves. This is possible because the moves do not involve $\beta_g$ (except for the diffeomorphism), and hence we may take them to keep $\gamma_g$ fixed as well. The result of the moves induced by $\mm$ on the triple Heegaard diagrams is shown in Figure~\ref{fig:BaseTriple}.

\begin {figure}
\begin {center}
\input{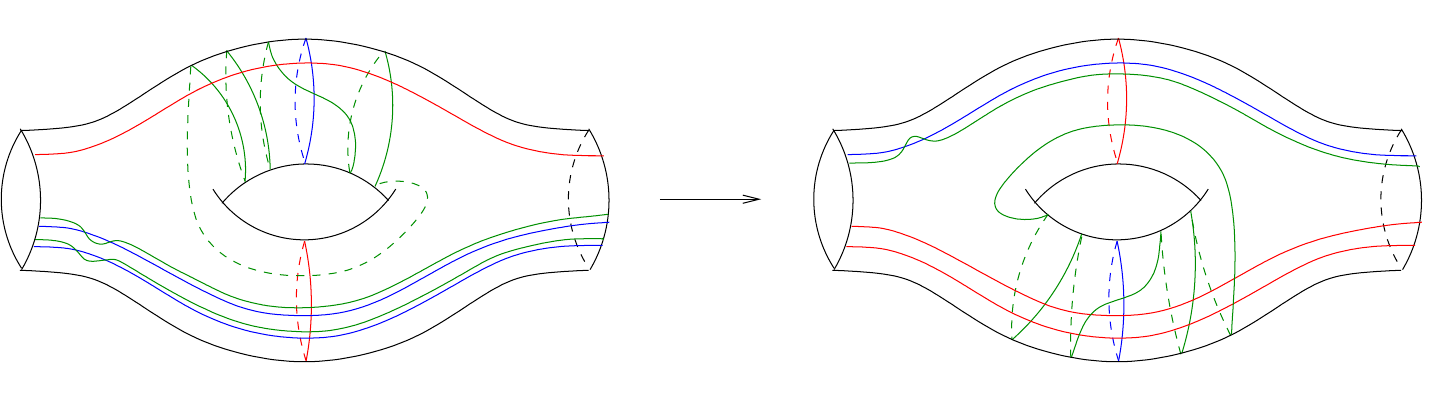_t}
\caption {These are the moves from Figure~\ref{fig:MovesKnot}, with a third set of curves added. The triple diagrams represent the cobordism $W'_p(K)$.}
\label{fig:BaseTriple}
\end {center}
\end {figure}

The $\alpha$ and $\gamma$ curves on the right hand side of Figure~\ref{fig:BaseTriple} form a diagram representing the surgery $Y_p(K)$. However, the corresponding $\alpha$ and $\gamma$ curves on the left hand side do not give the conjugate diagram. In order to obtain a sequence of moves that can be used to define the involution on $\CFp(Y_p(K))$, we add another set of moves, which we call the {\em standard sequence}, shown in Figure~\ref{fig:Standard}. Let us ignore the $\delta$ curves from that figure for a moment. Then, the standard sequence starts at the diagram conjugate to $(\Sigma, \alphas, \gammas, w, z)$, and ends at the $\alpha$-$\gamma$ diagram on the right hand side of Figure~\ref{fig:BaseTriple}. The standard sequence consists of a handleslide of the $\alpha$ longitude over the $\alpha$ curve, followed by a compound $\alpha$-destabilization (removing the upper handle), followed by a compound $\alpha$-stabilization (introducing a new lower handle), followed by several $\gamma$-handleslides over the $\gamma$ curve in the middle, 
and finally a diffeomorphism (several Dehn twists that unfurl the $\alpha$ curve at the expense of furling the $\gamma$ curve).

\begin {figure}
\begin {center}
\input{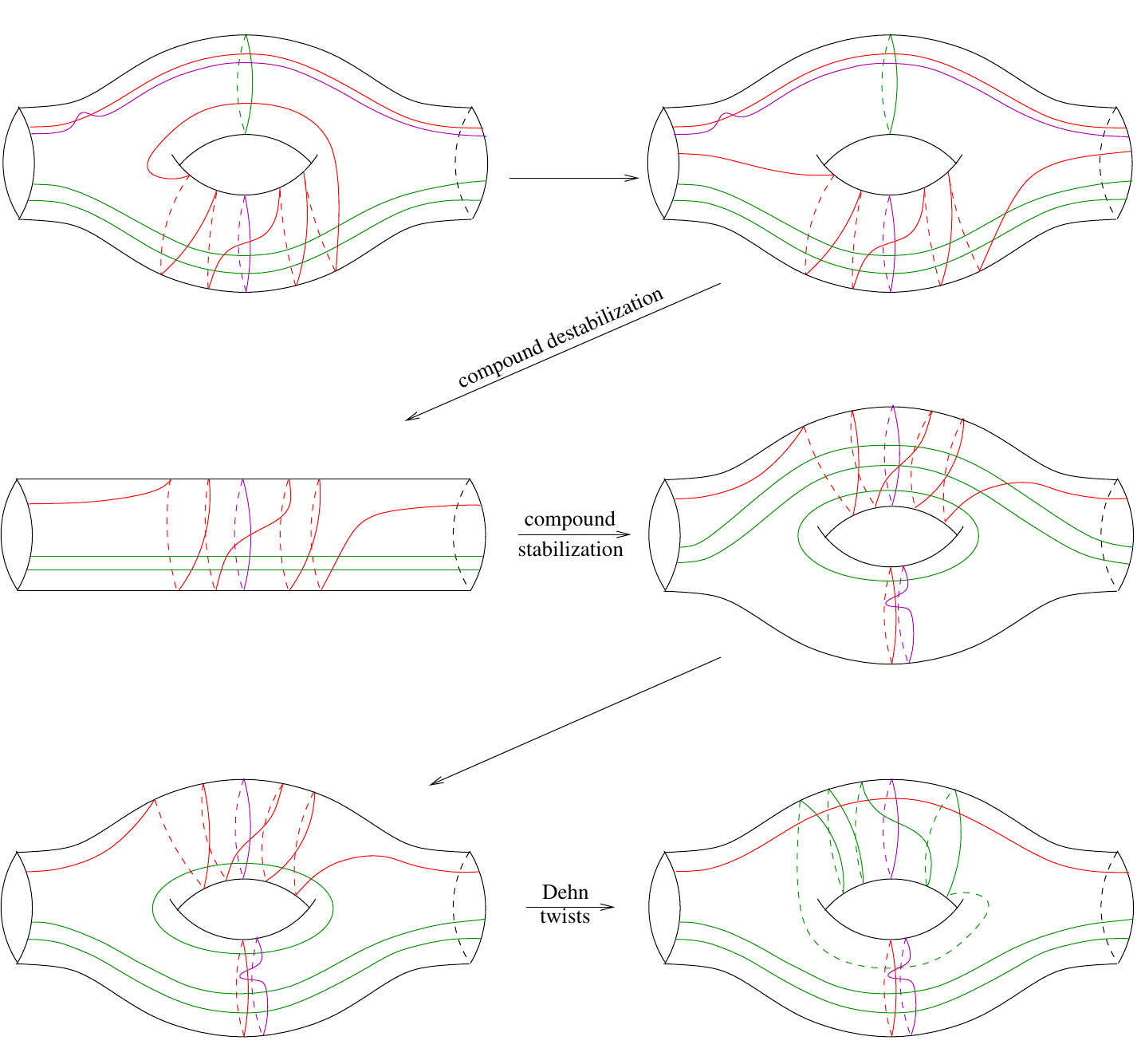_t}
\caption {The standard sequence of moves on triple diagrams.}
\label{fig:Standard}
\end {center}
\end {figure}

If we do the standard sequence of moves, and follow it by the sequence induced by $\mm$ on the $\alpha$-$\gamma$ curves, we obtain a sequence of moves from $(-\Sigma, \gammas, \alphas, z)$ to $(\Sigma, \alphas, \gammas, z)$. This is the kind of sequence needed to define the involution on $\CFp(Y_p(K))$.

Thus, we have constructed sequences of moves for both the knot Floer complex (with the $\alpha$-$\beta$ curves) and for the Floer complex of the surgery (with the $\alpha$-$\gamma$ curves). Unfortunately, the two sequences are not of the same length: the former consists of the moves induced by $\mm$, whereas the latter involves both the standard sequence and the moves induced by $\mm$. In particular, we do not yet have a suitable set of moves for triple diagrams (with the $\alpha$, $\beta$, and $\gamma$ curves). 

To understand the moves on triple diagrams, let us first clarify what we mean by the conjugate of such a diagram. Normally, in Heegaard Floer theory, when we have a cobordism given by surgery on a link, we consider triple diagrams that are {\em right-subordinate} to that cobordism as in \cite[Definition 4.2]{HolDiskFour} (where the terminology is ``diagrams subordinate to a bouquet for the link''). For example, $(\Sigma, \alphas,\gammas,  \betas, z)$ is a triple diagram subordinate to the  cobordism $W'_p(K)$ from $Y_p(K)$ to $Y$. This induces a map from $\CFp(\Sigma, \alphas, \gammas, z)$ to $\CFp(\Sigma, \alphas, \betas, z)$. The name right-subordinate refers to the fact that, in these  two Floer complexes, the boundary conditions on the right of the pseudo-holomorphic disks are the same; in our case, they are given by the $\alpha$ curves.

In their proof of conjugation invariance for cobordism maps \cite[Section 5.2]{HolDiskFour}, Ozsv\'ath and Szab\'o introduced the notion of a triple Heegaard diagram that is {\em left-subordinate} to a surgery cobordism. This means that the boundary conditions on the left are being fixed. For example, if we have a triple diagram $(\Sigma, \alphas, \gammas, \betas, z)$ that is right-subordinate to a cobordism, its conjugate is the diagram $(-\Sigma, \alphas, \betas, \gammas, z)$, left-subordinate to the same cobordism. The conjugate induces a map from $\CFp(-\Sigma, \gammas, \alphas,z)$ to $\CFp(-\Sigma, \betas, \alphas, z)$.

In our situation, with $(\Sigma, \alphas, \gammas, \betas, z)$ being the diagram on the right hand side of Figure~\ref{fig:BaseTriple}, its conjugate is the diagram shown at the very beginning of the standard sequence from Figure~\ref{fig:Standard}. Indeed, notice that the $\alpha$ and $\gamma$ curves are interchanged, and the $\beta$ curves are re-labelled as $\delta$. Let us add the new set of curves $\deltas$ to all the diagrams involved in the standard sequence, following the respective moves. Throughout Figure~\ref{fig:Standard}, the $\delta$-$\gamma$ diagrams (with the basepoints $z$ and $w$) represent the knot $K \subset Y$. The triple diagrams $(-\Sigma, \gammas, \deltas, \alphas, z)$ are left-subordinate to the cobordism $W'_p(K)$; such left-subordinate diagrams induce cobordism maps from $\CFp(-\Sigma, \alphas, \gammas, z)=\CFp(Y_p(K))$ to $\CFp(-\Sigma, \deltas, \gammas, z)=\CFp(Y)$, by counting $\gamma$-$\delta$-$\alpha$ triangles. 

Notice that we cannot go from the conjugate diagram $(-\Sigma, \alphas, \betas, \gammas, z)$ to $(\Sigma, \alphas, \gammas, \betas, z)$ by a sequence of usual Heegaard moves, because these cannot transform a left-subordinate diagram into a right-subordinate one. Instead, the best we can do is the following:
\begin{enumerate}[(i)]
\item Go through the standard sequence of moves in Figure~\ref{fig:Standard};
\item In the last diagram from Figure~\ref{fig:Standard}, replace the delta curves with beta curves as on the right hand side of Figure~\ref{fig:BaseTriple}. This changes a left-subordinate diagram into a right-subordinate one;
\item Finally, go through the moves induced by $\mm$, as in Figure~\ref{fig:BaseTriple}. 
\end{enumerate}

This is the process we will use to define the chain homotopy \eqref{eq:Rps}. Step (ii) in the process may seem abrupt, because, for example, it involves passing directly between two (seemingly unrelated) diagrams for $K \subset Y$: the $\delta$-$\gamma$ and the $\alpha$-$\beta$ diagrams. Nevertheless, a step of this kind seems unavoidable, and was also considered in the proof of conjugation invariance for cobordism maps \cite[Section 5.2]{HolDiskFour}. The quadruple Heegaard diagram that appears in Step (ii) is shown in Figure~\ref{fig:Quadruple}.

\begin {figure}
\begin {center}
\input{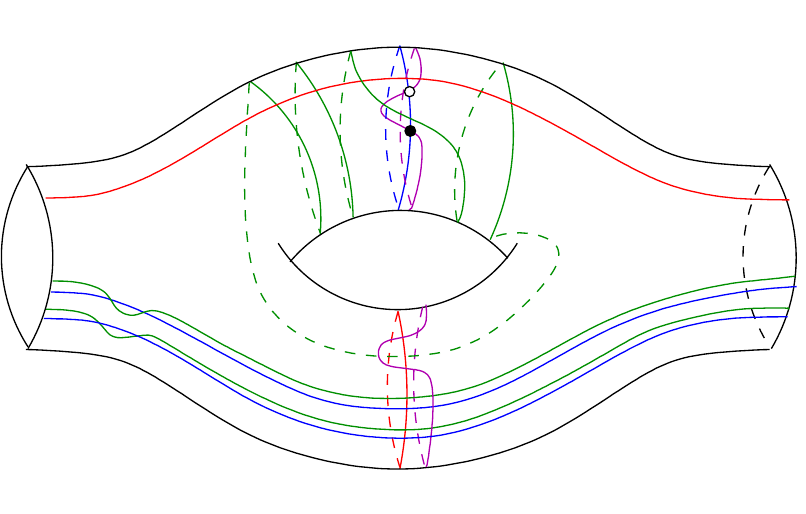_t}
\caption {A quadruple Heegaard diagram.}
\label{fig:Quadruple}
\end {center}
\end {figure}

\subsection{Proof of the involutive large surgery formula}
\label{sec:Proof}

We are now ready to complete the proof of Theorem~\ref{thm:Large} from the Introduction. By the discussion at the end of Section~\ref{sec:large}, our task is to construct the homotopy \eqref{eq:Rps} between $\iota_0 \circ \Gamma^+_{p,0}$ and $\Gamma^+_{p,0} \circ \iota$.

We use the moves on triple Heegaard diagrams described near the end of Section~\ref{sec:HMsurgery}, sequences (i)-(iii). At each step in these sequences, we have a triple Heegaard diagram that is either left- or right-subordinate to the cobordism $W'_p(K)$. Further, we have not one but two basepoints, so by dropping one set of curves we obtain a Heegaard diagram $H^{(k)}$ for the knot $K \subset Y$, for $k=0,1, \dots, m$. Let $H_p^{(k)}$ be the corresponding Heegaard diagrams for the surgery $Y_p(K)$. Let us add suitable almost complex structures, to obtain Heegaard data $\H^{(k)}$ for $K \subset Y$ and $\H_p^{(k)}$ for $Y_p(K)$. For each $i$, we have a chain map as in \eqref{eq:Xi}, with $s=0$:
$$\Gamma^{+, (i)}_{p,0} \co \CFp(\H_p^{(k)}, [0]) \to A_0^+(\H^{(k)}).$$
Recall from \eqref{eq:XiFormula} that these maps are obtained by counting rigid pseudo-holomorphic triangles in homotopy classes $\psi$ with $n_w(\psi) = n_z(\psi)$.

The moves on Heegaard diagrams produce chain homotopy equivalences
$$ \Phi(\H_p^{(k-1)}, \H_p^{(k)}) \co \CFp(\H_p^{(k-1)}, [0]) \to  \CFp(\H_p^{(k)}, [0])$$
and 
$$\Phi(\H^{(k-1)}, \H^{(k)}) \co A_0^+(\H^{(k-1)}) \to  A_0^+(\H^{(k)})$$
for $k=1, \dots, m.$ In the case of $\Phi(\H^{(k-1)}, \H^{(k)})$, there is a special situation for a certain value of $k$, corresponding to the abrupt Step (ii) mentioned at the end of Section~\ref{sec:HMsurgery}. Nevertheless, in this case we can still construct an equivalence $\Phi(\H^{(k-1)}, \H^{(k)})$; see Figure~\ref{fig:ChainHomotopies} and the discussion of Step (ii) at the end of this section.

Observe that $\H_p^{(0)}$ (the $\alpha$-$\gamma$ diagram at the beginning of Figure~\ref{fig:Standard}) is conjugate to $\H_p^{(m)}$ (the $\alpha$-$\gamma$ diagram at the end of Figure~\ref{fig:BaseTriple}). Moreover, the composition
$$ \Phi(\H_p^{(0)}, \H_p^{(m)}) := \Phi(\H_p^{(m-1)}, \H_p^{(m)}) \circ \dots \circ \Phi(\H_p^{(0)}, \H_p^{(1)})\co \CFp(\H_p^{(0)}, [0])\to \CFp(\H_p^{(m)}, [0]),$$
together with the canonical identification $\eta$ of the complexes $\CFp(\H_p^{(0)}, [0])$ and $\CFp(\H_p^{(m)}, [0])$, produces the map $\iota \co \CFp(\H_p^{(m)}, [0]) \to  \CFp(\H_p^{(m)}, [0]).$

Similarly, the knot diagram $\H^{(0)}$ (the $\delta$-$\gamma$ diagram at the beginning of Figure~\ref{fig:Standard}) is conjugate to $\H^{(m)}$ (the $\alpha$-$\beta$ diagram at the end of Figure~\ref{fig:BaseTriple}). The composition
$$  \Phi(\H^{(0)}, \H^{(m)}) := \Phi(\H^{(m-1)}, \H^{(m)}) \circ \dots \circ \Phi(\H^{(0)}, \H^{(1)})\co A_0^+(\H^{(0)})\to A_0^+(\H^{(m)}),$$
together with the identification $ A_0^+(\H^{(0)})\cong A_0^+(\H^{(m)})$, gives the map $\iota_0 \co A_0^+(\H^{(m)}) \to A_0^+(\H^{(m)})$.

Consider the sequence of diagrams
\[
\xymatrixcolsep{5pc}
\xymatrix{
\CFp(\H_p^{(0)}, [0]) \ar[r]^{\phantom{uuu}\Gamma^{+, (0)}_{p,0}} \ar[d]_{\Phi(\H_p^{(0)}, \H_p^{(1)})} \ar@{-->}[dr]^{R^{+, (1)}_{p}} & A_0^+(\H^{(0)})  \ar[d]^{\Phi(\H^{(0)}, \H^{(1)})}\\
\dots  \ar[d] & \dots  \ar[d] \\
\CFp(\H_p^{(k-1)}, [0]) \ar[r]^{\phantom{uuu}\Gamma^{+, (k-1)}_{p,0}} \ar[d]_-{\Phi(\H_p^{(k-1)}, \H_p^{(k)})} \ar@{-->}[dr]^{R^{+, (k)}_{p}} & A_0^+(\H^{(k-1)})  \ar[d]^-{\Phi(\H^{(k-1)}, \H^{(k)})}\\
\CFp(\H_p^{(k)}, [0]) \ar[r]^{\phantom{uuu}\Gamma^{+, (k)}_{p,0}} \ar[d] & A_0^+(\H^{(k)})  \ar[d] \\
\dots  \ar[d]_-{\Phi(\H_p^{(m-1)}, \H_p^{(m)})} \ar@{-->}[dr]^{R^{+, (m)}_{p}} & \dots  \ar[d]^-{\Phi(\H^{(m-1)}, \H^{(m)})} \\
\CFp(\H_p^{(m)}, [0]) \ar[r]_{\phantom{uuu}\Gamma^{+, (m)}_{p,0}} & A_0^+(\H^{(m)}).
}
\]

We claim that, at each step, we can construct chain homotopies $R^{+, (k)}_{p}$ between $\Phi(\H^{(k-1)}, \H^{(k)}) \circ \Gamma^{+, (k-1)}_{p,0}$ and $\Gamma^{+, (k)}_{p,0} \circ \Phi(\H_p^{(k-1)}, \H_p^{(k)})$. If so, this would imply that we have chain homotopies 
$$\Phi(\H^{(0)}, \H^{(m)}) \circ \Gamma^{+, (0)}_{p,0} \ \sim \ \Gamma^{+, (m)}_{p,0} \circ \Phi(\H_p^{(0)}, \H_p^{(m)})$$ and hence $$\iota_0 \circ \Gamma^{+, (m)}_{p,0} \ \sim \ \Gamma^{+, (m)}_{p,0} \circ \iota,$$ as desired.

The construction of the homotopies $R^{+, (k)}_{p}$ proceeds by standard arguments in Heegaard Floer theory. Typically, the maps of the form $\Phi(\H^{(k-1)}, \H^{(k)})$, $\Phi(\H_p^{(k-1)}, \H_p^{(k)})$ and $\Gamma^{+, (k)}_{p,0}$ are given by counts of pseudo-holomorphic triangles with $n_w(\psi) = n_z(\psi)$. We can then define $R^{+, (k)}_{p}$ by counting suitable pseudo-holomorphic quadrilaterals, again in homotopy classes $\psi$ with $n_w(\psi) = n_z(\psi)$.

A special situation appears at Step (ii), illustrated in Figure~\ref{fig:Quadruple}. In this case, the diagram that represents $Y_p(K)$ is unchanged (it is the $\alpha$-$\gamma$ diagram), but the diagram that represents $K \subset Y$ changes directly from the $\delta$-$\gamma$ diagram to the $\alpha$-$\beta$ diagram. We then construct the corresponding chain homotopy $R^{+, (k)}_{p}$ following Figure~\ref{fig:ChainHomotopies}. (This is inspired by \cite[Section 5.2]{HolDiskFour}.) Observe that the $\beta$-$\delta$ diagram represents the knot $K$ in $Y \# (S^1 \times S^2)$, and the associated $A_0^+$ complex consists of two copies of the $A_0^+$ complex for $K \subset Y$. These two copies, $A_0^+(\bullet)$ and $A_0^+(\circ)$ can be distinguished by whether their generators contain the black dot or the white dot from the intersection $\beta_g \cap \delta_g$ in Figure~\ref{fig:Quadruple}. Each copy is isomorphic to the $A_0^+$ complex formed using the $\beta$ and $\delta$ curves in the destabilized diagram obtained by deleting $\beta_g$, $\delta_g$, and the handle. Furthermore, as a corollary to Proposition~\ref{prop:JT2}, we have that all $A_0^+$ complexes coming from different diagrams for $K \subset Y$ are chain homotopy equivalent, by canonical equivalences (up to chain homotopy). Thus, in particular, we have canonical equivalences
\begin{equation}
\label{eq:equiv}
 A_0^+(\Sigma, \alphas, \betas, w, z) \simeq A_0^+(\bullet) \simeq A_0^+(\Sigma, \deltas, \gammas, w, z),
 \end{equation}
which are indicated by the long diagonal arrows on the right of Figure~\ref{fig:ChainHomotopies}.

The $\delta$-$\gamma$-$\beta$ and $\beta$-$\delta$-$\alpha$ triangle maps in Figure~\ref{fig:ChainHomotopies} are right-, resp. left-subordinate to the two-handle cobordism from $Y$ to $Y\# (S^1 \times S^2)$, and therefore they are chain homotopic to the inclusion 
$$A_0^+ \simeq A_0^+(\bullet) \hookrightarrow A_0^+(\bullet) \oplus A_0^+(\circ).$$

\begin {figure}
\begin {center}
\input{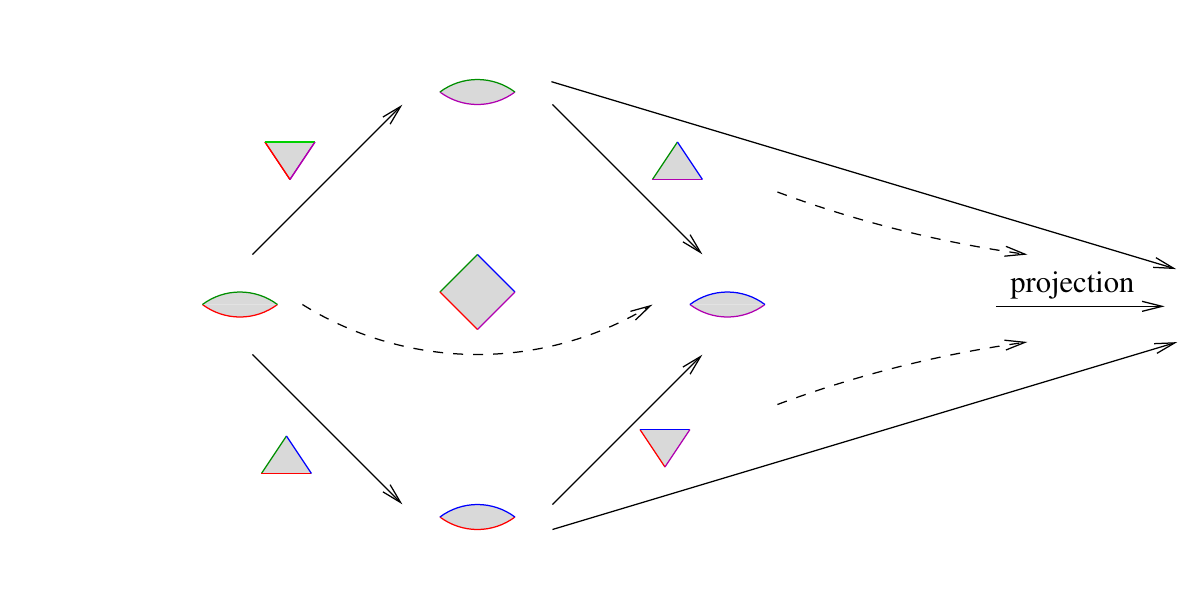_t}
\caption {Chain homotopies in Step (ii), for the maps coming from Figure~\ref{fig:Quadruple}.}
\label{fig:ChainHomotopies}
\end {center}
\end {figure}

This shows that the post-composition of either of these triangle maps with the projection to $A_0^+(\bullet)$ is chain homotopic to the respective canonical equivalence from \eqref{eq:equiv}. (We can think of the composition as adding a two-handle, followed by a cancelling three-handle.) The corresponding chain homotopies are shown as dashed arcs towards  the right of Figure~\ref{fig:ChainHomotopies}; let us denote them by $R_{\delta\gamma\beta}$ and $R_{\beta\delta\alpha}$, respectively. There is yet another chain homotopy in Figure~\ref{fig:ChainHomotopies}, represented by the dashed arc on the left, and given by counting  holomorphic $\alpha$-$\gamma$-$\beta$-$\delta$ quadrilaterals, with $n_z(\psi) = n_w(\psi)$. We denote this chain homotopy by $R_{\alpha\gamma\beta\delta}.$

We now define the chain homotopy $R^{+, (k)}_{p}$ between the $\gamma$-$\delta$-$\alpha$ and $\alpha$-$\gamma$-$\beta$ triangle maps as the sum of three maps, following Figure~\ref{fig:ChainHomotopies}. Specifically, the three maps are the composition of the $\gamma$-$\delta$-$\alpha$ triangle map with $R_{\delta\gamma\beta}$, the composition of $R_{\alpha\gamma\beta\delta}$ with the projection to $A_0^+(\bullet)$, and the composition on the $\alpha$-$\gamma$-$\beta$ triangle map with $R_{\beta\delta\alpha}$.

This finishes the proof of Theorem~\ref{thm:Large}.

\subsection{Involutive correction terms for large surgeries} 
\label{sec:dSurgery}
Let us now specialize to knots in $S^3$. Given Heegaard data $\H$ for $K \subset S^3$, let 
$$B^+ = C\{i \geq 0\} = \CFp(\Sigma, \alphas, \betas, w) \subset \CFKi(\H)$$ and consider the natural projection
$$ v_0^+ \co A_0^+ \to B^+.$$

Since the kernel of $v_0^+$ is finite dimensional and $H_*(B^+) \cong \HFp(S^3) = \T^+$, we see that the map induced on homology, $(v_0^+)_*$, must be surjective. Following the notation from \cite{RatSurg}, we let $V_0=V_0(K)$ be the nonnegative integer such that, for $n \gg 0$,  the map  on $U$-towers
$$(v_0^{+})_*|_{U^nH_*(A_0)} \co U^nH_*(A_0^+) \rightarrow H_*(B^+)$$
is given by multiplication by $U^{V_0}$.

Let $p \geq g(K)$. Under the identifications $H_*(A_0^+) \cong \HFp(Y_p(K), [0])$ from Theorem~\ref{thm:LargeSurg} and $H_*(B^+) \cong \HFp(S^3),$ the map $(v_0^+)_*$ is the one associated to the cobordism $W'_p(K)$ (and the zero spin structure). From the formula~\eqref{eq:GradingShift} for the grading shift, it follows that
\begin{equation}
\label{eq:dsurg}
 d(S^3_p(K), [0]) = \frac{p-1}{4} - 2V_0(K).
 \end{equation}

The quantity $V_0(K)$ can also be identified with $-d(S^3_1(K))/2$, and is an invariant under smooth knot concordance. It was studied in \cite{RasmussenThesis, RasmussenGT, RatSurg, Peters}. In \cite[Definition 7.1]{RasmussenThesis}, it appears in the form of Rasmussen's local $h$-invariant $h_0(K)=V_0(K)$. Different conventions are used in \cite{RasmussenGT}; with $h_0$ as defined there, we have $V_0(K)=h_0({K}^*)$, where ${K}^*$ is the mirror of $K$.

Now we define analogous invariants using involutive Floer complexes.  In view of Equation~\eqref{eq:dsurg}, it is natural to set
\begin{equation}
\label{eq:dlsurg}
\Vl_0(K)= \frac{1}{2}\Bigl(\frac{p-1}{4} - \dl(S^3_p(K), [0]) \Bigr)
\end{equation}
and
\begin{equation}
\label{eq:dusurg}
\Vu_0(K)= \frac{1}{2}\Bigl(\frac{p-1}{4} - \du(S^3_p(K), [0]) \Bigr). 
\end{equation}

Then, Theorem~\ref{thm:dLarge} is a tautology. However, there is something to prove, namely that $\Vl_0=\Vl_0(K)$ and $\Vu_0=\Vu_0(K)$ are independent of $p \geq g(K)$. To see this, observe that the isomorphism from Theorem~\ref{thm:Large} fits into a commutative diagram of long exact sequences
\[
\xymatrix{
\dots \ar[r] & Q \ccdot \HFp(Y_p, [0]) [-1] \ar[d]_{\cong} \ar[r] & \HFIp(Y_p, [0]) \ar[d]_{\cong} \ar[r] & \HFp(Y_p, [0]) [-1] \ar[d]_{\cong} \ar[r] & \dots \\
\dots \ar[r] & Q \ccdot H_*(A_0^+) [-1] \ar[r] & H_*(\AI_0^+) \ar[r] & H_*(A_0^+) [-1] \ar[r] & \dots 
}
\]
Thus, we can compute $\Vl_0$ and $\Vu_0$ by looking at the two infinite $U$-towers in $H_*(\AI_0^+)$, and comparing the minimal gradings in each tower to the minimal grading of the $U$-tower in $H_*(A_0^+)$. Precisely, let $x$ be a generator in the minimal grading of the tower in $H_*(A_0^+)$, and let $\xl$ and $Q \cdot \xu$ be generators in the minimal gradings of the first and second towers in $H_*(\AI_0^+)$, respectively. Then
$$ \Vl_0 = V_0 + \frac{1}{2} (\gr(x) - \gr(\xl)), \ \ \ \Vu_0 = V_0 + \frac{1}{2} (\gr(x) - \gr(\xu)).$$
These formulae do not involve $p$. Furthermore, this is how we will compute $\Vl_0$ and $\Vu_0$ in practice.

It follows from \eqref{eq:dlsurg}, \eqref{eq:dusurg} and Proposition \ref{propn:basicproperties} that
$$\Vu_0 \leq V_0 \leq \Vl_0.$$
Since $V_0$ is nonnegative, so is $\Vl_0$. However, we shall see in examples that $\Vu_0$ can be negative.

\begin{proposition}
$\Vl_0$ and $\Vu_0$ are invariants of smooth knot concordance.
\end{proposition}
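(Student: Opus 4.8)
The plan is to reduce the statement to the homology-cobordism invariance of $\dl$ and $\du$ established in Theorem~\ref{thm:ds}, together with the surgery formulas \eqref{eq:dlsurg}--\eqref{eq:dusurg}. Let $K_0$ and $K_1$ be smoothly concordant knots in $S^3$, and fix a smooth concordance $C \subset S^3 \times [0,1]$ between them. Choose an odd integer $p \geq \max(g(K_0), g(K_1))$. Since we have already shown that $\Vl_0$ and $\Vu_0$ do not depend on the choice of $p$ in this range, in view of \eqref{eq:dlsurg} and \eqref{eq:dusurg} it suffices to prove that $\dl(S^3_p(K_0), [0]) = \dl(S^3_p(K_1), [0])$ and $\du(S^3_p(K_0), [0]) = \du(S^3_p(K_1), [0])$.

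First I would invoke the classical fact that $p$-surgery along a concordance yields a homology cobordism between the surgered manifolds. Concretely, let $E = (S^3 \times [0,1]) \setminus \nu(C)$ be the exterior of $C$; the product framing of $C$ restricts to the Seifert framing on each $K_i$, so one may form the cobordism $W_p = E \cup_{\partial \nu(C)} \bigl((S^1 \times D^2) \times [0,1]\bigr)$ from $S^3_p(K_0)$ to $S^3_p(K_1)$. Capping $S^3 \times [0,1]$ off to $S^4$ and applying Alexander duality gives $H_1(E;\Z) \cong \Z$, generated by a meridian of $C$, with $H_i(E;\Z) = 0$ for $i \geq 2$; a Mayer--Vietoris argument then shows $H_*(W_p; \Z) \cong H_*(S^3_p(K_0);\Z)$, with the inclusions of both boundary components inducing isomorphisms on homology. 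In particular $H_*(W_p, S^3_p(K_i);\Z_2) = 0$, so $W_p$ is a $\Z_2$-homology cobordism. This is standard and I would not reproduce the details.

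Because $p$ is odd, each $S^3_p(K_i)$ is a $\Z_2$-homology sphere, and $W_p$ exhibits $S^3_p(K_0)$ and $S^3_p(K_1)$ as the same class in $\Theta^3_{\Z_2}$; hence by Theorem~\ref{thm:ds} the values $\dl(S^3_p(K_0))$ and $\dl(S^3_p(K_1))$ agree, and likewise $\du(S^3_p(K_0))$ and $\du(S^3_p(K_1))$. Moreover, for $p$ odd the only self-conjugate $\spinc$ structure on $S^3_p(K_i)$ is $[0]$ (a class $[s]$ satisfies $[s] = [-s]$ iff $2s \equiv 0 \pmod p$, which forces $s \equiv 0$), so the unique spin structure used in Theorem~\ref{thm:ds} is precisely $[0]$. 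Thus $\dl(S^3_p(K_i)) = \dl(S^3_p(K_i), [0])$ and $\du(S^3_p(K_i)) = \du(S^3_p(K_i), [0])$, and the desired equalities follow. Combining them with \eqref{eq:dlsurg}, \eqref{eq:dusurg}, and the $p$-independence noted above gives $\Vl_0(K_0) = \Vl_0(K_1)$ and $\Vu_0(K_0) = \Vu_0(K_1)$.

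The only input that is not purely formal is the homology-cobordism lemma for surgery on a concordance, and that is entirely classical, so I do not anticipate a serious obstacle; the remainder is bookkeeping with the $\Z/p\Z$ labeling of $\spinc$ structures. An alternative, slightly more self-contained route would be to feed $W_p$ directly into the argument of Proposition~\ref{prop:ddTheta}: since $W_p$ is a rational homology cobordism the map $F^{I,\infty}_{W_p, \t, \aa}$ is an isomorphism, and the grading-shift formula \eqref{eq:GradingShift}, applied to $W_p$ and to $-W_p$, pins down the correction terms; but invoking Theorem~\ref{thm:ds} directly is the most economical.
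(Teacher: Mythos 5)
Your argument is correct and is essentially the paper's proof: the paper likewise observes that a concordance induces a homology cobordism between the $p$-surgeries and then invokes the homology-cobordism invariance of $\dl$ and $\du$ (Proposition~\ref{prop:ddTheta}), citing Peters for the classical surgery-on-a-concordance lemma that you spell out. Your restriction to odd $p$ and the $\spinc$ bookkeeping are a harmless way of routing through Theorem~\ref{thm:ds} rather than applying Proposition~\ref{prop:ddTheta} directly, but the substance is identical.
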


\begin{proof}
By Proposition~\ref{prop:ddTheta}, $\dl$ and $\du$ are invariant under (smooth) homology cobordisms. Since a concordance of knots induces a homology cobordism between the $p$-surgeries, we obtain the result. Compare \cite[Proposition 2.1]{Peters}. 
\end{proof}

\subsection{An example: $+1$ surgery on the trefoil}
\label{subsec:lht} As an example, we compute the involutive Heegaard Floer homology of $S^3_{1}(T_{\ell})=-\Sigma(2,3,7)$, where $T_{\ell}$ is the left-handed trefoil. Note that $T_{\ell}$ has genus $1$, so this is a large surgery. 

A picture of $\CFKinfty(T_{\ell})$ appears in Figure~\ref{fig:lht}. Notice that it splits into the direct sum of complexes $U^k C, k \in \Z$, where the complex $C$ consists of three elements $x_0, x_1^1, x_1^2$ with $\del x_1^1=\del x_1^2 = x_0$. (This notation, while slightly cumbersome in such a small complex, has been chosen to match our notation for a more general case, which appears in Section~\ref{sec:Lspace} and \ref{sec:thin}.) Here, $x_0$ is in homological degree $1$, and $x_1^1$ and $x_1^2$ are in degree $2$. Therefore $$\HFp(-\Sigma(2,3,7)) \cong H_*(A_0^+) \cong \Tower_{0} \oplus (\Z_2)_{(0)},$$ where we use the standard notation in Heegaard Floer homology: $\Tower_j$ denotes an infinite tower $\Z_2[U, U^{-1}]/\Z_2[U]$ with the lowest element in degree $j$, and $(\Z_2)_{(j)}$ denotes a copy of $\Z_2$ in degree $j$ with trivial $U$ action. In our case, the lowest-degree element in the tower is represented by $[U(x^1_1+x_1^2)]$, and the additional element is represented by $[Ux^1_1]$. 

\begin {figure}
\begin {center}
\includegraphics{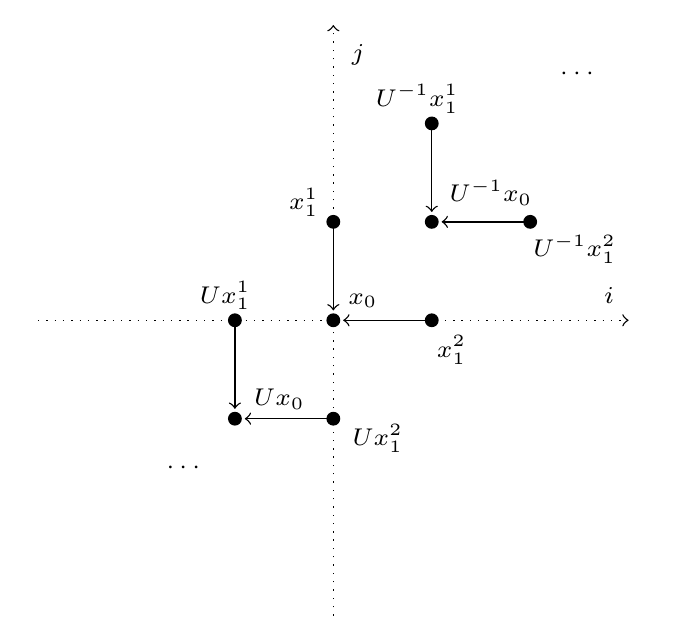}
\caption {The knot Floer complex of the left-handed trefoil.}
\label{fig:lht}
\end {center}
\end {figure}

\begin {figure}
\begin {center}
\includegraphics[scale=.8]{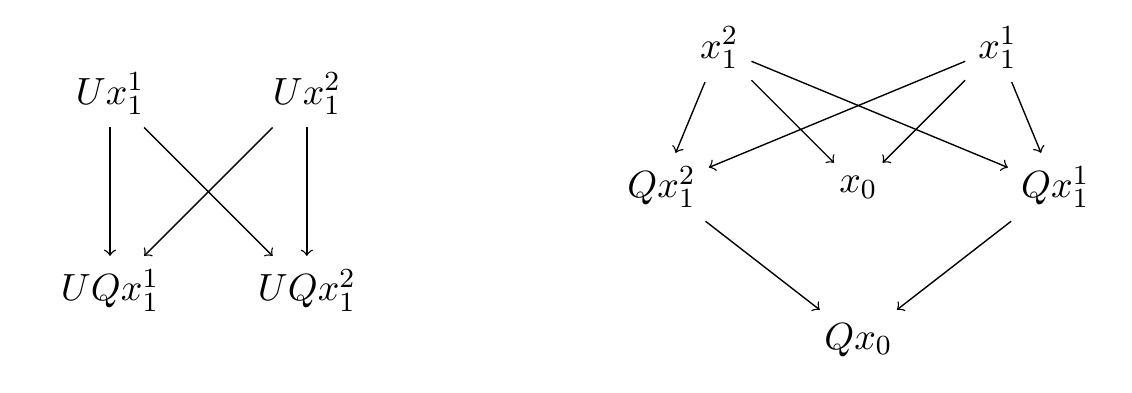}
\caption {Calculation of $+1$ surgery on the left-handed trefoil.}
\label {fig:lhtcomplex}
\end {center}
\end {figure}

Note that the map $\inv_K$ on $\CFKinfty(T_{\ell})$ is completely determined by its behavior with respect to the Maslov grading and the $\Z \oplus \Z$ filtration. Specifically, $\inv_K$ preserves $x_0$ and swaps $x^1_1$ with $x^2_1$.

We now compute the involutive homology $H_*(\AI_0^+)$. The chain complex 
$$\AI_0^+ = (A_0^+[-1] \oplus Q \ccdot A_0^+[-1], \del + Q \ccdot (1+\inv_0))$$ 
breaks into a direct sum of subcomplexes, two of which are pictured in Figure~\ref{fig:lhtcomplex}. The first has four elements; the second has six elements, and all subsequent complexes (not pictured) are copies of the second multiplied by $U^{-k}$ for $k>0$. We see that the homology of the four element complex is $$\Z_2\langle [U(x_1^1+x_1^2)], [UQx_1^1]=[UQx_1^2] \rangle$$ and the homology of the six element complex is $$\Z_2 \langle [x_1^1+x_1^2], [Q(x_1^1+x_1^2)]=[x_0] \rangle.$$ Note that $U[x_1^1+x_1^2]=[U(x_1^1+x_1^2)]$, whereas $U[Q(x_1^1+x_1^2)] = [UQx_1^1]+[UQx_1^2]=0$. Therefore we see that 
$$\HFIp(-\Sigma(2,3,7)) \cong \Tower_{1} \oplus \Tower_{2} \oplus (\Z_2)_{(0)}$$ where the lowest-degree elements of the towers can be taken to be $[U(x_1^1+x_1^2)]$ and $[Q(x_1^1+x_1^2)]$. Here the degrees are fixed by comparison with $\HFp(-\Sigma(2,3,7))$. We conclude that 
$$\dl(-\Sigma(2,3,7)) = 1-1=0, \ \ \ \du(-\Sigma(2,3,7)) = 2.$$ 
Using the properties of $\dl$ and $\du$ under orientation reversal (Proposition~\ref{prop:reversal}), we obtain the calculations for $\Sigma(2,3,7)$ stated in the Introduction. Combining these calculations with Corollary~\ref{corollary:lspace}, we get Corollary~\ref{cor:Lcor}.

With regard to the smooth concordance invariants defined in Section~\ref{sec:dSurgery}, we have $V_0(T_{\ell})= \Vl_0(T_{\ell})=0$ and $\Vu(T_{\ell})=-1.$

\subsection{Failure of additivity}
\label{sec:nonadditive}
We are now ready to give an example showing that the maps 
$$\dl, \du \co \Theta^3_{\Z_2} \to \Q$$
are not homomorphisms. Take $Y=\Sigma(2,3,7)$ and consider the connected sum $Y \# Y$. We have computed that $\dl(Y)=-2$. If $\dl$ were additive, then we would get $\dl(Y \# Y)=-4.$ On the other hand, from the calculation of $\HFp(Y)$ in \cite[p.47]{AbsGraded} and the connected sum formula \cite[Theorem 1.5]{HolDiskTwo} we get
$$\HFp(Y \# Y)\cong \T^+_0 \oplus (\Z_2)^3_{(-1)} \oplus (\Z_2)_{-2}.$$

From the exact triangle \eqref{pic:exact} relating $\HFIp(Y \# Y)$ to $\HFp(Y \# Y)$ we see that $\HFIp(Y \# Y)$ cannot have any elements in degrees less than $-2$. Hence $\dl(Y \# Y)$ cannot be $-4$, and $\dl$ is not additive. 

By changing orientation, in view of Proposition~\ref{prop:reversal}, we see that $\du$ is not additive either.

\section{L-space knots and their mirrors}
\label{sec:Lspace}

A knot $K \subset S^3$ is called an {\em L-space knot} if the surgery $S^3_p(K)$ is an L-space for some integer $p > 0$ (and hence for all $p \gg 0$). In \cite{OSLens}, Ozsv\'ath and Szab\'o  proved that if $K$ is an $L$-space knot, then the Alexander polynomial of $K$ is of the form
$$ \Delta_K(t) = (-1)^m + \sum_{i=1}^m (-1)^{m-i}(t^{n_i} + t^{-n_i})$$
for a sequence of positive integers $0 < n_1 < n_2 < \dots < n_m.$ Here, $n_m = g(K)$ is the genus of $K$. Let $n(K) \geq 0$ be the quantity
$$ n(K):= n_m - n_{m-1} + \dots + (-1)^{m-2} n_2 + (-1)^{m-1} n_1.$$ Furthermore, let $\ell_s = n_s - n_{s-1}$.

Ozsv\'ath and Szab\'o also proved that the knot Floer complex $\CFKinfty(K)$ is completely determined by this information. Before giving their description, it is helpful to introduce the notion of a model complex for $\CFKinfty(K)$.

\begin{definition} Given a knot $K$, we say that $C$ is a \emph{model complex} for $\CFKinfty (K)$ if $\CFKinfty(K) \simeq C \otimes \Z_2[U^{-1}, U]$, where $\Z_2[U^{-1}, U]$ is regarded as a chain complex with trivial differentials. \end{definition}

For example, the complex $C$ from Section~\ref{subsec:lht} is a model complex for $\CFKinfty(T_{\ell})$.

\begin{remark} There is an important subtlety to note at this juncture. To say that $C$ is a model complex for $\CFKinfty$ implies that it is preserved by $\del$, but not necessarily by $\inv_K$. This will not be relevant to the proofs in this section, but in the future computations in Section~\ref{sec:thin} it will frequently be the case that only $C \otimes \Z_2[U^{-1},U]$ is preserved by $\inv_K$. \end{remark}

If $K$ is an $L$-space knot, then $\CFKinfty(K)$ has a model $\Z_2$-complex $C$ with generators $x_0, x_1^1,x_1^2, \cdots, x_m^1,x_m^2$, as follows. The $(i, j)$-grading of $x_0$ is $(0,0)$, the grading of $x_m^1$ is $(-n(K), g(K)-n(K))$, and in general the gradings of $x^t_{m-2s}$ and $x^t_{m-(2s+1)}$ differ only in the $i$-grading by $\ell_{m-2s}$ and the gradings of $x^t_{m-(2s+1)}$ and $x^t_{m-(2s+2)}$ differ only in the $j$-grading by $\ell_{m-(2s+1)}$. Moreover the complex is symmetric: if $\gr(x_s^1)=(i,j)$, then $\gr(x_s^2)=(j,i)$. There is no need to keep track of the homological grading.

The differentials in the complex $C$ form a ``staircase'' as shown in Figure~\ref{fig:LComplex}. If $m$ is odd, then the nonzero differentials are
\begin{align*}
\del(x_0) &= x_1^1+x_1^2 & \del(x_s^t) &= x_{s-1}^t+x_{s+1}^t \text{ for } s>0 \text{ even},\ t \in \{1,2\}
\end{align*}
\noindent whereas if $m$ is even, the nonzero differentials are
\begin{align*}
\del(x_1^t) &= x_0 + x_2^t & \del(x_s^t) &= x_{s-1}^t+x_{s+1}^t \text{ for } s>1 \text{ odd},\ t \in \{1,2\}.
\end{align*}

\begin{figure}
\includegraphics[scale=.9]{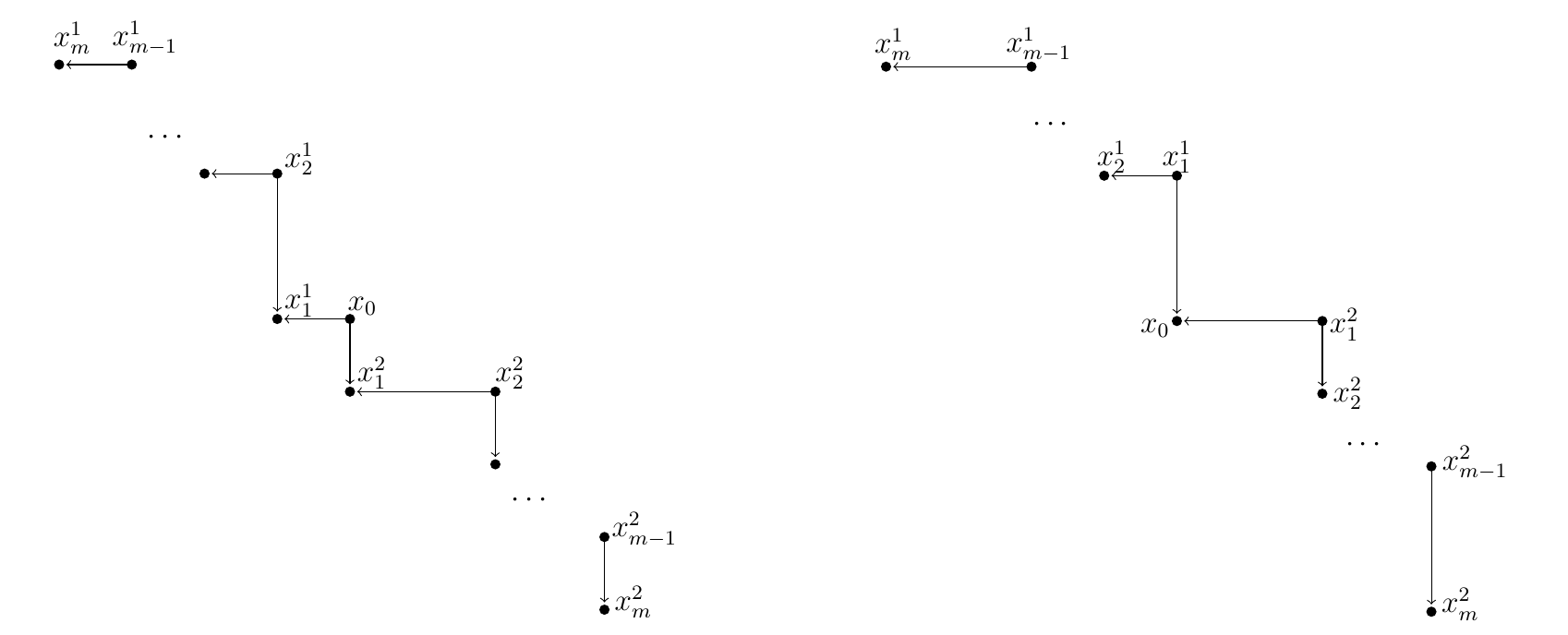}
\caption{Model complexes for the knot Floer complex of an $L$-space knot. The case $m$ odd is on the left and the case $m$ even is on the right.}
\label{fig:LComplex}
\end{figure}

The quantity $n(K)$ is the total length of the horizontal arrows in the top half of the complex $C$.

From the above description of $\CFKinfty(K)$ we see that $H_*(A_0^+)\simeq\Tower$ with lowest-degree element represented by $[x_m^1]=[x_m^2]$, and $H_*(B_0^+)\simeq \Tower$ with lowest-degree element represented by $[U^{-n(K)}x_m^1]=[U^{-n(K)}x_m^2]$. We deduce that 
$$V_0=n(K).$$

By definition, large surgeries on L-space knots are L-spaces. By  Corollary~\ref{corollary:lspace}, their involutive Heegaard Floer homology is determined by their usual Heegaard Floer homology, and in particular we have
$$ \Vl_0(K)=\Vu_0(K)= V_0(K)=n(K).$$

It is more interesting to consider mirrors of L-space knots. These have the property that sufficiently negative surgeries along them yield L-spaces. An example of such a knot is the left-handed trefoil $T_{\ell}$ from Section~\ref{subsec:lht}.

If $K$ is the mirror of an L-space knot, let $n_i, \ell_i$ and $n(K)$ be as before. (The Alexander polynomial, and hence these quantities, are unchanged under taking mirrors.) The knot Floer complex does change: There is now a model $\mathbb Z_2$-complex $C$ for $\CFKinfty(K)$ with $2m+1$ generators $x_0, x_1^1,x_1^2,\cdots,x_m^1,x_m^2$, as follows. The $(i,j)$-gradings of $x_0$ and $x_m^1$ are  $\operatorname{gr}(x_0)=(0,0)$ and $\operatorname{gr}(x_m^1) =  (n(K),g(K)-n(K))$. The gradings of $x_{m-2s}$ and $x_{m-(2s+1)}$ differ only in the $j$-grading by $\ell_{m-2s}$ and the gradings of $x_{m-(2s+1)}$ and $x_{m-(2s+2)}$ differ only in the $i$ grading by $\ell_{m-(2s+1)}$. Moreover, the complex is symmetric: if $\operatorname{gr}(x_s^1)=(i,j)$, then $\operatorname{gr}(x_s^2)=(j,i)$. The differentials in this complex form a ``staircase'' as in Figure~\ref{fig:mLComplex}. Precisely, if $m$ is odd, the nonzero differentials are
\begin{align*}
\del(x_1^t) &= x_0 + x_2^t \\
\del(x_s^t) &= x_{s+1}^t+x_{s-1}^t \text{ for } s \text{ odd, }  1 < s < m\\
\del(x_m^t) &= x_{m-1}^t
\end{align*}
\noindent whereas if $m$ is even, the differentials are
\begin{align*}
\del(x_0) &= x_1^1+x_1^2 \\
\del(x_s^t) &= x_{s+1}^t+x_{s-1}^t \text{ for } s \text{ even, } 0 < s < m \\
\del(x_m^t) &= x_{m-1}^t.
\end{align*}

\begin{figure}
\includegraphics[scale=.9]{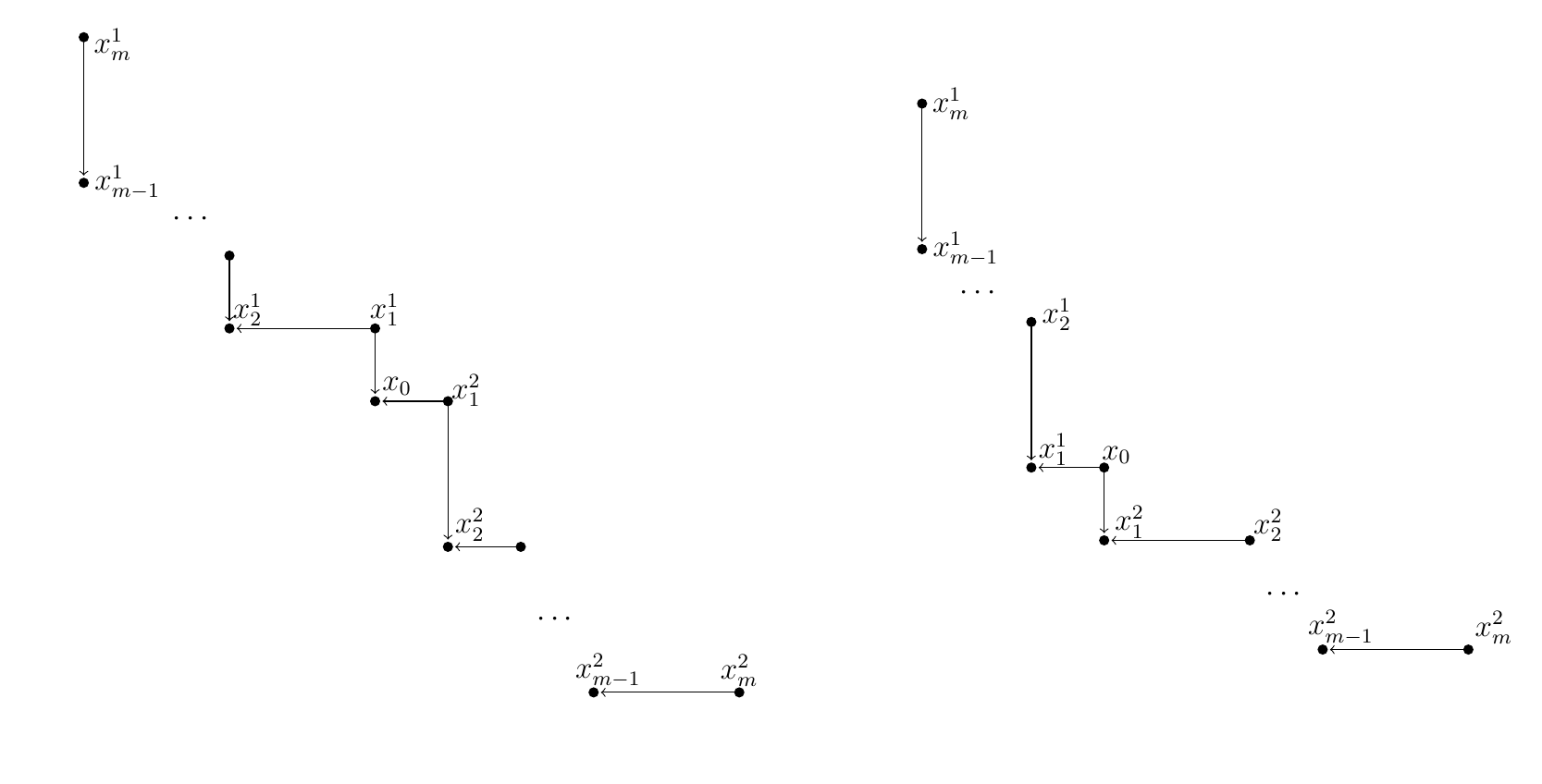}
\caption{Model complexes for the knot Floer complex of the mirror of an $L$-space knot. The case $m$ odd is on the left and the case $m$ even is on the right.}
\label{fig:mLComplex}
\end{figure}

Observe that now $n(K)$ represents the total length of the vertical arrows in the top half of the complex $C$. 

By Lemma~\ref{lem:transfer}, we can transfer the conjugation map $\inv_K$ to any complex filtered chain homotopy equivalent to $\CFKi(K)$; in particular, to $C \otimes \Z_2[U^{-1},U]$. Thus, we can assume that  $\CFKi(K)=C \otimes \Z_2[U^{-1},U]$, and look for a map $\inv_K$ on this complex that respects the Maslov grading, and is a skew-filtered quasi-isomorphism. 

In our case, since $U$ decreases Maslov grading by two, the map $\inv_K$ must preserve each subcomplex $U^n C$, for $n \in \Z$. In fact, given the requirements for $\inv_K$, it is easy to see that there is a unique possibility:
\begin{equation}
\label{eq:invL}
 \inv_K(x_0) = x_0,\ \inv_K(x^1_s)=x^2_s, \ \inv_K(x^2_s)=x^1_s.
 \end{equation}

Thus, we have all the information needed to compute the involutive Heegaard Floer homology of large surgeries on $K$. In particular, let us spell out the values of $\Vl_0(K)$ and $\Vu_0(K)$, which by Theorem~\ref{thm:dLarge} determine the involutive correction terms of the large surgeries.

\begin{proposition} \label{prop:mlspaceknots} Let $K$ be the mirror of an $L$-space knot. Then $\Vl_0(K) = V_0(K) =0$ and $\Vu_0(K) = -n(K)$.
\end{proposition}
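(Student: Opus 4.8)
The plan is to compute $H_*(\AI_0^+)$ directly from the model complex $C$ of Figure~\ref{fig:mLComplex} together with the explicit involution \eqref{eq:invL}, exactly as was done for the left-handed trefoil in Section~\ref{subsec:lht} (the trefoil being the case $m=1$, $n(K)=1$). First I would recall that $A_0^+ = C\{i\geq 0 \text{ or } j\geq 0\}$, and that $H_*(A_0^+)\cong \Tower$ with bottom element represented by $[x_m^1]=[x_m^2]$, while $V_0(K)=n(K)$ was already derived above. Since $\inv_0$ is induced by $\inv_K$, which by \eqref{eq:invL} fixes $x_0$ and swaps $x_s^1\leftrightarrow x_s^2$, the map $1+\inv_0$ on $A_0^+$ kills $x_0$ and sends $x_s^t \mapsto x_s^1+x_s^2$.

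Next I would analyze the mapping cone $\AI_0^+ = (A_0^+[-1]\oplus Q\cdot A_0^+[-1],\ \del + Q(1+\inv_0))$ by identifying the two infinite $U$-towers in its homology and computing the grading of the bottom generator of each. For the \emph{first tower}: the element $x_m^1+x_m^2$ (and its $U$-powers) is an honest cycle with $(1+\inv_0)(x_m^1+x_m^2)=0$, so it survives, and its grading matches that of $x_m^1$; comparing with the bottom of the tower in $H_*(A_0^+)$ (also $[x_m^1]$) gives $\gr(x)-\gr(\xl)=0$, hence $\Vl_0 = V_0 + 0 = n(K)$. Wait — this would contradict the claim, so the key point to get right is that the bottom of the first tower in $H_*(A_0^+)$ lies in grading one \emph{higher} after the shift $[-1]$ and the interaction with the staircase: in the mirror case $x_m^t \to x_{m-1}^t$ under $\del$, so $x_m^1+x_m^2$ is \emph{not} a cycle in $A_0^+$. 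I would instead track which combination is a cycle, find that the surviving classes in the first $U$-tower of $H_*(\AI_0^+)$ sit in the same gradings as in $H_*(A_0^+)$ (so $\Vl_0=V_0=0$, using $V_0(K)=0$ for mirrors of L-space knots), and for the \emph{second tower} (the one in the image of $Q$): an element of the form $Q\cdot z$ is a cycle iff $\del z + (1+\inv_0)z \in \im\del$; the relevant bottom class is represented by $Q(x_m^1+x_m^2)$, homologous to $Q x_0$ shifted down by $n(K)$ via the vertical staircase arrows connecting $x_m^t$ down to $x_0$ (the total vertical length being precisely $n(K)$). Comparing its grading to that of $[x_m^1]$ in $H_*(A_0^+)$ yields $\gr(x)-\gr(\xu)=-n(K)$, whence $\Vu_0 = V_0 - n(K) = -n(K)$.

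To organize this cleanly I would split $\AI_0^+$ into a direct sum of finite subcomplexes indexed by $U$-power (as in Figure~\ref{fig:lhtcomplex}), compute the homology of each small piece, and then assemble the $\Ring$-module structure; alternatively, and more efficiently, I would invoke the commutative diagram of long exact sequences displayed in Section~\ref{sec:dSurgery} relating $\HFIp(Y_p,[0])$ to $H_*(\AI_0^+)$ and its two towers, together with the formulas
$$ \Vl_0 = V_0 + \tfrac12(\gr(x)-\gr(\xl)), \qquad \Vu_0 = V_0 + \tfrac12(\gr(x)-\gr(\xu))$$
established there. Then the whole proof reduces to: (a) $V_0(K)=0$, already shown for mirrors of L-space knots since the bottom of $H_*(B^+)$ and $H_*(A_0^+)$ agree; (b) $\gr(\xl)=\gr(x)$, i.e. the first tower's bottom is unchanged; and (c) $\gr(\xu) = \gr(x) - 2n(K)$, computed by following the staircase of $\del$ from $x_m^t$ down to $x_0$, whose vertical extent is $n(K)$ by the description of $C$.

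The main obstacle I expect is item (c): correctly bookkeeping the Maslov gradings and the $[-1]$ shift through the staircase so that the vertical displacement between $[x_m^1]$ and the $Q$-tower bottom class comes out to exactly $2n(K)$ in Maslov grading (equivalently $n(K)$ in $U$-power). This requires being careful that in the mirror case the top of the staircase has \emph{vertical} (not horizontal) arrows, that the relation $Q(x_m^1+x_m^2) \sim Qx_0$ in $H_*(\AI_0^+)$ genuinely holds (each step $Q x_s^t \mapsto Q\del x_s^t$ in the cone differential realizes one segment of the staircase, and the $x_s^1+x_s^2$ ambiguities vanish because $1+\inv_0$ only contributes a $Q$-term and $Q^2=0$), and that no element of lower grading survives — the latter following from the exact triangle \eqref{pic:exact2} constraining $\HFIp$ below $d$. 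Everything else is a routine echo of the trefoil computation in Section~\ref{subsec:lht}.
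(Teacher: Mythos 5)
Your overall strategy coincides with the paper's: take the staircase model complex for the mirror with the involution \eqref{eq:invL}, split $\AI_0^+$ into finite pieces indexed by $U$-power, and locate the bottoms of the two towers. But several of the concrete assertions in your outline are false and would derail the computation if carried out as written. First, the facts you ``recall'' at the outset are those for $L$-space knots, not their mirrors: for the mirror one has $H_*(A_0^+)\cong\Tower\oplus\Z_2^{\oplus n(K)}$ with the bottom of the tower represented by $[U^{n(K)}(x_m^1+x_m^2)]$, and $V_0=0$ (you correct the value of $V_0$ later, but the correct identification of the tower bottom is exactly what items (b) and (c) of your plan require). Second, your proposed representative for the bottom of the second tower is wrong: for $m\ge 2$ the element $Q(x_m^1+x_m^2)$ is not a cycle in $\AI_0^+$ at $U$-power zero, since $\delinv(Q(x_m^1+x_m^2))=Q(x_{m-1}^1+x_{m-1}^2)\ne 0$ there; and $[Qx_0]$ is not the right class either --- for $m$ even $Qx_0$ is not a cycle, and for $m$ odd it is a boundary (already for the trefoil, $Qx_0=\delinv(Qx_1^1)$). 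The correct bottom class is $[Q(\sum_t(x_{m-2t}^1+x_{m-2t}^2))]$, with $x_0$ added when $m$ is even, taken at $U$-power zero; as an element $\xu$ of $A_0^+$ it lies in the Maslov grading of $x_m^1$, hence $2n(K)$ \emph{above} $\gr(x)$. Your grading bookkeeping is correspondingly inconsistent: you write $\gr(x)-\gr(\xu)=-n(K)$ in one place and $\gr(\xu)=\gr(x)-2n(K)$ in another; what is needed is $\gr(\xu)=\gr(x)+2n(K)$, so that $\Vu_0=V_0+\tfrac{1}{2}(\gr(x)-\gr(\xu))=-n(K)$.

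The one genuinely nontrivial point your outline does not address is why the second tower terminates at $U$-power zero, i.e.\ why $U$ annihilates its bottom class. The exact triangle \eqref{pic:exact2} does not give this; it only constrains $\HFIp$ by $\HFp$ and $\inv_*$, and the whole point of the chain-level computation is that the tower bottoms are not visible from that data alone. The actual reason is that in the piece of $\AI_0^+$ at $U$-power one, the classes $[UQ\sum_t x^1_{m-2t}]$ and $[UQ\sum_t x^2_{m-2t}]$ coincide, so their sum --- which is $U$ times the $U$-power-zero generator --- vanishes. With that verification supplied and the representatives corrected as above, your plan reproduces the paper's proof.
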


\begin{proof} 
From the above description of $\CFKinfty(K)$, we see that $U^{n(K)}C$ is the largest $U$-power of $C$ with nontrivial intersection with $A_0^+$. For context, let us begin by computing $H_*(A_0^+)$ and $H_*(B_0^+)$, and thus $V_0$. (The reader who is familiar with this computation may skip the next two paragraphs.)

Let $C^{(k)} = (U^k  C) \cap A_0^+$, so that $H_*(A_0^+) = \bigoplus H_*(C^{(k)})$. For $k>n(K)$, we have $C^{(k)}=0$. For $n(K)\geq k > n(K)-\ell_m$, the complex $C^{(k)}$ has generators $U^kx_m^1$ and $U^kx_m^2$ and no nonzero differentials, hence has homology 
$$H_*(C^{(k)}) = \Z_2\langle [U^kx_m^1 + U^kx_m^2], [U^kx_m^2]\rangle.$$ 
Similarly, if $n(K)-\ell_m-\cdots -\ell_{m-2s}\geq k > n(K)-\ell_m-\cdots -\ell_{m-(2s+2)}$, then $C^{(k)}$ is generated by the $U^k$ powers of $x_m^1,x_m^2,\cdots,x_{m-2s}^1,x_{m-2s}^2$ with the inherited differentials, and has homology
$$\Z_2\langle [U^k\sum_{t=0}^{s}(x_{m-2t}^1+x_{m-2t}^2)], [U^k\sum_{t=0}^{s}x^2_{m-2t}] \rangle.$$ 

Finally, for $k\leq 0$, $C^{(k)}=U^kC \subset A_0^+$, and the homology $H_*(C^{(k)})$ is one of the following, according to the parity of $m$:
\begin{align*}
H_*(C^{(k)}) = \begin{cases} \Z_2 \langle [U^k(\sum_{t=0}^{\frac{m}{2}} (x^1_{m-2t}+x^2_{m-2t}) +x_0)]\rangle \text{ for } m \text{ even},\\
\Z_2 \langle [U^k(\sum_{t=0}^{\frac{m-1}{2}}(x^1_{m-2t}+x^2_{m-2t})]\rangle \text{ for } m \text{ odd.}\end{cases}
\end{align*}

We see that $H_*(A_0^+) \simeq \Tower \oplus \mathbb \Z_2^{\oplus n(K)}$, where the lowest-degree element in $\Tower$ can be taken to be $[U^{n(K)}(x_m^1+x_m^2)]$. An extremely similar computation shows that $H_*(B_0^+) \simeq \Tower$, where the lowest-degree element can be taken to be $[U^{n(K)}x_m^2]$, which is the image of $[U^{n(K)}(x_m^1+x_m^2)]$ under the map $v_0 \co H_*(A^+_0)\rightarrow H_*(B^+_0)$. Therefore, $v_0$ is modelled on the identity in sufficiently large degrees, and we have $V_0=0$.

We now turn our attention to the computation of $\Vl_0$ and $\Vu_0$. Recall that we must have $(1+\inv_K)x_s^t = x_s^1+x_s^2$ and $(1+\inv_K)x_0 = 0$.

Let us compute the homology of the complex $\AI^+_0$. Since $1+\inv_K$ preserves $C$, it suffices to compute the homology of
$$\CI^{(k)} := (C^{(k)}[-1] \oplus Q \ccdot C^{(k)}[-1], \del + Q \ccdot (1+\inv_K))$$
for each integer $k$. 

For $k>n(K)$, the complex $\CI^{(k)}$ is trivial. For $n(K)\geq k > n(K)-\ell_m$, the complex $\CI^{(k)}$ has generators $U^kx_m^t$ and $QU^kx_m^t$ ($t \in \{1,2\}$) and the only contribution to the differential comes from $Q\ccdot(1 + \inv_K)$; therefore, the homology $H_*(\CI^{(k)})$ is 
$$\Z_2\langle [U^k(x_m^1 +x_m^2)], [U^kQx_m^1]=[U^kQx_m^2]\rangle.$$ 
Similarly, if $n(K)-\ell_m-\cdots -\ell_{m-2s}\geq k > n(K)-\ell_m-\cdots -\ell_{m-(2s+2)}$, then $\CI^{(k)}$ is generated by $x_m^t,\cdots,x_{m-2s}^t,Qx_m^t,\cdots,Qx_{m-2s}^t,$ with the inherited differentials. The homology of this complex is 
$$\Z_2\langle [U^k\sum_{t=0}^{s}(x_{m-2t}^1+x_{m-2t}^2)], [U^kQ\sum_{t=0}^{s}x^1_{m-2t}]= [U^kQ\sum_{t=0}^{s}x^2_{m-2t}] \rangle.$$ 

Finally, for $k\leq 0$, the homology $H_*(\CI^{(k)})$ is one of the following, according to the parity of $m$:
\begin{align*}
H_*(\CI^{(k)}) = \begin{cases} \Z_2 \langle [U^k(\sum_{t=0}^{\frac{m}{2}} (x^1_{m-2t}+x^2_{m-2t}) +x_0)], \\ \ \ \ \ [U^kQ(\sum_{t=0}^{\frac{m}{2}} (x^1_{m-2t}+x^2_{m-2t}) +x_0)] \rangle\text{ for } m \text{ even,}\\
\Z_2 \langle [U^k(\sum_{t=0}^{\frac{m-1}{2}}(x^1_{m-2t}+x^2_{m-2t})],\\ \ \ \ \ [U^kQ(\sum_{t=0}^{\frac{m-1}{2}}(x^1_{m-2t}+x^2_{m-2t}))]\rangle \text{ for } m \text{ odd.}\end{cases}
\end{align*}

Notice that regardless of the parity of $m$, the generator of $H_*(\CI^{(0)})$ in the image of $Q$ is annihilated by multiplication by $U$. Therefore, combining all three computations above, we see that, as a $\Z_2[U]$-module, the involutive homology $H_*(\AI_0^{+})$ is $\Tower \oplus \Tower \oplus \mathbb Z_2^{\oplus n(K)}$, where the first tower has lowest-degree element $[U^{n(K)}(x_m^1+x_m^2)]$ and the second has lowest-degree element either $[Q(\sum_{t=0}^{\frac{m}{2}} (x^1_{m-2t}+x^2_{m-2t}) +x_0)]$ or $[Q(\sum_{t=0}^{\frac{m-1}{2}}(x^1_{m-2t}+x^2_{m-2t}))]$, according to the parity of $m$. Comparing with the homology of $A_0^+$, we conclude that $\Vl_0 = 0$ and $\Vu_0 = -n(K)$. 
\end{proof}

\section{Large surgeries on thin knots}
\label{sec:thin}

\subsection{Thin knots}
\label{sec:introthin}
A knot $K \subset S^3$ is called {\em Floer homologically thin} (or {\em thin}) if its knot Floer homology 
$$\HFKhat(K)=\bigoplus_{j,k\in \Z} \HFKhat_k(K, j)$$ is supported in a single diagonal line $j-k=\tau$, for some $\tau \in \Z$. Thin knots were introduced in \cite{RasmussenThesis, RasSurvey}. The value $\tau=\tau(K)$ is the knot concordance invariant defined in \cite{RasmussenThesis, 4BallGenus}.

Alternating knots were shown to be thin in \cite{AltKnots}. A more general class of knots, called quasi-alternating knots, were also found to be thin \cite{MOQuasi}. Furthermore, in these cases we have $\tau=-\sigma/2$, where $\sigma$ is the signature of the knot. It is an open question whether $\tau=-\sigma/2$ for all thin knots.

For thin knots, the complex $\CFKinfty(K)$ has been described by Petkova \cite[Lemma 7]{Petkova}. By her work, we know that there is a model complex $C$ for $\CFKinfty(K)$ consisting of a direct sum of a single staircase and some number of square complexes. 

Let us introduce some notation. For a thin knot $K$, all differentials in $\CFKinfty(K)$ are of length one, going in either the horizontal or the vertical direction. Given a basis element $x \in \CFKinfty(K)$ lying in grading $(i,j)$, we let $\dhorz(x)$ denote the horizontal part of the differential, so that $\dhorz(x)$ lies in grading $(i-1,j)$. Similarly, we let $\dvert(x)$ denote the vertical part of the differential, so that $\dvert(x)$ lies in grading $(i,j-1)$. If $y$ is a sum of basis elements in multiple gradings, then $\dhorz(y)$ and $\dvert(y)$ are computed using linearity. 

We now introduce the $\Z_2$ complexes that are used to build the model complex for $\CFKinfty$. A \textit{staircase complex of step length one} consists of $2m+1$ basis elements $x_0, x_1^1, x_2^1,\cdots, x_m^1, x_m^2$, such that $x_0$ is in grading $(0,0)$, and the complex has one of the forms shown in Figure \ref{fig:staircases}. 

\begin{figure}
\includegraphics[scale=.5]{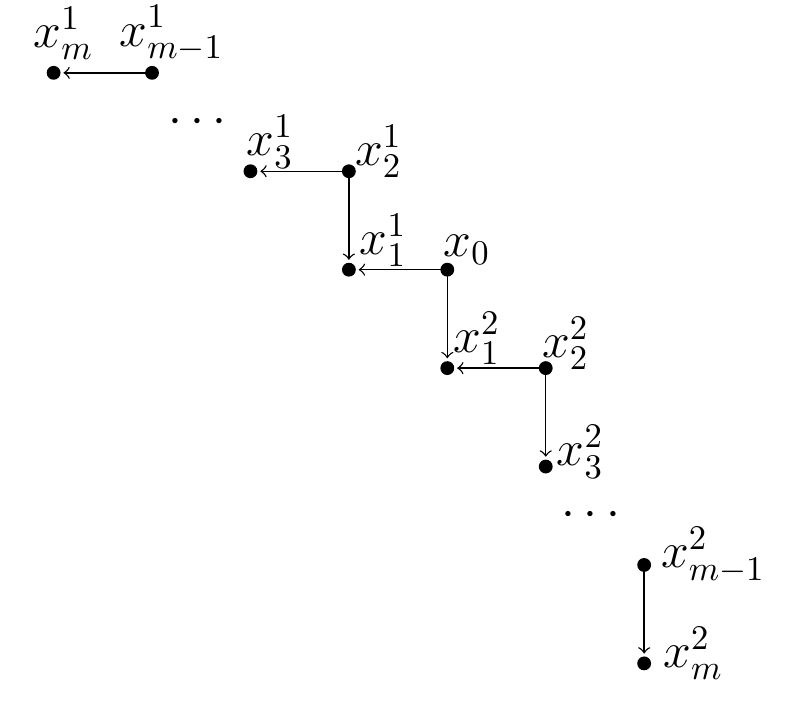}
\includegraphics[scale=.5]{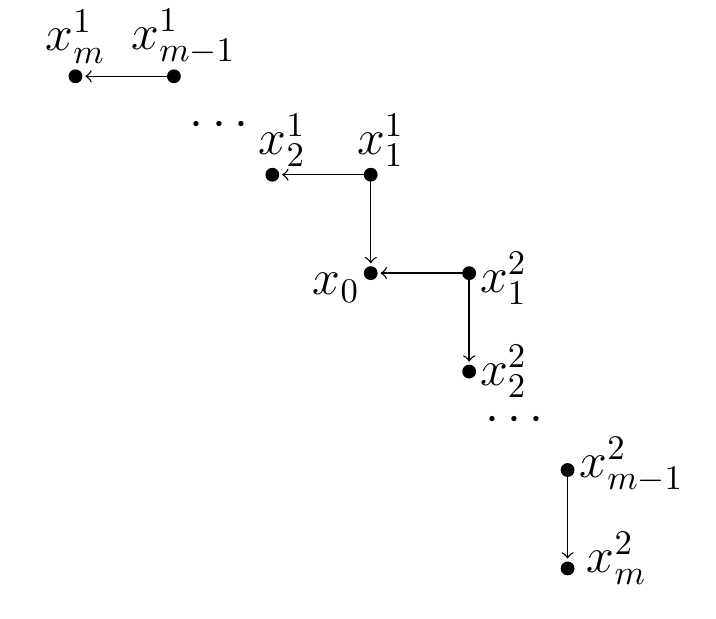}
\includegraphics[scale=.5]{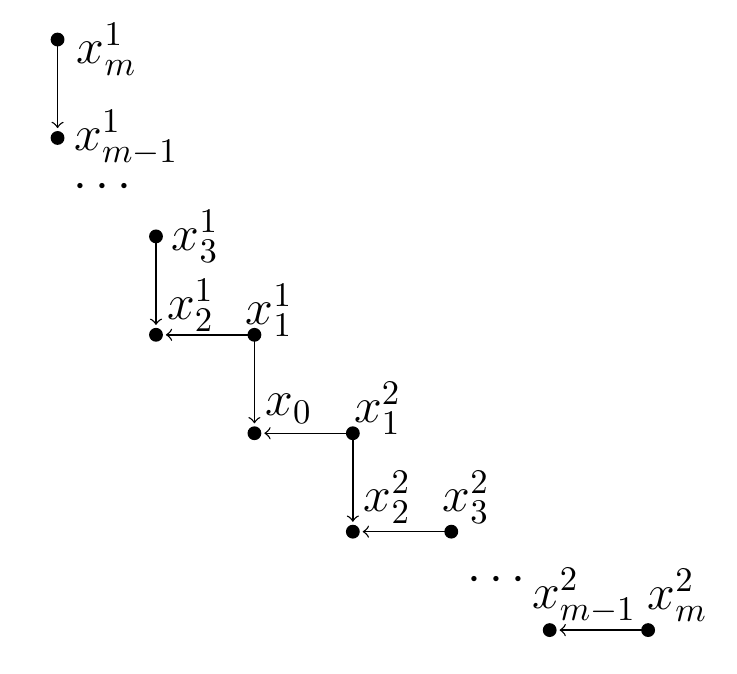}
\includegraphics[scale=.5]{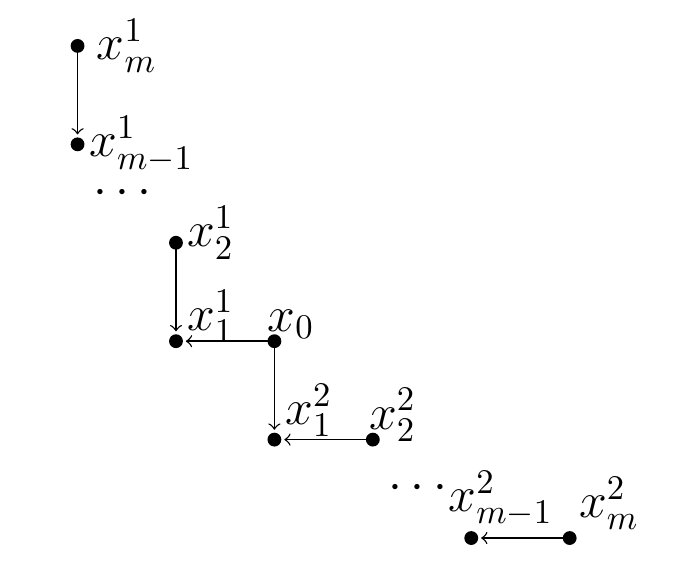}
\caption{The four staircase complexes with step length one. For larger versions, see Figures \ref{fig:thincomplex1}, \ref{fig:thincomplex2}, \ref{fig:thincomplex4}, and \ref{fig:thincomplex3}.}
\label{fig:staircases}
\end{figure}

A \textit{square complex} consists of four elements $a$ in grading $(i,j)$, $b$ in grading $(i-1,j)$, $c$ in grading $(i-1,j)$, and $Ue$ in grading $(i-1,j-1)$, with differentials
\begin{align*}
\dhorz(a) = b, \ \ 
\dvert(a) = c, \ \
\dvert(b) = \dhorz(c) = Ue, \ \
\del(Ue)=0. 
\end{align*}
A remark on notation: we use $Ue$ instead of $e$ so that $a$ and $e$ lie in the same grading, which will be convenient in future. Furthermore, because of the potential of confusion with a differential, we do not use ``$d$'' to label any element in the chain complex. 

The element $a$ is called the \textit{initial corner} of the square. An important feature of a initial corner $a$ is that $\dhorz\dvert(a) = \dvert\dhorz(a) \neq 0$. Notice that no other basis element of a square complex or a staircase complex has this property.

\begin{figure}
\includegraphics{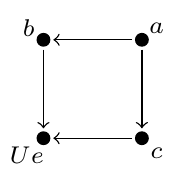}
\caption{A square complex}
\end{figure}

As mentioned above, the model complex $C$ consists of a single staircase and some number of square complexes. The value of the knot invariant $\tau$ can be read from the staircase. Indeed, for a staircase with $2m+1$ generators, we have 
\begin{equation}
\label{eq:mtau}
m = |\tau|.
\end{equation}
 Further, the sign of $\tau$ is positive if we are in one of the first two pictures in Figure~\ref{fig:staircases} (that is, the differential connecting $x^1_m$ and $x^1_{m-1}$ is horizontal), and is negative otherwise.

The Euler characteristic of $\HFKhat(K)$, which is the Alexander-Conway polynomial of the knot, takes the form
$$ \Delta_K(t) = (t^{-m} - t^{-m+1} + \dots - t^{m-1} + t^m) - (t^{-1}-2+t)r(t),$$
for some symmetric Laurent polynomial $r(t)$ of the form
$$ r(t)=r_0 - r_1(t^{-1}+t) + \dots + (-1)^k r_k(t^{-k} +t^k),$$
with $r_i$ being nonnegative integers. The polynomial $r(t)$ encodes the positions of the square complexes. In particular, $r(-1)$ represents the total number of squares.

The determinant of the knot is given by 
\begin{equation}
\label{eq:det}
D = \Delta_K(-1)=2m+1+4r(-1).
\end{equation}
Thus, the number of square complexes is
\begin{equation}
\label{eq:rminus}
r(-1)=(D-2|\tau|-1)/4.
\end{equation}
The parity of $r(-1)$ will play an important role in our considerations. Note that this is simply the parity of $r_0$, so we will refer to it as such.

Let us also define
\begin{equation}
\label{eq:nk}
 n(K)= \lceil m/2 \rceil.
\end{equation}
When there are no squares (so that $K$ is an L-space knot), this coincides with the definition of $n(K)$ in Section~\ref{sec:Lspace}.

Our goal will be to analyze the conjugation map $\inv_K$ on $\CFKinfty(K)$. By Lemma~\ref{lem:transfer}, we can assume that $\CFKinfty(K)$ is given by $C \otimes \Z_2[U, U^{-1}]$, where $C$ is our model complex. Suppose we have a generator $x=[\x, i, j]$ of $C \otimes \Z_2[U, U^{-1}]$.  Recall that $\inv_K$ is skew-filtered, that is, it takes $x$ to a sum of elements supported in the quadrant with upper right corner at $(i, j)$. Furthermore, since $K$ is a thin knot, the Maslov grading of $x$ is given by $i+j-\tau$. It follows that, in the $(i, j)$-plane, $\inv_K(x)$ must be on the diagonal with the same value of $i+j$ as $x$. Combining this with the skew-filtered property, we actually get that $\inv_K(x)$ must be supported in the point with coordinates $(j,i)$. 

To get more information about $\inv_K$, it is helpful to know its square $\inv_K^2$, which equals the Sarkar involution $\sarkar$; cf. Section~\ref{sec:Sarkar}. Since $\inv_K^2=\sarkar$, we must have that $\sarkar$ preserves the support $(i, j)$. One consequence of this is that $\sarkar$ equals the map $\sarkar_g$ induced by $\sarkar$ on the associated graded (with respect to the vertical filtration $j$).

Now, recall from Proposition~\ref{prop:sarkar} that the map $\sarkar_g$ can be computed explicitly. In fact, for thin knots, the result of this computation appears in \cite[Section 6]{SarkarMoving}.  We can describe it as follows. On the staircase complex, $\sarkar=\sarkar_g$ is the identity. On each square complex, we have
\begin{equation}
\label{eq:SarkarSquare}
\sarkar(a)= a+e, \ \ \sarkar(b)=b,\ \ \sarkar(c)=c,\  \ \sarkar(e)=e.
\end{equation}

We are left to find $\inv_K$ that squares to this map, and takes elements supported at $(i,j)$ to elements supported at $(j,i)$. 

\subsection{The figure-eight knot} 
\label{sec:figure-eight}
As a first and motivating example, we compute $\inv_K$ in the case of the figure-eight knot $4_1$, for which $m=0$ and $r(t)=1$. The complex $\CFKinfty(4_1)$ appears in Figure~\ref{fig:full}. We write $x=x_0$ for simplicity.

\begin {figure}
\begin {center}
\includegraphics{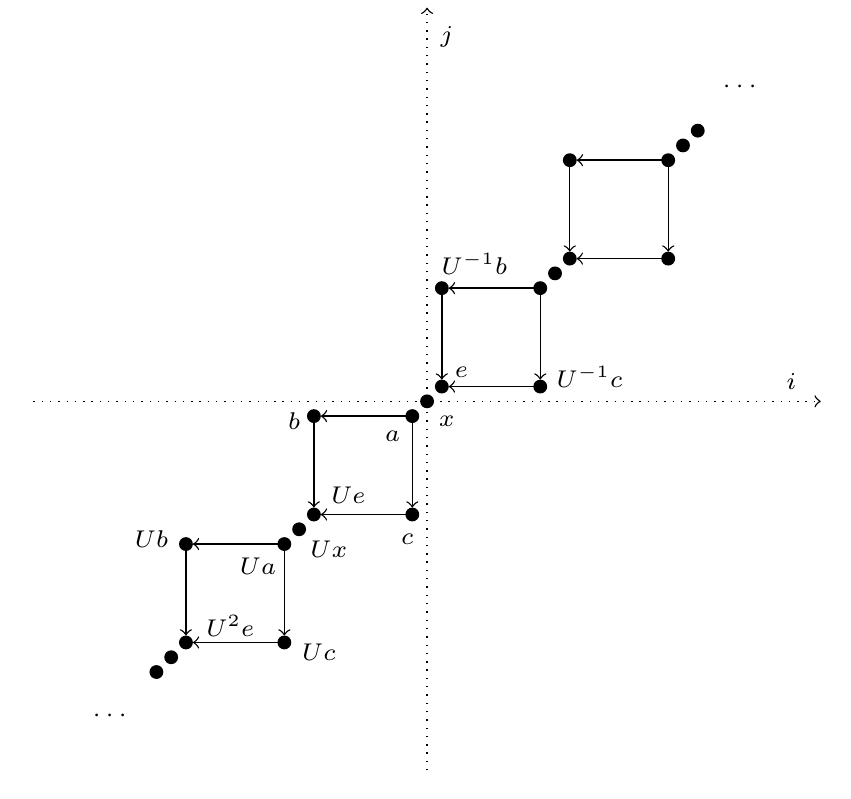}
\caption {The knot Floer complex for the figure-eight.}
\label {fig:full}
\end {center}
\end {figure}

This chain complex has eight grading-reversing automorphisms. Indeed, in all of them we must have $\inv_K(b)=c, \inv_K(c)=b$ and (because $\inv_K$ is a chain map) $\inv_K(e)=e$. Further, $\inv_K(x)$ is either $x$ or $x+e$, and $\inv_K(a)$ is $a, a+e, a+x$ or $a+x+e$. This gives eight possibilities. However, six are eliminated because they do not square to the Sarkar involution $ x \to x, \ a \to a+e.$

The remaining two maps
$$ x \mapsto x+ e, \ a \mapsto a+x$$
$$ x \mapsto x+e, \ a \mapsto a+x+e$$
are conjugate to each other (via turning $x$ into $x+e$), so up to change of basis there is only one map on $\CFKinfty(4_1)$ that could be $\inv_K$. Without loss of generality, let $\inv_K$ be the first map above, so that $1+\inv_K$ is
$$ e \mapsto 0, \ \ x \mapsto e,\ \ a \mapsto x, \ \ b,c \mapsto b+c.$$

We can use this to compute the knot invariants $\Vl_0$ and $\Vu_0$. We see that 
$$H_*(A_0^+) = \Z_2\langle [U^nx] : n\leq 0\rangle \oplus \Z_2 \langle [b] \rangle$$
and $H_*(B_0^+) = \Z_2\langle [U^nx] : n\leq 0\rangle,$ so $V_0=0$. Now consider the complex $\AI_0^+$. This chain complex breaks up into a direct sum of subcomplexes, the first two of which are illustrated in Figure~\ref{fig:eight}. The subcomplex containing the lowest graded elements, depicted on the left, is generated by the seven elements $a, b, c, Qx, Qa, Qb$, and $Qc$. The next subcomplex, depicted on the right, is generated by ten elements,  with differentials as shown in the figure; all further subcomplexes are identical to this subcomplex multiplied by $U^{-n}$. The homology of the left subcomplex is  $\Z_2\langle[Qx], [Qa+c], [Qc]\rangle$ and the homology of the right subcomplex is $\Z_2\langle  [U^{-1}Qx], [x+U^{-1}Qc] \rangle$. Observe that $U[x+ U^{-1}Qc] = [Qc]$. Therefore, 
$$H_*(\AI_0^+) \simeq \Tower_{-1} \oplus \Tower_{0} \oplus (\Z_2)_{(-1)},$$ 
where the towers can be taken to have lowest-degree elements $[Qc]$ and $[Qx]$. We conclude that $\Vl_0 = 1$ and $\Vu_0 = 0$.

\begin {figure}
\begin {center}
\includegraphics[scale=.8]{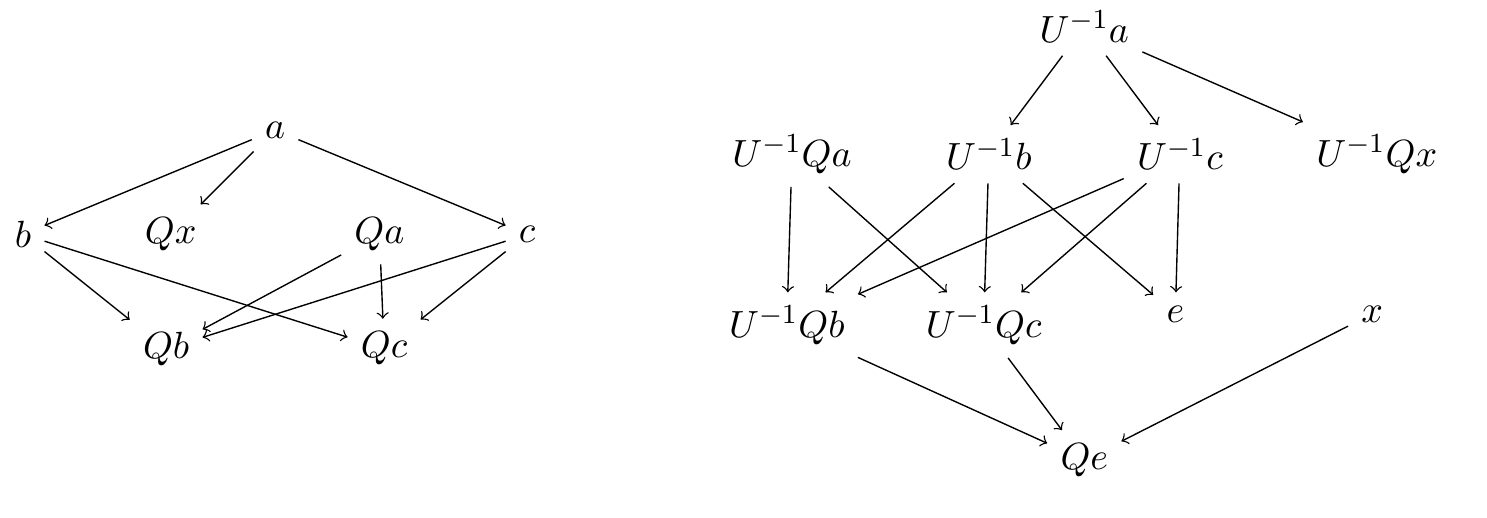}
\caption {Parts of the $\AI_0^+$ complex for the figure-eight.}
\label {fig:eight}
\end {center}
\end {figure}

\subsection{The general case}
Now let $K$ be an arbitrary thin knot. We seek to determine the conjugation map $\inv_K$ on $\CFKinfty(K)$. Let $C$ be the model complex for $\CFKinfty(K)$, described in Section~\ref{sec:introthin}.

For a staircase complex, we define the \textit{standard staircase map} to be the involution that exchanges $x^1_s$ and $x^2_{s}$ and fixes $x_0$. Note that this is a genuine involution. 

Given two squares $\{a,b,c,Ue\}$ and $\{a',b',c',Ue'\}$ such that the grading of $a$ is $(i,j)$ and the grading of $a'$ is $(j,i)$, we define \textit{standard square map on the pair} to be the map
\begin{align*}
a \mapsto a', \ \  b \mapsto c', \ \ c \mapsto b', \ \ e \mapsto e', \\
a' \mapsto a+ e, \ \ c' \mapsto b, \ \ b' \mapsto c, \ \ e' \mapsto e.
\end{align*}

Note that this squares to the Sarkar involution \eqref{eq:SarkarSquare} on the squares, which takes $a\mapsto a+e$, $a' \mapsto a' +e'$, and is the identity on $b,c,e,b',c',e'$.

\begin{proposition} \label{propn:thin} Let $K$ be a thin knot. Up to a (grading-preserving) change in basis, there is a unique automorphism $\inv_K$ on $\CFKinfty(K)$ that exchanges the $i$ and $j$ gradings and squares to the the Sarkar involution.
\end{proposition}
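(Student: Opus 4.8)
The plan is to fix the model complex $C = C_{\mathrm{st}} \oplus \bigoplus_{k} C_k$, a single staircase $C_{\mathrm{st}}$ plus some squares $C_k$, and to analyze how $\inv_K$ must act on it using three constraints: (1) $\inv_K$ is a chain map; (2) $\inv_K$ preserves the Maslov grading and is skew-filtered, so that a generator supported at $(i,j)$ is sent to a sum of generators all supported at $(j,i)$ (this is the sharpening of skew-filteredness for thin knots explained just above the statement); and (3) $\inv_K^2 = \sarkar$, the Sarkar involution, which on $C$ is the identity on the staircase and is given by \eqref{eq:SarkarSquare} on each square. The first step is to sort the squares by the symmetry $(i,j)\mapsto(j,i)$: a square $C_k$ with initial corner $a$ in grading $(i,j)$ is either ``self-conjugate'' ($i=j$) or is paired with another square $C_{k'}$ whose initial corner sits at $(j,i)$. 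Since the set of squares is determined by $r(t)$, which is symmetric, this pairing is well-defined, and the key structural observation—which I expect to need a short argument—is that \emph{there are no self-conjugate squares}: the initial corner would have to satisfy $i=j$, but then $a$ and $e$ would lie on the diagonal $j-i = -\tau$ while $b,c$ lie on $j-i = -\tau+1$, and examining where $\inv_K(a)$ can land (it must be a Maslov-graded, skew-filtered element supported at $(i,i)$ whose square recovers $\sarkar$) forces a contradiction with $\inv_K^2(a) = a+e$; alternatively this can be extracted directly from the shape of $\Delta_K(t)$.

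Next I would pin down $\inv_K$ on each piece. On the staircase: constraint (2) forces $\inv_K(x_0) = x_0$ or $x_0 + (\text{something supported at }(0,0))$, and $\inv_K(x_s^1)$ to be supported at the grading of $x_s^2$; running the chain-map condition through the staircase differentials (as in \eqref{eq:invL} in Section~\ref{sec:Lspace}) shows $\inv_K$ agrees with the standard staircase map up to a change of basis, using here that $\inv_K^2 = \sarkar = \id$ on the staircase to rule out the ``twisted'' possibilities, exactly as in the figure-eight warm-up of Section~\ref{sec:figure-eight}. On a conjugate pair of squares $\{a,b,c,Ue\}$, $\{a',b',c',Ue'\}$: constraint (2) forces $\inv_K(e)$ and $\inv_K(e')$ to land at the gradings of $e'$ and $e$ respectively; since $e, e'$ are cycles and $\inv_K$ is a quasi-isomorphism on associated graded, after a change of basis absorbing lower-order staircase/square terms we may take $\inv_K(e)=e'$, $\inv_K(e')=e$. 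Then the chain-map condition applied to $\del a = b$, $\del a' = c'$ (the horizontal parts) and to the vertical parts propagates this: $\inv_K(b), \inv_K(c)$ are forced to be $c', b'$ (plus possibly lower terms killed by a further change of basis), and finally $\inv_K(a)$ must be $a'$ plus an element of the correct grading; imposing $\inv_K^2(a) = a + e$ selects $\inv_K(a') = a + e$, giving precisely the standard square map on the pair.

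The bookkeeping device that makes "up to change of basis" rigorous is to induct on the diagonal grading $i+j$ (equivalently the Maslov grading), peeling off the possible correction terms from top to bottom; at each stage the ambiguity in $\inv_K$ is by a grading-preserving map into strictly lower pieces, which one removes by conjugating $\inv_K$ by a grading-preserving automorphism of $C$ that is the identity plus that correction. One must check that these successive conjugations compose into a single well-defined change of basis and that the normalization conditions imposed at earlier stages are preserved—this is the routine but slightly fiddly part. The main obstacle, and the step I would spend the most care on, is proving the nonexistence (or harmlessly reducible nature) of self-conjugate squares and, relatedly, ensuring that the cross-terms $\inv_K$ might acquire between \emph{different} conjugate pairs of squares (or between a square and the staircase) can genuinely be gauged away without disturbing the $\inv_K^2 = \sarkar$ normalization; here one uses that $\sarkar$ itself has no such cross-terms, together with the rigidity coming from the strong skew-filteredness of thin complexes, so that any off-diagonal contribution to $\inv_K$ is killed by a change of basis that changes $\inv_K^2$ only by an exact (hence absorbable) amount. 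Once all pieces are normalized, $\inv_K$ is the direct sum of the standard staircase map and standard square maps on the conjugate pairs, which is the asserted uniqueness up to change of basis.
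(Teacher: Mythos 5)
Your proposal goes wrong at the step you yourself flag as the one needing the most care: the claim that there are no self-conjugate squares. This claim is false, and the paper's own figure-eight computation (Section~\ref{sec:figure-eight}) is an explicit counterexample: $4_1$ has $m=0$ and $r(t)=r_0=1$, so its model complex is a single point $x_0$ plus a single square whose initial corner lies on the diagonal $i=j$. More generally $r_0$ counts exactly the squares whose initial corner sits on that diagonal, and nothing in the symmetry of $r(t)$ or in the gradings rules them out. What \emph{is} true is the weaker statement implicit in your parenthetical: a diagonal square cannot be preserved by $\inv_K$ \emph{in isolation}, since then $\inv_K(a)=a+\gamma e$, $\inv_K(e)=e$ would force $\inv_K^2(a)=a\neq a+e=\sarkar(a)$. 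The correct resolution is not that such squares are absent, but that the surviving one must couple to the staircase: up to basis change $\inv_K(a)=a+x_0$ and $\inv_K(x_0)=x_0+e$, which does square to $\sarkar$. Because you assumed the squares all pair off, your final answer --- a direct sum of the standard staircase map and standard square maps on conjugate pairs --- is wrong whenever $r_0$ is odd, and that is precisely the case that produces all the new information in Sections~\ref{sec:figure-eight} and the tables of Theorem~\ref{thm:Alt} (e.g.\ $\Vl_0(4_1)=1$); with your version of $\inv_K$ those invariants would never differ from the $L$-space-knot answers.

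The paper's argument instead runs as follows: pair off and split away squares two at a time under the standard square map --- including pairs of squares both of whose initial corners lie on the diagonal, provided $\inv_K(a)$ contains a second initial corner $a'\neq a$ --- until one is left with initial corners $a_1,\dots,a_t$ on the diagonal with $\inv_K(a_r)$ containing no initial corner other than $a_r$. Writing $\inv_K(a_r)=a_r+\beta_r x_0+\sum_s\gamma_r^s e_s$ and $\inv_K(x_0)=x_0+\sum_s\delta_s e_s$ and imposing $\inv_K^2(a_r)=a_r+e_r$ forces $\beta_r=\delta_r=1$ and $\delta_s=0$ for $s\neq r$, which is contradictory once $t\geq 2$; hence $t\leq 1$, with $t$ determined by the parity of $r_0$. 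The residual case $t=1$ is then pinned down by the figure-eight argument plus the chain-map and grading constraints, yielding the four coupled staircase-plus-square maps of Figures~\ref{fig:thincomplex1}--\ref{fig:thincomplex3}. Your handling of the off-diagonal squares and of the cross-term gauge-fixing is in the same spirit as the paper's, but the proof cannot be completed along your lines without replacing the nonexistence claim by this $t\leq 1$ analysis and the ensuing coupled case.
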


\begin{proof}  
Notice that because $\inv_K$ is a chain map that interchanges the gradings on $\CFKinfty(K)$, if $x$ is a basis element in a single grading, we must have $\dhorz  (\inv_K (x)) = \inv_K (\dvert(x))$. This implies that $\dhorz\inv_K = \inv_K  \dvert$ on $\CFKinfty$, and similarly $\dvert  \inv_K = \inv_K  \dhorz$. In particular, $\dvert  \dhorz  \inv_K = \dvert  \inv_K  \dvert = \inv_K  \dhorz  \dvert = \inv_K  \dvert  \dhorz$, so $\inv_K$ and $\dvert  \dhorz=\dhorz \dvert$ commute. 

The idea of this proof is to split off pairs of square complexes related by the standard square map from the model complex $C$, until what remains is a staircase or (more interestingly) a staircase and a single square.

Consider the set of elements $\{a_s\}$ which lie at the initial corners of squares in the model complex $C$, so that $\dhorz\dvert(a_s) = \dvert\dhorz(a_s) \neq 0$. Recall that $a_s$ are the only basis elements that are not mapped to zero by $\dhorz \dvert$. 

First, assume there is some $a_s$ in planar grading $(i,j)$ such that $i\neq j$. Then $\inv_K(a)$ lies in planar grading $(j,i)$ and has $\dhorz \dvert(\inv_K(a)) \neq 0$. Therefore $\inv_K(a)$, written as a sum of basis elements each appearing once, contains an initial corner $a' \neq a$. We will use this fact to do a change of basis so that the square $\{a,b,c,Ue\}$ is exchanged with another square complex by the standard square map on pairs.

Let $\dhorz(a)=b$, $\dvert(a)=c$, and $\dhorz\dvert(a)=Ue$. Then consider $\inv_K(a)$. Since $\inv_K$ is a chain map that interchanges the vertical and horizontal gradings, we observe that $\dhorz(\inv_K(a))=\inv_K(c)$, $\dvert(\inv_K(a))=\inv_K(b)$, and $\dhorz\dvert(\inv_K(a))=\inv_K(Ue) \neq 0$. Change basis by adding $\inv_K(a)+a'$ to $a'$, $\inv_K(b)+ \dvert(a')$ to $\dvert(a')$, $\inv_K(c)+ \dhorz(a')$ to $\dhorz(a')$, and $\inv_K(Ue)+\dvert\dhorz(a')$ to $\dvert\dhorz(a')$. This change of basis has the effect of changing the model complex $C$ to $C'$ in which the squares $\{a,b,c,Ue\}$ and $\{\inv_K(a), \inv_K(c), \inv_K(b), \inv_K(Ue)\}$ are interchanged by the standard square map.

Our goal now is to split $C\otimes \mathbb \Z_2[U^{-1}, U]$ as $(C^1 \otimes \Z_2[U^{-1},U]) \oplus (C^2 \otimes \Z_2[U^{-1},U]))$ such that each $C^i \otimes \Z_2[U^{-1},U]$ is preserved by $\inv_K$, and $C^2$ is still a direct sum of square complexes and a single staircase. To do this, we must change basis again. Let $C^1$ be the direct sum of the squares $\{a,b,c,Ue\}$ and $\{\inv_K(a), \inv_K(c), \inv_K(b), \inv_K(Ue)\}$. Suppose there is some $x \in C'$ a basis element not in $C^1$ such that $\inv_K(x)$, as a sum of elements in $C'$ in which each element appears at most once, contains $\inv_K(a)$. Then we change basis by adding $a$ to $x$, so that $\inv_K(x+a)$ no longer contains $\inv_K(a)$. Of course, we should now add $b$ to $\dhorz(x)$, $c$ to $\dvert(x)$, and $e$ to $\dhorz\dvert(x)$. Repeating this process for other elements as necessary, we can arrange to have a complex $C''$ in which there is no $x \neq a$ such that $\inv_K(a)$ appears nontrivially in $\inv_K(x)$. Consequently, there is also no $y \neq \inv_K(a)$ such that $a$ appears nontrivially in $\inv_K(y)$, because then $\inv_K(a)$ would appear in $\inv_K^2(y)$; since $\inv_K(a)$ is now the only basis element with the property that $\inv_K$ applied to it contains $a$, nothing can cancel with this term. Next, we do similar changes of bases to produce a complex $C'''$ for which there is no basis element $x \notin C^1$ such that $\inv_K(x)$ contains $b,c,d, \inv_K(b), \inv_K(c)$, or $\inv_K(d)$ nontrivially, achieving our goal.

We repeat the argument above, splitting off subcomplexes generated by pairs of squares with the standard square involution, until all square complexes remaining have initial corners on the main diagonal. We may repeat the same procedure with any square whose initial corner $a$ lies on the main diagonal which has the property that $\inv_K(a)$, expressed as a sum of basis elements each appearing once, includes an initial corner $a'$ in $C$ other than $a$. 

This leaves us with a set of initial corners $\{a_1,\dots ,a_t\}$ such that $\inv_K(a_r)$ contains only $a_r$ and no other initial corner, for $r=1, \dots, t$. (Of course, since $\dhorz\dvert(\inv_K(a_r))=\inv_K(e_r) \neq 0$, $\inv_K(a_r)$ must contain at least one initial corner in $C$, since these are the only elements with $\dhorz\dvert\neq 0$.) Suppose that the number of remaining squares, $t$, is at least $2$. Without loss of generality (because of the tensor product with $\Z_2[U^{-1},U]$), each $a_r$ lies in planar grading $(0,0)$. Then $e_1, \dots, e_t$ must also lie in grading $(0,0)$. Since we have already split off any other square complexes in pairs, we see that $\{a_1,\dots, a_t, e_1,\dots e_t, x_0\}$ are all of the basis elements in the grading $(0,0)$. Furthermore, $\dhorz\dvert(\inv_K(a_r))=\dhorz\dvert(a_r)=Ue_r$, so each $e_r$ must be fixed by $\inv_K$.  Finally, because $\dhorz\dvert(x_0) = 0$, we see that $\inv_K(x_0)$, as a sum of basis elements, cannot contain any $a_r$. This implies that we know the following about the behavior of the involution in grading $(0,0)$.
$$\inv_K(e_r)=e_r, \ \ \ \inv_K(a_r) = a_r + \beta_r x_0 + \sum_{s=1}^t \gamma_r^s e_s, \ \ \  \inv_K(x_0) = x_0 + \sum_{s=1}^t \delta_s e_s.$$
Here each $\beta_r, \gamma_r^s$ and $\delta_s$ is either $0$ or $1$ as appropriate. This implies that 
\begin{align*}
\inv_K^2(a_r) &= \inv_K(a_r+\beta_r x_0 + \sum_{s=1}^t \gamma_r^s e_s)\\
&=a_r+\beta_r x_0+ \sum_{s=1}^t \gamma_r^s e_s + \beta_r x_0 + \beta_r \sum_{s=1}^t \delta_s e_s + \sum_{s=1}^t \gamma_r^s e_s \\
&= a_r + \beta_r \sum_{s=1}^t \delta_s e_s.
\end{align*}
However, by assumption $\inv_K^2$ is the Sarkar involution $\sarkar$, so $\inv_K^2(a_r)=a_r+e_r$. This implies that $\beta_r=1, \ \delta_r = 1$ and all other $\delta_s$ are 0. Since $r$ was arbitrary, this is a contradiction when $t\geq 2$. We conclude that $t=0$ or $1$.

Therefore, we can always split off pairs of squares related by the standard square map until we have either a staircase complex $C_m$ of step length one or a staircase complex $C_m$ of step length one and a single square complex whose initial corner $a$ lies on the main diagonal. (The first case occurs when the original complex had an even number of square summands, i.e. $r_0 \equiv 0 $ mod 2, and the second case occurs when the original complex had an odd number of squares, i.e. $r_0 \equiv 1$ mod 2.) In the first case, $\inv_K$ must be the standard staircase map. For the second case, without loss of generality $x_0$ and $a$ lie in the grading $(0,0)$. An argument similar to the one given for the figure-eight knot in Section \ref{sec:figure-eight} shows that up to change of basis, the map $\inv_K$ in the grading $(0,0)$ must be
\begin{align*}
a\mapsto a+x_0, \ \ x_0 \mapsto x_0+e, \ \ e \mapsto e.
\end{align*}
Once this is established, the fact that $\inv_K$ is a chain map and interchanges the gradings completely determines the automorphism. There are four different cases, illustrated in Figures \ref{fig:thincomplex1}, \ref{fig:thincomplex2}, \ref{fig:thincomplex4}, and \ref{fig:thincomplex3}. 

In the first and fourth cases (shown in Figures~\ref{fig:thincomplex1} and \ref{fig:thincomplex3}, respectively) we must have
\begin{align*}
x_s^1&\mapsto x_s^2, &
b&\mapsto c+x_1^2,\\ 
x_s^2&\mapsto x_s^1,& 
c&\mapsto b+x_1^1. 
\end{align*}

In the second and third cases (shown in Figures~\ref{fig:thincomplex2} and \ref{fig:thincomplex4}) we get
\begin{align*}
x_1^1&\mapsto x_1^2+U^{-1}c,  &
x_s^1&\mapsto x_s^2  \ \text{  for } s>1, &
b&\mapsto c, \\
x_1^2&\mapsto x_1^1+U^{-1}b, &
x_s^2&\mapsto x_s^1\ \text{ for } s>1, &
c&\mapsto b.
\end{align*}
This completes the description of the automorphism $\inv_K$. \end{proof}

In view of Theorem~\ref{thm:LargeSurg} and Proposition~\ref{propn:thin}, we have all the information needed to calculate $\HFIp$ of large surgeries on thin knots. In particular, by Theorem~\ref{thm:dLarge}, the involutive correction terms are determined by the values of $\Vl_0$ and $\Vu_0$ for these knots. These values can be computed explicitly: 

\begin{proposition} \label{propn:lowervthin} Let $K$ be a thin knot. The invariants $\Vl_0$ and $\Vu_0$ associated to $K$ depend on the sign and parity of $\tau(K)$ and on the parity of $r_0$ as follows.
\begin{enumerate}
\item If $r_0\equiv 0$ mod 2, we have the following cases:
\begin{enumerate}
\item If $\tau(K)\geq 0$, then $\Vl_0=\Vu_0=V_0 = n(K)$.
\item If $\tau(K)<0$, then $\Vl_0=V_0 = 0$ and $\Vu_0=-n(K)$.
\end{enumerate}

\item If $r_0 \equiv 1$ mod 2, then we have the following cases:

\begin{enumerate}
\item If $\tau(K)=0$, then $\Vl_0 = 1$ and $V_0=\Vu_0 = 0$.
\item If $\tau(K)>0$ and $\tau(K) \equiv 1$ mod 2, then $\Vl_0=V_0=n(K)$ and $\Vu_0 = n(K)-1$.
\item If $\tau(K)>0$ and $\tau(K) \equiv 0$ mod 2, then $\Vl_0 = n(K)+1$ and $V_0=\Vu_0 = n(K)$.
\item If $\tau(K)<0$ and $\tau(K) \equiv 1$ mod 2, then $\Vl_0=V_0=0$ and $\Vu_0=-n(K)+1$.
\item If $\tau(K)<0$ and $\tau(K) \equiv 0$ mod 2, then $\Vl_0 =V_0= 0$ and $\Vu_0=-n(K)$.
\end{enumerate}
\end{enumerate}
\end{proposition}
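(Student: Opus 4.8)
The plan is to reduce everything to a direct computation with the model complex $C$ and the conjugation map $\inv_K$ whose form was pinned down in Proposition~\ref{propn:thin}. Recall from Section~\ref{sec:dSurgery} that $\Vl_0$ and $\Vu_0$ can be read off from the homology of $\AI_0^+ = (A_0^+[-1] \oplus Q \ccdot A_0^+[-1], \del + Q(1+\inv_0))$: if $x$ generates the bottom of the $U$-tower in $H_*(A_0^+)$ and $\xl$, $Q\cdot\xu$ generate the bottoms of the first and second towers in $H_*(\AI_0^+)$, then $\Vl_0 = V_0 + \tfrac12(\gr(x)-\gr(\xl))$ and $\Vu_0 = V_0 + \tfrac12(\gr(x)-\gr(\xu))$. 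So the task is: (1) recall the computation of $V_0$ for thin knots in terms of $\tau(K)$ and the parity $r_0$; (2) compute the two towers of $H_*(\AI_0^+)$ and locate their bottom gradings. For step (1), I would note that $A_0^+$ splits as a direct sum over $U$-powers of subquotients of $C$, and that the square summands and the staircase contribute separately; the staircase contributes the tower, and one reads off $V_0 = n(K) = \lceil m/2\rceil$ when $\tau \geq 0$ (positive staircase) and $V_0 = 0$ when $\tau < 0$ (negative staircase). This is standard (cf. \cite{RasmussenThesis,OSLens} and Section~\ref{sec:Lspace} for the L-space case) and the square complexes contribute only to $\HFred$.

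For step (2), the key structural input is that $\inv_K$ preserves the model complex $C$ \emph{except} on the at-most-one leftover square sitting on the main diagonal. More precisely, from the proof of Proposition~\ref{propn:thin}, $C \otimes \Z_2[U^{-1},U]$ splits (after change of basis) as a direct sum of: pairs of squares interchanged by the standard square map, a staircase $C_m$ on which $\inv_K$ is the standard staircase map, and — when $r_0$ is odd — one extra square whose initial corner is on the diagonal, glued to the staircase via the formula $a \mapsto a+x_0$, $x_0\mapsto x_0+e$. Each paired-squares block and each square-free staircase block contributes to $H_*(\AI_0^+)$ in the ``trivial'' way (two towers or acyclic pieces in the same relative gradings as for $A_0^+$, with the $Q$-tower shifted down by one relative to the ordinary tower, exactly as in Corollary~\ref{corollary:lspace} and Proposition~\ref{prop:mlspaceknots}), so they do not change $\Vl_0$ or $\Vu_0$ away from their ``no-squares'' value. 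Thus when $r_0$ is even, $\Vl_0 = V_0$ and $\Vu_0$ equals its value for the underlying L-space knot or its mirror: $\Vu_0 = n(K)$ if $\tau \geq 0$ and $\Vu_0 = -n(K)$ if $\tau < 0$, matching case (1).

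When $r_0$ is odd, the only nontrivial contribution comes from the subcomplex built out of the staircase $C_m$ together with the extra diagonal square, with $\inv_K$ given by one of the four explicit formulas in Figures~\ref{fig:thincomplex1}--\ref{fig:thincomplex3}. Here I would carry out the computation of $H_*(\AI_0^+)$ for this subcomplex directly, exactly as was done for the figure-eight in Section~\ref{sec:figure-eight} (the case $m=0$) and as a mild generalization of the mirror-L-space computation in Proposition~\ref{prop:mlspaceknots}. One writes $\AI_0^+$ for this block as a direct sum of finite subcomplexes indexed by $U$-power, computes the homology of each, identifies the two infinite towers, and compares their bottom gradings with that of the ordinary tower in $H_*(A_0^+)$. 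The answer depends on where the extra square attaches to the staircase, which is governed by the sign and parity of $\tau$: for $\tau = 0$ one recovers the figure-eight answer $\Vl_0 = 1$, $\Vu_0 = 0$; for $\tau > 0$ the square sits near the top of the positive staircase and shifts either $\xl$ up (when $\tau$ even) or $\xu$ down (when $\tau$ odd) by one $U$-power relative to the even case, giving $\Vl_0 = n(K)+1$, $\Vu_0 = n(K)$ or $\Vl_0 = n(K)$, $\Vu_0 = n(K)-1$ respectively; and symmetrically for $\tau < 0$ via Proposition~\ref{prop:reversal}, giving the two subcases (d), (e). The careful bookkeeping of which generator moves — i.e., tracking whether $[x_0]$, $[e]$, or a staircase class becomes the new bottom of a tower after passing to homology — is the main obstacle, since the four sign/parity cases have genuinely different small-scale behavior near the diagonal; but each is a finite, mechanical computation of the type already illustrated in Sections~\ref{subsec:lht} and \ref{sec:figure-eight}. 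An alternative, which I would use to double-check, is to invoke Proposition~\ref{prop:reversal} to halve the number of cases (the $\tau < 0$ cases follow from the $\tau > 0$ cases for the mirror), and to note the mod-$2\Z$ grading constraint \eqref{eq:mod2} together with Proposition~\ref{propn:basicproperties} ($\Vu_0 \leq V_0 \leq \Vl_0$) which already forces the answer to within $\pm 1$, leaving only the sign of the correction to be determined.
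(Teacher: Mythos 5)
Your proposal follows the paper's proof essentially verbatim: reduce via Proposition~\ref{propn:thin} to a staircase plus at most one diagonal square, observe that the paired squares and the bare staircase reproduce the L-space-knot (or mirror) answer, and then compute $H_*(\AI_0^+)$ explicitly in each odd-$r_0$ subcase by splitting into $\inv_K$-invariant summands and locating the bottoms of the two towers (the paper carries out cases (2b) and (2c) in full and leaves (2d), (2e) to the reader). One caveat on your proposed shortcut: the congruence \eqref{eq:mod2} is vacuous at the level of the $V$-invariants (it only says $2\Vl_0 \equiv 2V_0 \pmod{2}$), and $\Vu_0 \leq V_0 \leq \Vl_0$ does not bound the differences by one (indeed $\Vu_0 = -n(K)$ in case (1b)), so the explicit case computations cannot be replaced by that a priori argument.
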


\begin{remark}
Of course, the values of $V_0$ for thin knots were already known. See \cite[Corollary 1.5]{AltKnots} for the case of alternating knots; the arguments there readily extend to all thin knots. We included $V_0$ in the statement of Proposition~\ref{propn:lowervthin} for easy comparison with $\Vl_0$ and $\Vu_0$.
\end{remark}

\begin{proof}[Proof of Proposition~\ref{propn:lowervthin}] By Proposition \ref{propn:thin}, if $r_0\equiv 0$ mod 2 then it suffices to consider the case of a staircase complex. We obtain the same results as in case of $L$-space knots and mirrors of $L$-space knots. This proves Cases (1a) and (1b).

The more interesting calculation is when $r_0\equiv 1$ mod 2. Then, again by Proposition \ref{propn:thin}, it suffices to consider the case of a staircase complex together with a single square complex on the main diagonal. If $\tau(K)=0$ the staircase in question is a single point, and this is the complex for the figure-eight knot. By our computation in Section \ref{sec:figure-eight}, we obtain Case (2a).

\begin {figure}
\begin {center}
\includegraphics{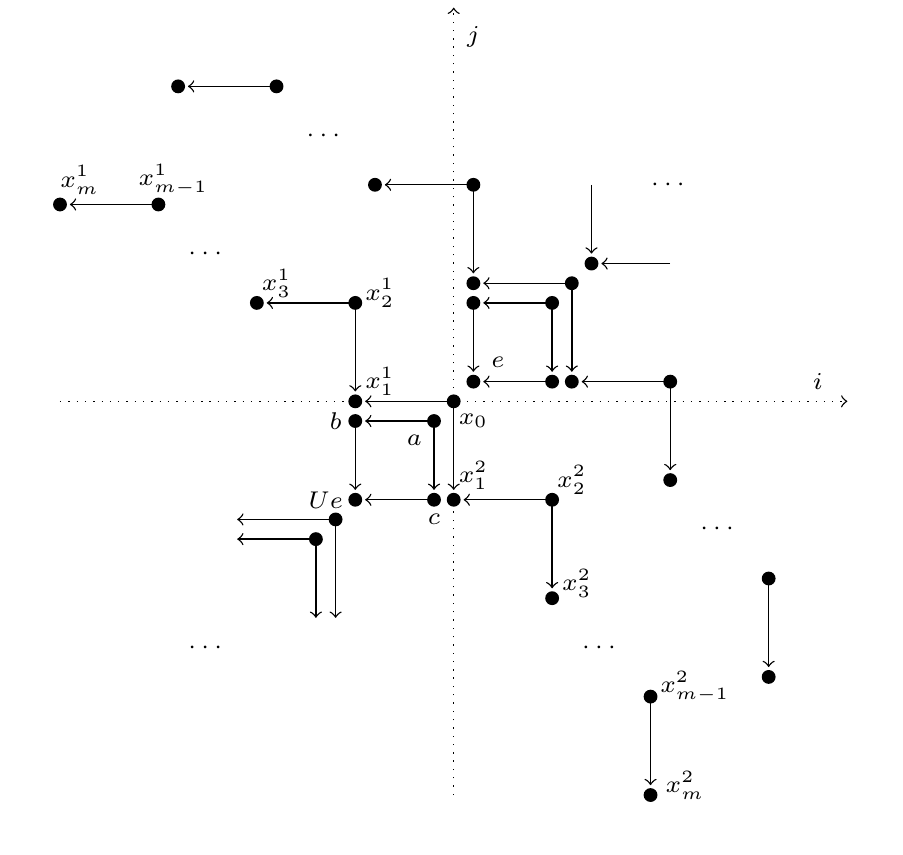}
\caption{A model complex for a thin knot with $\tau(K)>0$, and $\tau(K)$ and $r_0$ both odd.}
\label{fig:thincomplex1}
\end {center}
\end {figure}

The proofs of Cases (2b)-(2d) are computational, and are all variations on the same strategy. We will do Cases (2b) and (2c) in detail.

Suppose that $\tau(K)>0$ and $\tau(K)$ is odd. This is Case (2b). The relevant portion of the knot Floer complex is illustrated in Figure~\ref{fig:thincomplex1}. Observe that for this knot complex, $H_*(A_0^+) = \Z_2[U^{-1}]\langle [x_1^1]\rangle \oplus \Z_2\langle[b]\rangle$ and $V_0 = n(K)$. Therefore we will compare $\Vl_0$ and $\Vu_0$ to $n(K)$ by computing the difference in gradings between the generators of towers in $H_*(\AI_0^+)$ and the gradings of the elements  $x_{1}^1$ and $Qx_{1}^1$.
From the description of $\inv_K$ in the proof of Proposition~\ref{propn:thin} we see that the map $1+\inv_K$ is given by
\begin{align*}
x_0&\mapsto e, & 
b&\mapsto (b+c)+x_1^2, &
x_s^t&\mapsto x_s^1+x_s^2, \\
a&\mapsto x_0, &
c &\mapsto (b+c)+x_2^2, &
e&\mapsto 0.
\end{align*}
We start by breaking the full involutive complex 
$$\bigl( \CFKinfty(K)[-1] \oplus Q \ccdot \CFKinfty(K)[-1],\ \del + Q \ccdot (1+\inv_K) \bigr)$$
into the direct sum of three subcomplexes over $\Z_2[U^{-1}, U]$. The first of these is generated by a model $\Z_2$-complex $C_1$ with basis consisting of the elements $x_s^1+x_s^2$, $x_0$, $Qe$, and $U^{-1}Qb$. The nonzero differentials in this subcomplex are as follows:
\begin{align*}
&\delinv(U^{-1}Qb) = Qe, \\
&\delinv(x_0) = (x_1^1+x_1^2) + Qe, \\
&\delinv(x_s^1+x_s^2) = (x_{s+1}^1 + x_{s+1}^2) + (x_{s-1}^1+x_{s-1}^2) \text{ for } s \text{ even. }
\end{align*}
This subcomplex is acyclic.

 Our second subcomplex is generated by a model $\Z_2$-complex $C_2$ with generators $x_s^1$, $Q(x_s^1+x_s^2)$, $Qx_0$, $a$, and $b+c$. Here are the nonzero differentials:
\begin{align*}
&\delinv(a) = (b+c) + Qx_0, \\
&\delinv(x_s^1)=  Q(x_s^1+x_s^2)  \text{ for } s \text{ odd,} \\
&\delinv(b+c) = \delinv(Qx_0) = Q(x_1^1+x_1^2),\\
&\delinv(x_s^1)= x_{s+1}^1 + x_{s-1}^1 + Q(x_s^1+x_s^2) \text{ for } s \text{ even,} \\
&\delinv(Q(x_s^1+x_s^2))=Q(x_{s+1}^1+x_{s+1}^2) + Q(x_{s-1}^1+x_{s-1}^2) \text{ for } s \text{ even.} 
\end{align*} 

The homology of this subcomplex is $\Z_2[U,U^{-1}]\langle [x_1^1 + b + c]\rangle$.

Our third and last subcomplex is generated by a model $\Z_2$-complex $C_3$ generated by the elements  $b$, $Qa$, $Ue$, $Q(b+c)$, and $Qx_s^2$. It has nonzero differentials as follows:
\begin{align*}
&\delinv(Qa) = Q(b+c), \\
&\delinv(b) = Ue+ Q(b+c) + Qx_1^2, \\
&\delinv(Qx_s^2) = Qx_{s+1}^2 + Qx_{s-1}^2 \text{ for } s \text{ even.}
\end{align*}
The homology of this subcomplex is $\Z_2[U,U^{-1}]\langle [Qx_1^2]\rangle$. 

Now we can investigate the homology of the intersection of each of these three complexes with $\AI_0^+$, i.e., we will look at those terms in the region of the $(i, j)$-plane given by $\max (i, j) \geq 0$. The direct sum of these intersections will give $\AI_0^+$, and we are interested in the two infinite $U$-towers in its homology.

First, consider the intersection of the subcomplex generated by $C_1$ with $\AI_0^+$. For $k\leq 0$, the complex $U^k C_1$ is contained in $\AI_0^+$. Since $C_1$ is acyclic, it must be that neither of the two towers in $H_*(\AI_0^+)$ comes from $C_1 \otimes \Z_2[U^{-1}, U].$

Let us move to our second subcomplex $C_2$. All nonpositive $U$-powers of $C_2$ are contained in ${\AI_0^+}$, and have homology generated over $\Z_2$ by $U^{-k} [x_1^1 + b + c]$. However, the intersection of $U \ccdot C_2$ with ${\AI_0^+}$ is generated over $\Z_2$ by $Ux_s^1$ and $UQ(x_s^1+x_s^2)$ for $s\geq 2$, with nonzero differentials as follows:
\begin{align*}
&\delinv(Ux_{2}^1) = Ux_{3}^1 + UQ(x_{2}^1+x_{2}^2),\\
&\delinv(Ux_s^1)=  UQ(x_s^1+x_s^2)  \text{ for } s \text{ odd}, \\
&\delinv(UQ(x_{2}^1 + x_{2}^2))=UQ(x_{3}^1+x_{3}^2),\\
&\delinv(Ux_s^1)= Ux_{s+1}^1 + Ux_{s-1}^1 + UQ(x_s^1+x_s^2) \text{ for } s \text{ even, }s>2 ,\\
&\delinv(UQ(x_s^1+x_s^2))=UQ(x_{s+1}^1+x_{s+1}^2) + UQ(x_{s-1}^1+x_{s-1}^2) \text{ for } s \text{ even, }s>2.
\end{align*} 
This complex is acyclic. We conclude that the first infinite tower in $H_*(\AI_0^+)$ has $[x_1^1 + b + c]$ as the lowest degree element.

We finally turn our attention to the subcomplex generated by $C_3$. All negative $U$-powers of $C_3$ are contained in $\AI_0^+$ and have homology generated over $\Z_2$ by $[U^{-k} Qx_1^2]$. However, the intersection of $C_3$ itself with ${\AI_0^+}$ loses $Ue$. That is, $C_3 \cap {\AI_0^+}$ is generated by $b$, $Qa$, $Qx^2_2$, $Q(b+c)$, and $Qx_s^2$ with nonzero differentials as follows:
\begin{align*}
&\delinv(Qa) = Q(b+c), \\
&\delinv(b) = Q(b+c) + Qx_1^2, \\
&\delinv(Qx_s^2) = Qx_{s+1}^2 + Qx_{s-1}^2 \text{ for } s \text{ even.}
\end{align*}
This is acyclic. This means that the second infinite tower in $H_*(\AI_0^+)$ has $[U^{-1}Qx_1^2]$ as the lowest degree element.

Recall that the tower in $H_*(A_0^+)$ has $[x_1^1]$ as the lowest degree element, and $V_0=n(K)$.  Thus, the grading of $[x_1^1 + b + c]$ in $H_*(\AI_0^+)$ is one more than the grading of $[x_1^1]$ in $H_*(A_0^+)$. Further, $[x_1^1]$ and $[x_1^2]$ have the same grading, so the grading of $[U^{-1}Qx_1^2]$ in $H_*(\AI_0^+)$ is two more than the grading of $[x_1^1]$ in $H_*(A_0^+)$. We conclude that $\Vl(K) = n(K)$ and $\Vu(K) = n(K)-1$. 

\begin {figure}
\begin {center}
\includegraphics{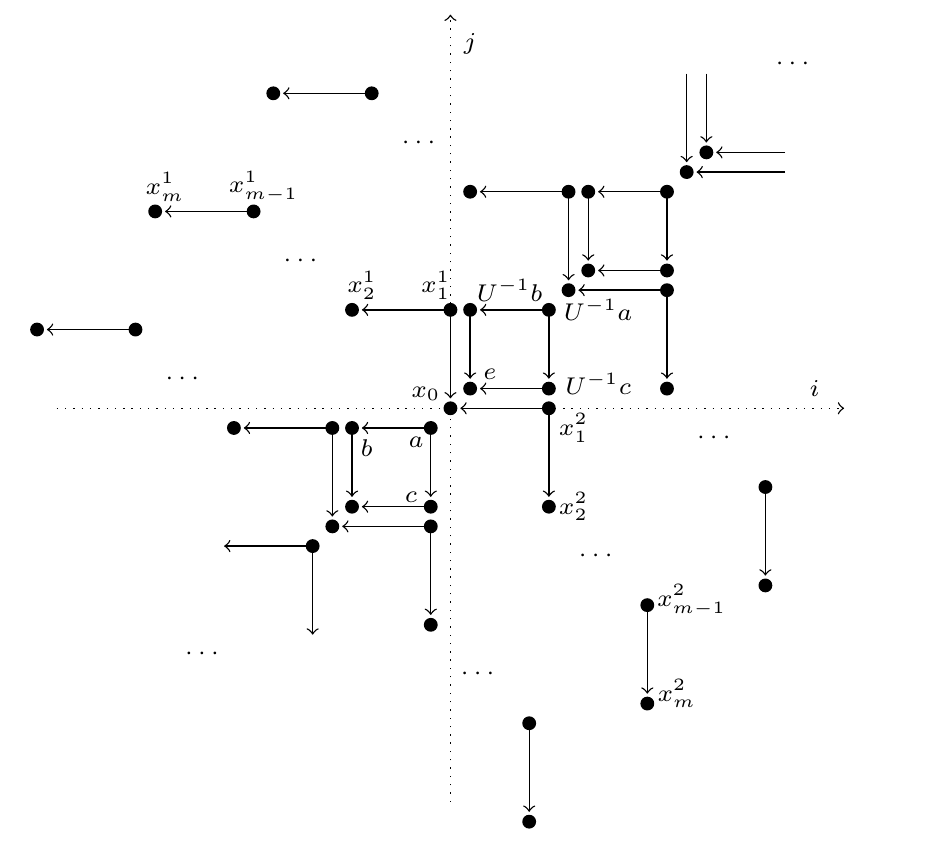}
\caption{A model complex for a thin knot with $\tau(K)>0$, $\tau(K)$ even, and $r_0$ odd.}
\label{fig:thincomplex2}
\end {center}
\end {figure}

Now consider Case (2c), with $\tau(K)>0$ and even, and $r_0$ odd. The complex $\CFKinfty(K)$ is illustrated in Figure~\ref{fig:thincomplex2}. Observe that for this knot complex, $H_*(A_0^+) = \Z_2[U^{-1}]\langle [x_0]\rangle \oplus \Z_2\langle[b]\rangle$ and $V_0 = n(K)$. Therefore we will compare $\Vl_0$ and $\Vu_0$ to $n(K)$ by computing the difference in gradings between the generators of towers in $H_*(\AI_0^+)$ and the gradings of the elements  $x_{0}$ and $Qx_{0}$.
The map $1+\inv_K$ on this complex is given by
\begin{align*}
a&\mapsto x_0, &
x_1^1&\mapsto x_1^1+x_1^2+U^{-1}c, &
x_s^1&\mapsto x_s^1+x_s^2\ \text{ for } s>1,  \\
x_0&\mapsto e, & 
x_1^2&\mapsto x_1^1+x_1^2+U^{-1}b & 
x_s^2&\mapsto x_s^1+x_s^2\ \text{ for } s>1,\\
e&\mapsto 0, &
b&\mapsto b+c, &
c&\mapsto b+c.
\end{align*}

We start, as in the previous case, by breaking the full involutive complex
$$\bigl( \CFKinfty(K)[-1] \oplus Q \ccdot \CFKinfty(K)[-1],\ \del + Q \ccdot (1+\inv_K) \bigr)$$
into the direct sum of four subcomplexes over $\Z_2[U^{-1}, U]$. The first of these is generated by a model $\Z_2$-complex $C_1$ with basis consisting of the elements $x_s^1+x_s^2$, $U^{-1}Q(b+c)$, and $U^{-1}Qa$. The nonzero differentials in this subcomplex are as follows:
\begin{align*}
&\delinv(U^{-1}Qa) = U^{-1}Q(b+c), \\
&\delinv(x_1^1+x^2_1) = U^{-1}Q(b+c) + (x_2^1+x_2^2), \\
&\delinv(x_s^1+x_s^2)= (x_{s+1}^1 + x_{s+1}^2) + (x_{s-1}^1+x_{s-1}^2) \ \text{ for } s \text{ odd, } s>1.
\end{align*}
This subcomplex is acyclic.
 
Our second subcomplex is generated by a model $\mathbb Z_2$-complex $C_2$ with basis consisting of the elements $Qa+c$ and $e$. The only nonzero differential is the following:
\begin{align*}
\delinv(Qa+c)=e.
\end{align*}
This subcomplex is also acyclic. 

Our third subcomplex is generated by a model $\mathbb Z_2$-complex $C_3$ with basis consisting of the elements $x_s^1, Q(x_s^1+x_s^2), x_0, U^{-1}Qc$ and $Qe$. The nonzero differentials are as follows:
\begin{align*}
&\delinv(x_0) = \delinv(U^{-1}Qc) = Qe, \\
&\delinv(Q(x_1^1 + x_1^2)) = Q(x_2^1+x_2^2), \\
&\delinv(x_s^1) = Q(x_s^1+x_s^2)  \text{ for } s  \text{ even, } \\
&\delinv(x_1^1) = x_2^1 + Q(x_1^1+x_1^2) + x_0 + U^{-1}Qc, \\
&\delinv(Q(x_{s}^1+ x_s^2)) = Q(x_{s+1}^1 + x_{s+1}^2) + Q(x_{s-1}^1 + x_{s-1}^2)  \text{ for } s  \text{ odd, } s>1,\\
&\delinv(x_s^1) = x_{s+1}^1+x_{s-1}^1 + Q(x_s^1+x_s^2)  \text{ for } s  \text{ odd, } s>1.
\end{align*}

The homology of this complex is $\Z_2[U^{-1},U]\langle [x_0 + U^{-1}Qc] \rangle$.

Our final subcomplex is generated by a model complex $C_4$ with generators $Qx_s^1, Qx_0, a$, and $b+c$. Here are the nonzero differentials:
\begin{align*}
&\delinv(a) = (b+c) + Qx_0 \\
&\delinv(Qx_1^1) = Qx_2^1+Qx_0 \\
&\delinv(Qx_s^1) = Qx_{s+1}^1+Qx_{s-1}^1 \text{ for } s \text{ odd, } s>1.
\end{align*}
The homology of this complex is $\Z_2[U^{-1}, U]\langle [Qx_0] \rangle$.

We now turn our attention to the homology of the intersection of each of these four complexes with $\AI_0^{+}$, to wit, the region of the plane with $\max(i,j) \geq 0$. The direct sum of these intersections gives $\AI_0^{+}$. As before, we are interested in finding the two infinite $U$-towers in its homology. 

First, consider the intersection of $C_1$ with $\AI_0^{+}$. For $k\geq -1$, $U^kC_1$ is contained in $\AI_0^{+}$. Since $C_1$ is acyclic, neither of the two towers in $H_*(\AI_0^{+})$ comes from $C_1 \otimes \Z_2[U^{-1},U]$. By similar logic, neither of the two towers comes from $C_2 \otimes \mathbb Z_2[U^{-1},U]$.

Now let us consider our third subcomplex, generated by $C_3$. All nonpositive $U$-powers of $C_2$ are contained in $\AI_0^{+}$, and have homology generated by $U^{-k}[x_0 + U^{-1}Qc]$. However, the intersection of $U \cdot C_2$ with $\AI_0^{+}$ is generated over $\mathbb Z_2$ by $Ux_s^1, UQ(x_s^1+x_s^2)$, and $Qc$, with nonzero differentials as follows:
\begin{align*}
&\delinv(UQ(x_1^1 + x_1^2)) = UQ(x_2^1+x_2^2), \\
&\delinv(Ux_s^1) = UQ(x_s^1+x_s^2)  \text{ for } s  \text{ even, } \\
&\delinv(Ux_1^1) = Ux_2^1 + UQ(x_1^1+x_1^2) + Qc, \\
&\delinv(UQ(x_{s}^1+ x_s^2)) = UQ(x_{s+1}^1 + Ux_{s+1}^2) +  UQ(x_{s-1}^1 + Ux_{s-1}^2)\text{ for } s  \text{ odd, } s>1,\\
&\delinv(Ux_s^1) = Ux_{s+1}^1+Ux_{s-1}^1 + UQ(x_s^1+x_s^2)  \text{ for } s  \text{ odd, } s>1.
\end{align*}
The homology of this complex is generated over $\mathbb Z_2$ by $[Qc]$. Since $U[x_0 + QU^{-1}c] = [Qc]$, we conclude this element is part of the first infinite tower. Finally, the intersection of $U^2 \cdot C_2$ with $\AI_0^{+}$ is generated over $\mathbb Z_2$ by $U^2x_s^1, QU^2(x_s^1+x_s^2)$ with $s>2$, and has nonzero differentials as follows:
\begin{align*}
&\delinv(U^2Q(x_3^1 + x_3^2)) = U^2Q(x_4^1+x_4^2), \\
&\delinv(U^2x_s^1) = U^2Q(x_s^1+x_s^2)  \text{ for } s  \text{ even, } \\
&\delinv(U^2x_3^1) = U^2x_4^1 + U^2Q(x_3^1+x_3^2) \\
&\delinv(U^2Q(x_{s}^1+ x_s^2)) = U^2Q(x_{s+1}^1 + x_{s+1}^2) +  U^2Q(x_{s-1}^1 + x_{s-1}^2) \text{ for } s  \text{ odd, } s>3,\\
&\delinv(U^2x_s^1) = U^2x_{s+1}^1+U^2x_{s-1}^1 + U^2Q(x_s^1+x_s^2)  \text{ for } s  \text{ odd, } s>3.
\end{align*}
This complex is acyclic. We conclude that the bottom element of the first infinite tower in $H_*(\AI_0^+)$ is $[Qc]$.

Finally, we turn our attention to the fourth subcomplex, generated by $C_4$. Every nonpositive $U$-power of $C_4$ is contained in $\AI_0^{+}$ and has homology generated over $\Z_2$ by $U^{-k}[Qx_0]$. However, $U \cdot C_4$ is generated over $\Z_2$ by the elements $UQx_s^1$, with nonzero differentials as follows: 
\begin{align*}
&\delinv(Qx_1^1) = Qx_2^1 \\
&\delinv(Qx_s^1) = Qx_{s+1}^1+Qx_{s-1}^1 \text{ for } s \text{ odd, } s>1.
\end{align*}
This complex is acyclic. We conclude that the bottom element in the second infinite tower in $H_*(\AI_0^{+})$ is $[Qx_0]$. 

Now, recall that the tower in $H_*(A_0)$ has $[x_0]$ as its lowest degree element, and $V_0 =n(K)$. The grading of $[Qc]$ in $H_*(\AI_0^{+})$ is one less than the grading of $[x_0]$ in $H_*(A_0^{+})$, and the grading of $[Qx_0]$ in $H_*(\AI_0^{+})$ is equal to the grading of $[x_0]$ in $H_*(A_0^{+})$.
We conclude that $\Vl(K) = n(K)+1$ and $\Vu(K)=n(K)$.
\medskip

\begin {figure}
\begin {center}
\includegraphics{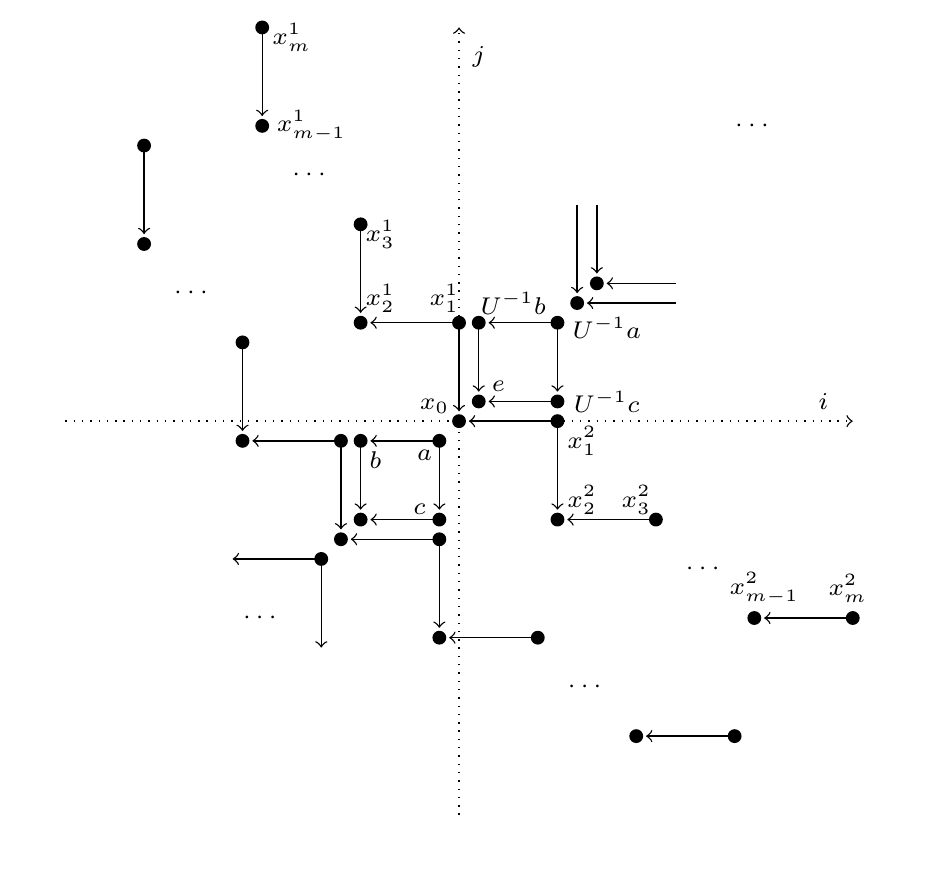}
\caption{A model complex for a thin knot with $\tau(K)<0$, $\tau(K)$ odd, and $r_0$ odd.}
\label{fig:thincomplex4}
\end {center}
\end {figure}

\begin {figure}
\begin {center}
\includegraphics{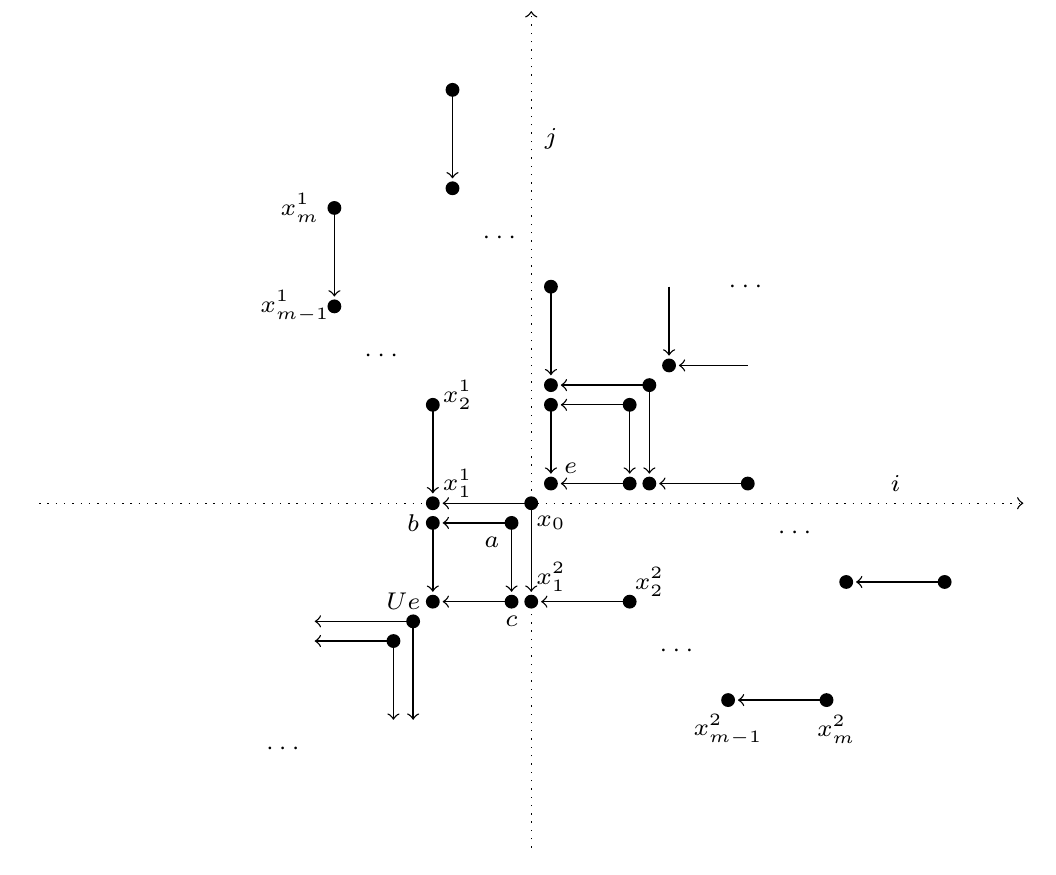}
\caption{A model complex for a thin knot with $\tau(K)<0$, $\tau(K)$ even, and $r_0$ odd.}
\label{fig:thincomplex3}
\end {center}
\end {figure}

Cases (2d) and (2e) are pictured in Figures~\ref{fig:thincomplex4} and ~\ref{fig:thincomplex3}, respectively. We leave the computations in these cases to the interested reader.\end{proof}

\begin{proof}[Proof of Theorem~\ref{thm:Alt}] This is now a simple consequence of Proposition~\ref{propn:lowervthin}, taking into account the relation $\tau = -\sigma(K)/2$ for alternating knots, the identities \eqref{eq:mtau}, \eqref{eq:det}, \eqref{eq:rminus} and \eqref{eq:nk}, as well as the fact that a knot has Arf invariant $0$ if and only if $D \equiv \pm 1 \pmod 8.$ 
\end{proof}

\begin{proof}[Proof of Corollary~\ref{cor:altHC}] We use the following $\Z_2$-homology cobordism invariants of $\Z_2$-homology spheres $Y$: $d$, $\dl$, $\du$, and the (generalized) Rokhlin invariant 
$$\mu(Y) \in \frac{1}{8} \Z \pmod{2\Z}$$ 
from \cite{EellsKuiper}. If $Y=S^3_p(K)$ for $p > 0$ odd, Proposition 3 in \cite{BohrLee} says that
$$ \mu(S^3_p(K)) = \frac{1}{8}(1-p) + \Arf(K) \pmod{2\Z}.$$

In our case, if $S^3_p(K)$ and $S^3_p(K')$ are $\Z_2$-homology cobordant, we see that $K$ and $K'$ have the same Arf invariant. Further, in view of Equation~\eqref{eq:dspk} and Theorem~\ref{thm:dLarge}, the values of $d$, $\dl$ and $\du$ for $p$-surgery on a knot are determined by the values of $(\Vl_0, V_0, \Vu_0)$ for that knot. It follows that $K$ and $K'$ have the same triple of values $(\Vl_0, V_0, \Vu_0)$. By inspecting the tables in Theorem~\ref{thm:Alt}, we conclude that $K$ and $K'$ have the same signature.
\end{proof}

\begin{remark}
The results in Theorem~\ref{thm:Alt} and Corollary~\ref{cor:altHC} apply equally well to quasi-alternating knots.
\end{remark}

\bibliographystyle{custom} 
\bibliography{biblio} 

\begin{thebibliography}{10}
\providecommand{\url}[1]{\texttt{#1}}
\providecommand{\urlprefix}{URL }
\expandafter\ifx\csname urlstyle\endcsname\relax
  \providecommand{\doi}[1]{doi:\discretionary{}{}{}#1}\else
  \providecommand{\doi}{doi:\discretionary{}{}{}\begingroup
  \urlstyle{rm}\Url}\fi
\providecommand{\eprint}[2][]{\url{#2}}

\bibitem{BohrLee}
C.~Bohr and R.~Lee, \emph{Homology cobordism and classical knot invariants},
  Comment. Math. Helv., \textbf{77}(2002), no.~2, 363--382.

\bibitem{CGH2}
V.~Colin, P.~Ghiggini, and K.~Honda, \emph{The equivalence of {H}eegaard
  {F}loer homology and embedded contact homology via open book decompositions
  {I}}, e-print, \eprint{arXiv:1208.1074}.

\bibitem{EellsKuiper}
J.~Eells, Jr. and N.~Kuiper, H., \emph{An invariant for certain smooth
  manifolds}, Ann. Mat. Pura Appl. (4), \textbf{60}(1962), 93--110.

\bibitem{FSExample}
R.~Fintushel and R.~J. Stern, \emph{A {$\mu$}-invariant one homology
  {$3$}-sphere that bounds an orientable rational ball}, in \emph{Four-manifold
  theory ({D}urham, {N}.{H}., 1982)}, Amer. Math. Soc., Providence, RI,
  volume~35 of \emph{Contemp. Math.}, pp. 265--268, 1984.

\bibitem{FoxMilnor}
R.~H. Fox and J.~W. Milnor, \emph{Singularities of {$2$}-spheres in {$4$}-space
  and cobordism of knots}, Osaka J. Math., \textbf{3}(1966), 257--267.

\bibitem{FroyshovSW}
K.~A. Fr{\o}yshov, \emph{Monopole {F}loer homology for rational homology
  3-spheres}, Duke Math. J., \textbf{155}(2010), no.~3, 519--576.

\bibitem{GS}
D.~E. Galewski and R.~J. Stern, \emph{Classification of simplicial
  triangulations of topological manifolds}, Ann. of Math. (2),
  \textbf{111}(1980), no.~1, 1--34.

\bibitem{HomEps}
J.~Hom, \emph{The knot {F}loer complex and the smooth concordance group},
  Comment. Math. Helv., \textbf{89}(2014), no.~3, 537--570.

\bibitem{HomWu}
J.~Hom and Z.~Wu, \emph{Four-ball genus bounds and a refinement of the
  {O}zsvath-{S}zabo tau-invariant}, e-print, \eprint{arXiv:1401.1565}.

\bibitem{JuhaszSutured}
A.~Juh{\'a}sz, \emph{Cobordisms of sutured manifolds}, e-print,
  \eprint{arXiv:0910.4382v3}.

\bibitem{Naturality}
A.~Juh{\'a}sz and D.~P. Thurston, \emph{Naturality and mapping class groups in
  {H}eegaard {F}loer homology}, e-print, \eprint{arXiv:1210.4996}.

\bibitem{KMBook}
P.~B. Kronheimer and T.~S. Mrowka, \emph{Monopoles and three-manifolds},
  volume~10 of \emph{New Mathematical Monographs}, Cambridge University Press,
  Cambridge, 2007.

\bibitem{KMOS}
P.~B. Kronheimer, T.~S. Mrowka, P.~S. Ozsv{\'a}th, and Z.~Szab{\'o},
  \emph{Monopoles and lens space surgeries}, Ann. of Math. (2),
  \textbf{165}(2007), no.~2, 457--546.

\bibitem{KLT1}
C.~Kutluhan, Y.-J. Lee, and C.~H. Taubes, \emph{{HF=HM I : H}eegaard {F}loer
  homology and {S}eiberg--{W}itten {F}loer homology}, e-print,
  \eprint{arXiv:1007.1979}.

\bibitem{LaudenbachCerf}
F.~Laudenbach, \emph{A proof of {R}eidemeister-{S}inger's theorem by {C}erf's
  methods}, Ann. Fac. Sci. Toulouse Math. (6), \textbf{23}(2014), no.~1,
  197--221.

\bibitem{LidmanManolescu}
T.~Lidman and C.~Manolescu, \emph{The equivalence of two {S}eiberg-{W}itten
  {F}loer homologies}, e-print, \url{arxiv:1603.00582}.

\bibitem{Lin}
F.~Lin, \emph{Morse-{B}ott singularities in monopole {F}loer homology and the
  triangulation conjecture}, e-print, \eprint{arXiv:1404.4561}.

\bibitem{LinExact}
F.~Lin, \emph{The surgery exact triangle in {P}in(2)-monopole {F}loer
  homology}, e-print, \eprint{arXiv:1504.01993}.

\bibitem{LipshitzCyl}
R.~Lipshitz, \emph{A cylindrical reformulation of {H}eegaard {F}loer homology},
  Geom. Topol., \textbf{10}(2006), 955--1097.

\bibitem{LiscaOwens}
P.~Lisca and B.~Owens, \emph{Signatures, {H}eegaard {F}loer correction terms
  and quasi-alternating links}, Proc. Amer. Math. Soc., \textbf{143}(2015),
  no.~2, 907--914.

\bibitem{LiscaStipsicz2}
P.~Lisca and A.~I. Stipsicz, \emph{Ozsv\'ath-{S}zab\'o invariants and tight
  contact three-manifolds. {II}}, J. Differential Geom., \textbf{75}(2007),
  no.~1, 109--141.

\bibitem{HFKsurvey}
C.~Manolescu, \emph{An introduction to knot {F}loer homology}, e-print,
  \eprint{arXiv:1401.7107}.

\bibitem{Spectrum}
C.~Manolescu, \emph{Seiberg-{W}itten-{F}loer stable homotopy type of
  three-manifolds with {$b_1=0$}}, Geom. Topol., \textbf{7}(2003), 889--932.

\bibitem{Triangulations}
C.~Manolescu, \emph{Pin(2)-equivariant {S}eiberg-{W}itten {F}loer homology and
  the triangulation conjecture}, J. Amer. Math. Soc., \textbf{29}(2016), no.~1,
  147--176.

\bibitem{MOwens}
C.~Manolescu and B.~Owens, \emph{A concordance invariant from the {F}loer
  homology of double branched covers}, Int. Math. Res. Not. IMRN, (2007),
  no.~20, Art. ID rnm077, 21.

\bibitem{LinkSurg}
C.~Manolescu and P.~S. Ozsv{\'a}th, \emph{Heegaard {F}loer homology and integer
  surgeries on links}, e-print, \eprint{arXiv:1011.1317}.

\bibitem{MOQuasi}
C.~Manolescu and P.~S. Ozsv{\'a}th, \emph{On the {K}hovanov and knot {F}loer
  homologies of quasi-alternating links}, in \emph{Proceedings of {G}\"okova
  {G}eometry-{T}opology {C}onference 2007}, G\"okova Geometry/Topology
  Conference (GGT), G\"okova, pp. 60--81, 2008.

\bibitem{MOS}
C.~Manolescu, P.~S. Ozsv{\'a}th, and S.~Sarkar, \emph{A combinatorial
  description of knot {F}loer homology}, Ann. of Math. (2), \textbf{169}(2009),
  no.~2, 633--660.

\bibitem{MOST}
C.~Manolescu, P.~S. Ozsv{\'a}th, Z.~Szab{\'o}, and D.~P. Thurston, \emph{On
  combinatorial link {F}loer homology}, Geom. Topol., \textbf{11}(2007),
  2339--2412.

\bibitem{MOT}
C.~Manolescu, P.~S. Ozsv{\'a}th, and D.~P. Thurston, \emph{Grid diagrams and
  {H}eegaard {F}loer invariants}, e-print, \eprint{arXiv:0910.0078}.

\bibitem{Matumoto}
T.~Matumoto, \emph{Triangulation of manifolds}, in \emph{Algebraic and
  geometric topology ({P}roc. {S}ympos. {P}ure {M}ath., {S}tanford {U}niv.,
  {S}tanford, {C}alif., 1976), {P}art 2}, Amer. Math. Soc., Providence, R.I.,
  Proc. Sympos. Pure Math., XXXII, pp. 3--6, 1978.

\bibitem{NiWu}
Y.~Ni and Z.~Wu, \emph{Cosmetic surgeries on knots in {$S^3$}}, J. Reine Angew.
  Math., \textbf{706}(2015), 1--17.

\bibitem{UpsilonT}
P.~S. Ozsv{\'a}th, A.~Stipsicz, and Z.~Szab{\'o}, \emph{Concordance
  homomorphisms from knot {F}loer homology}, e-print, \eprint{arXiv:1407.1795}.

\bibitem{OzsvathStipsicz}
P.~S. Ozsv{\'a}th and A.~I. Stipsicz, \emph{Contact surgeries and the
  transverse invariant in knot {F}loer homology}, J. Inst. Math. Jussieu,
  \textbf{9}(2010), no.~3, 601--632.

\bibitem{AbsGraded}
P.~S. Ozsv{\'a}th and Z.~Szab{\'o}, \emph{Absolutely graded {F}loer homologies
  and intersection forms for four-manifolds with boundary}, Adv. Math.,
  \textbf{173}(2003), no.~2, 179--261.

\bibitem{AltKnots}
P.~S. Ozsv{\'a}th and Z.~Szab{\'o}, \emph{Heegaard {F}loer homology and
  alternating knots}, Geom. Topol., \textbf{7}(2003), 225--254.

\bibitem{4BallGenus}
P.~S. Ozsv{\'a}th and Z.~Szab{\'o}, \emph{Knot {F}loer homology and the
  four-ball genus}, Geom. Topol., \textbf{7}(2003), 615--639.

\bibitem{Knots}
P.~S. Ozsv{\'a}th and Z.~Szab{\'o}, \emph{Holomorphic disks and knot
  invariants}, Adv. Math., \textbf{186}(2004), no.~1, 58--116.

\bibitem{HolDiskTwo}
P.~S. Ozsv{\'a}th and Z.~Szab{\'o}, \emph{Holomorphic disks and three-manifold
  invariants: properties and applications}, Ann. of Math. (2),
  \textbf{159}(2004), no.~3, 1159--1245.

\bibitem{HolDisk}
P.~S. Ozsv{\'a}th and Z.~Szab{\'o}, \emph{Holomorphic disks and topological
  invariants for closed three-manifolds}, Ann. of Math. (2),
  \textbf{159}(2004), no.~3, 1027--1158.

\bibitem{OSLens}
P.~S. Ozsv{\'a}th and Z.~Szab{\'o}, \emph{On knot {F}loer homology and lens
  space surgeries}, Topology, \textbf{44}(2005), no.~6, 1281--1300.

\bibitem{BrDCov}
P.~S. Ozsv{\'a}th and Z.~Szab{\'o}, \emph{On the {H}eegaard {F}loer homology of
  branched double-covers}, Adv. Math., \textbf{194}(2005), no.~1, 1--33.

\bibitem{HolDiskFour}
P.~S. Ozsv{\'a}th and Z.~Szab{\'o}, \emph{Holomorphic triangles and invariants
  for smooth four-manifolds}, Adv. Math., \textbf{202}(2006), no.~2, 326--400.

\bibitem{IntSurg}
P.~S. Ozsv{\'a}th and Z.~Szab{\'o}, \emph{Knot {F}loer homology and integer
  surgeries}, Algebr. Geom. Topol., \textbf{8}(2008), no.~1, 101--153.

\bibitem{RatSurg}
P.~S. Ozsv{\'a}th and Z.~Szab{\'o}, \emph{Knot {F}loer homology and rational
  surgeries}, Algebr. Geom. Topol., \textbf{11}(2011), no.~1, 1--68.

\bibitem{Peters}
T.~Peters, \emph{A concordance invariant from the {F}loer homology of $\pm 1$
  surgeries}, e-print, \eprint{arXiv:1003.3038v4}.

\bibitem{Petkova}
I.~Petkova, \emph{Cables of thin knots and bordered {H}eegaard {F}loer
  homology}, Quantum Topol., \textbf{4}(2013), no.~4, 377--409.

\bibitem{RasmussenThesis}
J.~A. Rasmussen, \emph{Floer homology and knot complements}, Ph.D. thesis,
  Harvard University, \url{arXiv:math.GT/0306378}, 2003.

\bibitem{RasmussenGT}
J.~A. Rasmussen, \emph{Lens space surgeries and a conjecture of {G}oda and
  {T}eragaito}, Geom. Topol., \textbf{8}(2004), 1013--1031.

\bibitem{RasSurvey}
J.~A. Rasmussen, \emph{Knot polynomials and knot homologies}, in \emph{Geometry
  and topology of manifolds}, Amer. Math. Soc., Providence, RI, volume~47 of
  \emph{Fields Inst. Commun.}, pp. 261--280, 2005.

\bibitem{RasmussenMilnor}
J.~A. Rasmussen, \emph{Khovanov homology and the slice genus}, Invent. Math.,
  \textbf{182}(2010), no.~2, 419--447.

\bibitem{SarkarMoving}
S.~Sarkar, \emph{Moving basepoints and the induced automorphisms of link
  {F}loer homology}, Algebr. Geom. Topol., \textbf{15}(2015), no.~5,
  2479--2515.

\bibitem{SeidelSmith}
P.~Seidel and I.~Smith, \emph{Localization for involutions in {F}loer
  cohomology}, Geom. Funct. Anal., \textbf{20}(2010), no.~6, 1464--1501.

\bibitem{Stoffregen}
M.~Stoffregen, \emph{Pin(2)-equivariant {S}eiberg-{W}itten {F}loer homology of
  {S}eifert fibrations}, e-print, \eprint{arXiv:1505.03234}.

\bibitem{Taubes}
C.~H. Taubes, \emph{Embedded contact homology and {S}eiberg-{W}itten {F}loer
  cohomology {I}}, Geom. Topol., \textbf{14}(2010), no.~5, 2497--2581.

\bibitem{Weibel}
C.~A. Weibel, \emph{An introduction to homological algebra}, volume~38 of
  \emph{Cambridge Studies in Advanced Mathematics}, Cambridge University Press,
  Cambridge, 1994.

\bibitem{Zemke}
I.~Zemke, \emph{A graph {TQFT} for hat {H}eegaard {F}loer homology}, e-print,
  \eprint{arXiv:1503.05846}.

\bibitem{Zemke3}
I.~Zemke, \emph{Quasi-stabilization and basepoint-moving maps in link {F}loer
  homology}, e-print, \eprint{arXiv:1604.04316}.

\end{thebibliography}

\end{document}